\documentclass{article}
\usepackage[utf8]{inputenc}
\usepackage{amsmath, amssymb,amsthm}
\usepackage{mathrsfs}
\usepackage{enumitem}
\usepackage{fullpage}
\usepackage{xcolor}
\usepackage{dsfont}
\usepackage{graphicx}
\usepackage[font=small,skip=0pt]{caption}
\theoremstyle{plain}

\usepackage[labelfont=bf,font=sf]{caption}
\usepackage[font={sf,small},labelfont=md]{subfig}

\usepackage[noadjust,nosort]{cite}


\usepackage{amsmath, amsthm, amssymb, tikz, mathtools, array, mathrsfs, tensor}
\usepackage{relsize}

\usetikzlibrary{shapes.geometric}
\tikzset{cross/.style={cross out, draw=black, minimum size=2.5*(#1-\pgflinewidth), inner sep=2pt, outer sep=0.5pt},
	cross/.default={1pt}}
\usetikzlibrary{calc,arrows}
\usepackage{pgfplots}
\pgfplotsset{compat=1.15}
\usepackage{polynom}
\usepackage{esint}
\usepackage{multicol}
\usepackage{dsfont}
\newcommand{\one}{\mathds{1}}
\usepackage{framed}
\usepackage{upref}

\usepackage{hyperref}
\hypersetup{colorlinks=true,linkcolor=blue,citecolor=blue,urlcolor=blue} 


\allowdisplaybreaks


\makeatletter
\setlength{\@fptop}{0pt plus 1fil}
\setlength{\@fpbot}{0pt plus 1fil}
\makeatother

\interfootnotelinepenalty=10000




\def\eps{\varepsilon}

\newcommand{\E}{\mathbb{E}}

\newcommand{\N}{\mathbb{N}}
\renewcommand{\P}{\mathbb{P}}

\newcommand{\R}{\mathbb{R}}

\newcommand{\Z}{\mathbb{Z}}


\newcommand{\CC}{\mathcal{C}}

\newcommand{\GG}{\mathcal{G}}
\newcommand{\HH}{\mathcal{H}}

\newcommand{\NN}{\mathcal{N}}

\newcommand{\PP}{\mathcal{P}}

\newcommand{\RR}{\mathcal{R}}
\renewcommand{\SS}{\mathcal{S}}
\newcommand{\TT}{\mathcal{T}}

\newcommand{\AAA}{\mathscr{A}}

\newcommand{\CCC}{\mathscr{C}}

\newcommand{\EEE}{\mathscr{E}}
\newcommand{\FFF}{\mathscr{F}}
\newcommand{\GGG}{\mathscr{G}}

\newcommand{\LLL}{\mathscr{L}}
\newcommand{\MMM}{\mathscr{M}}

\newcommand{\PPP}{\mathscr{P}}

\newcommand{\TTT}{\mathscr{T}}

\newcommand{\ZZZ}{\mathscr{Z}}


\newcommand{\ttt}{\mathfrak{t}}

\newcommand{\fb}{\mathfrak{b}}
\newcommand{\ufb}{\underline{\mathfrak{b}}}

\newcommand{\fff}{\mathfrak{f}}


\newcommand{\bDelta}{\boldsymbol{\Delta}}

\newcommand{\naesat}{\textsc{nae-sat}}
\newcommand{\knaesat}{k\textsc{-nae-sat}}
\newcommand{\cond}{\textsf{cond}}
\newcommand{\sat}{\textsf{sat}}
\def\Unif{{\sf Unif}}
\newcommand{\sol}{{\sf SOL}}
\newcommand{\rsb}{{\tiny{1}}\textsc{rsb}}
\newcommand{\size}{\textsf{size}}
\newcommand{\fav}{\mathscr{X}_{\sf fav}}

\newcommand{\g}{{\sf g}}
\newcommand{\sfb}{{\sf b}}
\newcommand{\atyp}{{\sf atyp}}
\newcommand{\typ}{{\sf typ}}
\newcommand{\Law}{\sf Law}
\newcommand{\rev}[1]{\noindent{{#1}}}
\newcommand{\bZ}{\textnormal{\textbf{Z}}}

\newcommand{\bG}{\textnormal{\textbf{G}}}
\newcommand{\bGG}{\boldsymbol{\mathcal{G}}}
\newcommand{\bI}{\textnormal{\textbf{I}}}
\newcommand{\sig}{\underline{\sigma}}

\newcommand{\uL}{\underline{\texttt{L}}}

\newcommand{\bux}{\underline{\textbf{x}}}
\newcommand{\ubz}{\underline{\textbf{z}}}
\newcommand{\uz}{\underline{z}}
\newcommand{\buz}{\boldsymbol{\underline{z}}}
\newcommand{\ux}{\underline{x}}

\newcommand{\utau}{\underline{\tau}}
\newcommand{\utheta}{\underline{\theta}}
\newcommand{\uh}{\underline{h}}

\newcommand{\uC}{\underline{C}}

\newcommand{\dul}{\dot{\underline{\ell}}}
\newcommand{\hul}{\hat{\underline{\ell}}}

\newcommand{\buL}{\underline{\textbf{L}}}
\newcommand{\bE}{\textbf{E}}
\newcommand{\bL}{\textbf{L}}

\newcommand{\bx}{\mathbf{x}}

\newcommand{\bv}{\mathbf{v}}

\newcommand{\bsigma}{\boldsymbol{\sigma}}

\newcommand{\bsig}{\underline{\boldsymbol{\sigma}}}

\newcommand{\bnu}{\boldsymbol{\nu}}

\newcommand{\lit}{\textnormal{lit}}

\newcommand{\sy}{\sf sy}
\newcommand{\tr}{{\sf tr}}

\newcommand{\lab}{\textnormal{lab}}


\newcommand{\qdot}{\dot{\mathbf{q}}}
\newcommand{\qhat}{\hat{\mathbf{q}}}
\newcommand{\dotq}{\dot{q}}







\newcommand{\rr}{{{\scriptsize{\texttt{R}}}}}
\newcommand{\bb}{{{\scriptsize{\texttt{B}}}}}

\newcommand{\fs}{{\scriptsize{\texttt{S}}}}
\newcommand{\ff}{\textnormal{\small{\texttt{f}}}}
\newcommand{\tz}{\small{\texttt{z}}}
\newcommand{\mm}{{{\texttt{m}}}}
\newcommand{\fF}{\scriptsize{\texttt{F}}}

\newcommand{\tL}{\texttt{L}}



\newcommand{\la}{\lambda}


\newcommand{\cyc}{{\sf cyc}}

\newcommand{\ee}{\mathfrak{E}}


\newcommand{\tns}{\textnormal{s}}








\newcommand{\wt}[1]{\widetilde{#1}}

\newcommand{\norm}[1]{\left\|{#1}\right\|} 
\newcommand{\tsq}{\scalebox{0.55}{$\square$}}

\newcommand{\dHH}{\dot{\HH}}
\newcommand{\hHH}{\hat{\HH}}
\newcommand{\bHH}{\bar{\HH}}

\newcommand{\given}{\,|\,}
\newcommand{\bgiven}{\,\,\Big|\,\,}
\newcommand{\bbgiven}{\,\,\bigg|\,\,}
\newcommand{\lekt}{\lesssim_{k,t}}
\def\de{{\rm d}}

\DeclareMathOperator{\Var}{Var}
\DeclareMathOperator{\Cov}{Cov}




\DeclareMathOperator{\tv}{TV} 





\newtheorem{thm}{Theorem}[section]
\newtheorem{prop}[thm]{Proposition}
\newtheorem{cor}[thm]{Corollary}
\newtheorem{lemma}[thm]{Lemma}


\theoremstyle{definition}
\newtheorem{defn}[thm]{Definition}

\newtheorem{remark}[thm]{Remark}
\newtheorem{obs}[thm]{Observation}




\title{Local geometry of NAE-SAT solutions in the condensation regime}


\author{
Allan Sly\thanks{Department of Mathematics, Princeton University. Email: \textup{\tt asly@math.princeton.edu}}\and 
Youngtak Sohn \thanks{Department of Mathematics Massachusetts Institute of Technology. Email: \textup{\tt youngtak@mit.edu}}}

\begin{document}
	\bibliographystyle{acm}
	
	\renewcommand{\thefootnote}{\arabic{footnote}} \setcounter{footnote}{0}
	
	 \maketitle
	
\begin{abstract}
The local behavior of typical solutions of random constraint satisfaction problems (\textsc{csp})
describes many important phenomena including
clustering thresholds, decay of correlations, and the behavior of message passing algorithms.  When the constraint density is low, studying the planted model is a powerful technique for determining this local behavior which in many examples has a simple Markovian structure. \rev{The} work of Coja-Oghlan, Kapetanopoulos, M\"{u}ller (2020) showed that for a wide class of models, this description applies up to the so-called condensation threshold.  

Understanding the local behavior after the condensation threshold is more complex due to long-range correlations. In this work, we revisit the random regular \textsc{nae-sat} model in the condensation regime and determine the local weak limit which describes a random solution around a typical variable. This limit exhibits a complicated non-Markovian structure arising from the space of solutions being dominated by a small number of large clusters.  This is the first description of the local weak limit in the condensation regime for any sparse random \textsc{csp}s in the one-step replica symmetry breaking (1\textsc{rsb}) class. Our result is non-asymptotic and characterizes the tight fluctuation  $O(n^{-1/2})$ around the limit. Our proof is based on coupling the local neighborhoods of an infinite spin system, which encodes the structure of the clusters, to a broadcast model on trees whose channel is given by the 1\textsc{rsb} belief-propagation fixed point. We believe that our proof technique has broad applicability to random \textsc{csp}s in the 1\textsc{rsb} class.

 \end{abstract}

 \section{Introduction}

A random constraint satisfaction problem (r\textsc{csp}) involves $n$ variables $\underline{z}=\{z_i\}_{i\leq n}\in \mathfrak{X}^n$ drawn from a finite alphabet set $\mathfrak{X}$, satisfying $m\equiv \alpha n$ random constraints. The aim is to analyze the solution space of r\textsc{csp}s as $n$ and $m$ increase, with $\alpha$ constant. Major advances have been made by statistical physicists using deep but non-rigorous theory, which describes a series of phase transitions as the constraint density $\alpha$ grows. Their insights apply to a wide class of r\textsc{csp}s belonging to the so-called one-step replica symmetry breaking (1\textsc{rsb}) class, including $k$-sat, nae-sat, and coloring (\cite{mpz02,kmrsz07}, see also \cite{anp05}, \cite{mm09}). We'll begin by describing some of the main predictions made by the physicists \cite{kmrsz07}. See Figure \ref{fig:phase} for the pictorial description of the conjectured phase diagram.

\begin{figure}
		\centering
		\begin{tikzpicture}[scale=0.9]
		\node[inner sep=0pt] (unique) at (0,0)
		{\includegraphics[width=2cm]{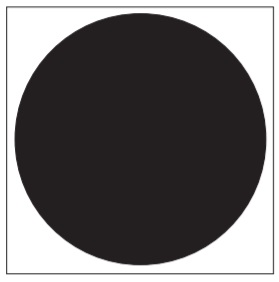}};
		
		\node[inner sep=0pt] (ext) at (2.4,0)
		{\includegraphics[width=2cm]{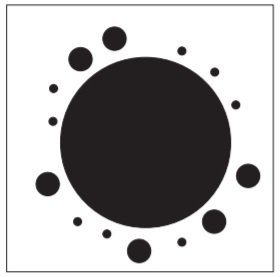}};
		
		\node[inner sep=0pt] (clust) at (4.8,0)
		{\includegraphics[width=2cm]{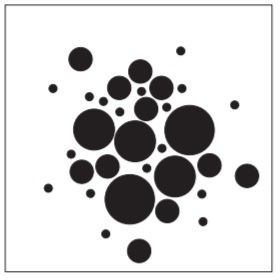}};
		
		\node[inner sep=0pt] (cond) at (7.2,0)
		{\includegraphics[width=2cm]{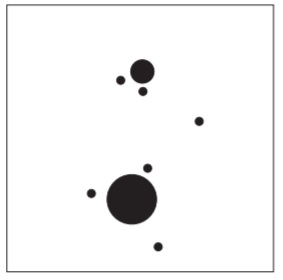}};
		
		\node[inner sep=0pt] (unsat) at (9.6,0)
		{\includegraphics[width=2cm]{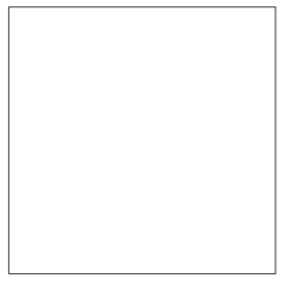}};
		
		\draw (1.2,1.1) -- (1.2,0.5);
		\draw (3.6,1.1) -- (3.6,0.5);
		\draw (6,1.1) -- (6,0.5);
		\draw (8.4,1.1) -- (8.4,0.5);
		\node[anchor=south] (un) at (1.2,1) {$\alpha_{\textsf{uniq}}$};
		\node[anchor=south] (un) at (3.6,1) {$\alpha_{\textsf{clust}}$};
		\node[anchor=south] (un) at (6,1) {$\alpha_{\textsf{cond}}$};
		\node[anchor=south] (un) at (8.4,1) {$\alpha_{\textsf{sat}}$};
		\node[anchor=north] (asd) at (0,-1) {uniqueness};
		\node[anchor=north] (asd) at (2.4,-1) {extremality};
		\node[anchor=north] (asd) at (4.8,-1) {clustering};
		\node[anchor=north] (asd) at (7.2,-1) {condensation};
		\node[anchor=north] (asd) at (9.6,-1) {unsat};
		
		\draw[->] (6.6,-1.72)--(10.5,-1.72);
		\node[anchor=north] (aaas) at (8.8,-1.7) {constraint density $\alpha$};
		\end{tikzpicture}
		\caption{\textit{Figure adapted from} \cite{kmrsz07,dss22}. A pictorial description of the conjectured phase diagram of random constraint satisfaction problems in the one-step replica symmetry breaking class. In the condensation regime $(\alpha_{\textsf{cond}},\alpha_{\textsf{sat}})$, a bounded number of clusters contain most of the solutions and the uniform measure over the solutions fails to be contiguous with the planted model.}\label{fig:phase}
	\end{figure}

When the density of constraints $\alpha$ is below the uniqueness threshold $\alpha_{\textsf{uniq}}$, all of the solutions lie in a single cluster. Here, a cluster is defined to be the connected component of the solution space where two solutions are connected if they differ in 
\rev{one} variable. As $\alpha$ increases, the space of solutions undergoes a shattering threshold $\alpha_{\textsf{clust}}$ after which the space of solutions shatters into exponentially many clusters of solutions, each well separated from each other~ \cite{achlioptas2008algorithmic}. While the space of solutions becomes more complex at this point, the behavior of a typical solution retains a simple description.  In particular, the uniform measure over the solutions is contiguous with respect to the so-called planted model. This was recently established rigorously by Coja-Oghlan, Kapetanopoulos, and Müller~\cite{coja2020replica} up to the condensation threshold $\alpha_{\cond}$ for several models including \textsc{nae-sat} and coloring. 

A second threshold, which is of primary interest in this paper, is the condensation threshold $\alpha_{\cond}$.  For $\alpha\in(\alpha_{\textsf{clust}},\alpha_{\cond})$ each cluster of solutions has only an exponentially small fraction of the total number of solutions while for $\alpha\in(\alpha_{\cond},\alpha_{\sat})$ most of the solutions are contained in $O(1)$ number of clusters.  Indeed, a more refined prediction is that the cluster sizes follow a Poisson-Dirichlet distribution \cite{kmrsz07}. This is the regime in which the model is said to be 1\textsc{rsb}, or in the \textit{condensation regime}.  Formally, this means that if we look at the normalized Hamming distance of two randomly chosen solutions, it is concentrated on two points.  This corresponds to a positive probability of having two solutions in the same cluster, in which case they are close, as well as a positive probability of two solutions in different clusters, in which case they are much further.  While this is predicted in many models\rev{,} it has so far only been established in the regular \textsc{nae-sat} for large $k$~\cite{NSS}.

It is further conjectured that not only does the structure of the space of solutions exhibit a phase transition at $\alpha_{\cond}$, \rev{but} so does the local distribution of the individual solutions themselves.  In particular, given a solution drawn uniformly at random, consider the empirical distribution of the solution in a ball of radius $2t$ around variables $1\leq i\leq n$. Here, a ball of radius $2t$ is with respect to the factor graph induced by the constraints and the variables. For example, if $2$ variables are involved in the same constraint, they have distance $2$.

For $\alpha<\alpha_{\cond}$, because of contiguity, it suffices to study the planted model to determine the limit of such local empirical distribution. Here, the planted model means taking a fixed ``planted'' assignment of the variables and then choosing the constraints conditioned to satisfy the planted assignment.  The local empirical distribution of the planted model admits a simple description as it can be studied with the configuration model.  In the case of \textsc{nae-sat} or colorings on random regular graphs, it is simply the uniform distribution \rev{of} solutions on a regular tree, which is Markovian in the sense that the spins along any path follow\rev{s} a Markov chain whose transition probabilities can be readily calculated.  This then describes the behavior of a random solution up to $\alpha_{\cond}$.

In this paper, we investigate the regime $\alpha>\alpha_{\cond}$ in the random $d$-regular $k$-\textsc{nae-sat} model for large $k\geq k_0$, where $k_0$ is an absolute constant. This regime presents a complex local empirical distribution, deviating from the planted model. The \textsc{nae-sat} problem offers additional symmetries compared to $k$-sat that make it more tractable from a mathematical viewpoint. Nevertheless, it is predicted to belong to the same 1\textsc{rsb} universality class of r\textsc{csp}s as random $k$-\textsc{sat} and random graph coloring, thus sharing similar qualitative behaviors. Let us give an informal statement of our main theorem.
\begin{thm}\label{thm:informal.main}(Informal)
For $k\geq k_0$ and $\alpha \in (\alpha_{\cond}(k),\alpha_{\sat}(k))$, consider a random regular $\knaesat$ solution $\ubz\in \{0,1\}^n$ defined in Section \ref{subsec:def}. For $t\geq 1$, the empirical distribution over balls of radius $2t$ of the solution $\ubz$ converges to an explicit non-Markovian limit $\PP_\star^t$ with $\Theta_k(1/\sqrt{n})$ fluctuations.
\end{thm}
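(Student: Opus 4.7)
The starting point is the condensation picture of \cite{NSS}: for $\alpha\in(\alpha_{\cond},\alpha_{\sat})$, a $1-o(1)$ fraction of $\sol$ is carried by an $O(1)$ number of large clusters whose normalized sizes converge to a Poisson--Dirichlet law. Sampling $\ubz$ uniformly is therefore equivalent to first drawing a \emph{size-biased} cluster $\gamma$ and then an equiprobable element of $\gamma$. The non-Markovianity of the eventual limit $\PP_\star^t$ will enter only through this size-bias tilt: after removing it one recovers the planted, Markovian limit valid for $\alpha<\alpha_{\cond}$.

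\textbf{Step 2: Two-layer encoding by the 1\textsc{rsb} fixed point.} Each cluster $\gamma$ is identified with a colored/frozen configuration $\underline{\sigma}$ on the half-edges of the factor graph, recording which variables are rigid and which are free. The law of $\underline{\sigma}$ is governed by the 1\textsc{rsb} belief-propagation fixed point $(\dbq,\hbq)$ at the appropriate Parisi parameter; conditional on $\underline{\sigma}$, the solutions in $\gamma$ decouple over free components and admit an explicit local description. This yields a tractable joint Gibbs specification for the pair $(\underline{\sigma},\ubz)$, the infinite-volume limit of which is the desired $\PP_\star^t$, and identifies the ``infinite spin system'' of the abstract with the limiting $\underline{\sigma}$-process on the $(d,k)$-regular bipartite tree $\mathbb{T}$.

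\textbf{Step 3: Coupling to a broadcast on $\mathbb{T}$.} A standard configuration-model estimate shows that, with probability $1-O(1/n)$, the depth-$2t$ neighborhood of a uniformly chosen variable is isomorphic to a ball in $\mathbb{T}$. On $\mathbb{T}$, I realize the conditional law of $(\underline{\sigma}|_{B_{2t}},\ubz|_{B_{2t}})$ as a Markov broadcast whose per-edge channel is given by $(\dbq,\hbq)$, tilted by the Bethe free-energy contribution of the boundary of $B_{2t}$. Marginalizing this boundary reweighting produces $\PP_\star^t$; the resulting law fails to be Markov on $\mathbb{T}$ because the cluster-size tilt couples far-apart spins through the global cluster partition function. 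This coupling is the main technical obstacle: unlike the $\alpha<\alpha_{\cond}$ regime where contiguity with the planted model makes the argument essentially automatic, above condensation the size-biased partition function $\bZ^{(\lambda)}$ is dominated by rare, high-complexity configurations, and its local effect must be controlled via a truncated second-moment / small-subgraph-conditioning analysis that uses the \textsc{nae-sat} symmetries and $k\geq k_0$ in an essential way.

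\textbf{Step 4: Non-asymptotic $n^{-1/2}$ rate.} Writing $\widehat{\PP}_n^t(B) := n^{-1}\sum_{i\leq n}\one\{B_{2t}(i)\cong B\}$ for the empirical ball distribution, the quantitative assertion of Theorem~\ref{thm:informal.main} follows in two pieces: (i) a moment computation yields $|\E\widehat{\PP}_n^t(B) - \PP_\star^t(B)|=O(1/n)$, inherited from the cluster-partition-function framework of \cite{NSS}; and (ii) an Efron--Stein / edge-switching estimate that propagates a single resampling through the 1\textsc{rsb} fixed point gives $\Var(\widehat{\PP}_n^t(B))=O(1/n)$. The matching $\Omega(n^{-1/2})$ lower bound on the typical fluctuations follows from a central limit theorem for the dominant linear term in the switching expansion, which identifies the $\Theta_k$ constant.
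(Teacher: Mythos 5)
Your Steps 1--3 describe the right high-level picture: the paper indeed works with the size-biased frozen configuration (Observation~\ref{obs:sampling}), identifies clusters with component colorings, and couples the law of a random $t$-neighborhood to a broadcast process on $\TTT_{d,k}$ whose channel is the 1\textsc{rsb}~BP fixed point. (Your invocation of Poisson--Dirichlet is an overstatement --- the paper emphasizes this is open and the argument never uses it --- but it is inessential to your reduction.) The non-Markovian explanation is also consistent with Section~\ref{ss:further}.

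The gap is in Step 4, and it is not cosmetic. You claim (i) $|\E\widehat{\PP}_n^t(B)-\PP_\star^t(B)|=O(1/n)$ ``inherited from \cite{NSS}'' and (ii) $\Var(\widehat{\PP}_n^t(B))=O(1/n)$ by an Efron--Stein / edge-switching argument. Neither holds as stated. For (i), \cite{NSS} supplies only $\ell^\infty$-concentration of the free component profile at scale $O(n^{-1/2}\log n)$; since a typical $(\bGG,\bux)$ has $n^{\Omega_k(1)}$ distinct free-tree types, this is far from the $\ell^1$-type concentration needed to control the empirical $t$-neighborhood measure at the required precision. Upgrading this to the $\ell^1$-distance $\|\xi-\xi^\star\|_\square = O(n^{-1/2})$ is exactly the paper's new Theorem~\ref{thm:concentration}, proved via a local CLT for triangular arrays together with the exponential decay of free-component frequencies; it is not inherited from \cite{NSS}, and without it the bias estimate is unjustified. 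For (ii), an unconditional Efron--Stein bound needs resampling a single edge to perturb $\widehat{\PP}_n^t(B)$ by $O(1/n)$, but above condensation this fails: flipping one literal or matching can cascade through the coarsening algorithm, grow or destroy free components arbitrarily far, and even change which clusters dominate $\sol(\bGG)$, so the per-coordinate increment is not uniformly $O(1/n)$. The paper sidesteps this entirely: it first conditions on the coloring profile $\xi$, under which $\P_\xi$ is a configuration model and the free components are small (Lemma~\ref{lem:exp:decay}); the conditional variance of $\sum_v X_v^t$ is then $O_{k,t}(\sqrt{n})$ by the locality argument of Lemma~\ref{lem:var:control}, i.e.\ $\Var(\widehat{\PP}_n^t\given\xi)=O(n^{-3/2})$, and the dominant $\Theta(n^{-1/2})$ fluctuation enters entirely through the conditional \emph{bias} term, controlled by the $\xi$-concentration and the coupling of Theorem~\ref{thm:coupling}. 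Your allocation of the $n^{-1/2}$ between bias and variance is therefore inverted, and the mechanism (edge switching / small-subgraph conditioning) is not the one that works here.
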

We refer to Theorem \ref{thm:main} below for the formal statement. \rev{An} explicit definition of $\PP_\star^t$ is given in Section \ref{s:lwl}. Unlike in the planted case, $\PP_\star^t$ is non-Markovian as shown in Section \ref{ss:further}.

We emphasize that for $t\geq 2$, characterization of the local weak limit $\PP_\star^t$ with $O(n^{-1/2})$ fluctuation poses significant difficulties, demanding novel methods not covered in earlier works \cite{NSS, nss2}. Indeed, \cite{NSS} has only studied statistics of depth $2$ neighborhoods, and they established concentration in $\ell^{\infty}$-distance of the \textit{free component profile} with larger distance $O(n^{-1/2}\log n)$. In order to establish Theorem \ref{thm:informal.main}, we \rev{build upon the estimates obtained from \cite{NSS} to improve their results to} $\ell^1$-type concentration with optimal fluctuation $O(n^{-1/2})$ which is much stronger since there are typically $n^{\Omega_k(1)}$ types of free trees. \rev{We refer to Theorem~\ref{thm:concentration} for the precise statement.} Further, we construct a delicate coupling in the infinite spin system encoding the clusters, which we call \textit{component coloring} and improve the concentration of depth $2$ neighborhoods to greater distances. Section~\ref{sec:proof} provides a high-level overview of our proof methodology, which we believe has wide applicability to 1\textsc{rsb} class random \textsc{csp}s.

We further remark that the characterization of the local weak limit in the condensation regime $(\alpha_{\cond}, \alpha_{\sat})$ is delicate due to the randomness coming from the weights for the clusters. Indeed, we expect that for a different notion of the local distribution studied in \cite{montanari2012weak}, where a uniformly random vertex is chosen first, and then the marginal of a neighborhood around the vertex is considered, the local weak limit will then be \textit{random}, which is a mixture of extremal Gibbs measures with weights drawn from a Poisson-Dirichlet distribution. Since showing that the relative sizes of the \rev{largest} clusters \rev{(albeit slightly different notion of a cluster, where two solutions are connected if they differ in $O(\log n)$ variables)} follow a Poisson-Dirichlet process in the regime $(\alpha_{\cond}, \alpha_{\sat})$ is open for any r\textsc{csp}'s in 1\textsc{rsb} universality class, we leave this different notion of local weak limit as a conjecture. See Section~\ref{ss:further} for a further discussion.

\subsection{Definitions and Main result} 
\label{subsec:def}
We first define the random regular \textsc{nae-sat} model.  An instance of a $d$-regular $k$-\textsc{nae-sat} problem can be represented by a labeled $(d,k)$-regular bipartite graph as follows. Let $V=\{ v_1, \ldots , v_n \}$ and $F=\{a_1, \ldots, a_m \}$ be the sets of variables and clauses, respectively. Connect $v_i$ and $a_j$ by an edge if the variable $v_i$ participates in the clause $a_j$. Denote this  bipartite graph by $G=(V,F,E)$, and for $e\in E$, let $\tL_e\in \{0,1\}$ denote the literal assigned to the edge $e$. Then, \textsc{nae-sat} instance is defined by $\GG=(V,F,E,\uL)\equiv (V,F,E,\{\tL_{e}\}_{e\in E})$.

For each $e\in E$, we denote the variable (resp. clause) adjacent to it by $v(e)$ (resp. $a(e)$).  Moreover, $\delta v$ (resp. $\delta a$) are the collection of adjacent edges to $v\in V$ (resp. $a \in F$). Then, a $\naesat$ solution is formally defined as follows.

\begin{defn}
		For an integer $l\ge 1$ and $\uz  =(z_i) \in \{0,1\}^l$, define
		\begin{equation}\label{eq:def:INAE}
		I^{\textsc{nae}}(\uz) :=  \mathds{1} \{\uz \textnormal{ is neither identically } 0 \textnormal{ nor }1 \}\,.
		\end{equation} 
		Let $\GG = (V,F,E,\uL)$ be a \textsc{nae-sat} instance. An assignment $\uz \in \{0,1\}^V$ is called a \textbf{solution}  if 
		\begin{equation}\label{eq:def:INAE G}
		I^{\textsc{nae}}(\uz;\GG) :=
		\prod_{a\in F} I^{\textsc{nae}} \big((z_{v(e)} \oplus \tL_e)_{e\in \delta a}\big) =1\,,
		\end{equation}
		where $\oplus$ denotes the addition mod 2. Denote the set of solutions by $\textsf{SOL}(\GG)\subset\{0,1\}^V$.
      \end{defn}

The random regular $\naesat$ instance $\bGG=(V,F,\bE,\buL)$ is then generated by a perfect matching between the set of \textit{half-edges} adjacent to variables and half-edges adjacent to clauses which are labeled from $1$ to $nd=mk$. Thus, $\bE$ is a uniform permutation in $S_{nd}$. Conditioned on $\bE$, the literals $\buL=(\bL_e)_{e\in \bE}$ is drawn i.i.d. from $\Unif(\{0,1\})$. We use the notation $\ubz\sim \Unif(\sol(\bGG))$ for a $\naesat$ solution drawn uniformly at random given a random regular $\naesat$ instance $\bGG$.
    
We next define the distribution in a local neighborhood of $v\in V$ and a depth $t\geq 1$. Hereafter we denote $d(\cdot,\cdot)\equiv d_{\GG}(\cdot,\cdot)$ by the graph distance on the factor graph $\GG$. Let $N_t(v,\GG)$ be the $2t-\frac{3}{2}$ neighborhood around $v$ in $\GG$. That is, $N_t(v,\GG)$ contains the variables $w\in V$ such that the distance $d(v,w)$ in $\GG$ is at most $2t-2$ ($t$ layers of variables including \rev{$v$}), and clauses $a\in F$ such that $d(v,a)\leq 2t-3$ ($t-1$ layers of clauses).

We have used the distance $2t-\frac{3}{2}$ instead of $2t-2$ to include the \textit{boundary} half-edges (half-edges that are not connected within $N_t(v,\GG)$) hanging from the leaf variables $\{w\in V: d(v,w)=2t-2\}$, which will be convenient for the proof. Denote the set of boundary half-edges (resp. full-edges) of $N_t(v,\GG)$ by $\partial N_t(v, \GG)$ (resp. $E_{\sf in}(N_t(v, \GG)$), and let $E(N_t(v,\GG)):= E_{\sf in}(N_t(v,\GG))\sqcup \partial N_t(v,\GG)$. The set of variables (resp. clauses) in $N_t(v,\GG)$ is denoted by $V(N_t(v,\GG))$ (resp. $F(N_t(v,\GG))$).
    
    We take the convention that the full-edges of $e\in E_{\sf in }(N_t(v,\GG))$ store literal information $\tL_e$ while the boundary half-edges $e\in \partial N_t(v,\GG)$ do not. To this end, for $\GG=(V,F,E,\uL)$ and $\uz\in \{0,1\}^V$, denote
    \begin{equation*}
    \uL_{t}(v,\GG):=(\tL_e)_{e\in E_{\sf in}(N_t(v,\GG))}\,,\quad \uz_{t}(v,\GG):=(z_v)_{v\in V(N_t(v,\GG))}\,.
    \end{equation*}
    Note that if $N_t(v,\GG)$ does not contain a cycle, it is isomorphic to $(d,k)$-regular tree. Denote the infinitary $(d,k)$ regular factor tree rooted at a variable $\rho$ by $\TTT_{d,k}\equiv \TTT_{d,k}(\rho)$. Here, we consider $\rho$ as a variable and its $d$ descendants as clauses. Similarly, all the clauses have \rev{$k-1$} descendant variables. Thus, the variables are located at even depths whereas the clauses are located at odd depths. Then, $\TTT_{d,k,t}$ is defined by \rev{the} $2t-\frac{3}{2}$ neighborhood around the root $\rho$ in $\TTT_{d,k}$ (see Figure \ref{fig:tree}). We use the notation $V(\TTT_{d,k,t}), F(\TTT_{d,k,t}), E_{\sf in}(\TTT_{d,k,t}),$ and $\partial \TTT_{d,k,t}$ respectively for the set of variables, clauses, full-edges, and boundary half-edges of $\TTT_{d,k,t}$.

    \begin{figure}[t]
    \centering
		\begin{tikzpicture}[square/.style={regular polygon,regular polygon sides=4},thick,scale=0.64, every node/.style={transform shape}]
		\node[circle,draw, scale=0.7, fill=black] (v1) at (0,0) {};
		\node[square,draw, scale=0.7] (a1) at (-1.5,-1.3) {};
		\node[square,draw, scale=0.7] (a2) at (0,-1.3) {};
		\node[square,draw, scale=0.7] (a3) at (1.5,-1.3) {};
         \node[circle ,draw, scale=0.7, fill=black] (v2) at (-3,-2.6) {};
		\node[circle ,draw, scale=0.7, fill=black] (v3) at (-1.7,-2.6) {};
        \node[circle ,draw, scale=0.7, fill=black] (v4) at (-0.6,-2.6) {};
        \node[circle ,draw, scale=0.7, fill=black] (v5) at (0.6,-2.6) {};
		\node[circle ,draw, scale=0.7, fill=black] (v6) at (1.7,-2.6) {};
        \node[circle ,draw, scale=0.7, fill=black] (v7) at (3,-2.6) {};
		\draw (v1)--(a1);
		\draw (v1)--(a2);
		\draw (v1)--(a3);
        \draw (a1)-- (v2);
        \draw (a1) -- (v3);
        \draw (a2)-- (v4);
        \draw (a2) -- (v5);
        \draw (a3) -- (v6);
        \draw (a3) -- (v7);
		\draw[draw=blue] (v2)--(-3.75,-3.3);
		\draw[draw=blue] (v3)--(-2.25,-3.3);
		\draw[draw=blue] (v4)--(-0.9,-3.3);
		\draw[draw=blue] (v5)--(0.3,-3.3);
		\draw[draw=blue] (v6)--(1.5,-3.3);
		\draw[draw=blue] (v7)--(3,-3.3);
        \draw[draw=blue] (v2)--(-3,-3.3);
		\draw[draw=blue] (v3)--(-1.5,-3.3);
		\draw[draw=blue] (v4)--(-0.3,-3.3);
		\draw[draw=blue] (v5)--(0.9,-3.3);
		\draw[draw=blue] (v6)--(2.25,-3.3);
		\draw[draw=blue] (v7)--(3.75,-3.3);
		\end{tikzpicture}
     \vspace{3mm}
		\caption{$\TTT_{d,k,t}$ for $d=k=3$ and $t=2$. Variables and clauses are drawn by the circular and square nodes, respectively. The boundary half-edges in $\partial \TTT_{d,k,t}$ are highlighted in blue. }\label{fig:tree}
	\end{figure}
 
    For $(\uz_t,\uL_t) \in \{0,1\}^{V(\TTT_{d,k,t})}\times \{0,1\}^{E_{\sf in} (\TTT_{d,k,t})}$ and $v\in V$, define the random variable $X_v^t \equiv X_v^t [\bGG,\ubz, \uz_t,\uL_t]$ by
    \begin{equation*}
        X_v^t \equiv X_v^t [\bGG,\ubz, \uz_t,\uL_t] :=\one\{N_t(v,\bGG)\textnormal{ is a tree, ~$\buL_{t}(v,\bGG)=\uL_t$,~ and }~~\ubz_{t}(v, \bGG)=\uz_t\}\;.
    \end{equation*}
Then the depth $t$ empirical distribution is given by
\[
\P_n^t[\bGG, \ubz](\uz_t,\uL_t)=\frac{1}{n}\sum_{v\in V}X_v^t\,.
\]

Note that $\P_n^t[\bGG,\ubz]$ is a random measure on \rev{$\{0,1\}^{V(\TTT_{d,k,t})}\times \{0,1\}^{E_{\sf in} (\TTT_{d,k,t})}$}. The total mass is $1-O(n^{-1})$ with high probability because of rare neighborhoods which contain a cycle. Our main theorem below determines the limit of $\P_n^t[\bGG,\ubz]$ with $O(n^{-1/2})$ fluctuation, thus such cyclic neighborhoods may be neglected. \rev{Throughout, $\PPP(A)$ denotes the set of probability measures on a measurable space $A$.}
    \begin{thm}\label{thm:main}
    For $k\geq k_0$ and $\alpha \in (\alpha_{\cond}(k),\alpha_{\sat}(k))$, let $\boldsymbol{\mathcal{G}}$ be a random regular $\knaesat$ instance. Given $\bGG$, draw a uniformly random $\naesat$ solution $\ubz \sim \Unif(\sol(\bGG))$. For any $t\geq 1$, there is an explicit non-random \rev{$\PP^\star_t\equiv \PP^\star_t[\alpha,k]\in \PPP\big(\{0,1\}^{V(\TTT_{d,k,t})}\times \{0,1\}^{E_{\sf in} (\TTT_{d,k,t})}\big)$, which is} defined in Section~\ref{s:lwl}, such that the following holds. For any $\eps>0$, there exists $C\equiv C(\eps,\alpha,k,t)>0$ such that we have with probability at least $1-\eps$ over $(\bGG, \ubz)$,
    \begin{equation}\label{eq:thm:main}
      d_{\tv}\Big(\P_n^{t}[\bGG, \ubz\,]\;,\;\PP_\star^t\Big)\leq \frac{C}{\sqrt{n}}\,.
    \end{equation}
    \end{thm}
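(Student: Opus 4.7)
The plan is to exploit the component-coloring representation of clusters developed in \cite{NSS, nss2}: a uniformly random $\ubz \sim \Unif(\sol(\bGG))$ can be generated by first sampling a dominant cluster (with weight essentially proportional to its number of solutions) and then sampling $\ubz$ uniformly inside that cluster. In the condensation regime, each variable in such a cluster is either frozen or free, and inside a local neighborhood the law of $\ubz$ is determined by the restriction of the component coloring together with an independent uniform choice among the admissible free-tree resolutions. Under this reduction, Theorem~\ref{thm:main} becomes a statement about the depth-$t$ empirical profile of component colorings, since summing out the free-tree resolutions is a fixed, deterministic contraction that does not expand total-variation distance.

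The base cases $t\in\{1,2\}$ are then covered by Theorem~\ref{thm:concentration}, which upgrades the $O(n^{-1/2}\log n)$ $\ell^\infty$-bound of \cite{NSS} on the depth-$2$ free component profile to an $\ell^1$-bound of optimal order $O(n^{-1/2})$; this strengthening is crucial because there are typically $n^{\Omega_k(1)}$ admissible free-tree types, so an $\ell^\infty$-bound would not suffice to control the TV distance on $N_t(v,\bGG)$. For fixed $t\geq 3$, the approach is to couple the depth-$t$ neighborhood $N_t(v,\bGG)$, together with its component coloring, to a broadcast process on the infinite $(d,k)$-regular factor tree $\TTT_{d,k}$ whose transition kernel is given by the 1\textsc{rsb} belief-propagation fixed point. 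Concretely, starting from the depth-$2$ coloring around $v$ pinned down by Theorem~\ref{thm:concentration}, one builds the outer layers one at a time: conditionally on the already-revealed portion and on the macroscopic component-coloring profile, the remainder of $\bGG$ is distributed as a configuration model on the residual degree sequence, and a direct enumeration shows that the law of the next layer of the coloring is within $O(n^{-1/2})$ in TV of the BP channel applied to the current layer. Since $t$ is constant and the 1\textsc{rsb} BP operator is contractive at its fixed point, the TV errors accumulated over $O(1)$ layers remain $O(n^{-1/2})$.

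The main obstacle, and exactly the reason $\PP_\star^t$ turns out to be non-Markovian, is that the different branches of $N_t(v,\bGG)$ are coupled through global cluster-level randomness: they share the identity and the weight of the dominant cluster, and this shared randomness feeds back into every local BP message. To handle this I would first condition on the macroscopic cluster/weight statistics at scale $n$, use the sharp $\ell^1$-concentration of Theorem~\ref{thm:concentration} to show that these statistics deviate from their deterministic 1\textsc{rsb} limits by at most $O(n^{-1/2})$, and then run the layer-by-layer coupling on this high-probability event. On this event the broadcast is driven by the deterministic 1\textsc{rsb} order parameter rather than by a single-cluster channel, and the residual correlations among the branches of $\TTT_{d,k,t}$ are precisely those encoded in the limit $\PP_\star^t$ constructed in Section~\ref{s:lwl}. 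Combining the three ingredients — component-coloring reduction, sharp depth-$2$ concentration, and the contractive broadcast coupling on the high-probability event — yields the total-variation bound $C/\sqrt{n}$ in \eqref{eq:thm:main}.
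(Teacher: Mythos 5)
Your proposal follows the paper's overall architecture — component colorings, sharp $\ell^1$-concentration of the depth-$1$ coloring profile, and a layer-by-layer coupling of the residual (colored) configuration model to a broadcast process on $\TTT_{d,k}$ — but there are two substantive gaps and one misstep.

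First, the statement that ``summing out the free-tree resolutions is a fixed, deterministic contraction that does not expand total-variation distance'' only handles the bias, not the variance. After conditioning on the frozen configuration $(\bGG,\bux)$, the solution $\ubz$ is still random (uniform among assignments coarsening to $\bux$), so $\P_n^t[\bGG,\ubz]$ fluctuates around its conditional mean $\E[\P_n^t\given\bGG,\bux]$. The contraction observation identifies that conditional mean with a coloring-profile quantity but says nothing about the conditional variance $\Var\big(\sum_v X_v^t\given\bGG,\bux\big)$. This fluctuation must be shown to be $O(\sqrt n)$, which requires that large free components — the only mechanism through which distant $X_v^t$ and $X_w^t$ become conditionally dependent — be rare. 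In the paper this is Lemma~\ref{lem:exp:decay} (exponential decay of the free-component profile) feeding into Lemma~\ref{lem:var:control}; your plan needs a corresponding step, and without it you cannot reach the $O(n^{-1/2})$ TV bound even if the depth-$t$ coloring profile were known exactly.

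Second, the reduction from the conditional law of $\ubz$ on $N_t(v,\GG)$ to the local pair $\big(\sig_t(v,\GG),\uL_t(v,\GG)\big)$ fails when a free component enters $N_t(v,\GG)$ at two distinct leaves and reconnects outside the ball: then $\E[X_v^t\given\bGG,\bux]$ genuinely depends on nonlocal structure, not just on the $t$-coloring. This is not a formality; it is precisely the event handled by Lemma~\ref{lem:bdry:cycle}, which shows that self-avoiding cycles with at most $2t$ edges outside free components are $o(n)$ in number under $\P_\xi$. Your proposal is silent on this, and some such lemma is needed before the depth-$t$ coloring profile can drive the argument.

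Finally, a smaller point: invoking contractivity of the 1\textsc{rsb} BP operator to bound the accumulated layer-wise coupling error misidentifies the source of error. The per-layer TV error comes from sampling without replacement in the colored configuration model (and from the discrepancy between the empirical profile $\xi$ and $\xi^\star$), and these additive errors simply sum over $O_{k,t}(1)$ layers (this is what Lemma~\ref{lem:tilt} plus a breadth-first coupling achieves). Contractivity of BP concerns how perturbations of the fixed point propagate through the channel and is neither needed nor really relevant here. Relatedly, Theorem~\ref{thm:concentration} directly handles only $t=1$; already for $t=2$ one needs the configuration-model coupling, so the claim that $t\in\{1,2\}$ are covered by the concentration theorem alone is not quite right.
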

    \begin{remark}\label{rmk:alpha:regime}
    For the random regular $k$-\textsc{nae-sat} model for $k\geq k_0$, the thresholds $\alpha_{\sat}(k)$ and $\alpha_{\cond}(k)$ were established respectively in \cite{dss16} and \cite{ssz22}. Further, they showed that the set  $(\alpha_{\cond}(k),\alpha_{\sat}(k))$ is contained in $[\alpha_{\textsf{lbd}}, \alpha_{\textsf{ubd}}]$, where $\alpha_{\textsf{lbd}} \equiv (2^{k-1}-2)\log 2$ and $\alpha_{\textsf{ubd}}\equiv 2^{k-1}\log 2$. Thus, we assume that $\alpha\in [\alpha_{\textsf{lbd}}, \alpha_{\textsf{ubd}}]$ throughout the paper. We also remark that the large $k$ expansion of $\alpha_{\cond}(k)$ and $\alpha_{\sat}(k)$ \cite{ssz22} is given by 
    \[
   \alpha_{\cond}(k)= 2^{k-1}\log 2 - \log 2 + o_k(1)\,,\quad \alpha_{\sat}(k)= 2^{k-1}\log 2 - \frac12\log 2 - \frac14 + o_k(1)\,.
    \]
    The quantities $\alpha_{\cond}(k)$ and $\alpha_{\sat}(k)$ are formally defined in equation \eqref{eq:def:alpha:cond:sat} below.
    \end{remark}
 We emphasize that the fluctuation $\frac{C}{\sqrt{n}}$ is optimal as local neighborhood frequencies have central limit theorem fluctuations.\footnote{Our proof method also shows that there exists a constant $C^\prime(\eps,\alpha,k,t)$ such that $d_{\tv}\big(\P_n^{t}[\bGG, \ubz\,],\PP_\star^t\big)\geq C^\prime n^{-1/2}$ with probability $1-\eps$. This lower bound follows straightforwardly from our analysis by considering fluctuation that comes from variables that are in a free tree with a single free variable (see Section \ref{subsec:proof:freecomp} for the definition). Thus, we focus on achieving a tight upper bound.} In particular, it is arguably much stronger than the asymptotic statement, where one only specifies the limit $\PP^\star_t$, and not the rate $O(n^{-1/2})$ in \eqref{eq:thm:main}. For example, Theorem \ref{thm:main} imply that $d_{\tv}\big(\P_n^{t}[\bGG, \ubz\,]\;,\;\PP_\star^t\big)\leq a_n/\sqrt{n}$ with probability $1-o_n(1)$, for any $a_n$ which diverges to $\infty$.

\subsection{Related works}

Local weak convergence of graphs, also known as Benjamini-Schramm convergence, gives a way of describing the local distributional limit of a sequence of (possibly random) graphs.  It was developed independently by Aldous~\cite{aldous2001zeta} to study the Assignment Problem and by Benjamini and Schramm~\cite{benjamini2001recurrence} to study recurrence of random walks on planar graphs. 

The notion of local weak convergence generalizes naturally to the graphs that are labeled by a spin system, which we study in this work, and it is conjectured that many global properties of the spin systems are determined by the local weak limit. Indeed, in the context of r\textsc{csp}s, statistical physics predictions~\cite{zdeborova2007phase,montanari2008clusters,kmrsz07} that describe a series of phase transitions r\textsc{csp} in the 1\textsc{rsb} universality class are based on the local weak limit. The shattering and freezing thresholds are described explicitly in terms of transitions in the behavior of the broadcast model, which corresponds to the local weak limit in this regime. The condensation corresponds to the point at which the local weak limit stops being given by the simple broadcasting model description.

Earlier mathematical literature on local weak limits for r\textsc{csp}s was centered around understanding the shattering and freezing thresholds.  The shattering threshold is conjectured to coincide with the reconstruction threshold of the local weak limit, which asks whether the spin at the root is dependent on asymptotically far away vertices.  Several results relate tree thresholds to the analogous graphs.  Gerschenfeld and Montanari~\cite{gerschenfeld2007reconstruction} showed that on locally treelike graphs, reconstruction on the graph is equivalent to that of the tree if two independent solutions have approximately independent empirical distributions. Montanari,  Restrepo, and Tetali~\cite{montanari2011reconstruction} extended this to a wider class of r\textsc{csp}s. Later, Coja-Oghlan, Efthymiou, and Jaafari established the local weak convergence in the $k$-colouring model up to the condensation threshold for large enough $k$ in~\cite{coja2018local}. On the other hand, the freezing threshold for coloring models was established by Molloy ~\cite{molloy2018freezing} and for a wider class of models in~\cite{mr13}. 

As noted above, studying the planted model has been a powerful tool to study the local weak limit of r\textsc{csp}s below condensation. A spectacular application of this method was by Achlioptas and  Coja-Oghlan, who established regimes of clustering for solutions for the coloring, $k$-\textsc{sat} and \textsc{nae-sat}~\cite{achlioptas2008algorithmic}.  This was established via the second moment method which shows that most graphs have similar numbers of solutions.  A more refined picture can be established by Robinson and Wormald's small graph conditioning method~\cite{robinson1994almost} to show that the planted model is contiguous with respect to the original distribution~\cite{janson1995random}.

Another setting where local weak convergence has played a key role is in the study of the stochastic block model.  This is a model of inhomogeneous graphs that contains communities and is an important test-bed for the statistical theory of community detection in networks. Understanding the local weak limit of the \rev{stochastic block model} has led to sharp information-theoretic bounds on when the detection of communities is possible~\cite{mossel2015reconstruction,MSS22}.

\subsection{Clusters}
\label{subsec:clusters}
\rev{Recall that a cluster is defined by the connected component of the solution space where two solutions are connected if they differ in one variable.} A central role in understanding the $\naesat$ model and sparse r\textsc{csp}s, in general, is to study how the space of solutions splits into small rigid clusters.  In a typical solution, a small but constant fraction of variables can be flipped between $0$ and $1$ without violating any constraints giving rise to exponentially many nearby solutions.  In order to give a combinatorial definition of a cluster, the so-called \emph{coarsening} algorithm inductively maps variables taking values in $\{0,1\}$ to $\ff$ free if they can be flipped without violating any constraints \cite{Parisi02, dss16}.  A constraint is considered satisfied if one of its variables is free and the algorithm continues until no more variables can be set to $\ff$ resulting in a $\{0,1,\ff\}$ valued configuration called a \emph{frozen configuration}.  Every valid frozen configuration satisfies the following properties.

      \begin{defn}[Coarsening and Frozen configuration]\label{def:frozenconfig}
      Given a $\naesat$ instance $\GG=(V,F,E, \uL)$ and a $\naesat$ solution $\uz \in \sol(\GG)$, the \textbf{coarsening} $\ux\in \{0,1,\ff\}^V$ of $\uz$ is defined by the following algorithm. For each variable $v\in V$, whenever the value of $z_v\in \{0,1\}$ can be flipped between $0/1$ without violating any constraints, change $z_v$ to $\ff$. Iterate until no more variable can be set to $\ff$. Note that the resulting configuration must satisfy the following. We call $\ux\in\{0,1,\ff \}^V$ a (valid) \textbf{frozen configuration} if it satisfies
		\begin{itemize}
			\item No \textsc{nae-sat} constraints are violated for $\ux$. That is, $I^{\textsc{nae}}(\ux;\GGG)=1$.
			
			\item For $v\in V$, $x_v\in \{0,1\}$ if and only if it is forced to be so. That is, $x_v\in \{0,1\}$ if and only if there exists $e\in \delta v$ such that $a(e)$ becomes violated if $\tL_e$ is negated, i.e., $I^{\textsc{nae}} (\ux; \GGG\oplus\one_e )=0$ where $\GGG\oplus\one_e$ denotes $\GGG$ with $\tL_e$ flipped. Conversely, $x_v=\ff$ if and only if no such $e\in \delta v$ exists.
		\end{itemize}
	\end{defn}
\rev{The term `frozen configuration' originates from the second item: $x_v = z\in \{0,1\}$ if and only if they are frozen to be $z$, i.e. cannot be flipped to $1-z$ without violating a clause. For this reason, $v$ is called a frozen variable (w.r.t. $\ux\in \{0,1,\ff\}^V$) if $x_v\in \{0,1\}$.
}

Frozen model solutions are themselves a random \textsc{csp} but without clusters because they are, in effect, clusters of solutions projected to a single point. \cite{dss16} showed that for $\alpha\in [\alpha_{\textsf{lbd}}, \alpha_{\textsf{ubd}}]$, with high probability all solutions map via coarsening algorithm to frozen configurations with a low density (less than $7/2^k$) of free variables. \rev{Thus, intuitively, the connected components of free variables form small subcritical branching trees. Such intuition was confirmed rigorously in the works~\cite{ssz22, NSS, nss2}. In particular, \cite{NSS} showed that with high probability, there is a one-to-one correspondence between all the clusters -- except those of negligible size -- and the frozen configurations with a low density of free variables and at most one cycle in their connected components. For the precise definition of the connected components of free variables, we refer to the definition of \textit{free components} in Section~\ref{subsec:proof:freecomp}.}

An alternative definition of the cluster model is in terms of fixed points of the Belief Propagation (BP) equations.  On each directed edge $e$ we define a pair of messages $\mm_e\equiv (\dot{\mm}_e,\hat{\mm}_e)$ taking values in the set of probability distributions on $\{0,1\}$.  The messages satisfy the BP equations if
\[
	\dot{\mm}_e(z)
= \frac{\prod_{e'\in\delta v(e)\setminus e} \hat{\mm}_{e'}(z)}
{\sum_{z^\prime \in\{0,1\}}\prod_{e'\in\delta v(e)\setminus e} \hat{\mm}_{e'}(z^\prime)}\,,
\]
and
\[
\hat{\mm}_e(z) = \frac{\sum_{\underline{z}_{\delta a(e)} \in\{0,1\}^d} \rev{\one}(z_e=z) I^\textsc{nae}((\uz\oplus L)_{\delta a(e)}) \prod_{e'\in\delta a(e)\setminus e} \dot{\mm}_{e'}(z_{e'})}
{\sum_{\underline{z}_{\delta a(e)} \in\{0,1\}^d} I^\textsc{nae}((\uz\oplus L)_{\delta a(e)}) \prod_{e'\in\delta a(e)\setminus e} \dot{\mm}_{e'}(z_{e'})}\,.
\]
The interpretation of $\dot{\mm}_e(z)$ is the probability that \rev{$v(e)$} is equal to $z$ in a random solution in that cluster after the edge $e$ is removed.  Frozen variables correspond to those that have at least one incoming message $\hat{\mm}$ that is a point mass. A solution to the BP equations $\underline{\mm}\equiv (\mm_e)_{e\in E}$ can be arrived from a $\naesat$ solution $\underline{z}$ by starting with $\dot{\mm}_{e}(y)=I(z_{v(e)}=y)$ for $e\in E$ and $y\in \{0,1\}$ and then iteratively calculating $\hat{\mm}$ from $\dot{\mm}$ \rev{around every clause,} and then $\dot{\mm}$ from $\hat{\mm}$ \rev{around every variable. With high probability over $(\bGG,\buz)$, where $\ubz\sim \Unif(\sol(\bGG))$, these iterations converge in $O(\log n)$ steps. This fact, though not subsequently employed, follows from \cite{NSS} which shows that typically, the largest connected components of free variables have size $O(\log n)$.}

The number of solutions in a configuration corresponding to a cluster $\ux$, or equivalently $\underline{\mm}$, is given by
\[
	\size(\underline{\mm}, \GG)
	=\prod_{v\in V}
	\dot{\varphi}(\hat{\mm}_{\delta v})
	\prod_{e\in E}\bar{\varphi}(\dot{\mm}_e,\hat{\mm}_e)\prod_{a\in F}
	\hat{\varphi}^{\lit}((\rev{\dot{\mm}}
	\oplus L)_{\delta a})\,.
 \]
for explicit functions $(\dot{\varphi},\bar{\varphi},\hat{\varphi}^{\lit})$ defined in equation (28) of \cite{ssz22} or equation \eqref{eq:def:phi} below. The challenge in the condensation regime is that the typical number of solutions is much smaller than the expected number of solutions.  This is because the largest contribution to the expected value comes from rare clusters, which are large. \rev{However,} it is exactly these clusters whose local distribution behaves like the planted model. Their absence in a typical realization results in a different empirical distribution.  Instead, following the physics heuristics of~\cite{kmrsz07} implemented rigorously in~\cite{ssz22,NSS}, we weight frozen model configurations according to $\size(\underline{\mm}, \GG)^\lambda$ and tune $\lambda$ to give clusters that correspond to the largest size that appears.  To this end, we define the following measure-valued functions. For $\mu \in \PPP([0,1])$, let
\begin{align*}
	\hat{\RR}_\lambda\mu(B) &\equiv 
	\frac1{\hat{\mathscr{Z}}(\mu)}
		\int \bigg(2-\prod_{i=1}^{k-1}x_{i}-\prod_{i=1}^{k-1}(1-x_{i})\bigg)^{\lambda}
	\rev{\one}\bigg\{
			\frac{1-\prod_{i=1}^{k-1}x_{i}}
			{2- \prod_{i=1}^{k-1}x_{i} - \prod_{i=1}^{k-1}(1-x_{i})}
		\in B\bigg\}
		\,
		\prod_{i=1}^{k-1}{\mu}(\de x_{i})\,,\\ 
	\dot{\RR}_\lambda\mu(B) &\equiv
	\frac1{\dot{\mathscr{Z}}(\mu)}
		\int\bigg(\prod_{i=1}^{d-1}y_{i}+\prod_{i=1}^{d-1}(1-y_{i})\bigg)^{\lambda}
		\rev{\one}\bigg\{
			\frac{\prod_{i=1}^{d-1}y_{i}}
			{\prod_{i=1}^{d-1}y_{i}+\prod_{i=1}^{d-1}(1-y_{i})}
		\in B
		\bigg\}
		\,
		\prod_{i=1}^{d-1}\mu(\de y_{i})\,,
\end{align*}
where $\hat{\mathscr{Z}}(\mu)$ and $\dot{\mathscr{Z}}(\mu)$ are normalizing constants to make $\hat{\RR}_\lambda\mu$ and $\dot{\RR}_\lambda\mu$ a probability measure. Denote $\RR_{\la} \equiv \dot{\RR}_{\la}\circ \hat{\RR}_{\la}:\PPP([0,1])\to \PPP([0,1])$. The fixed point of $\RR_{\la}$ was established in~\cite{ssz22}.
\begin{prop}[Proposition 1.2 in \cite{ssz22}]\label{prop:ssz:physics:bp:fixed}
Fix $k\geq k_0$. For $\la \in [0,1]$ and $\alpha \in [\alpha_{\textsf{lbd}}, \alpha_{\textsf{ubd}}]$, the following holds. Let $\dot{\mu}_{\lambda,0}=\frac12\delta_0 + \frac12 \delta_1$ and for $t\geq 0$, recursively set $\dot{\mu}_{\lambda,t+1}=\RR_\lambda \dot{\mu}_{\lambda,t}$. \rev{As $t\to\infty$,} $\dot{\mu}_{\la,t}$ converges to $\dot{\mu}_{\la}\in \PPP([0,1])$ in total variation distance. Moreover, $\dot{\mu}_{\la}\equiv \dot{\mu}_{\la}[\alpha,k]$ satisfies $\dot{\mu}_{\la}(dx)=\dot{\mu}_{\la}(d(1-x))$ and $\dot{\mu}_{\la}((0,1))\leq 7/2^k$.
\end{prop}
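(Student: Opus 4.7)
The plan is to prove convergence of $\dot{\mu}_{\lambda,t}$ to a fixed point of $\RR_\lambda$ in $d_{\tv}$ by (a) checking that the reflection $x \mapsto 1-x$ is preserved by each half-step, (b) constructing an invariant set $\KK_\lambda \subset \PPP([0,1])$ of symmetric measures with $\mu((0,1)) \leq 7/2^k$, and (c) establishing a $d_{\tv}$-contraction for $\RR_\lambda$ on $\KK_\lambda$, at which point Banach's fixed-point theorem gives the claim.

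Part (a) follows from a direct change of variables. Substituting $x_i \mapsto 1-x_i$ (resp.\ $y_i \mapsto 1-y_i$) in the integrals defining $\hat{\RR}_\lambda\mu$ and $\dot{\RR}_\lambda\mu$ leaves the weights invariant while reflecting the indicator argument; hence both half-steps commute with the involution on $\PPP([0,1])$. Since $\dot{\mu}_{\lambda,0}$ is symmetric, so is every iterate, and so is any $d_{\tv}$-limit.

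For (b), I would first compute one step explicitly: $\hat{\RR}_\lambda\dot{\mu}_{\lambda,0}$ is supported on $\{0,\tfrac12,1\}$ with mass of order $2^{-(k-1)}$ each on $\{0,1\}$ and residual mass at $\tfrac12$. When $\dot{\RR}_\lambda$ is then applied, the weight $\bigl(\prod_i y_i + \prod_i(1-y_i)\bigr)^\lambda$ favors $(d-1)$-tuples containing a $y_i = 1$ (giving $\dot{x}=1$) or $y_i = 0$ (giving $\dot{x}=0$) over purely mixed tuples by a factor exponential in $d \sim \alpha k$, so already after one step most of the mass sits on $\{0,1\}$ with a free remainder of order $2^{-k}$. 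Inductively, the combinatorial accounting used in \cite{dss16,ssz22,NSS} to bound the density of free variables in a coarsened frozen configuration propagates to each BP iterate and yields $\RR_\lambda\mu((0,1)) \leq 7/2^k$ uniformly over $\mu \in \KK_\lambda$; this is the step that consumes the threshold-regime hypothesis $\alpha \in [\alpha_{\textsf{lbd}}, \alpha_{\textsf{ubd}}]$.

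For (c), the symmetry forces $\mu(\{0\}) = \mu(\{1\}) = \tfrac{1}{2}(1 - \mu((0,1)))$, so any two measures in $\KK_\lambda$ agree on $\{0,1\}$ up to a total error bounded by $7/2^k$ and can differ in $d_{\tv}$ only through their restrictions to $(0,1)$. When $\RR_\lambda$ is applied, discrepancies originate only from $(k-1)$-tuples (for $\hat{\RR}_\lambda$) or $(d-1)$-tuples (for $\dot{\RR}_\lambda$) containing at least one coordinate drawn from the $(0,1)$ portion of either input; a union bound together with uniform lower bounds on the normalizers $\hat{\mathscr{Z}}, \dot{\mathscr{Z}}$ (which follow from the concentration on $\{0,1\}$ obtained in (b)) then yields $d_{\tv}(\RR_\lambda\mu, \RR_\lambda\mu') \leq C(k) \cdot 2^{-k} \cdot d_{\tv}(\mu,\mu')$, which is contractive for $k \geq k_0$. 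Completeness of $(\PPP([0,1]), d_{\tv})$ produces the unique fixed point $\dot{\mu}_\lambda \in \KK_\lambda$ and geometric convergence from $\dot{\mu}_{\lambda,0}$. The main obstacle is this last contraction estimate: one must decouple the purely $\{0,1\}$-valued contributions (which are identical across iterates by symmetry, as observed above) from the contributions containing at least one $(0,1)$-valued draw (whose total weight is controlled by the $7/2^k$ bound from (b)), while simultaneously comparing the reweighted normalizers $\hat{\mathscr{Z}}, \dot{\mathscr{Z}}$ for the two inputs under a coupling of the $(k-1)$ and $(d-1)$ i.i.d.\ samples.
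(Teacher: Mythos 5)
This proposition is imported verbatim from \cite{ssz22} (Proposition 1.2 there); the present paper does not reprove it, but Appendix~\ref{sec:BP} reviews the machinery underlying the proof. That machinery works not with $\PPP([0,1])$ directly but with the \emph{discrete coloring model}: every iterate $\dot{\mu}_{\lambda,t}$ started from $\frac12\delta_0+\frac12\delta_1$ is a discrete measure whose atoms are in bijection with a countable message alphabet $\dot{\MMM}$ (Definition~\ref{def:msg config}), so $\RR_\lambda$ is equivalent to a map $\textnormal{BP}_\lambda$ on $\PPP(\dot\Omega)$. The contraction is established there (Proposition~\ref{prop:BP:contraction}, constant $\lesssim k^2 2^{-k}$ on the invariant set $\mathbf{\Gamma}_C$ of \eqref{eq:def:bp:contract:set:1stmo}), and $\dot\mu_\la$ is then recovered from the fixed point $\dot q^\star_\la$ via the explicit translation of Lemma~\ref{lem:relation:bp}. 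Your proposal to run Banach's theorem directly on $(\PPP([0,1]),d_{\tv})$ is a genuinely different route, and parts (a) and (b) are fine in spirit (symmetry is a change of variables; the $7/2^k$ free-mass bound is what consumes $\alpha\in[\alpha_{\textsf{lbd}},\alpha_{\textsf{ubd}}]$).

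The gap is in step (c), specifically in the $\dot{\RR}_\lambda$ half. Your argument bounds the discrepancy in $\dot{\RR}_\lambda\mu$ vs.\ $\dot{\RR}_\lambda\mu'$ by a union bound over the $(d-1)$ coordinates and claims the result is $C(k)\,2^{-k}\,d_{\tv}(\mu,\mu')$; but $d-1$ is itself of order $\alpha k \asymp k\,2^{k-1}$, so the union bound alone produces a factor that wipes out any $2^{-k}$ gain, and you have not exhibited the compensating mechanism. Two additional difficulties you gloss over: (i) the reweighting $\bigl(\prod y_i+\prod(1-y_i)\bigr)^\lambda$ takes values as small as $2^{-\lambda(d-2)}$, so the normalizer $\dot{\mathscr Z}$ is not bounded below by a constant — you would need a matching lower bound on $\dot{\mathscr Z}$ \emph{and} control of how it varies between $\mu$ and $\mu'$, which is precisely where the sensitivity is worst; (ii) even after symmetrization, two elements of your $\KK_\lambda$ differ in mass on $\{0,1\}$ by $\tfrac12|\mu((0,1))-\mu'((0,1))|$, so the ``agree on $\{0,1\}$, differ only on $(0,1)$'' decoupling is only approximate and feeds back into the normalizer comparison. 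These are exactly the points that \cite{ssz22} handles by passing to the countable alphabet, where the identity/forcing/free trichotomy and combinatorial bounds on the weight functions $\dot\Phi,\hat\Phi$ make the bookkeeping tractable. As written, your sketch of (c) asserts the conclusion of \cite[Proposition 5.5]{ssz22} without the argument that makes the $k^2 2^{-k}$ rate actually emerge.
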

Denote $\hat{\mu}_{\la}\equiv \hat{\RR}\dot{\mu}_{\la}$. Define the measures $\dot{w}_{\la}, \hat{w}_{\la}, \bar{w}_{\la} \in \PPP([0,1])$ by
\begin{align*}
		\dot{w}_{\lambda}(B)	
		&= ({\dot{\ZZZ}}_{\lambda}^\prime)^{-1}
		\int\bigg(\prod_{i=1}^{d}y_{i}+\prod_{i=1}^{d}(1-y_{i})\bigg)^{\lambda}
		 \rev{\one}\bigg\{
		 \prod_{i=1}^{d}y_{i}
		 +\prod_{i=1}^{d}(1-y_{i})\in B
		 \bigg\}
		 \prod_{i=1}^{d}\hat{\mu}_{\lambda}(\de y_{i})\,,\\
		\hat{w}_{\lambda}(B)	
		&= 
		({\hat{\ZZZ}}_{\lambda}^\prime)^{-1}
		\int\bigg(1-\prod_{i=1}^{k}x_{i}-\prod_{i=1}^{k}(1-x_{i})\bigg)^{\lambda}
			\rev{\one}\bigg\{
			1-\prod_{i=1}^{k}x_{i}-\prod_{i=1}^{k}(1-x_{i})\in B
			\bigg\}
			\prod_{i=1}^{k}\dot{\mu}_\lambda(\de x_{i})\,,\\
		\bar{w}_{\lambda}(B)	
		&=
		({\bar{\ZZZ}}_{\lambda}^\prime)^{-1}
			\iint\bigg(xy+(1-x)(1-y)\bigg)^{\lambda}
			\rev{\one}\Big\{
			xy+(1-x)(1-y)\in B
			\Big\}
			\dot{\mu}_\lambda(dx)\hat{\mu}_{\lambda}(\de y)\,,\\
\end{align*}
where $\dot{\ZZZ}_{\la}^\prime ,\hat{\ZZZ}_{\la}^\prime ,\bar{\ZZZ}_{\la}^\prime $ are normalizing constants. Let
\begin{align*}
\mathfrak{F}(\lambda;\alpha)&= \log \dot{\ZZZ}_\lambda + \alpha \log \hat{\ZZZ}_\lambda - \alpha k \log \bar{\ZZZ}_{\lambda},\\
s(\lambda;\alpha)&=\int \log(x) \dot{w}_{\lambda}(\de x) + \alpha\int \log(x) \hat{w}_{\lambda}(\de x) - \alpha k \int \log(x) \bar{w}_{\lambda}(\de x).
\end{align*}
Then, $\alpha_{\cond}\equiv \alpha_{\cond}(k)$ and $\alpha_{\sat}\equiv \alpha_{\sat}(k)$ are defined by
\begin{equation}\label{eq:def:alpha:cond:sat}
\alpha_{\cond}\equiv \sup\Big\{\alpha \in [\alpha_{\textsf{lbd}},\alpha_{\textsf{ubd}}]:\mathfrak{F}(1;\alpha)> s(1;\alpha)\Big\}\,,\quad\alpha_{\sat}\equiv \sup\Big\{\alpha \in [\alpha_{\textsf{lbd}},\alpha_{\textsf{ubd}}]:\mathfrak{F}(0;\alpha)>0\Big\}\,,
\end{equation}
\rev{where we recall that $\alpha_{\textsf{lbd}} \equiv (2^{k-1}-2)\log 2$ and $\alpha_{\textsf{ubd}}\equiv 2^{k-1}\log 2$ from Remark~\ref{rmk:alpha:regime}.} It was shown in \cite[Proposition 1.4]{ssz22} that $\alpha_{\cond}(k)$ and $\alpha_{\sat}(k)$ \rev{are} well defined for $k\geq k_0$. Then, for $\alpha \in (\alpha_{\cond},\alpha_{\sat})$, we set $\lambda^\star \equiv \la^\star[\alpha,k]$ and $s^\star\equiv s^\star[\alpha,k]$ so that
\begin{equation}\label{eq:opt:la}
\lambda^\star :=\sup\{\lambda\in[0,1]:\mathfrak{F}(\lambda;\alpha)-\lambda s(\lambda;\alpha)>0\}\,\rev{,}\quad\quad s^\star:=s(\la^\star;\alpha).
\end{equation}
We remark that \cite{NSS, nss2} proved that in the condensation regime $\alpha \in (\alpha_{\cond},\alpha_{\sat})$, both the number of solutions and the number of solutions in the largest cluster are of size
$\Theta\big(\exp\big(ns^\star-\frac{1}{2\la^\star}\log n\big)\big)$.

\subsection{Local weak limit}\label{s:lwl}

We now specify the distribution over solution\rev{s} given the literals $\uL_t\in \{0,1\}^{E_{\sf in}(\TTT_{d,k,t})}$, where we recall that $\TTT_{d,k}$ is the infinite $(d,k)$ regular factor tree rooted at a variable $\rho$, and $\TTT_{d,k,t}$ is the sub-tree of $\TTT_{d,k}$ up to depth $2t-\frac{3}{2}$. First, we choose a random cluster in terms of its BP messages $\mm=(\dot{\mm},\hat{\mm})$.  Note that if we set the incoming messages $\hat{\mm}_e$ at the boundary edges $e\in \partial \TTT_{d,k,t}$, then there is a unique extension to the internal edges  $E_{\sf in}(\TTT_{d,k,t})$ solving the BP equations, which gives $\underline{\mm}_t \equiv (\mm_e)_{e\in E(\TTT_{d,k,t})}$. With abuse of notation, identify $\dot{\mm}_e, \hat{\mm}_e \in \PPP(\{0,1\})$ with $\dot{\mm}_e(1), \hat{\mm}_e(1) \in [0,1]$. For $\uL_t\in \{0,1\}^{E_{\sf in}(\TTT_{d,k,t})}$, we assign the weight
\[
\nu_{\la}(\de \underline{\mm}_t ; \uL_t)= (\mathcal{Z}_{\la})^{-1} \bigg( \prod_{v\in V(\TTT_{d,k,t})}
	\dot{\varphi}(\rev{\underline{\hat{\mm}}_{\delta v}})
	\prod_{e\in E_{\sf in}(\TTT_{d,k,t})}\bar{\varphi}(\dot{\mm}_e,\hat{\mm}_e)	
	\prod_{a\in F(\TTT_{d,k,t})}
	\hat{\varphi}^{\lit}(\rev{(\underline{\dot{\mm}}
    	\oplus \uL)_{\delta a}}) \bigg)^\lambda \prod_{e\in \partial \TTT_{d,k,t}} \hat{\mu}_\lambda(\de \hat{\mm}_e)\,,
\]
where the normalization constant $\mathcal{Z}_{\la}$ is given by
\begin{equation*}
    \mathcal{Z}_{\la}=\int \bigg( \prod_{v\in V(\TTT_{d,k,t})}
	\dot{\varphi}(\rev{\underline{\hat{\mm}}_{\delta v}})
	\prod_{e\in E_{\sf in}(\TTT_{d,k,t})}\varphi(\dot{\mm}_e,\hat{\mm}_e)	
	\prod_{a\in F(\TTT_{d,k,t})}
	\hat{\varphi}^{\lit}(\rev{(\underline{\dot{\mm}}
    	\oplus \uL)_{\delta a}}) \bigg)^\lambda \prod_{e\in \partial \TTT_{d,k,t}} \hat{\mu}_\lambda(\de \hat{\mm}_e)\,.
\end{equation*}
Given $\underline{\mm}_t$, let $\ux(\underline{\mm}_t)\equiv (x_v)_{v\in V(\TTT_{d,k,t})}$ be the frozen configuration associated with $\underline{\mm}_t$. That is, if there exists $e\in \delta v$ such that $\hat{\mm}_e=\delta_{z}$ for some $z \in \{0,1\}$, then set $x_v=z$, and otherwise, set $x_v=\ff$. For $\uz_t\in \{0,1\}^{V(\TTT_{d,k,t})}$, we write $\uz_t\sim_{\uL_t} \ux(\underline{\mm}_t)$ if $\uz_v=\ux_v$ whenever $\ux_v\in\{0,1\}$ and $\uz_t$ is a valid $\naesat$ configuration for literals $\uL_t$.  In other words, $\uz_t$ is a valid assignment of the spins in the free variables of $\ux(\underline{\mm}_t)$. For $\la^\star\equiv \la^\star[\alpha,k]$ in \eqref{eq:opt:la}, define the probability measure $\PP_{\star}^t\equiv \PP_{\star}^t[\alpha,k]\in \PPP\big( \{0,1\}^{V(\TTT_{d,k,t})}\times \{0,1\}^{E_{\sf in }(\TTT_{d,k,t})}\big)$ by
\begin{equation}\label{eq:def:P:star}
\PP_\star^t(\uz_t,\uL_t)= 2^{-|E_{\sf in} (\TTT_{d,k,t})|} \int \frac{ I\big(\uz_t\sim_{\uL_t} \ux(\rev{\underline{\mm}_t})\big) \prod_{e\in \partial \TTT_{d,k,t}} \hat{\mm}_e(\uz_{v(e)}) }{\sum_{\uz'_t} I\big(\uz'_t\sim_{\uL_t} \ux(\rev{\underline{\mm}_t})\big) \prod_{e\in \partial \TTT_{d,k,t}} \hat{\mm}_e(\uz'_{v(e)}) }\nu_{\la^\star}(\de \underline{\mm}_t;\uL_t)\,.
\end{equation}
This construction picks a frozen configuration $\ux$ according to the $\lambda^\star$-weighted measure and then picks a random solution properly weighted by the effect of the $\ux$ outside of the neighborhood.

\subsection{Further discussion}\label{ss:further}
Having established the local weak limit $\PP_{\star}^t\equiv \PP_{\star}^t[\alpha,k]$ for $\alpha \in (\alpha_{\cond},\alpha_{\sat})$, natural questions arise: is $\PP_{\star}^t[\alpha,k]$ a Gibbs measure? Can $\PP_{\star}^t[\alpha,k]$ be described in a Markovian fashion? 

As one might expect, the answer to the first question is yes in a rather simple manner. Note that given $\underline{\mm}_t$, the integrand in equation \eqref{eq:def:P:star} defines a Gibbs measure over $\uz_t$ with BP messages $(\hat{\mm}_e,\dot{\mm}_e)_{e\in E(\TTT_{d,k,t})}$. Thus, $\PP_{\star}^t$ is a mixture of such measures, which is again a Gibbs measure. Furthermore, let $F_0\subset F(\TTT_{d,k,t})$ be a subset of clauses and $V_0:= \{v\in V(\TTT_{d,k,t}):d(F_0,v)=1\}$ be the variables adjacent to them. Denote the boundary variables in $V_0$ by $\partial V_0:= \{v\in V_0:d(\rho,v)=2t-2\textnormal{ or }d(F_0^{\sf c}, v)=1\}$. Then, conditional on $\uz_{\partial V_0\sqcup V_0^{\sf c}}$ and $\uL_t$, it follows from the definition that $\PP_{\star}^t(\cdot\given \uz_{\partial V_0\sqcup V_0^{\sf c}}, \uL_t)$ is simply a uniform measure over the $\naesat$ solutions in $V_0\cup F_0$.\footnote{Here, note that for two $\naesat$ solutions $\uz_t, \uz_t^\prime$ such that $\uz_{\partial V_0\sqcup V_0^{\sf c}}=\uz_{\partial V_0\sqcup V_0^{\sf c}}^\prime$, $\uz_t \sim_{\uL_t}\ux(\mm_t)$ if and only if $\uz_t^\prime \sim _{\uL_t} \ux(\mm_t)$.} Thus, in this sense, all the interesting aspects of $\PP_{\star}^t$ come from the boundary conditions $(\hat{\mm}_e)_{e\in \partial\TTT_{d,k,t}}$.

 For the second question, the measure $\PP^t_{\star}$ is non-Markovian. \rev{Here, we emphasize that the non-Markovian property only applies to the limit of the joint law $(\ubz_t,\buL_t)\sim \P_n^{t}[\bGG,\ubz]$. Indeed, if we look only at the marginal $\ubz_t$, then for \emph{any} choice of $\alpha$, it is simply the product measure. Thus, the local weak limit is non-trivial only when the joint law $(\ubz_t,\buL_t)$ is considered. 
}

 Let us first show \rev{that $\PP^t_{\star}$ is non-Markovian} in the limiting case of $t=\infty$.  Note that the BP messages induced by $\uz$ are measurable with respect to $\uz$.  We condition on $z_\rho=1$ and on the literals $\uL_t$ and edges around the root $e\in \delta \rho$ satisfies $\hat{\mm}_e[1] \in \{0, \frac{1}{2}\}$. Assume it was Markovian.  Then, conditional on the root, the messages to the root from each subtree are independent.
Let $Y_i$ be the indicator that the $i$-th edge of $\rho$ is forcing.  Then taking a ball of radius 1 around the root, if any of the clauses are forcing, then $\mathcal{Z}_{\la}=1$ while if they are all separating, then the root is a free singleton and $\mathcal{Z}_{\la}=2^\lambda$.  Hence we have that with $p=\frac{\hat{\mu}_{\la^\star}(0)}{\hat{\mu}_{\la^\star}(0)+ \hat{\mu}_{\la^\star}(1/2)}$,
\[
\P(Y_{\partial \rho} = y_{\partial \rho}) = \frac1{z} p^{\sum_{i}y_i}(1-p)^{d-\sum_{i}y_i} 2^{(\lambda^\star-1)I(y\equiv 0)}\,,
\]
and so it is not a product measure.  The factor of $2^{(\lambda^{\star}-1)}$ comes from $\mathcal{Z}_{\la}$ and the probability $2^{-1}$ of $z_\rho=1$. As the measure is non-Markovian for $t=\infty$, it must also be non-Markovian for some large fixed $t$.

\rev{From a high-level, the peculiar phenomenon of the non-Markovian nature of $\PP^t_{\star}$ may be explained as follows. Recall that in the definition of $\PP^t_{\star}$ in Section~\ref{s:lwl}, we first defined the distribution $\nu_{\la}$ of the BP messages $\underline{\textbf{\mm}}_t \equiv (\textbf{\mm}_e)_{e\in E(\TTT_{d,k,t})}$, and then $(\ubz_t,\buL_t)\sim \PP_{\star}^t$ was drawn according to a weighting scheme concerning the weights $\underline{\textbf{\mm}}_t$. Here, if we consider the \textit{joint distribution} of $(\ubz_t,\buL_t, \underline{\textbf{\mm}}_t)$, then the limit is Markovian. That is, $(\ubz_t,\buL_t)$ needs to be augmented with the BP messages $\underline{\textbf{\mm}}_t$ in order to be Markovian. However, the BP messages $\underline{\textbf{\mm}}_t$ depend on an extensive range of $(\ubz_{\infty},\buL_{\infty})$. Therefore, $(\ubz_t,\buL_t)$ alone, without the BP messages, is non-Markovian, as functions of a Markov chain generally do not preserve the Markov property. 
}

We remark that our description of the local weak limit holds even below $\alpha_{\cond}$. Note that $\dot{\mu}_{\la}\equiv \dot{\mu}_{\la}[\alpha,k]$ in Proposition \ref{prop:ssz:physics:bp:fixed} is well-defined even for $\alpha \in [\alpha_{\textsf{lbd}},\alpha_{\cond}]$. Thus, in such regime, $\PP_{\star}^t[\alpha,k]$ is well-defined with equation \eqref{eq:def:P:star}, where $\lambda^{\star}[\alpha,k]\equiv 1$ for $\alpha\leq \alpha_{\cond}$, and (a modification) of our proof shows that Theorem \ref{thm:main} holds for $\alpha \in [\alpha_{\textsf{lbd}},\alpha_{\cond}]$. However, when $\la^{\star}=1$, it can be shown that $\PP_{\star}^t$ is just a uniform measure over $(\uz_t,\uL_t)$, which is a $\naesat$ solution and can be described by a simple broadcast model. This coincides with the description of the local weak limit obtained in \cite{coja2020replica} for $\alpha\in (0,\alpha_{\cond})$, thus below condensation, our method is just a more complicated way of determining the local weak limit.

We further remark that there are two different notions of local weak convergence of a solution ~\cite{montanari2012weak}.  In the terminology of~\cite{montanari2012weak}, our result is called \emph{convergence locally on average} as we have taken the empirical distribution averaged over all the vertices of the graph.  A stronger notion is \emph{convergence in probability locally} which asks that at almost all fixed variables $i$, the distribution of solutions in a ball of radius $t$ around $i$ converges.  This stronger notion was proved by~\cite{coja2020replica} for $\alpha<\alpha_{\cond}$.  In contrast, it is not true for $\alpha>\alpha_{\cond}$ because the local distribution is itself random depending on the clusters and their relative weights. To be more precise, let $\widetilde{\P}_{v,t}[\GG]$ denote the distribution of $(\buz_t(v,\GG), \uL_t(v,\GG))$, where $\buz\sim \Unif(\sol(\GG))$. Then, consider $\widetilde{\P}_{\bv,t}[\bGG]$, where $\bv$ is drawn uniformly at random from $V$ and $\bGG$ is a random regular \textsc{nae-sat} instance. Then, we conjecture that in the condensation regime $\alpha\in (\alpha_{\cond},\alpha_{\sat})$, the random element $\widetilde{\P}_{\bv,t}[\bGG]$ in $\PPP\big( \{0,1\}^{V(\TTT_{d,k,t})}\times \{0,1\}^{E_{\sf in }(\TTT_{d,k,t})}\big)$ equipped weak star topology converges weakly to 
\[
\widetilde{\P}_{\bv,t}[\bGG]\stackrel{w}{\longrightarrow}\frac{1}{2}\Big(\widetilde{\P}_{\star,+}^{t}+\widetilde{\P}_{\star,-}^{t}\Big),
\]
where the random elements $\widetilde{\P}_{\star,+}^{t}, \widetilde{\P}_{\star,-}^{t}$ are defined as follows. For $(\uz_t, \uL_t) \in  \{0,1\}^{V(\TTT_{d,k,t})}\times \{0,1\}^{E_{\sf in }(\TTT_{d,k,t})}$,
\[
\widetilde{\P}_{\star,+}^{t}(\uz_t,\uL_t)=
\sum_{i=1}^{\infty} \omega_i\cdot \frac{ I\big(\uz_t\sim_{\uL_t} \ux(\rev{\underline{\mm}_t^{(i)}})\big) \prod_{e\in \partial \TTT_{d,k,t}} \hat{\mm}_e^{(i)}(\uz_{v(e)}) }{\sum_{\uz'_t} I\big(\uz'_t\sim_{\uL_t} \ux(\rev{\underline{\mm}_t^{(i)}})\big) \prod_{e\in \partial \TTT_{d,k,t}} \hat{\mm}_e^{(i)}(\uz'_{v(e)}) }\,,\quad\widetilde{\P}_{\star,-}^{t}(\uz_t,\uL_t):=\widetilde{\P}_{\star,+}^{t}(\neg\,\uz_t,\uL_t)\,.
\]
Here, $(\omega_i)_{i\geq 1}$ follows Poisson-Dirichlet distribution with parameter $\la^\star\equiv \la^\star[\alpha,k]$ (see Chapter 2 of \cite{panchenko13a} for the definition of Poisson-Dirichlet process) and $\big(\rev{\underline{\mm}_t^{(i)}}\big)_{i\geq 1}\stackrel{i.i.d.}{\sim} \nu_{\la^\star}(\cdot\,;\,\uL_t)$. Also, $\neg\,\uz_t$ is obtained from $\uz_t$ by flipping $0$ and $1$. Establishing this conjecture is equivalent to showing that the cluster sizes follow Poisson-Dirichlet distribution, which is a major open problem for r\textsc{csp}'s in 1\textsc{rsb} class.

    \section{Proof overview}\label{sec:proof}
    In this section, we give an overview of the proof of Theorem \ref{thm:main} which is equivalent to 
    \begin{equation}\label{eq:thm:equiv}
        \limsup_{C\to\infty}\limsup_{n\to\infty} \P\left(\; \bigg|\frac{1}{n}\sum_{v\in V} X_v^t -\PP_\star^{t}(\uz_t,\uL_t)\bigg|\geq \frac{C}{\sqrt{n}}\;\right)=0,\quad\rev{\forall} (\uz_t,\uL_t) \in\rev{\{0,1\}^{V(\TTT_{d,k,t})}\times \{0,1\}^{E_{\sf in }(\TTT_{d,k,t})}}.
    \end{equation}
    Based on $\rsb$ heuristics, we analyze the law of $\ubz \sim \Unif(\sol(\bGG))$ by first conditioning on \rev{its} coarsening $\bux\in \{0,1,\ff\}^{V}$: we will see in Section \ref{subsec:proof:freecomp} that conditioned on $(\bGG, \bux)$, the law of $\ubz$ can be described in a relatively simple manner based on \textit{belief propagation}. We then divide the cases into whether the frozen configuration $(\bGG,\bux)$ is \textit{favorable} or not. By Chebyshev's inequality, it suffices to show that for any $\eps>0$, there exists a set $\fav\equiv \fav(\eps)$ of favorable frozen configurations such that $\P\left((\bGG,\bux)\rev{\notin} \fav \right) \leq \eps$ and 
    \begin{align}
        \sup_{(\GG,\ux)\in \fav}\Var\Big(&\sum_{v\in V} X_v^{t} \bgiven (\bGG,\bux)=(\GG, \ux) \Big)\leq C \sqrt{n}\;,
        \label{eq:goal:variance}\\
        \sup_{(\GG,\ux)\in \fav}\bigg|\frac{1}{n}\E\Big[&\sum_{v\in V} X_v^t\bgiven (\bGG,\bux)=(\GG, \ux) \Big]-\PP_{\star}^{t}(\uz_t,\uL_t)\bigg|\leq \frac{C}{\sqrt{n}}\;,
        \label{eq:goal:bias}
    \end{align}
    where $C\equiv C(\eps,k,t)>0$ is a constant that depends only on $\eps,k,t$. In the subsequent subsections, we explain the main ideas on how to establish \eqref{eq:goal:variance} and \eqref{eq:goal:bias} on some typical event $\fav$. In Section \ref{subsec:proof:freecomp}, we define the notion of \textit{free components}, which intuitively are the connected components of the subgraph formed by the free variables. Free components play a crucial role in understanding the law of $\ubz \sim \Unif(\sol(\bGG))$ conditioned on its coarsening $\bux$. Indeed, we show in Section \ref{subsec:proof:exp:decay} that the variance control \eqref{eq:goal:variance} follows from the exponential decay of the frequencies of the free components, which was established in \cite{NSS}. Obtaining the bias control \eqref{eq:goal:bias} is where most of the challenges lie in. In Sections \ref{subsec:proof:concentration} and \ref{subsec:proof:coupling}, we explain the main ideas to establish \eqref{eq:goal:bias}.

    \textbf{Notations:} Throughout, we let $\ux \in \{0,1,\ff\}^{V}$ be a valid frozen configuration on a $\naesat$ instance $\GG=(V,F,E,\uL)$. We often identify $V\equiv \{1,2,\ldots, n\}$ for convenience. \rev{
    For each $e\in E$, we denote the variable (resp. clause) adjacent to it by $v(e)$ (resp. $a(e)$).  Moreover, $\delta v$ (resp. $\delta a$) denotes the collection of adjacent edges to $v\in V$ (resp. $a \in F$). For a vector $\sig \equiv (\sigma_i)_{i\in X} \in A^X$ that takes values in the set $A$ and a subset of indices $S\subset X$, we use the subscript $\sig_{S}\equiv (\sigma_i)_{i\in S}$ to denote the restriction of $\sig$ to S. An exception is made for the subscript $t$, e.g. $\sig_t$, which is reserved to denote vectors that depend on the depth or distance $t\geq 1$.} For non-negative quantities $f=f_{d,k, n,t}$ and $g=g_{d,k,n,t}$, we use any of the equivalent notations $f=O_{k,t}(g), g= \Omega_{k,t}(f), f\lesssim_{k,t} g$ and $g \gtrsim_{k,t} f $ to indicate that there exists a constant $C_{k,t}$ which depends only on $k$ and $t$ \rev{such that $f\leq C_{k,t}g$ holds}. We drop the subscript $t$ \rev{if} the constant $C$ only depends on $k$.
    
    \subsection{Free components}
    \label{subsec:proof:freecomp}
     Given $(\GG,\ux)$, a variable $v\in V$ is called \textbf{frozen} in $\ux$ if $x_v\in \{0,1\}$. If $x_v=\ff$, it is \textbf{free}. A clause $a\in F$ is called \textbf{separating} if there are $2$ adjacent frozen variables that evaluate $0$ and $1$, i.e. there exist $e, e^\prime \in \delta a$ such that $\tL_{e}\oplus x_{v(e)} = 0, \quad \tL_{e^\prime} \oplus x_{v(e^\prime)}=1.$ A clause $a\in F$ is \textbf{non-separating} if it is not separating. Observe that a separating clause can never be violated no matter how the free variables in $\ux$ are filled with $0$ or $1$.
    
    An edge $e\in E$ is called \textbf{forcing} if flipping the value of $x_{v(e)}$ invalidates $a(e)$, i.e. $\tL_{e}\oplus x_{v(e)} \oplus 1 = \tL_{e'}\oplus x_{v(e')}\in \{0,1\}$ for all $e'\in \delta a \setminus e$. A clause $a\in F$ is \textbf{forcing}, if there exists $e\in \delta a$ which is a forcing edge. In particular, a forcing clause is also separating.
	\begin{defn}\label{def:free:comp}
	    Given $(\GG, \ux)$, a \textbf{free piece}, denoted by $\fff^{\textnormal{in}}$, is a connected component of the subgraph induced by the free variables and non-separating clauses in $\ux$. A \textbf{free component} \rev{$\fff$} is a union of $\fff^{\textnormal{in}}$ and the half-edges adjacent to $\fff^{\textnormal{in}}$. Thus, $\fff$ is composed of the free piece $\fff^{\textnormal{in}}$ and the \textit{boundary half-edges} hanging from $\fff^{\textnormal{in}}$. Moreover, the (half-)edges of $\fff$ are labeled as follows.
 \begin{itemize}
     \item Denote by $V(\fff), F(\fff), E(\fff)$ the set of variables, clauses, and full-edges of $\fff$ (i.e. edges of $\fff^{\textnormal{in}}$), respectively. Then, each $e\in E(\fff)$ is labeled by its literal $\tL_e$.
     \item Let $\dot{\partial} \fff$ (resp. $\hat{\partial}\fff$) be the set of boundary half-edges adjacent to $F(\fff)$ (resp. $V(\fff)$), and write $\partial{\fff}:=\dot{\partial}\fff \sqcup \hat{\partial}\fff$. Then, $e\in \dot{\partial} \fff$ is labeled with the information $(x_{v(e)},\tL_e)\in \{0,1\}^2$. Here $x_{v(e)}\in \{0,1\}$ is guaranteed since it must be frozen. The label $x_{v(e)}$ is called \textbf{spin-label} whereas $\tL_e$ is called \textbf{literal-label}. The other boundary half-edges $e\in \hat{\partial}\fff$, which must be adjacent to separating clauses in $\GG$, are unlabeled.
 \end{itemize}
A \textbf{free tree} is a free component $\fff$ which does not contain a cycle. We often use the notation $\ttt$ to denote a free tree and the notation $\fff$ for a generic free component, which may contain a cycle. We denote the collection of free components inside $(\ux, \GG)$ by $\mathscr{F}(\ux,\GG)$ and the collection of free trees by $\mathscr{F}_{\tr}(\ux,\GG)\subseteq \mathscr{F}(\ux,\GG)$. We use the notation 
\rev{
\begin{equation*}
v_{\fff}=|V(\fff)|\,,~~~f_{\fff}=|F(\fff)|\,,~~~e_{\fff}=|E(\fff)|
\end{equation*}
}
for the number of variables, clauses, and edges of a free component $\fff$.
	\end{defn}
We remark that an equivalent labeling scheme was also used in \cite[Definition 2.18]{NSS}. However, the notion of `free tree' in \cite{NSS} (see Definition 2.16) is slightly different than the one in Definition \ref{def:free:comp}. Namely, \cite{NSS} further introduced an equivalence relation and defined a `free tree' as an equivalence class. It is crucial that we do not make this reduction for the purpose of the coupling described in Section \ref{subsec:proof:coupling}.

Note that a free component $\fff\in \FFF(\ux, \GG)$ is embedded in $\GG$ by definition. However, it can also be treated as a separate labeled graph. To this end, we denote the set of possible free components (up to graph and label isomorphisms) by $\FFF$ and the set of possible free trees by $\FFF_{\tr}\subsetneq \FFF$. For $\fff\in \FFF$, define the \textit{weight} of $\fff$ as
\begin{equation}\label{eq:weight:fff}
    w_{\fff}:=\left|\left\{\uz\in \{0,1\}^{V(\fff)}:\textnormal{ the coarsening of the $\naesat$ solution $\uz$ is $\fff$}\right\}\right|,
\end{equation}
where \rev{$\uz\in \{0,1\}^{V(\fff)}$} is a $\naesat$ solution if it satisfies every clause in $\fff$ (recall that the every (half-)edges adjacent to clauses store literal information) and the coarsening is taken with respect to the clauses of $\fff$ as the same manner as described in Definition \ref{def:frozenconfig}. \rev{That is, $w_{\fff}$ counts the number of $\naesat$ solutions $\uz\in \{0,1\}^{V(\fff)}$ such that applying the coarsening algorithm to $\uz$ sets every variable in $V(\fff)$ to $\ff$.} A crucial observation then follows.
\begin{obs}\label{obs:sampling}
Since a separating clause can never be violated, we have $\size(\ux,\GG)=\prod_{\fff\in \FFF(\ux,\GG)}w_{\fff}$. Thus, sampling a random regular $\naesat$ instance $\bGG$ and a uniformly random $\naesat$ solution $\ubz\in \Unif(\sol(\bGG))$ is \textit{equivalent} to the following sampling procedure. 
    \begin{enumerate}[label=(\alph*)]
        \item Sample a random regular $\naesat$ instance $\bGG$. Then, sample a frozen configuration, or equivalently a cluster, $\bux \in \{0,1,\ff\}^{V}$ with probability proportional to its weight $\size(\bux, \bGG)$, namely $\P(\bux= \ux \given \bGG)=\size(\ux, \bGG)/|\sol(\bGG)|$. 
        \item Given $(\bGG,\bux)$, sample a $\naesat$ solution $\ubz$ uniformly at random among those which are coarsened to $\bux$. Equivalently, for each free component $\fff \in \FFF(\bux,\bGG),$ independently sample $\ubz_{V(\fff)}\in \{0,1\}^{V(\fff)}$ uniformly at random among those which are coarsened to $\fff$.
    \end{enumerate}
\end{obs}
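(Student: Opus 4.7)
The plan is to first verify the product formula $\size(\ux,\GG)=\prod_{\fff\in \FFF(\ux,\GG)}w_{\fff}$, after which the sampling equivalence follows by elementary conditioning. Any $\uz \in \sol(\GG)$ whose coarsening equals $\ux$ must satisfy $z_v = x_v$ for every frozen $v$, so the only degrees of freedom are the values $(z_v)_v$ on the free variables of $\ux$.

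First I would show that both the NAE constraints and the coarsening check decouple across the free components $\FFF(\ux,\GG)$. Every clause of $\GG$ is either separating in $\ux$ or non-separating. A separating clause contains two adjacent frozen literals of opposite sign and is therefore NAE-satisfied regardless of the values of its remaining neighbors; in particular, flipping any single adjacent free variable cannot violate it. Every non-separating clause, together with its adjacent free variables, lies in a unique free component, and by Definition~\ref{def:free:comp} the sets $\{V(\fff)\}_{\fff \in \FFF(\ux,\GG)}$ partition the free variables of $\ux$. Consequently the NAE constraints restrict to independent systems on each $\fff$, and when the coarsening algorithm is run on $\uz$, the decision to set a free variable $v \in V(\fff)$ to $\ff$ depends only on the non-separating clauses in $F(\fff)$, since the separating clauses in $\hat{\partial}\fff$ can never obstruct a flip.

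It follows that $\uz$ has coarsening $\ux$ if and only if, for every $\fff \in \FFF(\ux,\GG)$, the local restriction $\uz|_{V(\fff)}$ is a NAE-SAT solution on $\fff$ whose local coarsening sends every variable to $\ff$. By~\eqref{eq:weight:fff} there are exactly $w_{\fff}$ such local configurations and they may be chosen independently across the components, yielding $\size(\ux,\GG) = \prod_{\fff} w_{\fff}$.

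Finally the sampling equivalence is a direct conditioning computation. For any valid frozen $\ux$,
\[ \P(\bux = \ux \given \bGG = \GG) = \frac{|\{\uz \in \sol(\GG) : \text{coarsening of }\uz \text{ is } \ux\}|}{|\sol(\GG)|} = \frac{\size(\ux,\GG)}{|\sol(\GG)|}, \]
which matches step (a). Given $(\bGG,\bux) = (\GG,\ux)$, the solution $\ubz$ is uniform over the $\size(\ux,\GG)$ NAE-SAT solutions coarsening to $\ux$, and the product decomposition above expresses this uniform measure as a product of independent uniform measures over the local restrictions $\ubz_{V(\fff)}$, which is step (b). The only real subtlety in the whole argument is verifying that global coarsening commutes with restriction to a single free component, and this is exactly the content of the phrase ``a separating clause can never be violated'' emphasized in the statement.
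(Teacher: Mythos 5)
Your proof is correct and takes essentially the same route as the paper, which states this as an Observation with only the one-line justification that separating clauses can never be violated. You have simply filled in the details of why this makes the NAE constraints and the coarsening check both factor across free components, after which the conditioning computation in steps (a)--(b) is immediate.
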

We remark that for a free tree $\ttt\in \FFF_{\tr}$, every $\naesat$ solution $\uz_{V(\ttt)}$ of $\ttt$ is coarsened to $\ttt$, so $\size(\ttt)$ is the number of $\naesat$ solutions of $\ttt$. Moreover, sampling a $\naesat$ solution $\ubz_{V(\ttt)}\in \{0,1\}^{V(\ttt)}$ of $\ttt$ uniformly at random can be analyzed in a simple manner. That is, any marginals of the law of $\ubz_{V(\ttt)}$ can be described by \textit{belief propagation} (see Section \ref{sec:BP}).
 
For the rest of this subsection, we review the notion of \textit{component coloring} in \cite{NSS}. Define the set $\CCC$ as
	\begin{equation}\label{eq:def:Omegcom1}
	\CCC:= \{\rr_0, \rr_1, \bb_0, \bb_1, \fs \} \cup \{ (\fff, e): \fff \in \mathscr{F}, \, e\in E(\fff)  \}\,.
	\end{equation}
	Here, we take the convention that $(\fff,e)=(\fff^\prime,e^\prime)$ if there exists a graph isomorphism from $\fff$ to $\fff^\prime$ that keeps the labels unchanged and maps $e$ to $e^\prime$. The symbols $\rr_0, \rr_1$ (resp. $\bb_0,\bb_1$) represent `red' (resp. `blue') spins, and $\fs$ represent `separating' spin\rev{s}. \rev{Roughly speaking, red spins are assigned to edges that are forcing while blue spins are assigned to non-forcing edges adjacent to frozen variables. Separating spins are assigned to edges between a separating clause and a free variable.} The terminologies were introduced in \cite{ssz22}, building on the works \cite{cp13stoc, coja14stoc}.
	\begin{defn}\label{def:compcol}
		Let $\ux\in \{0,1,\ff\}^{V}$ be a (valid) frozen configuration on $\GG$. The \textbf{component coloring} $\sig\equiv (\sigma_e){} \in \CCC^E$ corresponding to $\ux$ is defined by the following procedure.
		\begin{enumerate}
			\item For each frozen variable $v \in V$, i.e. $x_v\in\{0,1\}$, and an adjacent edge $e\in \delta v$, assign $\sigma_e = \rr_{x_v}$ if $e$ is forcing, $\sigma_e=\bb_{x_v}$ otherwise.
			\item For each separating clause $a$ and an adjacent edge $e\in \delta a$, assign $\sigma_e = \fs$ if $x_{v(e)}= \ff$.
			\item For the other edges $e\in E$, which must be contained in a free component of $\ux$, let $\fff_e\in \mathscr{F}$ be the free component that contains $e$. We then set $\sigma_e = (\fff_e,e)\in \CCC$.
		\end{enumerate}
     \rev{With a slight abuse of notation, we use the symbol $\rr$ (resp. $\bb$) to denote a spin that can take a value either $\rr_0$ or $\rr_1$ (resp. $\bb_0$ or $\bb_1$).}
	\end{defn}
	Given $\GG$, we call a component coloring $\sig$ valid on $\GG$ if there exists a valid frozen configuration $\ux$ on $\GG$ such that it maps to $\sig$ with the above procedure. Then, it is straightforward to see that the procedure in Definition \ref{def:compcol} gives a one-to-one correspondence between the valid frozen configurations and the valid component colorings. The following remark plays an important role later in Section \ref{subsec:proof:coupling}.
    \begin{remark}\label{rmk:comp:col}
    Given $(\GG,\ux)$, suppose $e\in E$ is contained in a free component. Note that by Definition \ref{def:free:comp}, $(\fff_e,e)$ contains the information on literals of $\fff_e$ and spin labels on boundary half-edges of $\fff$. Therefore, $\sigma_e=(\fff_e,e)$ completely determines the colors of adjacent edges, $(\sigma_{e^\prime})_{e^\prime \in \delta v(e)\setminus e\,\sqcup\, \delta a(e)\setminus e}$.
    \end{remark}
    \subsection{Exponential decay of the frequencies of free components}
    \label{subsec:proof:exp:decay}
    By Observation \ref{obs:sampling}, conditional on $(\bGG, \bux)=(\GG, \ux)$, we have that $X_i^{t}$ and $X_j^t$ are independent if $N_t(i,\GG)\cap N_t(j,\GG)=\emptyset$ and there is no free component intersecting both $N_t(i,\GG)$ and $N_t(j,\GG)$. Thus, the critical component in establishing the variance control \eqref{eq:goal:variance} is to show that the large free components are rare in a typical frozen configuration, which enables us to argue that for most $i,j\in V$, $X_i^t$ and $X_j^t$ are (conditionally) independent. We formalize this idea in this subsection. We start with the definition of \textit{boundary profile} and \textit{free component profile} introduced in \cite[Definition 3.2]{NSS}.

   \begin{defn}\label{def:empirical:boundary}
	Given $(\GG, \ux)$, the unnormalized free component profile of $\ux$ is the sequence $(n_\fff[\GG,\ux])_{\fff\in \mathscr{F}}$, where $n_\fff[\GG,\ux]$ is the number of free component\rev{s} $\fff$ inside $(\GG,\ux)$. The \textbf{free component profile} is then $\{p_{\fff}[\GG, \ux]\}_{\fff\in \FFF}:=\Big\{\frac{n_{\fff}[\GG, \ux]}{n}\Big\}_{\fff\in \FFF}$. The \textbf{boundary profile} of $(\GG, \ux)$ is the tuple $B[\GG,\ux] \equiv (\dot{B},\hat{B},\bar{B})$ defined as follows. Let $\sig \in \CCC^{E}$ be the component coloring corresponding to $\ux$ in Definition \ref{def:compcol}. Then, $\dot{B}$, $\hat{B},$ and $\bar{B}$ are respectively measures on $\{\rr,\bb\}^d$, $\{\rr,\bb, \fs\}^k$ and $\{\rr,\bb, \fs\}$ defined as
		\begin{equation*}
		\begin{split}
		&\dot{B}(\underline{\tau})
		:=
		|\{v\in V: \sig_{\delta v}=\underline{\tau} \} | / |V| \quad
		\textnormal{for all } \underline{\tau}\in \{\rr,\bb\}^d\;,\\
		 &\hat{B}(\underline{\tau})
		:=
		|\{a\in F: \sig_{\delta a}=\underline{\tau} \} | / |F| \quad
		\textnormal{for all } \underline{\tau}\in \{\rr,\bb, \fs\}^k\;,\\
		 &\bar{B}(\tau)
		:=
		|\{e\in E: \sigma_e=\tau \} | / |E| \quad
		\textnormal{for all } {\tau}\in \{\rr,\bb, \fs\}\;,
		\end{split}
		\end{equation*}
       \rev{where we recall that $\sig_{\delta a}$ (resp. $\sig_{\delta v})$ denotes $\sig$ restricted to $\delta a$ (resp. $\delta v$).} Finally, the \textbf{($1$-neighborhood) coloring profile} of $(\GG,\ux)$ is the collection of the boundary profile and the free component profile, which we denote by $\xi[\GG,\ux]:=(B[\GG, \ux], \{p_{\fff}[\rev{\GG}, \ux]\}_{\fff\in \FFF})$.
	\end{defn}
    For $r>0$, let $\ee_{r}$ be the collection of free component profiles satisfying the exponential decay of frequencies in its number of variables with rate $2^{-rk}$. That is,
	\begin{equation}\label{eq:def:exp:decay:profile}
	    \ee_{r}:= \Big\{(p_\fff)_{\fff \in \FFF}: \sum_{\fff \in \FFF,\rev{v_{\fff}}=v} p_\fff \leq 2^{-rkv}, ~~\forall v\geq 1\;\Big\}\,.
	\end{equation}
    With slight abuse of notation, we also denote $\xi=(B,\{p_{\fff}\}_{\fff\in \FFF})\in \ee_r$ if $\{p_{\fff}\}_{\fff\in \FFF}\in \ee_r$. In Section \ref{sec:concentration:free:tree}, we show that $(\bGG,\bux)$ is contained in the set $\ee_{\frac{1}{4}}$ and there are no multi-cyclic free components, i.e. a free component with more than one cycle, with high probability.
    \begin{lemma}\label{lem:exp:decay}
       For $k\geq k_0$ and $\alpha \in (\alpha_{\cond}(k),\alpha_{\sat}(k))$, we have that
       \begin{equation*}
       \P\left(\{p_{\fff}[\bGG,\bux\,]\}_{\fff\in \FFF}\notin \ee_{\frac{1}{4}}~~\textnormal{or}~~p_{\fff}[\bGG,\bux]\neq 0~~\textnormal{for some multi-cyclic $\fff\in \FFF$}\right)=o_n(1)\,.
       \end{equation*}
    \end{lemma}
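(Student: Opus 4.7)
The plan is a first–moment argument applied to the joint distribution of $(\bGG,\bux)$. Recall from Observation~\ref{obs:sampling} that $\bux$ is sampled proportionally to $\size(\ux,\bGG)$, i.e.\ $\P(\bux=\ux\mid\bGG)=\size(\ux,\bGG)/|\sol(\bGG)|$. For any free component shape $\fff\in\FFF$ with $v_{\fff}=v$, I would first write
\[
\E\bigl[\,n_{\fff}[\bGG,\bux\,]\,\bigr]
\;=\;\E_{\bGG}\!\left[\frac{\sum_{\ux}\size(\ux,\bGG)\,n_{\fff}[\bGG,\ux]}{|\sol(\bGG)|}\right].
\]
Prior results from \cite{ssz22,NSS} give $|\sol(\bGG)|\ge \exp\!\bigl(n s^{\star}-\tfrac{1}{2\lambda^{\star}}\log n-C\bigr)$ with probability $1-o_n(1)$. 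Restricting to this event and using the crude deterministic bound $|\sol(\bGG)|\ge 1$ on its complement (contributing $o_n(1)$ by $L^{1}$-truncation against the existing tail bounds on $|\sol|$), the task reduces to controlling the numerator, which is a standard \emph{planted} first-moment calculation: for every solution $\uz$ of $\bGG$ whose coarsening realizes $\fff$ at a prescribed location, one estimates the number of ways to embed $\fff$ into the random $(d,k)$-regular configuration model. This yields, after cancelling the exponential $\exp(ns^{\star})$ against $1/|\sol(\bGG)|$ and applying the weighting by $\hat\mu_{\lambda^{\star}}$ at the boundary, an expected count of the form
\[
\E\bigl[\,n_{\fff}[\bGG,\bux\,]\,\bigr]\;\lekt\; n\cdot w_{\fff}^{\lambda^{\star}}\cdot \Bigl(\tfrac{C_{k}}{2^{k}}\Bigr)^{v}\cdot n^{1-v_{\fff}-f_{\fff}+e_{\fff}}\cdot (\text{literal/spin factors}),
\]
with the power-of-$n$ factor coming from edge-matching in the configuration model. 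For a tree one has $v_{\fff}+f_{\fff}-e_{\fff}=1$, so the $n$-power contributes $n$; each additional independent cycle costs a factor $n^{-1}$.

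Next I would sum over all free components with a prescribed number $v$ of free variables. Using the enumeration bounds on free trees from \cite{NSS} (there are at most $(Ck)^{v}$ isomorphism classes with $v$ variables, and $w_{\fff}\le 2^{v}$ trivially), the total expected mass coming from trees of size $v$ is bounded by $n\cdot (C_{k}/2^{k})^{v}$ for some $C_{k}=O_{k}(1)$. Since $k\ge k_{0}$, this is at most $n\cdot 2^{-kv/2}$, so by Markov
\[
\P\!\left(\textstyle\sum_{\fff\colon v_{\fff}=v}p_{\fff}[\bGG,\bux]>2^{-kv/4}\right)\le 2^{kv/4}\cdot 2^{-kv/2}=2^{-kv/4}.
\]
Summing in $v\ge 1$ gives an $O(2^{-k/4})=o_{k}(1)$ bound uniform in $n$, which in particular tends to $0$ (it is even $o_n(1)$ once we recall that the sum is truncated at some slowly growing cutoff where the enumeration bound starts to fail — one handles large $v$ separately by the crude bound $\sum_{v\ge v_{0}}\sum_{\fff\colon v_{\fff}=v}p_{\fff}\le 1$ and a direct concentration argument for $v\le \log n$, exactly as in \cite[\S3]{NSS}).

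For the multi-cyclic statement, the same first-moment calculation shows that the expected number of multi-cyclic free components (those with at least two independent cycles, i.e.\ $f_{\fff}+v_{\fff}-e_{\fff}\le -1$) is $\lekt n^{-1}\sum_{v\ge 1}(C_{k}/2^{k})^{v}=O_{k}(n^{-1})$, so by Markov the probability that any multi-cyclic component exists is $o_{n}(1)$.

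The main obstacle is the handling of the random denominator $|\sol(\bGG)|$: without the sharp lower tail estimates from \cite{ssz22,NSS} one cannot pass from an unconditional first moment (which is dominated by atypical clusters in the condensation regime) to an expectation under the size-biased cluster measure. I would therefore explicitly invoke those lower tail bounds at the start of the proof, and then the remainder is a configuration-model enumeration that is essentially routine given the combinatorial framework already set up in Section~\ref{subsec:proof:freecomp}.
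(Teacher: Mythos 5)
There is a genuine gap in your Markov step, and it is exactly the step your parenthetical remark tries to paper over. Your per-$v$ estimate $\E\big[\sum_{\fff\colon v_{\fff}=v}p_{\fff}\big]\lesssim_k 2^{-kv/2}$ together with Markov at threshold $2^{-kv/4}$ yields a per-$v$ failure probability of $2^{-kv/4}$, and summing over $v\ge1$ gives $O_k(2^{-k/4})$ --- a bound that is small for large $k$ but does \emph{not} tend to $0$ as $n\to\infty$. The lemma asserts $o_n(1)$ for fixed $k,\alpha$, so a first-moment + union bound over $v$ at this level of precision simply cannot close the argument. You acknowledge this yourself and gesture at ``a direct concentration argument for $v\le\log n$, exactly as in \cite[\S3]{NSS}'' --- but that concentration argument \emph{is} the entire content of the lemma. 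The paper's proof does not proceed via a per-$\fff$ or per-$v$ first moment at all; it invokes Proposition~\ref{prop:nss}, whose proof uses the $\lambda^{\star}$-tilted first moment $\E\overline{\bZ}_{\la^\star}\big[(\Xi_0)^{\textsf c}\big]$ and the estimate from \cite[Prop.~3.5]{NSS} that $\E\overline{\bZ}_{\la^\star}\big[(\Xi_0)^{\textsf c}\big]\lesssim_k \frac{\log^3 n}{n^2}\E\overline{\bZ}_{\la^\star}$; the crucial $n$-dependent factor $\frac{\log^3 n}{n^2}$ is what ultimately produces the $o_n(1)$, and it does not appear anywhere in your sketch. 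Your $\lambda^\star$-weighting $w_{\fff}^{\la^\star}$ in the expectation is the right instinct (the paper also uses $\size(\ux)/|\sol(\bGG)|\le\size(\ux)^{\la^\star}/|\sol(\bGG)|^{\la^\star}$ after restricting to the lower-tail event for $|\sol(\bGG)|$, which is the rigorous version of your ``cancelling $\exp(ns^\star)$'' heuristic), but the remaining work --- showing the tilted moment of the \emph{bad event} is suppressed by a polynomial-in-$n$ factor relative to the tilted moment of everything --- is precisely the large-deviation estimate you have outsourced without naming it. Your multi-cyclic bound has the same issue in principle, though there the count does carry an explicit $n^{-1}$ and could plausibly be completed along the lines you sketch.

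A secondary, smaller concern: even granting the tilted first-moment bound, your passage from the event ``$|\sol(\bGG)|$ is large'' to a deterministic planted-model embedding count sweeps under the rug the fact that conditioning on a high-probability event involving $\bGG$ changes the distribution of $\bGG$; the paper avoids this by bounding $\P(\bux=\ux\mid\bGG)$ pointwise in $\GG$ on the good event and only then taking expectation, rather than ``passing to the planted model.'' This is fixable, but as written it is not a valid step.
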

    Moreover, we show that if $\big\{p_{\fff}[\bGG,\bux\,]\big\}_{\fff\in \FFF}\in \ee_{\frac{1}{4}}$ holds, then our desired variance control \eqref{eq:goal:variance} holds.
    \begin{lemma}\label{lem:var:control}
    Consider a frozen configuration $(\GG, \ux)$ which satisf\rev{ies} $\big\{p_{\fff}[\GG,\ux\,]\big\}_{\fff\in \FFF}\in \ee_{\frac{1}{4}}$. Then, we have
    \begin{equation}\label{eq:lem:var:control}
        \Var\Big(\sum_{i=1}^{n}X_i^{t} \bgiven (\bGG,\bux)=(\GG, \ux) \Big)\lesssim_{k,t} \sqrt{n}\,.
    \end{equation}
    \end{lemma}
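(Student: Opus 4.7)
The plan is to exploit Observation~\ref{obs:sampling}: conditional on $(\bGG,\bux)=(\GG,\ux)$, the random solution $\ubz$ decomposes as an independent product of spin configurations $Z_\fff:=\ubz_{V(\fff)}$ across the free components $\fff\in\FFF(\ux,\GG)$, with all frozen coordinates determined by $\ux$. Each indicator $X_v^t$ is therefore a function of those $Z_\fff$ whose free component meets the ball $N_t(v,\GG)$. Writing $N_t^*(\fff):=\{v\in V:V(\fff)\cap V(N_t(v,\GG))\neq\emptyset\}$, one has $|N_t^*(\fff)|\leq C_{k,t}\,v_\fff$ by $(d,k)$-regularity and the bounded depth $2t$ of the ball.

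\textbf{Step 1 (covariance support).} I would first expand
$$\Var\Big(\sum_{v\in V}X_v^t\,\Big|\,(\GG,\ux)\Big)=\sum_{i,j\in V}\Cov\bigl(X_i^t,X_j^t\,\big|\,(\GG,\ux)\bigr)$$
and observe that, by the product structure of $(Z_\fff)_\fff$ and determinism of the frozen coordinates, $\Cov(X_i^t,X_j^t\,|\,(\GG,\ux))=0$ whenever no single $\fff\in\FFF(\ux,\GG)$ meets both $N_t(i,\GG)$ and $N_t(j,\GG)$. Partitioning by $\fff$,
$$\Var\Big(\sum_{v\in V}X_v^t\,\Big|\,(\GG,\ux)\Big)\leq\sum_{\fff\in\FFF(\ux,\GG)}\Phi_\fff,\qquad \Phi_\fff:=\sum_{i,j\in N_t^*(\fff)}\bigl|\Cov\bigl(X_i^t,X_j^t\,\big|\,(\GG,\ux)\bigr)\bigr|.$$

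\textbf{Step 2 (per-component refinement).} The technical heart is to bound $\Phi_\fff$ sharply. Lemma~\ref{lem:exp:decay} reduces to the case of $\fff$ being a tree (or at most once-cyclic) component, whose uniform-over-coarsening Gibbs measure is tree-factorised and governed by its BP marginals. Applying a tree Poincar\'e-type spectral-gap inequality to the centred functionals $\tilde X_v^t:=X_v^t-\E[X_v^t\,|\,(\GG,\ux)]$, combined with the observation that covariances $|\Cov(\tilde X_i^t,\tilde X_j^t\,|\,(\GG,\ux))|$ decay in the tree distance between the supports of $X_i^t$ and $X_j^t$ inside $\fff$ and that only $O_{k,t}(1)$ vertices sit at each such distance, telescopes the double sum $\Phi_\fff$ to $C_{k,t}\,v_\fff$ (a factor of $v_\fff$ better than the Cauchy--Schwarz bound $|N_t^*(\fff)|^2\leq C_{k,t}v_\fff^2$).

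\textbf{Step 3 (summation and cancellation).} Summing over $\fff$ and using the exponential decay $\sum_{\fff:v_\fff=v}p_\fff[\GG,\ux]\leq 2^{-kv/4}$, one gets the raw estimate
$$\Var\Big(\sum_{v\in V}X_v^t\,\Big|\,(\GG,\ux)\Big)\leq C_{k,t}\sum_\fff v_\fff\leq C_{k,t}\,n\sum_{v\geq 1}v\cdot 2^{-kv/4}\leq C_{k,t}'\,n\cdot 2^{-k/4}.$$
To reach the sharp $\sqrt n$ rate stated in the lemma, I would then carry out a second-order analysis at the \emph{free-variable} rather than free-component level: each free variable $u$ contributes to $\sum_v X_v^t$ an expression of the form $\sum_u \xi_u\, Z_u+(\text{higher-order})$ with deterministic coefficients $\xi_u$ and independent marginals on $Z_u$, and the $\{0,1\}$-symmetry of $\knaesat$ together with the boundary profile $B[\GG,\ux]$ imposes a near-cancellation on the $\xi_u$'s that reduces their $\ell^2$-mass to $O_{k,t}(\sqrt n)$; the higher-order terms are controlled by iterating the Poincar\'e bound of Step~2 against the exponential decay.

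\textbf{Main obstacle.} The decisive step is the final sharpening from the routine $O_{k,t}(n\cdot 2^{-k/4})$ bound to $O_{k,t}(\sqrt n)$. The per-component Poincar\'e estimate in Step~2 is comparatively straightforward given the tree description of $Z_\fff$ and the BP fixed-point structure from Proposition~\ref{prop:ssz:physics:bp:fixed}, but the Rademacher-type cancellation at the free-variable level in Step~3 is subtle: it must be extracted from the $\{0,1\}$-symmetry of $\knaesat$ and from the constraint that $\ux$ is a valid frozen configuration, and it depends crucially on the hypothesis $\{p_\fff\}\in\ee_{1/4}$ (which both suppresses the tail of large components and controls how many singletons participate). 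Overcoming this obstacle is what distinguishes the present sharp variance bound from the looser estimates used in~\cite{NSS}.
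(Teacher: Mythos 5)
Your Step 1, together with the ``raw estimate'' at the beginning of your Step 3, is essentially the paper's proof: conditional on $(\bGG,\bux)=(\GG,\ux)$ the only randomness is the independent assignment of the free components (Observation~\ref{obs:sampling}), so $\Cov(X_i^t,X_j^t)$ vanishes unless some $\fff$ meets both balls (the paper's good set $I_{\g}[\GG,\ux]$ also excludes pairs with overlapping balls, which only adds $n(kd)^{2t-2}$ pairs), and the number of remaining pairs is at most $n\big((kd)^{2t-2}+(kd)^{2t-2}\sum_{\fff}v_{\fff}^2\,p_{\fff}[\GG,\ux]\big)\lesssim_{k,t}n$ by $\{p_{\fff}\}\in\ee_{\frac14}$; bounding each covariance by a constant then gives $\Var(\sum_i X_i^t\mid\cdot)\lesssim_{k,t}n$. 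That is exactly what the paper's displayed computation delivers, and it is all that is used downstream: in the proof of Theorem~\ref{thm:main} the variance is divided by $C^2n$ in Chebyshev and $C\to\infty$ is taken after $n\to\infty$, so an $O_{k,t}(n)$ bound suffices. Your Step 2 (a per-component Poincar\'e/correlation-decay refinement $\Phi_{\fff}\lesssim_{k,t}v_{\fff}$) is unnecessary for this --- the crude bound $|N_t^*(\fff)|^2\lesssim_{k,t}v_{\fff}^2$ already sums to $O_{k,t}(n)$ against the $2^{-kv/4}$ tail --- and is in any case asserted rather than proved.

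The genuine gap is the final part of Step 3, the attempted sharpening from $O_{k,t}(n)$ to $O_{k,t}(\sqrt n)$. No such cancellation is available: the conditional law of $\ubz$ is a product over free components, so already for $t=1$ the sum $\sum_v X_v^1$ is a deterministic constant plus a sum of independent per-component terms, and its conditional variance is a sum of nonnegative per-component variances; each free tree consisting of a single free variable contributes an independent Bernoulli$(1/2)$ indicator of variance $1/4$. Frozen configurations satisfying $\{p_{\fff}\}\in\ee_{\frac14}$ include (and the typical ones relevant to the theorem are) configurations with $\Theta_k(n)$ such singletons, so the conditional variance is genuinely of order $n$ --- this is precisely the source of the matching $C'n^{-1/2}$ fluctuation lower bound acknowledged in the paper's footnote --- and independence of the blocks rules out any ``Rademacher-type'' cancellation among your coefficients $\xi_u$. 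In other words, the $\sqrt n$ on the right-hand side of \eqref{eq:lem:var:control} cannot be attained uniformly over the stated class (it should be read as $n$, which is what the paper's own proof yields and what the Chebyshev step needs); your argument should simply stop at the raw estimate instead of pursuing the cancellation, which would fail.
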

    \begin{proof}
    Given $(\GG, \ux)$ with $\big\{p_{\fff}[\GG,\ux\,]\big\}_{\fff\in \FFF}\in \ee_{\frac{1}{4}}$, denote by $I_{\g}[\GG, \ux]$ the collection of $(i,j)\in V^2$ such that $N_t(i,\GG)\cap N_t(j,\GG)=\emptyset$ and there exists no free component $\fff\in \FFF(\ux,\GG)$ which intersect\rev{s} both $N_t(i,\GG)$ and $N_t(j,\GG)$. Then, by Observation \ref{obs:sampling}, we have that 
    \begin{equation*}
        (i,j)\in I_{\g}[\GG,\ux] \implies \Cov\Big(X_i^{t}, X_j^{t} \bgiven (\bGG,\bux)=(\GG,\ux)\Big)=0\,.
    \end{equation*}
    Note that for a fixed $i\in V$, there are at most $(kd)^{2t-2}$ $j\in V$ with $d(i,j)\leq 4t-4$. Moreover, for a fixed free component $\fff\in \FFF(\ux,\GG)$, if $N_t(i,\GG)$ intersects with $\fff$, then there must be a variable $v\in V(\fff)$ with $d(i,v)\leq 2\rev{t}-2$ (the boundary of $N_t(i,\GG)$ are formed by variables, not clauses), so there are at most $v_{\fff}(kd)^{t-1}$ such variables $i\in V$. Hence, it follows that
    \begin{equation*}
        \left|V^2 \setminus I_{\g}[\GG, \ux]\right|\leq n\bigg((kd)^{2t-2}+\sum_{\fff\in \FFF} v_{\fff}^2(kd)^{2t-2}\cdot p_{\fff}[\GG,\ux]\bigg)\lekt n\,,
    \end{equation*}
    where the last inequality holds since $\sum_{\fff\in \FFF}v_{\fff}^2 \cdot p_{\fff}[\GG,\ux]\leq \sum_{v}v^2 2^{-\frac{kv}{4}}$ holds due to $\big\{p_{\fff}[\GG,\ux\,]\big\}_{\fff\in \FFF}\in \ee_{\frac{1}{4}}$. Therefore, our claim \eqref{eq:lem:var:control} follows.
    \end{proof}

   \subsection{Tight concentration of free component profile}\label{subsec:proof:concentration}
   We now consider the bias control in \eqref{eq:goal:bias}. First, let us examine the case where $t=1$. Then, $N_1(i,\bGG)$ is always isomorphic to $\TT_{d,k,1}$, which consists of a single variable $\rho$ and boundary half-edges adjacent to it (with no literals), thus $z_1 \in \{0,1\}^{\TT_{d,k,1}}$ is given by $z_1=z_{\rho}\in \{0,1\}$. Thus, if a variable $i\in V$ is frozen with respect to $(\GG,\ux)$, then $X_i^{1}\equiv X_i^{1}[\GG,\ux,\uz_1,\uL_1]$ is deterministic, which equals the indicator that $x_i=z_{\rho}$.
   
   On the other hand, if a variable $i\in V$ is \rev{free} with respect to $(\GG,\ux)$, then recalling Observation \ref{obs:sampling}, $\E[X_i^{1}\given (\bGG,\bux)=(\GG,\ux)]$ is a function which only depends on $\fff$, the free component which contains $i\in V$. More precisely, $\E[X_i^{1}\given (\bGG,\bux)=(\GG,\ux)]$ equals the probability that $\boldsymbol{z}_i=z_{\rho}$, where $\ubz_{V(\fff)}\in \{0,1\}^{V(\fff)}$ is sampled u.a.r. among those which are coarsened to $\fff$. When $\fff$ is a free tree, this probability can be evaluated using \textit{belief propagation}.

   Therefore, if we establish the tight $O(n^{-1/2})$ concentration of the number of frozen variables of $(\bGG, \bux)$ and the analogue $\ell^1$- type concentration on the free component profile $\{p_{\fff}[\bGG,\bux]\}_{\fff\in \FFF}$, then we can establish tight bias control \eqref{eq:goal:bias} for the simplest case $t=1$. In fact, the former concentration of the number of frozen variables can be established using the concentration of the boundary profile $B[\bGG,\bux]$ around the \textit{optimal} boundary profile $B^\star$ using the results of \cite{NSS}. We review the definition of $B^\star$ \rev{from} \cite{NSS} in Section \ref{sec:BP}.

   However, establishing $O(n^{-1/2})$ concentration of the free component profile $\{p_{\fff}[\bGG,\bux]\}_{\fff\in \FFF}$ in $\ell^1$-type distance poses significant challenges. Indeed, the results of \cite{NSS} only imply a much weaker concentration in $\ell^{\infty}$-distance of the free component profile\rev{, i.e. $\sup_{\fff\in \FFF}|p_{\fff}-p_{\fff}^\star|$ for some $(p_{\fff}^\star)_{\fff\in \FFF}$ (see Definition~\ref{def:optimal:coloring}), }with larger distance $O(\frac{\log n}{\sqrt{n}})$, and it is a priori not clear if the stronger $\ell^1$-type concentration can be established, let alone removing the $\log n$ factor to obtain the optimal fluctuation as in Theorem \ref{thm:main}. Note that there are \textit{unbounded} number of types of free components in a typical frozen configuration $(\bGG,\bux)$\footnote{From the branching process heuristics described in \cite{dss16}, a typical frozen configuration has the largest free tree with $\Theta_k(\log n)$ variables as one can infer from Lemma \ref{lem:exp:decay}, and there are exponentially many types of free trees with a given number of variables $v$, so there are typically $n^{\Omega_k(1)}$ types of free trees.}\rev{.}

   Nevertheless, we show in Section \ref{sec:concentration:free:tree} that by a delicate use of a local central limit theorem for triangular arrays \cite{Borokov17} and the exponential decay of the free component profile (cf. Lemma \ref{lem:exp:decay}), the free component profile concentrates in $\ell^1$-type distance on the optimal scale $O(n^{-1/2})$. To be precise, consider the following distance of $2$ coloring profiles $\xi^1=(B^1, \{p_{\fff}^1\}_{\fff\in \FFF})$ and $\xi^2=(B^2, \{p_{\fff}^2\}_{\fff\in \FFF})$:
   \begin{equation}\label{eq:def:tsq:norm}
       \norm{\xi^1-\xi^2}_{\tsq}:=\norm{B^1-B^2}_1+\sum_{\fff\in \FFF}\left|p_{\fff}^1-p_{\fff}^2\right|(v_{\fff}+f_{\fff})\,.
   \end{equation}
   We show in Section \ref{sec:concentration:free:tree} that the following \rev{t}heorem holds.
   \begin{thm}\label{thm:concentration}
    For $k\geq k_0$ and $\alpha \in (\alpha_{\cond}(k),\alpha_{\sat}(k))$ there exists an explicit $\xi^{\star}\equiv \xi^\star[\alpha,k] \equiv (B^\star,\{p_{\fff}^\star \}_{\fff\in \FFF})$ in Definition \ref{def:optimal:coloring} such that the following holds. For any $\eps>0$, there exists a constant $C(\eps,k)>0$ such that  
    \begin{equation*}
        \P\left(\big\|\xi[\bGG,\bux\,]-\xi^\star\big\|_{\tsq}\geq \frac{C}{\sqrt{n}}\right)\leq \eps\,.
    \end{equation*}
   \end{thm}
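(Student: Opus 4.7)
My plan is to bound $\mathbb{E}\|\xi[\bGG,\bux]-\xi^\star\|_\tsq\lesssim_k n^{-1/2}$ and then apply Markov's inequality. Splitting $\|\xi-\xi^\star\|_\tsq=\|B-B^\star\|_1+\sum_{\fff\in\FFF}|p_\fff-p_\fff^\star|(v_\fff+f_\fff)$, the boundary piece is essentially already in \cite{NSS,ssz22}: the coordinates of $B$ live in a finite-dimensional simplex indexed by $\{\rr,\bb\}^d\sqcup\{\rr,\bb,\fs\}^k\sqcup\{\rr,\bb,\fs\}$, the tilted free-energy functional is strictly concave at its maximizer $B^\star$ with uniformly positive-definite Hessian, and a Laplace-method local CLT gives each coordinate Gaussian $n^{-1/2}$ fluctuations under the cluster-tilted law $\P(\bux=\ux\mid\bGG)\propto\size(\ux,\bGG)^{\lambda^\star}$.

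The heart of the proof is the $\ell^1$-sum over free components, which I would organize in three steps. \emph{(i) Truncation.} Using Lemma \ref{lem:exp:decay} together with the analogous decay $\sum_{v_\fff=v}p_\fff^\star\lesssim 2^{-kv/4}$ for the target profile (inherited from Proposition \ref{prop:ssz:physics:bp:fixed} since $\dot\mu_{\lambda^\star}((0,1))\le 7/2^k$), one restricts to $v_\fff\le v_0:=Ck^{-1}\log n$; the tail contributes at most $\sum_{v>v_0}v\cdot 2^{-kv/4}=o(n^{-1/2})$. \emph{(ii) Pointwise local CLT.} For each fixed $\fff$ with $v_\fff\le v_0$, I would condition on $B$ and approximate the joint law of the counts $(n_{\fff'})_{\fff'}$ by an independent Poisson system with rates tuned to $\dot\mu_{\lambda^\star}$, then apply the local CLT for triangular arrays from \cite{Borokov17} to obtain $\mathbb{E}|p_\fff-p_\fff^\star|\lesssim\sqrt{p_\fff^\star/n}$ with constants uniform in $\fff$. \emph{(iii) Aggregation.} Since there are at most $C^v$ distinct free components with $v$ variables (bounded-degree labeled bipartite structures with at most one cycle, using Lemma \ref{lem:exp:decay}), Cauchy-Schwarz grouped by size gives $\sum_{v_\fff\le v_0}(v_\fff+f_\fff)\sqrt{p_\fff^\star}\lesssim\sum_v v\cdot C^{v/2}\cdot 2^{-kv/8}<\infty$ for $k\ge k_0$, so combining with (ii) yields $\mathbb{E}\sum_\fff|p_\fff-p_\fff^\star|(v_\fff+f_\fff)\lesssim n^{-1/2}$.

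The main obstacle is step (ii). The tilted law $\size(\ux,\bGG)^{\lambda^\star}$ is not a product measure: components are coupled both through the global normalization and through the matching constraints defining $\bGG$. A naive union bound over the $n^{\Omega_k(1)}$ realized component types would incur a $\log n$ loss, which is precisely the $O(\log n/\sqrt{n})$ barrier of \cite{NSS}. Removing this factor requires comparing the tilted measure to a product-Poisson reference with rates derived from $\dot\mu_{\lambda^\star}$, with the error controlled uniformly at the sharp pointwise scale $\sqrt{p_\fff^\star/n}$ rather than the cruder $1/\sqrt{n}$; this is where the "one large step" size-biasing construction of \cite{ssz22,NSS} and the tree-level exchangeability of the random regular $\knaesat$ model must be combined delicately with the Borokov local CLT.
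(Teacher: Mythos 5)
Your overall plan — split off the finite-dimensional boundary piece, obtain a pointwise bound $\E|p_{\fff}-p_{\fff}^\star|\lesssim\sqrt{p_{\fff}^\star/n}$, and aggregate using the exponential decay of $p^\star$ — is the same basic strategy as the paper's, and your Cauchy-Schwarz aggregation in step (iii) is a valid alternative to the paper's device of assigning summable per-tree tail thresholds $\eps_{\ttt}\asymp v_\ttt^{-2}(Ck)^{-v_\ttt}$ and summing Chernoff bounds. You have also correctly identified both the obstacle (the cluster-tilted measure is not a product; free-component counts are coupled through the matching and the normalization) and the right tool (Borokov's local CLT for triangular arrays). So the structure is right.

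The gap is in step (ii), and it is not a minor one. You describe it as a Poissonization plus ``apply the local CLT to obtain $\E|p_{\fff}-p_{\fff}^\star|\lesssim\sqrt{p_{\fff}^\star/n}$ with constants uniform in $\fff$,'' but this elides the specific reduction that makes the LCLT usable here. The paper first proves (Proposition \ref{prop:moment:formula}) that $\E\bZ_{\la,s}[B,\{p_\fff\}]$ factorizes over free trees, which turns the conditional law of the free-tree counts given $(B,s,\{p_\fff\}_{\fff\notin\FFF_\tr})$ into that of $nh_\circ$ i.i.d.\ draws from the tilted distribution $\P_{\utheta^\star}$ \emph{conditioned on} a four-dimensional moderate-deviation event $\AAA$ (equation \eqref{eq:crucial:identity}). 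The LCLT then enters twice: once to show $\P_{\utheta^\star}(\AAA)\asymp n^{-2}$, and — crucially — once more inside Lemma \ref{lem:typical} to show that the ratio $\P_{\utheta^\star}(\AAA\mid\bI_\ttt=I)/\P_{\utheta^\star}(\AAA)$ is bounded by a constant $\wt C$ uniformly over $\ttt\in\FFF_\tr^\typ$ and $|I|\le n(1-\eps_0)h_\circ^\star$. It is this ratio bound, not the LCLT by itself, that upgrades the trivial unconditional binomial estimate $\E|p_\ttt-p_\ttt^\star|\lesssim\sqrt{p_\ttt^\star/n}$ to a conditional one without a $\log n$ loss over the $n^{\Omega_k(1)}$ tree types. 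Your proposal gestures at ``combining the size-biasing construction and tree-level exchangeability delicately with the LCLT'' at exactly this point, but gives no argument for why the conditioning costs only a constant at the scale $\sqrt{p_\fff^\star/n}$ rather than at the cruder scale $n^{-1/2}$; absent the multinomial identity and the ratio lemma, Poissonization by itself only produces the unconditional estimate. This is the missing idea, and it is precisely where \cite{NSS} stopped at $O(\log n/\sqrt n)$.

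Two smaller points. First, your tail estimate in step (i) needs to be complemented by a handling of the event where $p_\fff\notin\ee_{1/4}$; since that event has $o(1)$ probability but $\|\xi-\xi^\star\|_\tsq$ is not bounded there, a bare Markov inequality on $\E\|\xi-\xi^\star\|_\tsq$ does not go through — one needs to condition on the good event $\Gamma(\uC)$ from Proposition \ref{prop:nss} first. Second, the claimed decay $\sum_{v_\fff=v}p_\fff^\star\lesssim 2^{-kv/4}$ for the target profile is in fact $\ee_{1/2}$ (Lemma 3.13 of \cite{NSS}), which you should use since the Cauchy-Schwarz aggregation at the margin benefits from the stronger rate.
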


   \subsection{From $1$-neighborhoods to $t$-neighborhoods}
   \label{subsec:proof:coupling}
   In this subsection, we discuss the ideas for establishing the bias control \eqref{eq:goal:bias} for general $t\geq 2$.

   Given $(\GG, \ux)$, let $\sig\in \CCC^{E}$ be the corresponding component coloring (cf. Definition \ref{def:compcol}). Note that by Observation \ref{obs:sampling}, the quantity $\E[X_i^{t}\given (\bGG,\bux)=(\GG,\ux)]$ is completely determined by the configuration of free components intersecting with $N_t(i,\GG)$. Further, note that \rev{for $e\in E(N_t(i,\GG))$}, $\sigma_e$ encodes the free component that $e$ is contained in. Here, we emphasize that $E(N_t(i,\GG))$ contains the boundary half-edges $\partial N_t(i,\GG)$. Therefore, $N_t(i, \GG)$, $\sig_{t}(i, \GG)\equiv (\sigma_e)_{e\in E(N_t(i,\GG))}$, and $\uL_{t}(i,\GG)$ determine $\E[X_i^{t}\given (\bGG,\bux)=(\GG,\ux)]$ (see Remark \ref{rmk:p} below) with one exception: namely, \rev{when two distinct leaves of $N_t(i,\GG)$ are connected by a path inside a free component $\fff\in \FFF(\ux,\GG)$ and the path is contained in the complement of $N_t(i,\GG)$.}
   Fortunately, as shown below, this exceptional case is very rare and, thus, can be neglected.  
   \begin{figure}[t]
   \begin{center}
\includegraphics[width=0.4\textwidth]{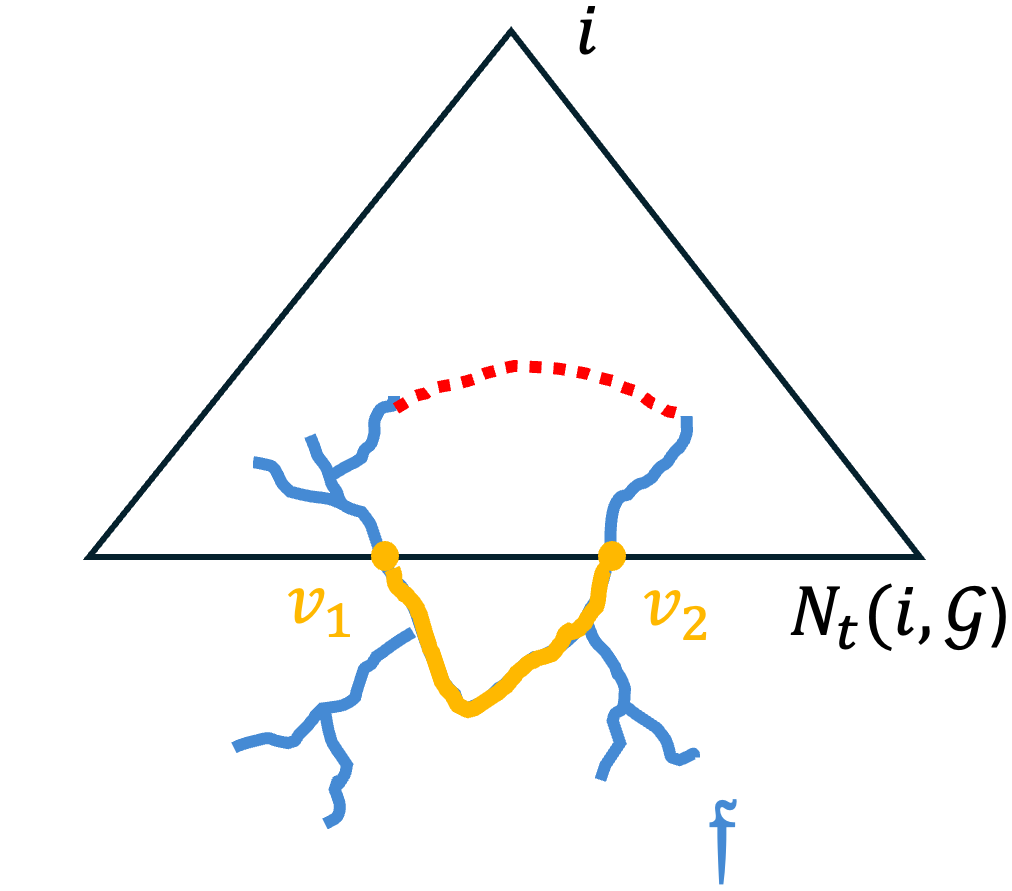}
\end{center}
\hspace{0.5cm}
\caption{A pictorial description of a scenario where two distinct leaves $v_1,v_2$ of $N_t(i,\GG)$ are connected by a path (colored yellow), which lies inside $\fff$ and outside $N_t(i,\GG)$. The blue curves represent the free component $\fff$ intersecting with $N_t(i,\GG)$. The red dashed line represents the edges colored $\{\rr_0,\rr_1,\bb_0,\bb_1,\fs\}$ under the component coloring $\sig$. Lemma~\ref{lem:bdry:cycle} shows that most of the cycles contain more than $2t$ edges colored $\{\rr_0,\rr_1,\bb_0,,\bb_1,\fs\}$.
}

\label{fig:cycle}
\end{figure}

   Note that in the exceptional case described above, there must be a self-avoiding cycle, i.e. a cycle that does not self-intersect, within $\fff\cup N_t(i,\GG)$. That is, consider $2$ variables $v_1,v_2$ that are \rev{leaves of $N_t(i,\GG)$ and are contained in $\fff$.} Then, there must exist $2$ distinct self-avoiding paths that connect $v_1,v_2$, where the first one \rev{is contained in $\fff\cap N_t(i,\GG)^{\textsf{c}}$} and the second one lies within $N_t(i,\GG)$. Concatenating \rev{these two paths} produces a self-avoiding cycle. Observe that the length of this cycle might be as large as $\Theta_k(\log n)$ given the exponential decay in Lemma \ref{lem:exp:decay} due to the case where $\fff$ is large. However, if we only count the edges \rev{in this cycle} that are not contained in free components, \rev{i.e. the edges that have colors $\{\rr_0, \rr_1,\bb_0,\bb_1, \fs\}$ under the component coloring $\sig$, }
  the number of such edges is at most $2t$. \rev{See Figure~\ref{fig:cycle} for a pictorial description.} The lemma below shows that \rev{such self-avoiding cycles that have at most $2t$ edges outside of free components are rare. In other words, most of the cycles contain more than $2t$ edges that are colored $\{\rr_0, \rr_1,\bb_0,\bb_1, \fs\}$.}
   \begin{lemma}\label{lem:bdry:cycle}
       Given $(\GG,\ux)=(V,F,E,\uL,\ux)$ and a cycle $\CC$ inside the bipartite graph $(V,F,E)$, \rev{let} $N_{\cyc}^{\sf b}(2t;\GG,\ux)$ \rev{denote} the number of self-avoiding cycles \rev{which have at most $2t$ edges that are not contained in free components of $\ux$, i.e. the edges that are colored $\{\rr_0,\rr_1,\bb_0,\bb_1,\fs\}$ under the component coloring $\sig$ corresponding to $\ux$.} Uniformly over $\xi=(B,\{p_{\fff}\}_{\fff\in \FFF})$ such that $\{p_{\fff}\}_{\fff\in \FFF}\in \ee_{\frac{1}{4}}$ and $p_{\fff}=0$ if $\fff$ is multi-\rev{cyclic}, and $\norm{B-B^\star}_{1}\leq n^{-1/3}$, we have for each fixed $t\geq 1$ that
       \begin{equation*}
           \P_{\xi}\Big(N_{\cyc}^{\sfb}(2t;\bGG,\bux)\geq n^{1/3}\Big)\equiv \P\Big(N_{\cyc}^{\sfb}(2t;\bGG,\bux)\geq n^{1/3}\bgiven \xi[\bGG,\bux]=\xi\Big)=o_n(1)\,.
       \end{equation*}
   \end{lemma}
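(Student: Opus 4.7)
The plan is to bound $\E_\xi[N_{\cyc}^{\sfb}(2t;\bGG,\bux)]$ by a first-moment computation and then invoke Markov's inequality. First I will decompose $N_{\cyc}^{\sfb}(2t;\bGG,\bux)=\sum_{s=0}^{2t}N_s$, where $N_s$ counts self-avoiding cycles with exactly $s$ non-free edges, i.e.\ exactly $s$ edges colored in $\{\rr_0,\rr_1,\bb_0,\bb_1,\fs\}$ under the component coloring $\sig$. The goal is then to show $\E_\xi[N_s]=O_{k,t}(1)$ uniformly in $\xi$ for each $s$, so that the lemma follows from Markov and a union bound over $s\in\{0,1,\ldots,2t\}$, giving $\P_\xi(N_{\cyc}^{\sfb}(2t;\bGG,\bux)\geq n^{1/3})=O_{k,t}(n^{-1/3})=o_n(1)$.

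For the main case $s\geq 1$, I will enumerate cycles by their structure. A self-avoiding cycle with exactly $s$ non-free edges decomposes cyclically as $P_1 e_1 P_2 e_2\cdots P_s e_s$, where each $e_j$ is a non-free edge and each $P_j$ is a self-avoiding (possibly trivial) path inside one free component of $\bux$. The enumeration specifies, for each $j$, a free-component copy of $\bux$ together with an internal path and its two boundary half-edges. Using $\{p_\fff\}_{\fff\in\FFF}\in\ee_{\frac{1}{4}}$,
\[
\sum_{\fff\in\FFF}p_\fff(v_\fff+f_\fff)^2\leq \sum_{v\geq 1}v^2\cdot 2^{-kv/4}=O_k(1),
\]
so each arc gives at most $O_k(n)$ choices after absorbing bounded-degree factors of $d^{O(1)}$. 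For the probability: under $\P_\xi$, once the free-component copies are placed, the residual boundary half-edges and non-free half-edges are uniformly matched within each color class. Since $\norm{B-B^\star}_1\leq n^{-1/3}$, each color class contains $\Theta(n)$ half-edges, so standard configuration-model estimates give that any $s$ specified non-free edges appear simultaneously in the matching with probability $O(n^{-s})$. Multiplying these factors, $\E_\xi[N_s]\leq O_k(n)^s\cdot O(n^{-s})=O_{k,t}(1)$.

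For $s=0$, the cycles lie entirely in free components; since $p_\fff=0$ for multi-cyclic $\fff$, each such cycle is the unique cycle of a uni-cyclic free component, so $N_0=n\sum_{\fff\text{ uni-cyclic}}p_\fff$ is deterministic given $\xi$, and I will bound it using the exponential decay $\ee_{\frac{1}{4}}$ combined with the global constraints on the total number of free-component boundary half-edges imposed by $\norm{B-B^\star}_1\leq n^{-1/3}$. The main obstacle I anticipate is in the $s\geq 1$ case: conditioning on the full profile $\xi$ rather than only on $B$ couples the placement of free-component copies with the matching of residual half-edges, so the $O(n^{-s})$ matching bound requires justifying that $\P_\xi$ factorizes as a uniform canonical labeling of the free-component copies times a uniform color-preserving matching of the residual half-edges; once this factorization is in hand the configuration-model estimate is routine.
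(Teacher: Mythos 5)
Your overall strategy is the same as the paper's: condition on $\xi$, observe that $\P_\xi$ is a configuration model where only the $\{\rr,\bb,\fs\}$-colored half-edge matching (plus literals) is random (this is exactly Observation~\ref{obs:config}, which resolves the ``main obstacle'' you flag at the end), use that $\norm{B-B^\star}_1\le n^{-1/3}$ forces each boundary color class to have $\Theta_k(n)$ half-edges, and do a first-moment count of cycles followed by Markov. Your bookkeeping is slightly different — you decompose a cycle as single non-free edges alternating with arcs inside free components and sum $\sum_\fff p_\fff(v_\fff+f_\fff)^2=O_k(1)$ to bound enumeration per arc, whereas the paper groups consecutive non-free edges into maximal ``boundary segments'' and pays $O_k(\log n)$ per segment junction, getting $(\log n)^{2t}$ rather than your $O_{k,t}(1)$ — but both are easily under the $o(n^{1/3})$ threshold and both are valid.

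The real gap is the $s=0$ case. You correctly note $N_0 = n\sum_{\fff\ \text{unicyclic}}p_\fff$ is deterministic given $\xi$ and claim to bound it ``using the exponential decay $\ee_{1/4}$ combined with the global constraints imposed by $\norm{B-B^\star}_1\le n^{-1/3}$,'' but this does not work. The compatibility equations \eqref{eq:compatible}--\eqref{eq:compat:var:clause} pin down $\sum_\fff p_\fff v_\fff$, $\sum_\fff p_\fff f_\fff$, $\sum_\fff p_\fff e_\fff$, hence $\sum_{\fff}p_\fff(v_\fff+f_\fff-e_\fff)=\sum_{\ttt\in\FFF_\tr}p_\ttt$ via~\eqref{eq:h:circ}, all to within $O(n^{-1/3})$ of the optimal values — but a unicyclic component has $v_\fff+f_\fff-e_\fff=0$, so these identities place no upper bound at all on $\sum_{\text{unicyclic}}p_\fff$; similarly the extra constraints from $\uh[B]$ add no new information since $\|\ufb_\fff\|_1$ is a linear combination of $v_\fff,f_\fff,e_\fff$. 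And $\ee_{1/4}$ alone only gives $\sum_{\text{unicyclic}}p_\fff\le\sum_v 2^{-kv/4}=O_k(1)$, which is far from the required $o(n^{-2/3})$. In fact, as stated the lemma's hypotheses are insufficient for the $s=0$ count, and the paper's own proof silently restricts to cycles carrying at least one $\{\rr,\bb,\fs\}$-colored edge; in the actual application (through the $\Xi_C$ condition $\norm{\xi-\xi^\star}_\tsq\le C/\sqrt n$ together with $p_\fff^\star=0$ for cyclic $\fff$, or through the $\Xi_0$ condition $\sum_{\fff\notin\FFF_\tr}p_\fff\le\log n/n$) one does control the number of unicyclic components. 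So you should either add such a hypothesis explicitly or observe that only $s\geq 1$ cycles need to be counted when the lemma is invoked; do not leave the $s=0$ bound to the two stated hypotheses.
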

   \begin{remark}\label{rmk:P:xi}
       Hereafter, we denote $\P_{\xi}$ (resp. $\E_{\xi}$) \rev{as} the probability (resp. expectation) with respect to the conditional measure $\P(\cdot\given \xi[\bGG,\bux]=\xi)$. Since $\xi=\xi[\GG,\ux]$ determines the size of $\ux$, \rev{i.e. }$\size(\ux,\GG)$ (cf. Observation \ref{obs:sampling}), $\P_{\xi}$ is a uniform measure over $(\bGG,\bux)$ \rev{that satisfies} $\xi[\bGG,\bux]=\xi$. Thus, $\P_{\xi}$ can be described in terms of a configuration model with colored edges. This fact is further explained in Observation \ref{obs:config} below. We remark that a similar idea was also used in \cite{Bordenave15}.
   \end{remark}

   Having Lemma \ref{lem:bdry:cycle} in hand, we focus our attention on the empirical measure of $\big(\sig_{t}(i, \GG), \uL_t(i,\GG)\big)_{i\in V}$.
   \begin{defn}\label{def:t:nbd:empirical}
   We say $(\sig_t,\uL_t)\in \CCC^{E(\TTT_{d,k,t})}\times\{0,1\}^{E_{\sf in}(\TTT_{d,k,t})}$ is a valid (literal-reinforced-) \textbf{$t$-coloring} if it can be realized as $\big(\sig_t(i,\GG), \uL_t(i,\GG)\big)=(\sig_t,\uL_t)$ for some valid component coloring $(\GG,\sig)$ and $i\in V$.
   
   We take the convention that the $2$ valid $t$-colorings $(\sig^{(\ell)}_t, \uL^{(\ell)}_t)\equiv \big((\sigma^{(\ell)}_e)_{e\in E(\TTT_{d,k,t})},(\tL^{(\ell)}_e)_{e\in E_{\sf in}(\TTT_{d,k,t})}\big)$, $\ell =1,2,$ are the same if there is an automorphism $\phi$ of $\TTT_{d,k,t}$ such that $\sigma^{(1)}_e=\sigma^{(2)}_{\phi(e)}, e\in E(\TTT_{d,k,t})$, and $\tL^{(1)}_e=\tL^{(2)}_{\phi(e)}, e\in E_{\sf in}(\TTT_{d,k,t})$. Denote \rev{by} $\Omega_t$ the set of all valid $t$-colorings up to automorphisms. Then, the \textbf{t-coloring profile} of a valid frozen configuration $(\GG,\ux)$ is the probability measure $\nu_t\equiv \nu_t[\GG,\ux]$ on the set $\Omega_{t}\sqcup \{\cyc\}$ defined by
       \begin{equation*}
       \begin{split}
           &\nu_t(\utau_t,\uL_t):=\big|\{i\in V:\;\sig_{t}(i,\GG)=\utau_t~~\textnormal{and}~~\uL_{t}(i,\GG)=\uL_t\}\big|\,/\,|V|\quad\textnormal{for all}\quad (\tau_t,\uL_t)\in \Omega_t\;,\\
           &\nu_t(\cyc):=\big|\{i\in V:\;N_t(i,\GG)\textnormal{  contains a cycle}\}\big|\,/\,|V|\;.
       \end{split}
       \end{equation*}
   \end{defn}
   \begin{remark}\label{rmk:p}
    Given $(\GG,\ux)$ and $i\in V$, suppose that $\fff\cap N_t(i,\GG)$ is connected for every $\fff\in \FFF(\ux,\GG)$. Then,
    \begin{equation*}
        \E\big[X_i^t[\bGG,\ubz, \uz_t,\uL_t] \;\big|\; (\bGG,\bux)=(\GG,\ux)\big]=p_{\uz_t}\big( \sig_t(i,\GG),\uL_t\big)\one\left(\textnormal{$N_t(i,\GG)$ is a tree and $\uL_{t}(i,\GG)=\uL_t$}\right).
    \end{equation*}
    Here, $p_{\uz_t}(\sig_t,\uL_t)$ is defined as follows for $\uz_t \in \{0,1\}^{V(\TTT_{d,k,t})}, \uL_t\in \{0,1\}^{E_{\sf in}(\TTT_{d,k,t})}$, $\sig_t\equiv (\sigma_e)_{e\in E(\TTT_{d,k,t})}\in \CCC_t$. Note that $(\sig_t,\uL_t)$ not only determines the $\{0,1,\ff\}$ configuration on $V(\TTT_{d,k,t})$, but also the \textit{hanging} free trees on $\partial \TTT_{d,k,t}$. That is, for $e\in \partial \TTT_{d,k,t}$, $\sigma_e$ encodes the free component $\fff_e$, if any, that $e$ resides in. Assuming that all the free components $\fff_e$ are disjoint for $e\in \partial \TTT_{d,k,t}$, $p_{\uz_t}(\sig_t,\uL_t)$ is the probability of obtaining $\uz_t$ when we independently sample for each $\fff\in \{\fff_e\}_{e\in E(\TTT_{d,k,t})}$, a $\naesat$ solution $\ubz_{V(\fff)}\in \{0,1\}^{V(\fff)}$ uniformly at random among those which are coarsened to $\fff$. When $\{\fff_e\}_{e\in E(\TTT_{d,k,t})}$ are all free trees, the value $p_{\uz_t}(\sig_t, \rev{\uL_t})$ can be expressed in terms of \textit{belief-propagation}.
   \end{remark}
   The main component in obtaining the bias control \eqref{eq:goal:bias} is to enhance the concentration of \rev{the} coloring profile as stated in Theorem \ref{thm:concentration} to that of \rev{the} $t$-coloring profile. \begin{defn}\label{def:Xi}
       For $C>0$, let $\Xi_C\equiv \Xi_{C,n}$ be the set of coloring profile\rev{s} $\xi$ which satisfy both $\norm{\xi-\xi^\star}_{\tsq}\leq \frac{C}{\sqrt{n}}$ and $\E_{\xi}\big[N_{\cyc}(2t;\GG)\big]\leq C$, where $N_{\cyc}(2t;\GG)$ denotes the number of cycles in $\GG$ of length at most $2t$.
   \end{defn}
   \begin{thm}\label{thm:coupling}
    For $k\geq k_0$ and $\alpha \in (\alpha_{\cond}(k),\alpha_{\sat}(k))$ there exists an explicit $\nu^{\star}_t\equiv \nu^\star_t[\alpha,k]\in \PPP(\Omega_t)$ in Definition \ref{def:optimal:t} below, such that the following holds. For any $\eps>0, C>0$, and $t\geq 1$, there exists a constant $K\equiv K(C,\eps,k,t)>0$ such that uniformly over any vector $w \in [-1,1]^{\Omega_t\cup\{\cyc\} }$ and $\xi\in \Xi_C$, we have
   \begin{equation*}
       \P_{\xi}\bigg(\Big|\big\langle \nu_t[\bGG, \bux] -\nu_t^\star \,,\, w\big\rangle\Big|\geq \frac{K}{\sqrt{n}}\bigg)\leq \eps\,.
   \end{equation*}
   \end{thm}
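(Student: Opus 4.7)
The strategy is a bias--variance decomposition via Chebyshev's inequality. It suffices to show that uniformly over $\xi \in \Xi_C$ and $w \in [-1,1]^{\Omega_t\cup\{\cyc\}}$ one has
\begin{equation*}
\Big|\E_\xi\langle \nu_t[\bGG,\bux],\, w\rangle - \langle \nu_t^\star,\, w\rangle\Big|\leq \frac{K_1}{\sqrt{n}},\qquad \Var_\xi\big(\langle \nu_t[\bGG,\bux],\, w\rangle\big) \leq \frac{K_2}{n},
\end{equation*}
with $K_1,K_2$ depending only on $C,k,t$; Chebyshev then delivers the stated probabilistic bound with $K=K_1+\sqrt{K_2/\eps}$.

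The variance is handled by the switching/Efron--Stein inequality on the uniformly random half-edge matching that defines $\P_\xi$ (Remark \ref{rmk:P:xi}). After the free components prescribed by $\xi$ are placed, $\bGG$ is obtained by matching the remaining color-$\{\rr,\bb,\fs\}$ half-edges uniformly among compatible pairs. Swapping two matched pairs alters $\sig_t(v,\bGG)$ for at most $O((kd)^{2t-1})$ vertices $v$, so $n\langle \nu_t,w\rangle = \sum_v w(\sig_t(v,\bGG),\uL_t(v,\bGG))$ shifts by $O_{k,t}(1)$ since $\|w\|_\infty\le 1$. Summing the Efron--Stein estimate over the $\Theta(n)$ swap coordinates yields $\Var_\xi(\langle \nu_t,w\rangle) = O_{k,t}(1/n)$. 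The $\cyc$ coordinate is negligible: $\E_\xi[\nu_t(\cyc)]\le \E_\xi[N_\cyc(2t;\bGG)]/n \le C/n$.

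For the bias, I construct an explicit coupling between the breadth-first exploration of $N_t(\rho,\bGG)$ from a uniformly chosen root $\rho$ under $\P_\xi$ and the idealized tree process defining $\nu_t^\star$. Under $\P_\xi$, revealing a half-edge either (a) triggers the attachment of an entire free component $\fff$ en bloc---the remainder of $\fff$ being determined by the color $\sigma_e=(\fff_e,e)$ via Remark \ref{rmk:comp:col}---or (b) exposes a uniformly random compatible partner of color $\rr,\bb,\fs$, leading into a clause whose type is drawn from $\hat B(\cdot\mid\tau)$. The tree process runs identically with $\xi^\star$ in place of $\xi$ and with partners drawn independently. The coupling fails on three events: (i) $N_t(\rho,\bGG)$ contains a cycle, with probability $O_{k,t}(1/n)$ since $\E_\xi[N_\cyc(2t;\bGG)]\le C$; (ii) the finite-size correction of the partner distribution after revealing prior half-edges, contributing $O_{k,t}(1/n)$ per step; (iii) a mismatch of the size-biased free-component attachment between $\xi$ and $\xi^\star$. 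The third term is controlled exactly by $\|\xi-\xi^\star\|_\tsq\le C/\sqrt n$ because the probability of attaching component $\fff$ to a given half-edge is proportional to $(v_\fff+f_\fff)p_\fff$, which is precisely the weighting built into the norm \eqref{eq:def:tsq:norm}. Summing the failure probabilities over the BFS steps---finite in expectation thanks to the exponential-decay hypothesis $\xi\in\ee_{1/4}$ inherited through $\Xi_C$ from Theorem \ref{thm:concentration}---yields the $K_1/\sqrt n$ bias bound on each of $\Omega_t$ and the $\cyc$ coordinate.

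The principal obstacle is that free components intersecting $N_t(\rho)$ have sizes $v_\fff,f_\fff$ which are unbounded in $n$, merely controlled in distribution by Lemma \ref{lem:exp:decay}. This is precisely why the $\tsq$-norm weights components by $v_\fff+f_\fff$: BFS encounters free components in a size-biased fashion, and the relevant closeness of $\xi$ to $\xi^\star$ is in this Wasserstein-type norm rather than in $\ell^1$. Exponential decay simultaneously provides the moment bounds that tighten the tails of the total BFS length, ensuring that the $O_{k,t}(1/n)$ finite-size corrections from item (ii) indeed sum to an error $\ll 1/\sqrt n$. Once the bias and variance bounds are in place, Chebyshev and a union bound over the (trivial) $\cyc$ coordinate complete the proof.
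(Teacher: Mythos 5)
Your bias--variance decomposition is the same one the paper uses (Proposition~\ref{prop:coupling} is exactly the pair of estimates you need, followed by Chebyshev), and your treatment of the bias term is essentially the paper's: a breadth-first exploration of $N_t(\rho,\bGG)$ under the colored configuration model (Observation~\ref{obs:config}, Remark~\ref{rmk:exploration}) is coupled to the $\xi^\star$-broadcast process, with errors coming from cycles, sampling-without-replacement tilts, and the $\xi$-vs-$\xi^\star$ discrepancy, the last of which is controlled by $\|\xi-\xi^\star\|_{\tsq}\le C/\sqrt n$ (Lemma~\ref{lem:tilt} is the paper's version of your item (iii)). Where you genuinely depart is the variance. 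The paper couples a \emph{pair} of BFS explorations on the same $\bGG$ against two independent ones, obtaining $d_{\tv}(\E_\xi[\bnu_t\otimes\bnu_t],\E_\xi\bnu_t\otimes\E_\xi\bnu_t)\lesssim_{k,t}n^{-1}$ directly; you instead invoke a bounded-differences/Efron--Stein argument on the random half-edge matchings. This is a legitimate alternative: swapping two matched $\{\rr,\bb,\fs\}$-colored pairs alters $\sig_t(v,\bGG)$ only for the $O_{k,t}(1)$ vertices within distance $2t$ of the four touched half-edges (the free-component interiors are fixed once $\xi$ is conditioned on, so no cascading), giving a per-swap change of $O_{k,t}(1/n)$ in $\langle\nu_t,w\rangle$ and hence $\Var_\xi=O_{k,t}(1/n)$. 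This route is more off-the-shelf and arguably cleaner than the pair-coupling, at the cost of care in deploying a permutation-invariant concentration inequality rather than vanilla Efron--Stein for product spaces.

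Two small corrections. First, $\Xi_C$ (Definition~\ref{def:Xi}) imposes $\|\xi-\xi^\star\|_{\tsq}\le C/\sqrt n$ and $\E_\xi[N_{\cyc}(2t;\GG)]\le C$, but \emph{not} $\xi\in\ee_{1/4}$, so your claim that this hypothesis is ``inherited through $\Xi_C$'' is unavailable. Fortunately you do not need it: the BFS has at most $(kd)^{2t}=O_{k,t}(1)$ steps deterministically, since free components are attached en bloc via Remark~\ref{rmk:comp:col}, so there is no ``BFS length tail'' to control. Second, the size-biased attachment weight of a free component $\fff$ is proportional to the relevant \emph{boundary half-edge count}, not literally to $(v_\fff+f_\fff)p_\fff$; but since for uni-cyclic $\fff$ one has $\|\ufb_\fff\|_1\asymp_k v_\fff+f_\fff$ this is immaterial, and the aggregate discrepancy is indeed $\lesssim_k\|\xi-\xi^\star\|_{\tsq}$, which is what you use.
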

 
   The proof of Theorem \ref{thm:coupling} is provided in Section \ref{sec:coupling}. For the remainder of this subsection, we discuss the high-level ideas in proving Theorem \ref{thm:coupling}.
   
   The measure $\nu_t^\star$ in Theorem \ref{thm:coupling} can be described by a so-called \textit{broadcast} model based on $\xi^\star$ (see Section \ref{subsec:broadcast}). Indeed, this description is what we will use to prove Theorem \ref{thm:coupling}. Let $\E_{\xi}\bnu_t\in \PPP(\Omega_t\cup\{\cyc\})$ be the conditional expectation given $\xi$ of the random element $\bnu_t\equiv \nu_t[\bGG,\bux\,]$ in $\PPP(\Omega_t\cup\{\cyc\})$. Then, by Chebyshev's inequality, Theorem \ref{thm:coupling} is implied by
   \begin{equation}\label{eq:coupling}
       d_{\tv}\left(\,\E_{\xi} \bnu_t \,,\,\nu_t^\star \,\right)\lekt n^{-1/2}\,,\quad\quad d_{\tv}\left(\,\E_{\xi}[\bnu_t\otimes \bnu_t]\,,\, \E_{\xi}\bnu_t\otimes \E_{\xi}\bnu_t\,\right)\lekt n^{-1} \,,
   \end{equation}
   uniformly over $\xi\in \Xi_C$, where $\bnu_t\otimes \bnu_t\in \PPP\left((\Omega_t\cup \{\cyc\})^2\right)$ is the product measure of $\bnu_t$'s.
   
   We establish the first claim in \eqref{eq:coupling} by coupling the measures $\E_{\xi}[\bnu_t]$ and $\nu_t^\star$. Since $\E_{\xi}[\bnu_t]$ can be described by a certain \textit{sampling with replacement} procedure based on $\xi$ and $\nu_\star^t$ is the analog \textit{sampling without replacement} procedure based on $\xi^\star$, we construct a coupling of the two procedures based on a coupling of $\xi$ and $\xi^\star$. The $O_{k,t}(n^{-1/2})$ probability of error comes from the coupling of $\xi$ and $\xi^\star$ since the difference between sampling with or without \rev{replacement} only induces error of probability $O_{k,t}(n^{-1})$. Similarly, we establish the second claim in \eqref{eq:coupling}, where the dominant probability of error now comes from the latter difference.

   In the coupling argument above, rare spins might cause problems. However, recalling Remark \ref{rmk:comp:col}, if $e\in E$ is contained in a free component, $\sigma_e=(\fff_e,e)$ completely determines the colors of adjacent edges, $(\sigma_{e^\prime})_{e^\prime \in \delta v(e)\setminus e\,\sqcup\, \delta a(e)\setminus e}$. Moreover, the condition $\norm{\xi-\xi^\star}_{\tsq}=o_n(1)$ guarantees that the number of boundary spins $\{\rr,\bb,\fs\}$ are at least $\Omega_k(n)$. Therefore, Remark \ref{rmk:comp:col} rules out the difficulties coming from the rare spins, and this is the sole reason we work with the more complicated component colorings instead of the simpler `coloring configuration' defined in \cite{ssz22}.

   \begin{proof}[Proof of Theorem \ref{thm:main}]
   It suffices to prove \eqref{eq:thm:equiv} for any fixed $(\uz_t, \uL_t)\in \{0,1\}^{V(\TTT_{d,k,t})}\times \{0,1\}^{E_{\sf in} (\TTT_{d,k,t})}$. To this end, fix any $\eps>0$. Then, recalling the set $\Xi_C$ from Definition \ref{def:Xi}, there exists $C=C(\eps,k,t)$ such that
   \begin{equation}\label{eq:Xi:whp}
   \P\left(\xi[\bGG,\bux]\rev{\notin} \Xi_C\right)\leq \frac{\eps}{3}\,.
   \end{equation}
   \rev{This is because} for large enough $C>0$, Theorem \ref{thm:concentration} guarantees that $\norm{\xi[\bGG,\bux]-\xi^\star}_{\tsq}\leq C n^{-1/2}$ holds with probability at least $1-\eps/6$. \rev{Moreover, since} $\E\big[\E\big[N_{\cyc}(2t;\GG)\given \xi[\bGG,\bux]\big]\big]=\E[N_{\cyc}(2t;\GG)]\lekt 1$ holds by tower property, Markov's inequality shows that $\E_{\xi}\big[N_{\cyc}(2t;\GG)\big]\leq C$ is satisfied with probability at least $1-\eps/6$.

   Now, with $C=C(\eps,k,t)$ that satisfies \eqref{eq:Xi:whp}, Theorem \ref{thm:coupling} \rev{yields} that there exists $K=K(\eps,t,k)$ such that 
   \begin{equation}\label{eq:fav:1}
   \sup_{\xi\in \Xi_C}\P_{\xi}\bigg(\Big|\big\langle \nu_t[\bGG, \bux] -\nu_t^\star \,,\, p_{\uz_t,\uL_t} \big\rangle\Big|\geq \frac{K}{\sqrt{n}}\bigg)\leq \frac{\eps}{3}\,,
   \end{equation}
   where $p_{\uz_t,\uL_t}\in [0,1]^{\Omega_t\sqcup\{\cyc\}}$ is defined by  $p_{\uz_t,\uL_t}(\sig_t,\uL^\prime_t)=p_{\uz_t}(\sig_t,\uL_t)\one(\uL^\prime_t=\uL_t)\in [0,1]$ for $(\sig_t,\uL^\prime_t)\in \Omega_t$ (see Remark \ref{rmk:p} for the definition of $p_{\uz_t}(\sig_t,\uL_t)$), and $p_{\uz_t,\uL_t}(\cyc)\equiv 0$. Moreover, the result of Appendix B in \cite{ssz22} imply that $\PP^t_{\star}$ defined in Section \ref{s:lwl} satisfies $\PP_\star^t(\uz_t,\uL_t)=\sum_{\sig_t\in \CCC_t} \nu_t^\star(\sig_t,\uL_t)p_{\uz_t}(\sig_t,\uL_t)$, and for completeness we have provided its proof in Appendix~\ref{sec:appendix:B}. To this end, we define $\fav \equiv \fav(\eps,k,t)$ to be the favorable set of $(\GG,\ux)$ which satisfies the following 4 conditions.
   \begin{equation*}
       (i)~\xi[\GG,\ux]\in \ee_{\frac{1}{4}}~~~(ii)~\xi[\GG,\ux]\in \Xi_C~~~ (iii)~N_{\cyc}^{\sf b}(2t;\GG,\ux)\leq n^{1/3}~~~~(iv)~\big|\big\langle \nu_t[\bGG, \bux]\,,\, p_{\uz_t,\uL_t} \big\rangle-\PP_{\star}^t(\uz_t,\uL_t)\big|\leq \frac{K}{\sqrt{n}}
   \end{equation*}
    Then, Lemma \ref{lem:exp:decay}, Lemma \ref{lem:bdry:cycle}, and equations \eqref{eq:Xi:whp}, \eqref{eq:fav:1} show that \begin{equation*}
    \P\big((\GG,\ux)\notin \fav\big)\leq \rev{\frac{2\eps}{3}}+o_n(1)\,.
    \end{equation*}
    \rev{Note that by} the first condition $\xi[\GG,\ux]\in \ee_{\frac{1}{4}}$, the variance control for $\fav$ in \eqref{eq:goal:variance} holds by Lemma \ref{lem:var:control}. With regards to the bias control \eqref{eq:goal:bias}, let $V_{\g}[\GG,\ux]\subset V$ be the set of $v\in V$ such that $N_t(v,\GG)$ is a tree and \rev{that} $N_t(v,\GG)\cap \fff$ is connected \rev{for every $\fff\in \FFF(\ux,\GG)$}. Then for $v\in V_\g[\GG,\ux]$, we have by Remark \ref{rmk:p} that $\E\big[X_i^t[\bGG,\ubz, \uz_t,\uL_t] \;\big|\; (\bGG,\bux)=(\GG,\ux)\big]=p_{\uz_t,\uL_t}\big(\sig_t(i,\GG),\uL_{N_t(i,\GG)}\big)$. Thus, for $(\GG,\ux)\in \fav$,
   \begin{equation*}
   \bigg|\frac{1}{n}\E\Big[\sum_{i=1}^{n}X_i^t\bgiven (\bGG,\bux)=(\GG, \ux) \Big]-\PP_{\star}^{t}(\uz_t,\uL_t)\bigg| \leq \frac{K}{\sqrt{n}}+\frac{\big|(V_{\g}[\GG,\ux])^{\textsf{c}}\big|}{n}\,.
   \end{equation*}
   \rev{Observe that} if $v\notin V_{\g}[\GG,\ux]$, then \rev{there exists a cycle $\CC$ that overlaps with $N_t(v,\GG)$, and have at most $2t$ edges that are not contained in free components of $\ux$} (see the paragraph above Lemma \ref{lem:bdry:cycle}). Thus, we have for $(\GG,\ux)\in \fav$ that
   \rev{
   \begin{equation*}
   \big|(V_{\g}[\GG,\ux])^{\textsf{c}}\big|\leq 2t(kd)^t \cdot \big|N_{\cyc}^{\sf b}(2t;\GG,\ux)\big|\lekt n^{1/3}\,.
   \end{equation*}
   }
   Therefore, both \rev{of the} conditions \eqref{eq:goal:variance} and \eqref{eq:goal:bias} hold on the set $\fav$ with $\P((\GG,\ux)\notin \fav)\leq \eps$. By Chebyshev's inequality, this concludes the proof of \eqref{eq:thm:equiv}, \rev{proving} Theorem \ref{thm:main}.
   \end{proof}

    \section{Concentration of coloring profile in $\ell^1$-type distance}
    \label{sec:concentration:free:tree}
    In this section, we prove Lemma \ref{lem:exp:decay} and Theorem \ref{thm:concentration}. While Lemma \ref{lem:exp:decay} follows from the results of \cite{NSS, nss2}, Theorem \ref{thm:concentration} requires careful control \rev{of} the frequencies of large free trees. Throughout, we consider $k$ large enough so that the results of \cite{NSS, nss2} hold and \rev{let} $\alpha \in (\alpha_{\cond}(k),\alpha_{\sat}(k))$.
    
    \subsection{Exponential decay of free component profile}
    Recall the notation of coloring profile $\xi[\GG,\ux]:=(B[\GG, \ux], \{p_{\fff}[\ux]\}_{\fff\in \FFF})$ from Definition \ref{def:empirical:boundary}. We denote $s_\circ(C)\equiv s_{\circ}(C,n)\equiv s^\star-\frac{1}{2\la^\star}\frac{\log n}{n}+\frac{C}{n}$ for $C\in \R$, where $s^\star\equiv s^\star(\alpha,k)>0$ and $\la^\star\equiv \la^\star(\alpha,k)\in (0,1)$ are defined in \eqref{eq:opt:la}. Throughout, we let $(\bGG, \bux)$ be a random frozen configuration drawn with probability proportional to its size (cf. $(a)$ of Observation \ref{obs:sampling}). The proof of Lemma \ref{lem:exp:decay} \rev{follows from} the \rev{p}roposition below, which is established using the estimates from \cite{NSS, nss2}.
    \begin{prop}\label{prop:nss}
    For $\uC\equiv (C_1,C_2) \in \R^2$ with $C_2>0$, define $\Gamma(\uC) \equiv \Gamma_{n}(\uC)$ by the set of frozen configurations $(\GG,\ux)$ which satisfy the following conditions:
    \begin{itemize}
        \item $\left|\sol(\GG)\right|\in [e^{n s_{\circ}(-C_2)}, e^{n s_{\circ}(C_2)]}$ and $\size(\ux;\GG)\geq e^{ns_{\circ}(C_1)}$.
        \item Let $\Xi_0$ be the set of coloring profile $\xi=(B,\{p_{\fff}\}_{\fff\in \FFF})$ such that $\xi\in \ee_{\frac{1}{4}}, \sum_{\fff\in \FFF\setminus \FFF_{\tr}} p_{\fff}[\GG,\ux]\leq \frac{\log n}{n}$, and $p_{\fff}[\GG,\ux]=0$ if $\fff\in \FFF$ is multi-cyclic, i.e. contains at least $2$ cycles. Then, $\xi(\GG,\ux) \in \Xi_0$ holds.
    \end{itemize}
    Then, for any $\eps>0$, there exists $\uC\equiv \uC(\eps,\alpha,k)$ such that $(\bGG,\bux)\in \Gamma(\uC)$ with probability at least $1-\eps$.
    \end{prop}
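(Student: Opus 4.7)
The plan is to assemble Proposition \ref{prop:nss} from the quantitative estimates already established in \cite{NSS, nss2}, divided into the three assertions of the statement: concentration of $|\sol(\bGG)|$ at the scale $\exp(ns_\circ(\cdot))$, a lower bound on the size of the sampled cluster $\bux$, and the structural conditions defining $\Xi_0$.

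First I would invoke the partition function concentration from \cite{NSS, nss2}: under the $\la^\star$-weighting that defines $s^\star$, one has $|\sol(\bGG)| = \Theta\bigl(\exp(ns^\star - \tfrac{1}{2\la^\star}\log n)\bigr)$ with probability $1 - \eps/3$, which, for $C_2 = C_2(\eps,\alpha,k)$ chosen sufficiently large, gives the first half of the first bullet. Next, using the sampling description in Observation \ref{obs:sampling} -- where $\bux$ is drawn proportional to $\size(\bux;\bGG)$ -- the cluster-size lower bound follows because in the $\rsb$ condensation regime the dominant contribution to $\sum_{\ux} \size(\ux;\bGG) = |\sol(\bGG)|$ comes from $O(1)$ clusters whose individual sizes are each of the same order as $|\sol(\bGG)|$, a fact established in \cite{NSS, nss2}. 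Thus for a size-biased cluster, $\size(\bux;\bGG) \geq c_\eps \, |\sol(\bGG)|$ with probability $1 - \eps/3$, which combined with the lower bound on $|\sol(\bGG)|$ yields $\size(\bux;\bGG) \geq e^{n s_\circ(C_1)}$ for an appropriate $C_1 = C_1(\eps,\alpha,k)$.

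For the second bullet I would appeal directly to the structural results of \cite{NSS} on the free-component geometry of the size-biased frozen configuration. The exponential decay $\{p_\fff[\bGG,\bux]\}_{\fff\in\FFF} \in \ee_{1/4}$ reflects the subcriticality of the branching process governing connected components of free variables, with decay rate controlled by the free-variable density being at most $7/2^k$ (cf. \cite{dss16}); choosing $1/4$ as the rate is well inside the subcritical regime. The absence of multi-cyclic free components and the $\log n / n$ bound on the density of (single-)cyclic free components follow from the configuration-model estimate that the expected number of short cycles in a random $(d,k)$-regular bipartite graph is $O(1)$: a first-moment computation on component colorings, as performed in \cite{NSS}, shows multi-cyclic free components contribute $o(1/n)$ in expectation under the weighted measure, while single-cyclic free components contribute $O(1/n)$ per fixed topology and at most $O(\log n/n)$ after summing over the subcritically-many topologies; Markov's inequality then handles each event. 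A union bound over the three pieces, with $\uC(\eps,\alpha,k)$ chosen to accommodate all three, yields the claim.

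The main obstacle will be (ii): extracting the cluster-size lower bound at the precise scale $s_\circ(C_1) = s^\star - \tfrac{1}{2\la^\star}\tfrac{\log n}{n} + \tfrac{C_1}{n}$ requires not merely an order-of-magnitude estimate but tightness of the random variable $\log |\sol(\bGG)| - n s^\star + \tfrac{1}{2\la^\star}\log n$, together with a comparison between $|\sol(\bGG)|$ and the maximum cluster size, uniformly enough to survive size-biasing. The required tightness is precisely what the refined moment analysis in \cite{NSS, nss2} provides, but one must carefully transfer the estimates on the joint partition function to the marginal $\size(\bux;\bGG)$ under the size-biased cluster sample, which is the step where the $\Theta(1)$ ratio between the largest cluster and the total partition function is used.
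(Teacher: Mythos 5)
Your treatment of the cluster-size lower bound in the first bullet has a genuine gap, and it is the crux of the proposition. You reduce $\size(\bux;\bGG)\geq e^{ns_\circ(C_1)}$ to the claim that the size-biased cluster satisfies $\size(\bux;\bGG)\geq c_\eps\,|\sol(\bGG)|$ with probability $1-\eps/3$, which you attribute to \cite{NSS,nss2}. But what those works establish is that the \emph{maximum} cluster has size $\Theta(\exp(ns^\star-\tfrac{1}{2\la^\star}\log n))$, the same scale as $|\sol(\bGG)|$. That the largest cluster is an $\Omega(1)$ fraction of the total does \emph{not} imply that a size-biased sample lands on an $\Omega(1)$-fraction cluster with high probability: one could have, e.g., one cluster of mass $N/2$ and $\sqrt{N}$ clusters of mass $\tfrac12\sqrt{N}$, in which case the maximum is $\Theta(N)$ but the size-biased sample has mass $o(N)$ with probability $\tfrac12$. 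What is actually needed is that for every $\eps>0$ there is $\delta_\eps>0$ so that the \emph{total} mass of clusters of size $\leq\delta_\eps|\sol(\bGG)|$ is at most $\eps|\sol(\bGG)|$ w.h.p., i.e.\ a tail estimate on the whole small-cluster spectrum rather than a two-point comparison of the max against the total; and this is not the same statement, nor is it what the cited $\Theta$ bound gives.

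The paper avoids this entirely by going through a truncated first moment. Writing $\overline{\bZ}_{1,s}=\sum_{\ux}\size(\ux;\bGG)\one\{\size(\ux;\bGG)\leq e^{ns}\}$, one has $\P\big(\size(\bux;\bGG)\leq e^{ns_\circ(C_1)}\,\big|\,\bGG\big)=\overline{\bZ}_{1,s_\circ(C_1)}/|\sol(\bGG)|$; conditioning on the already-established lower bound $|\sol(\bGG)|\geq e^{ns_\circ(-C_2)}$ and applying Markov, the problem reduces to the estimate (eq.\ (3.79) of \cite{NSS})
\begin{equation*}
\E\,\overline{\bZ}_{1,s_\circ(C_1)}\;\lesssim_k\;n^{-\frac{1}{2\la^\star}}e^{ns^\star+(1-\la^\star)C_1}\,,
\end{equation*}
whose exponent goes to $-\infty$ as $C_1\to-\infty$ because $\la^\star<1$. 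This is precisely the moment computation that quantifies the smallness of the total mass over small clusters, and it is a finer input than ``max cluster is $\Theta$ of total.'' You would need to invoke this specific bound, not the $\Theta$ comparison, to close the argument. Your treatment of the second bullet is broadly in the spirit of the paper's proof (a weighted first-moment estimate on atypical component colorings, namely Proposition 3.5 of \cite{NSS}, translated to a conditional probability via the same Markov/conditioning device), though your summation over cyclic topologies is informal; the mechanism is right even if the bookkeeping is glossed.
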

    \begin{proof}
        Fix $\eps>0$. By Theorem 1.1 of \cite{NSS}, there exists $C_2\equiv C_2(\eps,\alpha,k)>0$ such that
        \begin{equation*}
            \P\Big(\big|\sol(\bGG)\big|\notin \big[e^{n s_\circ(-C_2)},e^{n s_\circ(C_2)}\big]\Big)\leq \frac{\eps}{3}\,.
        \end{equation*}
         Moreover, since $\P(\bux=\ux\given \bGG)=\size(\ux;\bGG)/|\sol(\GG)|$, we have
        \begin{equation}\label{eq:tech:1}
            \P\Big(\size(\bux;\bGG)\leq e^{ns_\circ(C_1)}~\,\textnormal{and}~~ \big|\sol(\bGG)\big|\in \big[e^{n s_\circ(-C_2)},e^{n s_\circ(C_2)}\big]\bgiven \bGG \Big)\leq e^{-ns_\circ(-C_2)} \overline{\bZ}_{1,s_{\circ}(C_1)}\,,
        \end{equation}
        where 
        \begin{equation*}
            \overline{\bZ}_{1,s_{\circ}(C_1)}\equiv \overline{\bZ}_{1,s_{\circ}(C_1)}(\bGG):= \sum_{\ux\in \{0,1,f\}^{V}}\size(\ux;\bGG)\one\big(\size(\ux;\bGG)\leq e^{n s_{\circ}(C_1)}\big)\,.
        \end{equation*}
        In the proof of Theorem 1.1-$(a)$ in \cite{NSS} (see equation (3.79) therein), it was shown that
        \begin{equation*}
            \E  \overline{\bZ}_{1,s_{\circ}(C_1)}\lesssim_{k} n^{-\frac{1}{2\la^\star}}e^{ns^\star+(1-\la^\star)C_1}\,.
        \end{equation*}
        We remark that in \cite{NSS}, they introduced a truncation of free and red variables, but this truncation only induces a difference that is exponentially small in $n$ (see Lemma 2.12 of \cite{NSS}, or Lemma 3.3 of \cite{ssz22}). Thus, taking expectation in \eqref{eq:tech:1} shows that
        \begin{equation*}
             \P\Big(\size(\bux;\bGG)\leq e^{ns_\circ(C_1)}~\,\textnormal{and}~~ \big|\sol(\bGG)\big|\in \big[e^{n s_\circ(-C_2)},e^{n s_\circ(C_2)}\big]\Big)\leq e^{(1-\la^\star)C_1+C_2}\leq \frac{\eps}{3}\,,
        \end{equation*}
        where we took $C_1\equiv C_1(\eps,\alpha,k)$ small enough so that $e^{(1-\la^\star)C_1+C_2}\leq \eps/3$.
        
        We now establish the second condition of $\Gamma(\uC)$. Let
        \begin{equation*}
        \overline{\bZ}_{\la^\star}\big[(\Xi_0)^{\textsf c}\big]\equiv \overline{\bZ}_{\la^\star}\big[(\Xi_0)^{\textsf c}\big](\bGG):=\sum_{\ux\in \{0,1,f\}^{V}}\size(\ux;\bGG)^{\la^\star}\one\big(\,\xi(\bGG,\ux)\notin \Xi_0\,\big)\,, 
        \end{equation*}
        and similarly, let $\overline{\bZ}_{\la^\star}\equiv \overline{\bZ}_{\la^\star}(\bGG)$ defined by the same equation, but without the indicator term above. Then, it was shown in Proposition 3.5 of \cite{NSS} that
        \begin{equation*}
            \E \overline{\bZ}_{\la^\star}\big[(\Xi_0)^{\textsf c}\big]\lesssim_k \frac{\log^3 n}{n^2} \E \overline{\bZ}_{\la^\star}\lesssim_k \frac{\log^3 n}{n^2} e^{n\la^\star s^\star}\,,
        \end{equation*}
        where the final estimate is from Corollary 3.6 and Theorem 3.21 of \cite{NSS}. Thus, proceeding in a similar manner as in \eqref{eq:tech:1}, we have
        \begin{equation*}
            \P\Big(\xi(\bGG,\bux)\notin \Xi_0~\,\textnormal{and}~~ \big|\sol(\bGG)\big|\in \big[e^{n s_\circ(-C_2)},e^{n s_\circ(C_2)}\big]\Big)\leq \sqrt{n}e^{n\la^\star s^\star+(2-\la^\star)C_2}\cdot \E\overline{\bZ}_{\la^\star}\big[(\Xi_0)^{\textsf c}\big]=o_n(1)\,.
        \end{equation*}
        Therefore, this concludes the proof.
    \end{proof}
    \begin{proof}[Proof of Lemma \ref{lem:exp:decay}]
    For $\eps>0$, consider $\Gamma(\uC)$ for $\uC\equiv \uC(\eps,\alpha,k)$ from the conclusion of Proposition \ref{prop:nss}. Since $\Gamma(\uC)$ contains the coloring profile with exponential decay and the absence of multi-cyclic free components, we have
    \begin{equation*}
        \limsup_{n\to\infty} \P\Big(\xi[\bGG,\bux]\notin \ee_{\frac{1}{4}}~~\textnormal{or}~~p_{\fff}[\bGG,\bux]\neq 0~~\textnormal{for some multi-cyclic}~\fff\in \FFF\Big)\leq \limsup_{n\to\infty}\P\Big((\bGG,\bux)\notin \Gamma(\uC)\Big)\leq \eps\,.
    \end{equation*}
    Since $\eps>0$ is arbitrary, the \textsc{lhs} must equal $0$.
    \end{proof}
    
    Finally, the concentration of the boundary profile $B[\bGG,\bux]$ can also be established using the first moment estimates from Section 3 of \cite{NSS}. We first recall the following notation from \cite{NSS}: for $\la\in [0,1]$ and $s\in [0,\log 2)$,
    \begin{equation}\label{eq:def:Z:lambda}
    \begin{split}
    &\bZ_{\la}:=	\sum_{\ux \in \{0,1,\ff\}^V}\size(\ux;\bGG)^\lambda \one\left\{ \frac{\rr(\ux)}{nd} \vee \frac{\ff(\ux)}{n}\leq \frac{7}{2^k}\right\}\,,\\
    &\bZ_{\la, s}:=	\sum_{\ux \in \{0,1,\ff\}^V}\size(\ux;\bGG)^\lambda \one\left\{ \frac{\rr(\ux)}{nd} \vee \frac{\ff(\ux)}{n}\leq \frac{7}{2^k}, ~e^{ns} \le \textsf{size}(\ux;\GG) < e^{ns+1}\right\}\,,
    \end{split}
    \end{equation}
    where $\rr(\ux)$ denotes the number of forcing edge in $(\bGG,\ux)$ and $\ff(\ux)$ denotes the number of free variables in $\ux$. Similarly, we define the quantities $\bZ_{\la}^{\tr}$ and $\bZ_{\la,s}^{\tr}$ by the contribution of the sum in \eqref{eq:def:Z:lambda} from frozen configurations $\ux$ that does not contain cyclic free components, i.e. $\FFF(\ux,\bGG)\subset \FFF_{\tr}$. Moreover, denote by $\bZ_{\la}[B]$ (resp. $\bZ_{\la,s}[B]$) the contribution to $\bZ_{\la}$ (resp. $\bZ_{\la,s}$) from frozen configuration\rev{s} $\ux$ having boundary profile $B[\bGG,\ux]=B$. The quantities $\bZ_{\la}^{\tr}[B]$ and $\bZ_{\la,s}^{\tr}[B]$ are analogously defined.  
 
    \begin{prop}\label{prop:boundary}
    There exists the \textit{optimal boundary profile} $B^\star\equiv B^\star[\alpha,k]$ so that the following holds. For any $\eps>0$, there exists $C\equiv C(\eps,\alpha,k)>0$ such that uniformly over $|s-s^\star|\leq n^{-2/3}$, we have
    \begin{equation*}
    \E\bZ_{\la^\star,s}\bigg[\norm{B-B^\star}_1\geq \frac{C}{\sqrt{n}}\bigg]\leq \eps \cdot \E\bZ_{\la^\star, s}\,.
    \end{equation*}
    \end{prop}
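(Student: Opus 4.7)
The strategy is a local central limit theorem for the boundary profile $B$ under the $\size^{\lambda^\star}$-reweighted measure, built on the first-moment computation of Section 3 of \cite{NSS}. By the configuration model, one can write
\[
\E\bZ_{\lambda^\star,s}[B] = \exp\!\big(n\,\Phi_s(B) + O_k(\log n)\big),
\]
for an explicit rate functional $\Phi_s$ on the simplex of admissible boundary profiles (those consistent with $(d,k)$-regularity and marginal agreement between $\dot B,\hat B,\bar B$, with free-spin density $\le 7/2^k$, and with the size-window constraint $e^{ns}\le\size(\ux)<e^{ns+1}$). I would define $B^\star$ as the unique maximizer of $\Phi_{s^\star}$; by the variational characterization of the 1\textsc{rsb} fixed point in Proposition~\ref{prop:ssz:physics:bp:fixed}, it coincides with the frequency vector read off from messages sampled i.i.d.\ from $(\dot\mu_{\lambda^\star},\hat\mu_{\lambda^\star})$, and hence matches the boundary component of $\xi^\star$ in Definition~\ref{def:optimal:coloring}.

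Next, I would Taylor expand $\Phi_s$ around $B^\star$. Because $B^\star$ is a critical point and---by the large-$k$ contraction estimates for $\RR_{\lambda^\star}$ from \cite{ssz22, NSS}---the Hessian is strictly negative definite on the constrained tangent space, there exist $c_k,\delta_k>0$ such that
\[
\Phi_s(B) - \Phi_s(B^\star) \;\le\; -c_k\|B-B^\star\|_2^2
\]
for all $\|B-B^\star\|_1\le\delta_k$ and $|s-s^\star|\le n^{-2/3}$. Outside the $\delta_k$-neighborhood, strict concavity yields a uniform gap $\Phi_s(B)-\Phi_s(B^\star)\le -\Omega_k(1)$, so those profiles contribute at most $e^{-\Omega_k(n)}\,\E\bZ_{\lambda^\star,s}$ and are negligible.

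Finally, since $B$ lives on a simplex of dimension depending only on $k$, we have $\|B-B^\star\|_1\lesssim_k\|B-B^\star\|_2$, and the number of admissible $B$'s at resolution $1/n$ is at most $n^{O_k(1)}$. Substituting $x=\sqrt n(B-B^\star)$ and bounding the discrete sum by a Gaussian tail integral yields
\[
\sum_{B:\,\|B-B^\star\|_1\ge C/\sqrt n}\E\bZ_{\lambda^\star,s}[B] \;\lesssim_k\; e^{-c_k' C^2}\,\E\bZ_{\lambda^\star,s},
\]
which is $\le\eps\,\E\bZ_{\lambda^\star,s}$ for $C$ large in terms of $\eps,\alpha,k$.

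The main obstacle is verifying strict negative-definiteness of the Hessian of $\Phi_{s^\star}$ at $B^\star$ on the constrained tangent space. I expect this to follow by combining the spectral-gap / contraction estimates for $\RR_{\lambda^\star}$ at large $k$ established in \cite{ssz22, NSS} with a careful accounting of the reparametrization symmetries of the BP messages---in particular the $0\!\leftrightarrow\!1$ flip, which is a genuine degenerate direction of the unconstrained message-level Hessian but leaves the profile $B$ invariant and hence does not obstruct concentration of $B$ itself.
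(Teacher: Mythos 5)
Your proposal captures the correct mechanism and is, in spirit, the same argument the paper invokes by citation from Section~3 of \cite{NSS}: a rate-function/local-CLT analysis of the boundary-profile marginal of the $\size^{\la^\star}$-tilted measure, with Gaussian tails coming from a negative-definite Hessian on the constrained tangent space. The paper's own proof is almost entirely a pointer: it first truncates to the moderate-deviation window $\|B-B^\star\|_1\le n^{-1/3}$ via the large-deviation bound in \cite[Prop.~3.23]{NSS}, then observes that in that window $\E\bZ_{\la^\star,s}[B]$ and the tree-only quantity $\E\bZ^{\tr}_{\la^\star,s}[B]$ differ by a uniform $(1+o(1))$ constant (cyclic free components only contribute at this order), and finally imports the Gaussian tail $\exp(-\Omega_k(C^2))$ for $\bZ^{\tr}$ from \cite[Lemma~3.16, Prop.~3.17]{NSS}. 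Your sketch skips the tree/cycle split and runs the Hessian argument on $\bZ_{\la^\star,s}$ directly; that is viable but means you carry the cyclic free-component combinatorics through the rate functional, which the paper's route offloads. The obstacle you flag — strict negative-definiteness of the Hessian of $\Phi_{s^\star}$ at $B^\star$ modulo the $0\!\leftrightarrow\!1$ symmetry and the marginal-matching constraints — is indeed the substantive point: it is precisely the content of \cite[Lemma~3.16, Prop.~3.17]{NSS}, established there via the contraction of the $\la^\star$-tilted BP operator at large $k$, so it is available as a citation rather than requiring a fresh derivation. One small correction to your last remark: the $0\!\leftrightarrow\!1$ flip does not fix a generic profile $B$ (it swaps $\rr_0\!\leftrightarrow\!\rr_1$, $\bb_0\!\leftrightarrow\!\bb_1$ in the arguments of $\dot B,\hat B$); what actually saves the argument is that $B^\star$ itself is flip-invariant (inherited from $\dot\mu_{\la}(d x)=\dot\mu_{\la}(d(1-x))$) and the flip is an $\ell^1$-isometry, so concentration around $B^\star$ is unobstructed.
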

    \begin{proof}
        The proof follows from Section 3 of \cite{NSS}, but we include it for completeness. By equations (3.66) and (3.71) in the proof of Proposition 3.23 of \cite{NSS}, we have that uniformly over $|s-s^\star|\leq n^{-2/3}$,
        \begin{equation*}
             \E\bZ_{\la^\star,s}\bigg[\norm{B-B^\star}_1\geq n^{-1/3} \bigg]=o_n(1)\cdot \E\bZ_{\la^\star, s}\,.
        \end{equation*}
        On the other hand, the equations (3.66) and (3.77) in \cite{NSS} show that uniformly over $\norm{B-B^\star}_1\leq n^{-1/3}$ and $|s-s^\star|\leq n^{-2/3}$, we have
        \begin{equation*}
            \E \bZ_{\la^\star,s}[B]=(1+o_n(1))C_1(\alpha,k)\cdot \E\bZ_{\la^\star,s}^{\tr}[B]\,,
        \end{equation*}
        for some $C_1(\alpha,k)>0$ ($C_1(\alpha,k)=e^{\xi^{\textnormal{uni}}(B^\star,s^\star)}$ in \cite{NSS}). Moreover, Lemma 3.16 and Proposition 3.17 of \cite{NSS} imply that (see equations (3.64) and the one below in \cite{NSS}) uniformly over $|s-s^\star|\leq n^{-2/3}$,
        \begin{equation*}
            \E\bZ_{\la^\star,s}^{\tr}\bigg[\norm{B-B^\star}_1\geq \frac{C}{\sqrt{n}}\bigg]\lesssim_k \exp\left(-\Omega_k(C{\,^2})\right)\E\bZ_{\la^\star,s}^{\tr}\,.
        \end{equation*}
        Since $\E\bZ_{\la^\star,s}^{\tr}\leq \E \bZ_{\la^\star,s}$, taking $C$ large enough completes the proof.
    \end{proof}

    \subsection{Concentration of free tree profile}
      Having established the concentration of boundary profile in Proposition \ref{prop:boundary}, we now establish the concentration of \rev{the} free component profile as in Theorem \ref{thm:concentration}. By Proposition \ref{prop:nss}, we have with high probability that the number of cyclic free components is at most $\log n$ and \rev{that} the largest free component is of size $O_k(\log n)$, thus all the interest is in the concentration of the free tree profile.

      Proposition 3.8 in \cite{NSS} shows that different free components are \textit{independent} in \rev{an} appropriate measure, which enables us to use \rev{the} local central limit theorem for triangular array \cite{Borokov17}. We first introduce the necessary notations: for a coloring profile $\xi =(B,\{p_{\fff}\}_{\fff\in \FFF})$, let $\bZ_{\la}[\xi]\equiv \bZ_{\la}[B, \{p_{\fff}\}_{\fff\in \FFF}]$ denote the contribution to $\bZ_{\la}$ from frozen configuration\rev{s} $\ux$ with coloring profile $\xi[\bGG,\ux]=\xi$. Note that in order for $\bZ_{\la}[\xi]\neq 0$, $B$ and $\{p_{\fff}\}_{\fff\in \FFF}$ must be \textit{compatible} in the following sense (see Definition 3.2 in \cite{NSS}).
      \begin{defn}\label{def:compat}
       For a free component $\fff$, let $\mathfrak{b}_{\fff}(\bb_x), x\in \{0,1\},$ count the number of boundary half-edges $e\in \dot{\partial}\fff$ such that the spin-label equals $x$. Thus, $\fb_{\fff}(\bb_0)+\fb_{\fff}(\bb_1)=|\dot{\partial}\fff|$ holds. Further denote $\fb_{\fff}(\fs)=|\hat{\partial}\fff|$. We define the vector $\ufb_{\fff}\in \N^3$ associated with $\fff\in \FFF$ as $\ufb_{\fff}:=(\,\fb_{\fff}(\bb_0),\,\fb_{\fff}(\bb_1),\,\fb_{\fff}(\fs)\,)$. Also, let $\uh\equiv \uh[\{p_{\fff}\}_{\fff\in \FFF}] =( h_{\bb_0}, \, h_{\bb_1}, \, h_{\fs})\in \R^{3}$ be $h_{\sigma}=\sum_{\fff\in \FFF}\fb_{\fff}(\sigma)p_{\fff}$ for $\sigma\in \{\bb_0,\bb_1,\fs\}$. Then, the free component profile $\{p_{\fff}\}_{\fff\in \FFF}$ is \textbf{compatible} with the boundary profile $B$, which we denote by $\{p_{\fff}\}_{\fff\in \FFF}\sim B$, if for $\tau\in \{\bb,\fs\}$,
\begin{equation}\label{eq:compatible}
	\begin{split}
	\bar{B}(\tau)&=\frac{1}{d} \sum_{\underline{\sigma}\in\{\rr,\bb\}^d } \dot{B}(\underline{\sigma})  \sum_{i=1}^d \one(\sigma_i =\tau)+ \frac{\one(\tau=\fs)}{d} h(\tau)\\
   &=\frac{1}{k}
	\sum_{\underline{\sigma}\in \{\rr,\bb,\fs\}^k} \hat{B} (\underline{\sigma})
	\sum_{j=1}^k \one(\sigma_j =\tau)
	+
	\frac{\one(\tau\in \{\bb_0,\bb_1\})}{d}  h(\tau)\, ,
	\end{split}
	\end{equation}
    and we have
    \begin{equation}\label{eq:compat:var:clause}
        \sum_{\fff\in \FFF}p_{\fff}v_{\fff} = 1-\langle \dot{B}, \textbf{1} \rangle\,,\quad\quad\sum_{\fff\in \FFF}p_{\fff}f_{\fff} = \frac{d}{k}(1-\langle \hat{B}, \textbf{1} \rangle)\,,\quad\quad \sum_{\fff\in \FFF}p_{\fff}e_{\fff} = d(1-\langle \bar{B}, \textbf{1} \rangle)\,,
    \end{equation}
    where $\textbf{1}$ denotes the all-$1$ vector. With slight abuse of notation\rev{s}, we call $\uh\equiv \uh[B]\in \R^3$ the \textbf{induced boundary profile}, \rev{which is determined from the equation \eqref{eq:compatible}}. We also remark that the last equality in \eqref{eq:compat:var:clause} is actually redundant, since it is implied by the other equalities in \eqref{eq:compatible} and \eqref{eq:compat:var:clause}.
    \end{defn}
    \begin{remark}\label{rmk:number:freetrees}
        \rev{Suppose that} $\{p_{\fff}\}_{\fff\in \FFF}\sim B$ and \rev{that} $p_{\fff}=0$ holds if $\fff$ is multi-cyclic. \rev{Then,} \eqref{eq:compat:var:clause} implies that
        \begin{equation}\label{eq:h:circ}
            h_{\circ}\equiv 1-\langle \dot{B}, \one \rangle+ \frac{d}{k}(1-\langle \hat{B}, \one \rangle) -d(1-\langle \bar{B}, \one \rangle)= \sum_{\fff\in \FFF}p_{\fff}(v_{\fff}+f_{\fff}-e_{\fff})=\sum_{\ttt\in \FFF_{\tr}}p_{\ttt}\,.
        \end{equation}
        Thus, $h_{\circ}\equiv h_{\circ}[B]$ determines the number of free trees $\sum_{\ttt\in \FFF_{\tr}}n_{\ttt}$ when there is no multi-cyclic free component.
    \end{remark}
    \rev{Next, we} define the set of \textit{labeled component} $\LLL(\fff)$ of a free component $\fff \in \FFF$ (see Definition 2.20 in \cite{NSS}). A labeled component $\fff^{\lab}$ is obtained from $\fff$ by adding additional labels on the half-edges and $\fff$ as follows. For $a\in F(\fff)$ (resp. $v \in V(\fff)$ ), arbitrarily label half-edges adjacent to $a$ (resp. $v$) by $1,...,k$ (resp. $1,...,d$). If $\fff$ is cyclic, then add an additional label by first choosing a spanning tree of $\fff$ and labeling their edges by ``tree''. We consider $\fff^{\lab}$ up to graph and label isomorphism, so $|\LLL(\fff)|$ counts the number of labeled components up to such isomorphism. If we let $T_{\fff}$ be the number of spanning trees of $\fff$ ($T_{\ttt}=1$), then, the \textbf{embedding number} of $\fff$ is defined by
		\begin{equation*}
		J_\fff := d^{1-v(\fff)} k^{-f(\fff)} \frac{|\mathscr{L}(\fff)|}{T_\fff}\;.
		\end{equation*}

    \begin{prop}[Proposition 3.7 in \cite{NSS}]
    \label{prop:moment:formula}
        Fix $\la\in [0,1]$. For a coloring profile $\xi=(B, \{p_{\fff}\}_{\fff\in \FFF})$ such that $\{p_{\fff}\}_{\fff\in \FFF}\sim B$, we have
        \begin{equation*}
              \E \bZ_\lambda [B, \{p_\fff \}_{\fff\in \mathscr{F}}] = \frac{n! m!}{nd !} \frac{(nd\bar{B})!}{(n\dot{B})! (m\hat{B})!}\prod_{\sig\in \{\rr,\bb,\fs\}^k}\hat{v}(\sig)^{m\hat{B}(\sig)} \prod_{\fff\in \mathscr{F}}\left[ \frac{1}{(n p_\fff)!} \Big(d^{e_{\fff}-f_{\fff}}k^{f_{\fff}}J_\fff w_\fff^{\la} \Big)^{n p_\fff}\right]\,,
        \end{equation*}
        where $\hat{v}(\sig)$ for $\sig \in \{\rr,\bb,\fs\}^k$ is defined in \eqref{eq:def:vhat:basic} below.
    \end{prop}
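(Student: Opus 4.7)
The approach is a direct first-moment computation via the configuration model. The starting point is to write $\E \bZ_\la[B, \{p_\fff\}] = \sum_{(\GG, \ux)}\P(\bGG = \GG)\,\size(\ux, \GG)^\la\,\one(\xi[\GG, \ux] = \xi)$, where under the configuration model $\P(\bGG = \GG) = 2^{-nd}/(nd)!$ (the literal factor $2^{-nd}$ will cancel against the count of valid literal assignments below). By Observation \ref{obs:sampling}, $\size(\ux, \GG) = \prod_{\fff\in \FFF(\ux, \GG)} w_\fff$, so the weight factors cleanly as $\prod_{\fff \in \FFF} w_\fff^{\la\, n p_\fff}$, where the product is over equivalence classes of free components and each class is represented $np_\fff$ times.

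To count $(\GG, \ux)$ with profile $\xi$, I would enumerate them in stages. First, partition the $n$ variables by boundary type $\sig \in \{\rr, \bb\}^d$: the number of such partitions is the multinomial $n!/\prod_\sig (n\dot B(\sig))!$. Do the analogous thing for clauses, obtaining $m!/\prod_\sig (m\hat B(\sig))!$. Next, for each clause of boundary type $\sig \in \{\rr, \bb, \fs\}^k$, choose its literals together with the spins on its neighbors and which edges are forcing, consistent with the declared coarse type; the number of such local configurations is precisely $\hat v(\sig)$, and this factor also absorbs the configuration-model literal weight $2^{-mk} = 2^{-nd}$, producing $\prod_\sig \hat v(\sig)^{m \hat B(\sig)}$. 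Then, for each equivalence class $\fff \in \FFF$, embed $np_\fff$ unordered copies of $\fff$ into the graph: the unordered-copy factor is $1/(np_\fff)!$, and each embedding is parameterized by an element of $\LLL(\fff)$ together with a choice of its constituent variables, clauses, and half-edges. The definition $J_\fff = d^{1-v_\fff} k^{-f_\fff} |\LLL(\fff)|/T_\fff$ packages the internal bookkeeping, with the $T_\fff$ absorbing the spanning-tree label redundancy and the $d, k$ powers tracking the configuration-model half-edge orderings internal to $\fff$. Combined with the per-copy weight $w_\fff^\la$, the net contribution per class is $\tfrac{1}{(np_\fff)!}(d^{e_\fff - f_\fff} k^{f_\fff} J_\fff w_\fff^\la)^{np_\fff}$.

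Finally, the half-edges that do not lie inside any free component must be matched while respecting the boundary coloring. In each coarse color class $\tau \in \{\rr, \bb, \fs\}$, variable and clause half-edges are available in equal number $nd \bar B(\tau)$ (by the compatibility relations \eqref{eq:compatible}), and any of the $(nd\bar B(\tau))!$ matchings within the class is valid; hence the residual matching contributes $\prod_\tau (nd \bar B(\tau))!$. Combined with $\P(\bGG = \GG) \propto 1/(nd)!$, this produces the prefactor $(nd\bar B)!/(nd)!$. Assembling all the pieces gives the claimed formula. The main obstacle is the free-component embedding step: for cyclic $\fff$ one must carefully disentangle contributions from (i) the spanning-tree choice used to parameterize $\LLL(\fff)$, which is corrected by dividing by $T_\fff$; (ii) the extra half-edge assignments at cycle-closing edges, producing the $d^{e_\fff - v_\fff + 1}$ excess relative to tree components; and (iii) the half-edge labelings at the $v_\fff + f_\fff$ internal nodes. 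The compact definition of $J_\fff$ is designed precisely so that tree and cyclic $\fff$ are treated on the same footing, after which the remainder of the argument is systematic bookkeeping.
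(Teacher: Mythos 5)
The paper does not actually prove this proposition; it cites Proposition~3.7 of [NSS] and only notes, in the remark following it, that the proof in [NSS] proceeds by first establishing exactly this formula and then collapsing equivalence classes of free trees. So there is no paper-internal proof to compare against — but the cited proof is indeed a configuration-model first-moment computation, and your outline is in the right spirit and attacks the correct quantities (multinomial corrections for frozen variables and separating clauses, the literal factor $\hat v$, the free-component embedding via $J_\fff$, the residual boundary matching $(nd\bar B)!/(nd)!$).

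That said, two points in your accounting are currently off or under-justified and would need repair before this becomes a proof. First, you assert that partitioning variables by boundary type gives the multinomial $n!/\prod_\sig(n\dot B(\sig))!$, but $\sum_\sig n\dot B(\sig)<n$ because $\dot B$ is supported only on $\{\rr,\bb\}^d$ and omits the free variables; so this quotient is not a multinomial coefficient. The missing factorial of the free-variable count has to be split across the $\frac{1}{(np_\fff)!}$ factors and the internal-labeling count $|\LLL(\fff)|$ (together with the half-edge labelings absorbed into $J_\fff$), and the proposal never closes this loop. Second, your cycle-correction exponent is wrong: you write $d^{\,e_\fff - v_\fff + 1}$ as the excess over trees, but the actual factor in the statement is $d^{\,e_\fff - f_\fff}k^{f_\fff}J_\fff = d^{\,e_\fff - f_\fff - v_\fff + 1}\,|\LLL(\fff)|/T_\fff$; your version is off by $d^{f_\fff}$ and does not vanish for trees. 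Neither slip invalidates the overall strategy, but the embedding step — the place where $J_\fff$ is consumed — is exactly where the bookkeeping is delicate, and these are the places where a careless count breaks.
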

    \begin{remark}
    There is a slight difference between Proposition \ref{prop:moment:formula} and Proposition 3.7 of \cite{NSS} because of the difference \rev{in} the notion of `free tree'. Namely, the `free tree' in \cite{NSS} corresponds to an equivalence class of our notion of free trees. However, the proof of Proposition 3.7 of \cite{NSS} proceeds exactly by first establishing Proposition \ref{prop:moment:formula}, and \rev{summing up the contribution from the free trees in the same equivalence class}. 
    \end{remark}
    We now define the \textit{optimal free tree profile} and \textit{optimal coloring profile} $\xi^\star$ used in \rev{Theorem} \ref{thm:concentration}.
    \begin{defn}\label{def:optimal:coloring}
    For a free tree $\ttt \in \FFF_{\tr}$, the \textbf{optimal free tree profile} $\{p^\star_{\ttt}\}_{\ttt\in \FFF_{\tr}}$ is defined by
    \begin{equation*}
	     p_{\ttt}^\star := \frac{J_{\ttt} w_{\ttt}^{\lambda^\star}}{\bar{\mathfrak{Z}}^\star (\dot{\ZZZ}^\star)^{v_{\ttt}}(\hat{\ZZZ}^\star)^{f_{\ttt}}}\dot{q}^\star(\bb_0)^{|\dot{\partial}\ttt|}(2^{-\lambda}\hat{q}^\star(\fs))^{|\hat{\partial}\ttt|}\,,
    \end{equation*}
    where the quantities $\bar{\mathfrak{Z}}^\star, \dot{\ZZZ}^\star, \hat{\ZZZ}^\star, \dot{q}^\star(\bb_0), \hat{q}^\star(\fs)$ are defined in Definition \ref{def:opt:bdry:1stmo}. For a cyclic free component $\fff\in \FFF\setminus \FFF_{\tr}$, we define $p_{\fff}^\star=0$. Then, \rev{recalling} the optimal boundary profile $B^\star$ in Proposition \ref{prop:boundary}, we define the \textbf{optimal coloring profile} $\xi^\star$ to be $\xi^\star\equiv (B^\star, \{p_{\fff}^\star\}_{\fff\in \FFF}$\rev{)}.
    
    An important observation is that $p_{\ttt}^\star$ can be expressed by
    \begin{equation}\label{eq:theta:opt}
	     p^\star_{\ttt}= J_{\ttt}w_{\ttt}^{\lambda^\star} \exp\left(\big\langle\,\utheta^\star\,,\,(1, \ufb_{\ttt})\,\big \rangle\right)\,,
	\end{equation}
    where the vector $\utheta^\star\equiv (\theta^\star_{\circ},\theta^\star_{\bb_0},\theta^\star_{\bb_1},\theta^\star_{\fs})\in \R^4$ is defined by
    \rev{
    \begin{equation*}
        \begin{split}
            &\theta^\star_{\circ}\equiv \log\big(\frac{(\dot{\ZZZ}^\star)^{\frac{k}{kd-k-d}}(\hat{\ZZZ}^\star)^{\frac{d}{kd-k-d}}}{\bar{\mathfrak{Z}}^\star}\big)\,,~~~\theta^\star_{\bb_0}\equiv \theta^\star_{\bb_1}\equiv \log\big(\frac{\dot{q}^\star(\bb_0)}{(\dot{\ZZZ}^\star)^{\frac{1}{kd-k-d}}(\hat{\ZZZ}^\star)^{\frac{d-1}{kd-k-d}}}\big)\,,\\&\theta^\star_{\fs}\equiv\log\big(\frac{2^{-\la}\hat{q}^\star(\fs)}{(\dot{\ZZZ}^\star)^{\frac{k-1}{kd-k-d}}(\hat{\ZZZ}^\star)^{\frac{1}{kd-k-d}}}\big)\,.
        \end{split}
    \end{equation*}
    }
    \end{defn}
    \begin{remark}\label{rmk:optimal:compatible}
    It was shown in Lemma B.2 of \cite{NSS} that the optimal free tree profile $\{p_{\ttt}^\star\}$ and the optimal boundary profile $B^\star$ are compatible. That is, $h^\star\equiv (h^\star_{\bb_0},h^\star_{\bb_1}, h^\star_{\fs})$ and $h^\star_\circ$ induced from $B^\star$ by the equations \eqref{eq:compatible} and \eqref{eq:h:circ} satisfy $\sum_{\ttt\in \FFF_{\tr}}p_{\ttt}^\star = h^\star_{\circ}$ and $\sum_{\ttt\in \FFF_{\tr}}\fb_{\ttt}(\sigma)p_{\ttt}^\star =h^\star_{\sigma}$ for $\sigma\in \{\bb,\fs\}$.
    \end{remark}
    The following proposition shows the $\ell^1$-type concentration of the free tree profile conditional on the $(B, s, \{p_{\fff}\}_{\fff\in \FFF\setminus \FFF_{\tr}})\in \Psi_{\typ}$, where $\Psi_{\typ}\equiv \Psi_{\typ}(C)$ is defined by the set of $(B, s, \{p_{\fff}\}_{\fff\in \FFF\setminus \FFF_{\tr}})$ which satisfy
    \begin{equation}\label{eq:def:Psi}
        \norm{B-B^\star}_1\leq \frac{C}{\sqrt{n}}\,,~~|s-s^\star|\leq n^{-2/3}\,,~~\sum_{\fff\in \FFF\setminus \FFF_{\tr}}p_{\fff}\leq \frac{\log n}{n}\,,~~\textnormal{and}~~p_{\fff}=0 \textnormal{ if $\fff$ is multi-cyclic or $v(\fff)\geq \log n$.}
    \end{equation}
    \begin{prop}\label{prop:freetree:concentration}
    For any $\eps>0$ and a constant $C>0$, there exists another constant $C_0$, which only depends on $\eps, C,\alpha,k$, such that uniformly over $(B, s, \{p_{\fff}\}_{\fff\in \FFF\setminus \FFF_{\tr}})\in \Psi_{\typ}(C)$,
    \begin{equation}\label{eq:prop:freetree:concentration}
    \sum_{\{p_{\ttt}\}_{\ttt\in \FFF_{\tr}}:\sum_{\ttt}\big|p_{\ttt}-p_{\ttt}^\star\big|v_{\ttt}\geq \frac{C_0}{\sqrt{n}}}\E\bZ_{\la^\star,s}\Big[B,\{p_{\ttt}\}_{\ttt\in \FFF_{\tr}}, \{p_{\fff}\}_{\FFF\setminus \FFF_{\tr}}\Big]\leq \eps\cdot \sum_{\{p_{\ttt}\}_{\ttt\in \FFF_{\tr}}}\E\bZ_{\la^\star,s}\Big[B,\{p_{\ttt}\}_{\ttt\in \FFF_{\tr}}, \{p_{\fff}\}_{\FFF\setminus \FFF_{\tr}}\Big]\,.
    \end{equation}
    \end{prop}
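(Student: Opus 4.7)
The plan is to reinterpret the sum on the left of (4.9) as a conditional probability under a product-Poisson model, and then apply a local central limit theorem combined with the exponential decay of the optimal profile to obtain the $\ell^1$-type concentration at the sharp scale $n^{-1/2}$. This is a quantitative refinement of the $\ell^\infty$ concentration in \cite{NSS}: we must upgrade to a simultaneous $\ell^1$-type control and eliminate the extraneous $\log n$ factor.

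\emph{Poisson reformulation.} First I would apply Proposition \ref{prop:moment:formula} and cancel the factors depending only on $B$ and $\{p_\fff\}_{\fff\notin \FFF_\tr}$ to get
$$
\frac{\E\bZ_{\la^\star,s}[B,\{p_\ttt\},\{p_\fff\}]}{\sum_{\{p_\ttt'\}\sim B}\E\bZ_{\la^\star,s}[B,\{p_\ttt'\},\{p_\fff\}]} = \frac{\prod_\ttt (C_\ttt)^{np_\ttt}/(np_\ttt)!}{\sum_{\{p_\ttt'\}\sim B}\prod_\ttt (C_\ttt)^{np_\ttt'}/(np_\ttt')!},
$$
where $C_\ttt := d^{e_\ttt-f_\ttt}k^{f_\ttt}J_\ttt w_\ttt^{\la^\star}$. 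Using the representation \eqref{eq:theta:opt}, the ratio $\prod_\ttt (C_\ttt/p_\ttt^\star)^{np_\ttt}$ factors as $\exp\langle \underline{\alpha}, \sum_\ttt np_\ttt(1,\ufb_\ttt,v_\ttt,f_\ttt)\rangle$ for a fixed vector $\underline{\alpha}$ depending only on $\utheta^\star,d,k,n$. Since each coordinate of $\sum_\ttt np_\ttt(1,\ufb_\ttt,v_\ttt,f_\ttt)$ is pinned down by the compatibility conditions \eqref{eq:compatible}-\eqref{eq:compat:var:clause} and is hence constant over all $\{p_\ttt'\}\sim B$, we may replace $C_\ttt$ by $p_\ttt^\star$. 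The resulting ratio equals $\P(N_\ttt = np_\ttt \text{ for all }\ttt \mid \text{constraints})$, where $N_\ttt\sim \text{Poi}(np_\ttt^\star)$ are independent and the conditioning event is a finite system of linear equalities (compatibility) plus a width-$1$ band on $\sum_\ttt N_\ttt\log w_\ttt$ coming from the size window $[ns,ns+1)$.

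\emph{Conditional moment bounds via local CLT.} Since $\{p_\ttt^\star\}\sim B^\star$ by Remark \ref{rmk:optimal:compatible} and $\norm{B-B^\star}_1, |s-s^\star|$ are both $o_n(1)$ on $\Psi_\typ$, the target constraint vector deviates from its natural value at $(np_\ttt^\star)_\ttt$ by $O_k(\sqrt n)$ coordinatewise. I would then apply the multivariate local CLT for triangular arrays from \cite{Borokov17} to the image of $(N_\ttt)$ under the finite-rank linear map $A$ encoding all the constraints. This yields
$$
\E[N_\ttt\mid \text{constraints}] = np_\ttt^\star + O_k(\sqrt n\, p_\ttt^\star),\qquad \Var(N_\ttt\mid \text{constraints})\lesssim_k np_\ttt^\star,
$$
so by Jensen's inequality the conditional mean absolute deviation satisfies $\E[|N_\ttt - np_\ttt^\star|\mid\cdot]\lesssim_k \sqrt{np_\ttt^\star} + \sqrt n\, p_\ttt^\star (v_\ttt+f_\ttt+|\ufb_\ttt|)$. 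Multiplying by $v_\ttt$ and summing, the conditional first moment of $\sum_\ttt v_\ttt|N_\ttt-np_\ttt^\star|$ is bounded by $C_k\sqrt n$ provided that
$$
\sum_\ttt v_\ttt\sqrt{p_\ttt^\star} + \sum_\ttt v_\ttt(v_\ttt+f_\ttt+|\ufb_\ttt|)\, p_\ttt^\star = O_k(1).
$$
Both sums are finite by Cauchy-Schwarz within each size class together with the exponential decay $\sum_{\ttt:v_\ttt=v}p_\ttt^\star\lesssim_k 2^{-kv/4}$ (a consequence of $\xi^\star\in\ee_{1/4}$, following the proof of Proposition \ref{prop:nss}) and the elementary bound $|\FFF_{\tr,v}|\leq (Cdk)^v$ on labeled free trees with $v$ variables; for $k$ sufficiently large, the resulting geometric series in $v$ converge. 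Markov's inequality then upgrades the $O_k(\sqrt n)$ conditional first moment to the probability bound \eqref{eq:prop:freetree:concentration} with $C_0 = C_k/\eps$.

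\emph{Main obstacle.} The delicate point is the uniform local CLT of the second step. The linear functionals $\ufb_\ttt,v_\ttt,f_\ttt,\log w_\ttt$ are unbounded in $\ttt$, but the exponential decay confines the effective support to $v_\ttt\lesssim (\log n)/k$, where all coefficients are at most polylog in $n$. I would first truncate to this range, absorbing the tail contribution (of total Poisson mass $o(n^{-1/2})$) into the error budget, then verify the Lindeberg-Feller non-degeneracy and aperiodicity conditions of \cite{Borokov17} for the truncated triangular array. Non-degeneracy follows because the elementary free trees (e.g., the trivial single free variable with all boundary half-edges adjacent to separating clauses) carry $\Omega_k(1)$-fraction of the mass $p_\ttt^\star$ and hence contribute $\Omega_k(n)$ to each boundary-spin variance; aperiodicity holds because these same small trees span the lattice generated by $(1,\ufb_\ttt,v_\ttt,f_\ttt)_\ttt$. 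With these conditions in hand, the quantitative local CLT gives the stated moment bounds and the rest of the argument proceeds as outlined.
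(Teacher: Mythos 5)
Your proposal is essentially the same program as the paper's own argument. Both approaches start from Proposition~\ref{prop:moment:formula}, recast the left-hand side of \eqref{eq:prop:freetree:concentration} as a conditional probability under a product reference measure tilted by $\utheta^\star$ (the paper fixes $\sum_\ttt np_\ttt = nh_\circ$ and draws i.i.d.\ from $\P_{\utheta^\star}$; you Poissonize, which is equivalent after conditioning on the total count), and both are powered by the local CLT for triangular arrays and the exponential decay $\{p^\star_\fff\}\in\ee_{1/2}$ together with $L_v\le (Ck)^v$. Your endgame---bound $\E[\sum_\ttt v_\ttt|N_\ttt - np_\ttt^\star|\mid\AAA]$ by $O_k(\sqrt n)$ via Cauchy--Schwarz across size classes and then invoke Markov---is a cleaner packaging than the paper's dyadic Chernoff with tuned weights $\eps_\ttt=C_0'(v^2(Ck)^v)^{-1}$ and separate typical/atypical cases, and it does yield $C_0\propto 1/\eps$; this is a genuine simplification of the bookkeeping if it can be carried out.

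The gap is in the sentence \emph{``This yields $\E[N_\ttt\mid\text{constraints}]=np_\ttt^\star+O_k(\sqrt n\,p_\ttt^\star)$, $\Var(N_\ttt\mid\text{constraints})\lesssim_k np_\ttt^\star$.''} A multivariate local CLT applied to the constraint functional $A(N)=(\sum_\ttt N_\ttt\ufb_\ttt,\sum_\ttt N_\ttt,\sum_\ttt N_\ttt\log w_\ttt,\ldots)$ gives a pointwise estimate $\P(A(N)=\cdot)\asymp_k n^{-K}$ in the moderate-deviation window, which is \eqref{eq:prob:AAA}. It does not by itself give conditional moments of a single coordinate $N_\ttt$, and a ``joint local CLT for $(N_\ttt, A(N))$'' fails whenever $np_\ttt^\star = O(1)$ (which includes all trees with $v_\ttt\gtrsim (\log n)/k$). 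The correct bridge is a \emph{ratio} estimate of the form $\sup_{m\le n(1-\eps_0)h_\circ^\star}\P(\AAA\mid N_\ttt=m)/\P(\AAA)\lesssim_k 1$, obtained by applying the local CLT to the \emph{remaining} sum $\sum_{\ttt'\ne\ttt} N_{\ttt'}(\cdots)$ (whose variance stays $\Theta(n)$ regardless of $p_\ttt^\star$), together with a crude large-deviation cutoff at $m\le n(1-\eps_0)h_\circ^\star$; this is exactly Lemma~\ref{lem:typical}. From the ratio bound one then gets $\E[|N_\ttt - np_\ttt^\star|\mid\AAA]\le\wt C\,\E|N_\ttt - np_\ttt^\star| + (\text{negl.}) \lesssim_k \sqrt{np_\ttt^\star}$ (using that $\E|\mathrm{Poi}(\mu)-\mu|\le\sqrt\mu$ absorbs any $O(\sqrt n\,p_\ttt^\star)$ bias since $p_\ttt^\star\le 1$), after which your Cauchy--Schwarz/Markov conclusion goes through. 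So the conditional-moment claim is not wrong, but as stated it substitutes a citation of \cite{Borokov17} for what is actually the crux of the proof; you would need to spell out the ratio comparison, which is precisely the content of Lemma~\ref{lem:typical}. A minor further slip: you quote $|\FFF_{\tr,v}|\le(Cdk)^v$; since $d\asymp_k 2^k$ this would kill the Cauchy--Schwarz step $\sum_{v_\ttt=v}\sqrt{p_\ttt^\star}\le\sqrt{L_v}\,2^{-kv/4}$. The correct bound is $L_v\le (Ck)^v$ with a universal $C$ (no $d$; the boundary degrees at a variable are determined by the tree structure), as the paper uses.
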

    \begin{remark}\label{rmk:clauses}
    Note that by the definition of $\Psi_{\typ}$ in \eqref{eq:def:Psi}, every cyclic free component has one cycle and has variables $v_{\fff}<\log n$. Since every clause in a free component must have at least 2 full edges adjacent to them (otherwise, the clause is forcing), it follows that $2f_{\fff}\leq e_{\fff}$. For $\fff\in \FFF$ having at most one cycle, this implies that $f_{\fff}\leq e_{\fff}$. In particular, we may assume that $f_{\fff}<\log n$ holds if $p_{\fff}\neq 0$ and $(B, s, \{p_{\fff}\}_{\fff\in \FFF\setminus \FFF_{\tr}})\in \Psi_{\typ}$.
    \end{remark}
    We now discuss the main ideas behind the proof of Proposition \ref{prop:freetree:concentration}. By Proposition \ref{prop:moment:formula}, we can express\footnote{From \eqref{eq:compat:var:clause}, the term $\prod_{\fff\in \FFF}(d^{e_{\fff}-f_{\fff}}k^{f_{\fff}})^{np_{\fff}}$ in Proposition \ref{prop:moment:formula} cancels out\rev{.}}
    \begin{equation}\label{eq:useful:identity}
        \frac{\E\bZ_{\la^\star,s}\Big[B,\{p_{\ttt}\}_{\ttt\in \FFF_{\tr}}, \{p_{\fff}\}_{\FFF\setminus \FFF_{\tr}}\Big]}{ \sum_{\{p_{\ttt}\}_{\ttt\in \FFF_{\tr}}}\E\bZ_{\la^\star,s}\Big[B,\{p_{\ttt}\}_{\ttt\in \FFF_{\tr}}, \{p_{\fff}\}_{\FFF\setminus \FFF_{\tr}}\Big]}=\frac{\prod_{\fff\in \mathscr{F}_{\tr}}\left[ \frac{1}{(n p_\fff)!} \left(J_\fff w_\fff^\lambda \right)^{n p_\fff}\right]}{\sum_{\{p_{\ttt}\}_{\ttt\in \FFF_{\tr}}}\prod_{\fff\in \mathscr{F}_{\tr}}\left[ \frac{1}{(n p_\fff)!} \left(J_\fff w_\fff^\lambda \right)^{n p_\fff}\right]}\,,
    \end{equation}
    where the sum in the denominator in the \textsc{rhs} is restricted to sum over $\{p_{\ttt}\}_{\ttt\in \FFF_{\tr}}$ which satisfy $\{p_{\fff}\}_{\fff\in \FFF}\equiv\{\{p_{\ttt}\}_{\ttt\in \FFF_{\tr}}, \{p_{\fff}\}_{\fff\in \FFF\setminus \FFF_{\tr}}\}\sim (B,s)$, where
    \begin{equation*}
        \{p_{\fff}\}_{\fff\in \FFF}\sim (B,s)\iff \{p_{\fff}\}_{\fff\in \FFF}\sim B~~\textnormal{and}~~\sum_{\fff\in \FFF}p_{\fff} \log w_{\fff}\in [s, s+1/n)\,.
    \end{equation*}
    The rightmost condition comes from the fact that we defined $\bZ_{\la,s}$ in \eqref{eq:def:Z:lambda} as the contribution to $\bZ_{\la}$ from $\ux$ such that $\size(\ux,\bGG)=\prod_{\fff\in \FFF}w_{\fff}\in[e^{ns},e^{ns+1})$.

    Thus, a priori, the \textsc{rhs} of \eqref{eq:useful:identity} is a probability of a configuration of free trees conditional on a large deviation event. A classical tool in large deviation theory \cite{DZ10} is to introduce an exponential scaling factor to move from the large deviation regime to a moderate deviation regime. Since we are considering $\norm{B-B^\star}_1=O(n^{-1/2})$, we can use the \textit{optimal scaling factor} $\utheta^\star$ from \eqref{eq:theta:opt} (see Remark \ref{rmk:optimal:compatible}). To this end, we introduce a probability distribution over the free trees
    \begin{equation*}
        \P_{\utheta^\star}(X=\ttt)\equiv \frac{J_{\ttt}w_{\ttt}^{\lambda^\star} \exp\left(\big\langle\,\utheta^\star\,,\,(1, \ufb_{\ttt})\,\big \rangle\right)}{h^\star_{\circ}}=(h^\star_{\circ})^{-1}p_{\ttt}^\star\,, 
    \end{equation*}
    and let $X_1,...,X_{nh_{\circ}}$ be a i.i.d. sample from $\P_{\utheta^\star}(\cdot)$, where $h_{\circ}\equiv h_{\circ}[B]$ is defined in \eqref{eq:h:circ}. Recall from Remark \ref{rmk:number:freetrees} that when $p_{\fff}=0$ for multi-cyclic $\fff$, $\{p_{\fff}\}_{\fff\in \FFF}\sim B$ implies that the total number of free trees is given by $nh_{\circ}$. Thus, from the identity \eqref{eq:useful:identity}, the ratio of interest in \eqref{eq:prop:freetree:concentration} can be expressed by
    \begin{equation}\label{eq:crucial:identity}
    \frac{\sum_{\{p_{\ttt}\}_{\ttt\in \FFF_{\tr}}:\sum_{\ttt}\big|p_{\ttt}-p_{\ttt}^\star\big|v_{\ttt}\geq \frac{C_0}{\sqrt{n}}}\E\bZ_{\la^\star,s}\Big[B,\{p_{\ttt}\}_{\ttt\in \FFF_{\tr}}, \{p_{\fff}\}_{\FFF\setminus \FFF_{\tr}}\Big]}{ \sum_{\{p_{\ttt}\}_{\ttt\in \FFF_{\tr}}}\E\bZ_{\la^\star,s}\Big[B,\{p_{\ttt}\}_{\ttt\in \FFF_{\tr}}, \{p_{\fff}\}_{\FFF\setminus \FFF_{\tr}}\Big]}=\P_{\theta^\star}\bigg(\sum_{\ttt\in \FFF_{\tr}}v_{\ttt} \Big|\frac{1}{n}\sum_{i=1}^{nh_{\circ}}\one(X_i=\ttt)-p_{\ttt}^\star \Big|\geq \frac{C_0}{\sqrt{n}}\bbgiven \AAA\bigg),
    \end{equation}
    where the event $\AAA\equiv \AAA(B, s, \{p_{\fff}\}_{\fff\in \FFF\setminus \FFF_{\tr}})$ regarding $X_1,...,X_{nh_{\circ}}$ is given by
    \begin{equation}\label{eq:def:AAA}
        \AAA\equiv \Big\{\sum_{i=1}^{nh_{\circ}}\ufb_{X_i}= n\uh(B)-\sum_{\fff\in \FFF\setminus \FFF_{\tr}}np_{\fff}\ufb_{\fff}~~~~\textnormal{and}~~~~\sum_{i=1}^{nh_{\circ}}\log w_{X_i}+\sum_{\fff\in \FFF\setminus \FFF_{\tr}}np_{\fff}\log w_{\fff}\in [ns,ns+1)\Big\}.
    \end{equation}
    Observe that the event $\AAA$ is a moderate deviation event for $(B,s,\{p_{\fff}\}_{\fff\in \FFF\setminus \FFF_{\tr}})\in \Psi_{\typ}\equiv \Psi_{\typ}(C)$. Indeed, since $\E_{\theta^\star}\ufb_{X}=(h_{\circ}^\star)^{-1}h^\star$ (cf. Remark \ref{rmk:optimal:compatible}), we have uniformly over $\Psi_{\typ}$ that
    \begin{equation*}
        \Big\|nh_{\circ}\E_{\theta^\star}\ufb_{X_i}-n\uh(B)+\sum_{\fff\in \FFF\setminus \FFF_{\tr}}np_{\fff}\ufb_{\fff}\Big\|\lesssim_k n\big(|h_{\circ}(B)-h_{\circ}^\star|+\norm{\uh(B)-\uh^\star}_1\big)+\log^{2}n\lesssim \sqrt{n}\,,
    \end{equation*}
    where the first inequality holds since for $\fff$ that contains at most one cycle, $\norm{\ufb_{\fff}}_1\lesssim_{k} v_{\fff}$ holds (cf. Remark \ref{rmk:clauses}), and the second inequality holds since $B\to\uh(B)$ is $O_k(1)$-Lipschitz (see \eqref{eq:compatible} and \eqref{eq:h:circ}). Similarly, $|nh_{\circ}\E_{\theta^\star}\log w_{X}-ns-\sum_{\fff\in \FFF\setminus \FFF_{\tr}} np_{\fff}\log w_{\fff}|\lesssim_{k}\sqrt{n}$ holds. Thus, local central limit theorem with triangular arrays \cite{Borokov17} show that uniformly over $(B,s,\{p_{\fff}\}_{\fff\in \FFF\setminus \FFF_{\tr}})\in \Psi_{\typ}$,
    \begin{equation}\label{eq:prob:AAA}
    \P_{\utheta^\star}(\AAA)\asymp_k n^{-2}\,.
    \end{equation}
    Therefore, if in the \textsc{rhs} of \eqref{eq:crucial:identity}, the sum \rev{was} replaced by a supremum over free trees and $C_0$ \rev{was} allowed to grow with $n$, say $C_0=\Omega(\log n)$, then we could resort to concentration inequalities (e.g. Chernoff bounds) to show \rev{that} the conditional probability of interest is less than $\eps$. Indeed, this was the strategy taken in \cite{NSS} (see Section 6 therein). 

    However, to obtain the $\ell^1$-type control as in Proposition \ref{prop:freetree:concentration}, we need to take \rev{account} for the sum over free trees and conditioning on $\AAA$ in \eqref{eq:crucial:identity} more carefully. Our strategy is to first divide the $\FFF$ into \textit{typical} and \textit{atypical} set of free trees: for $n\geq 2$, let
    \begin{equation*}
        \FFF_{\tr}^{\typ}\equiv \FFF_{\tr}^{\typ}(n)\equiv \big\{\ttt\in \FFF_{\tr}:p_{\ttt}^\star\geq n^{-1/2}\big\}\,,~~~\FFF_{\tr}^{\atyp}\equiv \FFF_{\tr}^{\atyp}(n)\equiv \FFF_{\tr}\setminus \FFF_{\tr}^{\typ}\,.
    \end{equation*}
    For each free tree $\ttt\in \FFF_{\tr}^{\typ}\sqcup \FFF_{\tr}^{\atyp}$, we assign a \textit{cost} $\eps_{\ttt}$ that is summable and bound the probability that the empirical count of $\ttt$ among $X_i$'s deviate from $p_{\ttt}^\star$ by distance $\frac{\eps_{\ttt}}{\sqrt{n}}$ conditional on $\AAA$. For atypical trees $\ttt\in \FFF_{\tr}^{\atyp}$, we design such cost $\eps_{\ttt}$ crudely based on a Chernoff bound (cf. Lemma \ref{lem:atypical}). On the contrary, for typical trees $\ttt\in \FFF_{\tr}^{\typ}$, we assign $\eps_{\ttt}$ carefully by showing that the conditioning on $\AAA$ has a negligible effect on the probability of interest for typical trees, which we argue by a local central limit theorem for triangular arrays \cite{Borokov17} (cf. Lemma \ref{lem:typical}).
    
    We first start with the easier case of atypical trees. A crucial fact that we use throughout the proof is that $p_{\fff}^\star\in \ee_{\frac{1}{2}}$ holds from Lemma 3.13 in \cite{NSS}.
    \begin{lemma}\label{lem:atypical}
        For $\AAA\equiv \AAA(B, s, \{p_{\fff}\}_{\fff\in \FFF\setminus \FFF_{\tr}})$ in \eqref{eq:def:AAA} and $C>0$, we have uniformly over $(B, s, \{p_{\fff}\}_{\fff\in \FFF\setminus \FFF_{\tr}})\in \Psi_{\typ}(C)$ that
        \begin{equation}\label{eq:lem:atypical}
            \P_{\utheta^\star}\Big(\sum_{\ttt\in \FFF_{\tr}^{\atyp}}v_{\ttt}\Big|\frac{1}{n}\sum_{i=1}^{nh_{\circ}}\one(X_i=\ttt)-p_{\ttt}^\star \Big|\geq n^{-1/2}(\log n)^{-1/2}\bgiven \AAA\Big)=o_n(1)\,.
        \end{equation}
    \end{lemma}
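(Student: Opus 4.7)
My plan is to reduce the conditional statement to an unconditional one via $\P_{\utheta^\star}(\cdot\mid\AAA)\leq \P_{\utheta^\star}(\cdot)/\P_{\utheta^\star}(\AAA)$, losing a factor of order $n^{2}$ because of \eqref{eq:prob:AAA}. Consequently the unconditional tail of the event $\{S_{\atyp}\geq n^{1/2}/\sqrt{\log n}\}$, where $S_{\atyp}:=\sum_{\ttt\in\FFF_{\tr}^{\atyp}}v_{\ttt}|N_{\ttt}-np_{\ttt}^{\star}|$ and $N_{\ttt}=\sum_{i=1}^{nh_{\circ}}\one(X_{i}=\ttt)$, must be made $o(n^{-2})$. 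This rules out a naive first- or second-moment Markov argument and forces an exponential-tail (Bernstein) bound coupled with a union bound over atypical trees. Accordingly, I would design a ``cost'' $\eps_{\ttt}$ for each atypical tree so that (i) the deterministic sum $\sum_{\ttt\in\FFF_{\tr}^{\atyp}}\eps_{\ttt}$ is at most $n^{-1/2}/\sqrt{\log n}$, and (ii) the Bernstein tail for $\{v_{\ttt}|N_{\ttt}/n-p_{\ttt}^{\star}|>\eps_{\ttt}\}$, summed over atypical trees, is $o(n^{-2})$.

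For the Bernstein tail, $N_{\ttt}$ is a sum of i.i.d.\ indicators with mean $\mu_{\ttt}=nh_{\circ}p_{\ttt}^{\star}/h_{\circ}^{\star}$; absorbing the deterministic bias $|\mu_{\ttt}-np_{\ttt}^{\star}|\lesssim_{k}\sqrt{n}\,p_{\ttt}^{\star}$ (coming from $|h_{\circ}-h_{\circ}^{\star}|\lesssim n^{-1/2}$ on $\Psi_{\typ}(C)$) one obtains
\[
\P_{\utheta^{\star}}\!\big(|N_{\ttt}-np_{\ttt}^{\star}|\geq K_{\ttt}+C\sqrt{n}\,p_{\ttt}^{\star}\big)\leq 2\exp\!\Big(-\tfrac{K_{\ttt}^{2}}{2(\mu_{\ttt}+K_{\ttt})}\Big).
\]
Choosing $\eps_{\ttt}\asymp v_{\ttt}\bigl[\sqrt{p_{\ttt}^{\star}(\log n)/n}+(\log n)/n+\sqrt{n}\,p_{\ttt}^{\star}\bigr]$ and the corresponding $K_{\ttt}=n\eps_{\ttt}/v_{\ttt}$ makes each individual tail at most $n^{-C'}$ for arbitrarily large $C'$, so that after a union bound over atypical trees with non-negligible $p_{\ttt}^{\star}$ (of which there are at most polynomially many, since $p_{\ttt}^{\star}\geq n^{-C}$ forces their count to be $\leq n^{C}\sum_{\ttt}p_{\ttt}^{\star}=O(n^{C})$, the remaining trees contributing only through the bias) the total tail is $o(n^{-2})$.

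The combinatorial step is to verify $\sum_{\ttt\in\FFF_{\tr}^{\atyp}}\eps_{\ttt}\leq n^{-1/2}/\sqrt{\log n}$, equivalently that both
\[
\sum_{\ttt\in\FFF_{\tr}^{\atyp}}v_{\ttt}\sqrt{p_{\ttt}^{\star}}=o\bigl((\log n)^{-1}\bigr)\qquad\text{and}\qquad \sum_{\ttt\in\FFF_{\tr}^{\atyp}}v_{\ttt}\,p_{\ttt}^{\star}=o\bigl(n^{-1/2}(\log n)^{-1/2}\bigr).
\]
Here the atypicality condition $p_{\ttt}^{\star}<n^{-1/2}$ is essential: combined with the exponential decay $\{p_{\ttt}^{\star}\}\in\ee_{1/2}$ of Lemma 3.13 in \cite{NSS} and an entropy bound $|\{\ttt\in\FFF_{\tr}:v_{\ttt}=v\}|\leq C(d,k)^{v}$ for labeled bipartite free trees, a split $v_{\ttt}\lessgtr V_{0}$ for an appropriate $V_{0}=\Theta(\log n/k)$ allows one to use the atypicality bound for small trees (where entropy dominates) and the exponential decay for large trees (where weight dominates). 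For $k$ sufficiently large, the two estimates meet and deliver the required saving over the trivial $O(1)$ bound coming from $\ee_{1/2}$ alone.

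The main obstacle is this combinatorial step: the exponential decay at rate $1/2$ is only just strong enough to beat the entropy of labeled free trees whose variable degree is $d=\Theta(2^{k})$, and the cutoff $V_{0}$ must be calibrated so that both the small-tree budget (entropy $\cdot\, n^{-1/2}$) and the large-tree budget ($\ee_{1/2}$) simultaneously fit within $n^{-1/2}/\sqrt{\log n}$. The atypicality condition $p_{\ttt}^{\star}<n^{-1/2}$ is what makes the interpolation work and buys the additional $1/\sqrt{\log n}$ saving; without it the bound $O(1)$ from $\ee_{1/2}$ alone would be insufficient by a factor of $\sqrt{n}$.
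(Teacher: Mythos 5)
Your reduction to an unconditional bound via $\P_{\utheta^\star}(\cdot\mid\AAA)\le\P_{\utheta^\star}(\cdot)/\P_{\utheta^\star}(\AAA)$ and $\P_{\utheta^\star}(\AAA)\gtrsim_k n^{-2}$ is exactly the paper's first step, and the two-regime split at $v_\ttt\lessgtr V_0=\Theta(\log n/k)$ is also the paper's. The gap is in the cost assignment for large trees. If you insist on a per-tree tolerance $\eps_\ttt$ with an additive floor $\eps_\ttt\gtrsim v_\ttt(\log n)/n$, then the deterministic budget $\sum_{\ttt\in\FFF_{\tr}^{\atyp}}\eps_\ttt$ already diverges, because the number of atypical trees at level $v$ is $L_v\asymp(Ck)^v$ and this sum is over all $v\ge V_0$. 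Even after truncating to $v\le C\log n$ (which you need anyway: trees with $v>C\log n$ have $\sum_\ttt\mu_\ttt\ll n^{-2}$ and never appear), the union bound $\sum_{v>V_0}L_v\cdot\text{(tail)}$ cannot be made $o(n^{-2})$ with a tail probability $n^{-C'}$ for a fixed $C'$, because $L_v$ grows without bound in $v$. To defeat the entropy $L_v$, the per-tree tolerance must shrink with $v$ much faster than $\log n/n$; but then it cannot sit inside a budget of the form $\eps_\ttt\gtrsim v_\ttt(\log n)/n$ either.

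The paper's fix, which you do not have, is to allocate cost \emph{per level} rather than per tree: set $\eps_v=(v^2\log n)^{-1}$ so $\sum_{v}\eps_v\asymp 1/\log n\ll n^{-1/2}\cdot\sqrt{n}/\sqrt{\log n}$, and within level $v$ apply a union bound over the $L_v$ trees with per-tree tolerance $\eps_v/(vL_v\sqrt{n})$. The resulting Chernoff/Bernstein tail is $\exp\!\big(-\Omega(\eps_v^2/(v^2L_v^2\,p_\ttt^\star))\big)$, and using $p_\ttt^\star\le 2^{-kv/2}$ (from $\{p_\ttt^\star\}\in\ee_{1/2}$) this is $\exp(-\Omega(2^{kv/3}/\log^2 n))$, which decays super-exponentially in $v$ and hence beats $L_v=(Ck)^v$ after the union bound; the level sums then telescope to $\exp(-\Omega(n^{2/3}/\log^2 n))\ll n^{-2}$. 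This is the step your proposal is missing. Also, the bias term in your $\eps_\ttt$ should read $v_\ttt\,p_\ttt^\star/\sqrt{n}$ rather than $v_\ttt\sqrt{n}\,p_\ttt^\star$ (the latter alone already exceeds the budget by a factor $\Theta(\sqrt{n}\log n/k)$), and the intermediate reasoning about ``trees with non-negligible $p_\ttt^\star$'' is not the right cut; the operative cut is on $v_\ttt$, which is what you eventually use and is what the paper uses.
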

    \begin{proof}
    Since $\P_{\utheta^\star}(\AAA)\gtrsim_k n^{-2}$ uniformly over $(B, s, \{p_{\fff}\}_{\fff\in \FFF\setminus \FFF_{\tr}})\in \Psi_{\typ}$ by \eqref{eq:prob:AAA}, it suffices to show that the unconditional probability in \eqref{eq:lem:atypical} is $\ll n^{-2}$. Throughout, we abbreviate $\P\equiv \P_{\theta^\star}$ for simplicity.
    
    Note that if $v(\ttt)\geq \frac{\log n}{k\log 2}$, then $\ttt\in \FFF_{\tr}^{\atyp}$ by $p_{\fff}^\star\in \ee_{\frac{1}{2}}$. We first deal with the case $v(\ttt)\geq \frac{\log n}{k\log 2}$. For $v\geq \frac{\log n}{k\log 2}$, let $\eps_v= (v^2 \log n)^{-1}$. Then, 
    \begin{equation}\label{eq:atypical:tech:1}
        \P\Big(\sum_{\ttt:v_{\ttt}=v}v \Big|\frac{1}{n}\sum_{i=1}^{nh_{\circ}}\one(X_i=\ttt)-p_{\ttt}^\star\Big|\geq \frac{\eps_v}{\sqrt{n}}\Big)\leq L_v\cdot \sup_{v(\ttt)=v}\P\Big( \Big|\frac{1}{n}\sum_{i=1}^{nh_{\circ}}\one(X_i=\ttt)-p_{\ttt}^\star\Big|\geq \frac{\eps_v}{v L_v \sqrt{n}}\Big)\,,
    \end{equation}
    where $L_v:=|\{\ttt\in \FFF_{\tr}: v(\ttt)=v\}|$. An important observation is that $L_v\leq (Ck)^v$ for some universal constant $C>0$\footnote{There are at most $4^{v+f}$ number of isomorphism classes of trees with $v+f$ nodes (see Section 9.5 in \cite{FlajoletSedgewick}). The factor $k^v$ comes from assigning the number of spin-labels $\{0,1\}$ to the clauses in $\fff$ which have boundary-half edges.}. Thus, it follows that for $v(\ttt)=v \geq \frac{\log n}{k\log 2}$, 
    \begin{equation*}
        |h_\circ \E\one(X_i=\ttt)-p_{\ttt}^\star|\lesssim_k \frac{p_{\ttt}^\star}{\sqrt{n}}\leq \frac{2^{-\frac{kv}{2}}}{\sqrt{n}}\ll \frac{\eps_v}{v L_v \sqrt{n}}\,,
    \end{equation*}
    where the second inequality is due to $\{p_{\fff}^\star\}\in \ee_{\frac{1}{2}}$. Thus, we can use Chernoff's bound in the \textsc{rhs} of \eqref{eq:atypical:tech:1} to have
    \begin{equation*}
         \P\Big(\sum_{\ttt:v_{\ttt}=v}v \Big|\frac{1}{n}\sum_{i=1}^{nh_{\circ}}\one(X_i=\ttt)-p_{\ttt}^\star\Big|\geq \frac{\eps_v}{\sqrt{n}}\Big)\leq (Ck)^v\exp\Big(-\Omega\Big(\frac{\eps_v^2}{v^2 (Ck)^{2v}p_{\ttt}^\star}\Big)\Big)\,.
    \end{equation*}
    Again, using the fact $p_{\ttt}^\star \leq 2^{-\frac{kv}{2}}$, the \textsc{rhs} above is at most $\exp\big(-\Omega\big( 2^{\frac{kv}{3}}/(\log n)^2\big)\big)$. Therefore,
    \begin{equation}\label{eq:atypical:first:case}
    \sum_{v\geq \frac{\log n}{k \log 2}} \P\Big(\sum_{\ttt:v_{\ttt}=v}v \Big|\frac{1}{n}\sum_{i=1}^{nh_{\circ}}\one(X_i=\ttt)-p_{\ttt}^\star\Big|\geq \frac{\eps_v}{\sqrt{n}}\Big)=\exp\Big(-\Omega\Big(\frac{n^{2/3}}{\log^2 n}\Big)\Big)\ll n^{-2}\,.
    \end{equation}
    Since $\sum_{v}\eps_v\asymp (\log n)^{-1}\ll (\log n)^{-1/2}$, \eqref{eq:atypical:first:case} takes care of the case $v(\ttt) \geq \frac{\log n}{k\log 2}$.

    Let us now consider the case where $v(\ttt)\leq \frac{\log n}{k\log 2}$ and $p_{\ttt}^\star\leq n^{-1/2}$. Note that since $L_v \leq (Ck)^v$, the number of such trees is at most $n^{O(\frac{\log k}{k})}$. Thus, we have for some universal constant $C^\prime>0$ \rev{that}
    \begin{equation*}
    \begin{split}
        &\P\bigg(\sum_{v(\ttt)\leq \frac{\log n}{k\log 2},\,p_{\ttt}^\star\leq n^{-1/2}}v_{\ttt}\Big|\frac{1}{n}\sum_{i=1}^{nh_{\circ}}\one(X_i=\ttt)-p_{\ttt}^\star \Big|\geq \frac{n^{-1/2}(\log n)^{-1/2}}{2}\bigg)\\
        &\leq n^{\frac{C^\prime \log k}{k}} \sup_{p_{\ttt}^\star\leq n^{-1/2}}\P\bigg(\Big|\frac{1}{n}\sum_{i=1}^{nh_{\circ}}\one(X_i=\ttt)-p_{\ttt}^\star \Big|\geq n^{-\frac{1}{2}-\frac{C^\prime \log k}{k}}\bigg)\,,
    \end{split}
    \end{equation*}
    Note that if $p_{\ttt}^\star \leq n^{-1/2}$, then $|h_{\circ}\P(X_i=\ttt)-p_{\ttt}^\star|\lesssim_k n^{-1/2}p_{\ttt}^\star\leq n^{-1} \ll n^{-1/2-C^\prime \log k/k}$ for large enough $k$. Thus, using Chernoff's bound in the \textsc{rhs} above, it follows that
    \begin{equation}\label{eq:atypical:second:case}
        \P\bigg(\sum_{v(\ttt)\leq \frac{\log n}{k\log 2},\,p_{\ttt}^\star\leq n^{-1/2}}v_{\ttt}\Big|\frac{1}{n}\sum_{i=1}^{nh_{\circ}}\one(X_i=\ttt)-p_{\ttt}^\star \Big|\geq \frac{n^{-1/2}(\log n)^{-1/2}}{2}\bigg)\leq n^{\frac{C^\prime \log k}{k}}\exp\Big(-\Omega\big(n^{\frac{1}{2}-\frac{2C^\prime \log k}{k}}\big)\Big)\,,
    \end{equation}
    \rev{which is negligible compared to $n^{-2}$.} The estimates \eqref{eq:atypical:first:case} and \eqref{eq:atypical:second:case} conclude the proof.
    \end{proof}
    Next, we consider the more delicate case of typical trees.
    \begin{lemma}\label{lem:typical}
    For $C>0$, there exist constants $\wt{C}\equiv \wt{C}(C,\alpha,k)$ and $C_k$, such that the following holds. For any $\ttt\in \FFF_{\tr}^{\typ}$, and $\eps_{\ttt}>0$, we have
    \begin{equation}\label{eq:lem:typical}
         \P_{\utheta^\star}\Big(v_{\ttt}\Big|\frac{1}{n}\sum_{i=1}^{nh_{\circ}}\one(X_i=\ttt)-p_{\ttt}^\star \Big|\geq \frac{\eps_{\ttt}}{\sqrt{n}}\bgiven \AAA\Big)\leq  \wt{C}\cdot \P_{\utheta^\star}\Big(v_{\ttt}\Big|\frac{1}{n}\sum_{i=1}^{nh_{\circ}}\one(X_i=\ttt)-p_{\ttt}^\star \Big|\geq \frac{\eps_{\ttt}}{\sqrt{n}}\Big)+\exp(-C_k \sqrt{n})\,.
    \end{equation}
    \end{lemma}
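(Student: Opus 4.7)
The plan is to derive Lemma \ref{lem:typical} from a uniform upper bound of the form $\P_{\utheta^\star}(\AAA \mid N_\ttt = m) \lesssim_k n^{-2}$ obtained via a local CLT, where $N_\ttt := \sum_{i=1}^{nh_\circ} \one(X_i = \ttt)$ counts the occurrences of the tree $\ttt$. Writing $E_\ttt$ for the event $v_\ttt |N_\ttt/n - p_\ttt^\star| \geq \eps_\ttt/\sqrt{n}$ appearing in \eqref{eq:lem:typical}, Bayes' rule together with the lower bound $\P_{\utheta^\star}(\AAA) \gtrsim_k n^{-2}$ from \eqref{eq:prob:AAA} reduces matters to showing
\[
\P_{\utheta^\star}(E_\ttt, \AAA) \;=\; \sum_{m \in \wt E_\ttt} \P_{\utheta^\star}(N_\ttt = m) \, \P_{\utheta^\star}(\AAA \mid N_\ttt = m) \;\leq\; \frac{\wt C}{n^2} \, \P_{\utheta^\star}(E_\ttt) + \exp(-\bar c_k n),
\]
with $\wt E_\ttt := \{m \in \Z : v_\ttt|m/n - p_\ttt^\star| \geq \eps_\ttt/\sqrt n\}$; dividing by $\P_{\utheta^\star}(\AAA) \gtrsim_k n^{-2}$ absorbs the factor $n^{-2}$ and turns the Chernoff-type error $\exp(-\bar c_k n)$ into the claimed $\exp(-C_k \sqrt n)$ for large $n$. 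Conditional on $N_\ttt = m$, the $M := nh_\circ - m$ samples with $X_i \neq \ttt$ are i.i.d.\ draws from $Q := \P_{\utheta^\star}(\,\cdot \mid X \neq \ttt)$, independent of the positions of the $\ttt$-samples, so $\AAA$ becomes a four-dimensional constraint on $\big(\sum_{X_i\neq\ttt}\ufb_{X_i},\,\sum_{X_i\neq\ttt}\log w_{X_i}\big)$, with three integer coordinates and one real coordinate confined to an interval of length $1$.

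For $m$ such that $M \geq \delta_k n$ with a suitable $k$-dependent $\delta_k > 0$, the local CLT upper bound for triangular arrays from \cite{Borokov17} applied to the sum of $M$ i.i.d.\ copies under $Q$ yields $\P_{\utheta^\star}(\AAA \mid N_\ttt = m) \leq C_k M^{-2} \leq C_k' n^{-2}$, provided the $4 \times 4$ covariance of $(\ufb_X, \log w_X)$ under $Q$ is non-degenerate with a $k$-dependent lower bound uniform in $\ttt$ and in $(B, s, \{p_\fff\}) \in \Psi_\typ(C)$. This non-degeneracy is inherited from the corresponding property of the unconditional distribution under $\P_{\utheta^\star}$ (already used to derive \eqref{eq:prob:AAA}) once one knows $\P_{\utheta^\star}(X = \ttt) = p_\ttt^\star/h_\circ^\star \leq 1 - c_k$ for some $c_k > 0$, which follows from \eqref{eq:theta:opt} together with the fact that several free-tree types of comparable weight each contribute a positive fraction of $h_\circ^\star$. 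Multiplying this bound by $\P_{\utheta^\star}(N_\ttt = m)$ and summing over $m \in \wt E_\ttt$ produces the term $(\wt C/n^2)\,\P_{\utheta^\star}(E_\ttt)$.

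The complementary regime $M < \delta_k n$ is handled by the trivial estimate $\P_{\utheta^\star}(\AAA \mid N_\ttt = m) \leq 1$ combined with a Chernoff tail on $N_\ttt$: since $N_\ttt \sim \mathrm{Binomial}(nh_\circ,\, \P_{\utheta^\star}(X=\ttt))$ has mean at most $(1-c_k)nh_\circ$, choosing $\delta_k < c_k/2$ makes $\{N_\ttt > nh_\circ - \delta_k n\}$ a deviation of order $n$ from the mean, so Chernoff gives $\P_{\utheta^\star}(N_\ttt > nh_\circ - \delta_k n) \leq \exp(-\bar c_k n)$, which contributes at most $\exp(-\bar c_k n)$ to $\P_{\utheta^\star}(E_\ttt, \AAA)$. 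The main technical obstacle is the uniform-in-$\ttt$ four-dimensional LCLT upper bound: applying \cite{Borokov17} requires verifying the covariance non-degeneracy of $Q$ with constants depending only on $k$ and $C$, uniformly over $\ttt \in \FFF_\tr^\typ$ and $(B,s,\{p_\fff\}) \in \Psi_\typ(C)$. Once this uniformity is established, the remainder of the argument — Bayes, Chernoff, and summation — is standard.
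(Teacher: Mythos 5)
The proposal follows essentially the same route as the paper: you condition on $N_\ttt = m$ where the paper conditions on $\bI_\ttt = I$ (equivalent, since $\P_{\utheta^\star}(\AAA\mid\bI_\ttt=I)$ depends only on $|I|$), cut off the regime where $N_\ttt$ is close to $nh_\circ$ via Chernoff, and bound the conditional probability $\P_{\utheta^\star}(\AAA\mid N_\ttt=m)\lesssim_k n^{-2}$ by the same LCLT applied to the residual tilted law $Q=\P^{-\ttt}$, divided against $\P_{\utheta^\star}(\AAA)\gtrsim_k n^{-2}$. Your Chernoff tail estimate $\exp(-\bar c_k n)$ is a slight sharpening of the paper's $\exp\!\big(-\Omega_k(np_\ttt^\star\eps_0^2)\big)\le\exp(-C_k\sqrt n)$, but both are adequate since $p_\ttt^\star\ge n^{-1/2}$ on $\FFF_\tr^\typ$.
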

    \begin{proof}
    We first show that the fraction $(nh_{\circ})^{-1}\sum_{i=1}^{nh_{\circ}}\one(X_i=\ttt)$ is bounded away from $1$ w.h.p.. To this end, for $\ttt\in \FFF_{\tr}^{\typ}$, denote $\bI_{\ttt}\equiv \{1\leq i\leq nh_{\circ}:X_i=\ttt\}$. Let the constant $\eps_0\equiv \eps_0(\alpha,k)>0$ be chosen so that $(1+\eps_0)\sup_{\ttt\in \FFF_{\tr}}p_\ttt^\star<(1-\eps_0)h_{\circ}^\star$ \rev{holds}. Such $\eps_0>0$ exists since $h_{\circ}^\star =\sum_{\ttt\in \FFF_{\tr}} p_{\ttt}^\star>0$ (cf. Remark \ref{rmk:optimal:compatible}). Then, by a Chernoff bound,
        \begin{equation}\label{eq:I:not:large}
\P_{\utheta^\star}\Big(\sum_{i=1}^{nh_{\circ}}\one(X_i=\ttt)\geq n(1-\eps_0)h_{\circ}^\star \Big)\leq \exp\Big(-\Omega_k\big(np_{\ttt}^\star\eps_0^2\big)\Big)\leq \exp(-C_k\sqrt{n})\,.
        \end{equation}
    Thus, $|\bI_{\ttt}|\leq n(1-\eps_0)h_\circ^\star$ holds with probability $1-\exp(-C_k\sqrt{n})$. Note that we can express
    \begin{equation*}
        \P_{\utheta^\star}\bigg(\AAA\cap \Big\{\,\Big|\frac{1}{n}\sum_{i=1}^{nh_{\circ}}\one(X_i=\ttt)-p_{\ttt}^\star \Big|\geq \frac{\eps_{\ttt}}{v_{\ttt}\sqrt{n}}\,\Big\}\bigg)
        =\sum_{I:\big|\frac{|I|}{n}-p_{\ttt}^\star\big|\geq \frac{\eps_{\ttt}}{v_{\ttt}\sqrt{n}}}\P_{\utheta^\star}\Big(\AAA\cap \Big\{\bI_{\ttt}=I\Big\}\Big)\,.
    \end{equation*}
    Having \eqref{eq:I:not:large} in mind, we bound the \textsc{rhs} above by
    \begin{equation*}
        \sup_{|I|\leq n(1-\eps_0)h_{\circ}^\star}\P_{\utheta^\star}\Big(\AAA\bgiven \bI_{\ttt}=I\Big)\cdot \P_{\utheta^\star}\Big(\Big|\frac{1}{n}\sum_{i=1}^{nh_{\circ}}\one(X_i=\ttt)-p_{\ttt}^\star \Big|\geq \frac{\eps_{\ttt}}{v_{\ttt}\sqrt{n}}\Big)+\P_{\utheta^\star}\Big(\sum_{i=1}^{nh_{\circ}}\one(X_i=\ttt)\geq n(1-\eps_0)h_{\circ}^\star \Big)\,.
    \end{equation*}
    Thus, because $\P_{\utheta^\star}(\AAA)\gtrsim_k n^{-2}\gg \exp(-C_k\sqrt{n})$, it follows that
    \begin{equation}\label{eq:typ:tech}
         \P_{\utheta^\star}\Big(\Big|\frac{1}{n}\sum_{i=1}^{nh_{\circ}}\one(X_i=\ttt)-p_{\ttt}^\star \Big|\geq \frac{\eps_{\ttt}}{v_{\ttt}\sqrt{n}}\bgiven \AAA\Big)\leq  \mathcal{R} \cdot \P_{\utheta^\star}\Big(\Big|\frac{1}{n}\sum_{i=1}^{nh_{\circ}}\one(X_i=\ttt)-p_{\ttt}^\star \Big|\geq \frac{\eps_{\ttt}}{v_{\ttt}\sqrt{n}}\Big)+\exp(-C_k^\prime \sqrt{n})\,,
    \end{equation}
    where $\mathcal{R}:= \sup_{|I|\leq n(1-\eps_0)h_{\circ}^\star}\frac{\P_{\utheta^\star}(\AAA\given \bI_{\ttt}=I)}{\P_{\utheta^\star}(\AAA)}$. We now argue that $\mathcal{R}\lesssim_k 1$ by a local central limit theorem: note that the distribution of $X_1,..., X_{nh_{\circ}}$ given $\bI_{\ttt}=I$ are i.i.d from the distribution
    \rev{
    \begin{equation*}
        \P^{-\ttt}(X_i=\ttt^\prime):= \frac{p_{\ttt^\prime}^\star}{h_{\circ}^\star-p_{\ttt}^\star}\one(\ttt^\prime \neq \ttt)\,,\quad\ttt^\prime \in \FFF_{\tr}\,.
    \end{equation*}
    }
    Thus, it follows that
    \begin{equation*}
    \begin{split}
        &\P_{\utheta^\star}\Big(\AAA\bgiven \bI_{\ttt}=I\Big)=\P^{-\ttt}\big(\AAA_{|I|}\big),\quad\textnormal{where}\\
        &\AAA_{\ell}:=\Big\{\sum_{i>\ell}\ufb_{X_i}= n\uh(B)-\sum_{\fff\in \FFF\setminus \FFF_{\tr}}np_{\fff}\ufb_{\fff}-\ell\cdot \ufb_{\ttt},~~~\sum_{i>\ell}\log w_{X_i}+\sum_{\fff\in \FFF\setminus \FFF_{\tr}}np_{\fff}\log w_{\fff}+\ell\cdot \log w_{\ttt}\in [ns,ns+1)\Big\}.
    \end{split}
    \end{equation*}
    By local central limit theorem for triangular arrays, we have for $\AAA_{\ell}\equiv \AAA_{\ell}(B,s, \{p_{\fff}\}_{\fff\in \FFF\setminus \FFF_{\tr}})$ that
    \begin{equation*}
    \limsup_{n\to\infty} \sup_{\substack{(B,s, \{p_{\fff}\}_{\fff\in \FFF\setminus \FFF_{\tr}})\in \Psi_{\typ}(C)\\ \ttt\in \FFF_{\tr}^{\typ}, \; \ell\leq n(1-\eps_0)h_{\circ}^\star}}\; n^2\cdot \P^{-\ttt}\big(\AAA_{\ell}\big)<\infty\,.
    \end{equation*}
    Recalling that $\P_{\utheta^\star}(\AAA)\gtrsim_k n^{-2}$ holds (cf. \eqref{eq:prob:AAA}), we therefore have that $\mathcal{R}\leq \wt{C}$ for some $\wt{C}\equiv \wt{C}(C,\alpha,k)$. Plugging this estimate into \eqref{eq:typ:tech} concludes the proof.
    \end{proof}
    Having Lemmas \ref{lem:atypical} and \ref{lem:typical} in hand, we now prove Proposition \ref{prop:freetree:concentration}.
    \begin{proof}[Proof of Proposition \ref{prop:freetree:concentration}]
    Fix $C>0$ and $\eps>0$. By Lemma \ref{lem:atypical}, it suffices to restrict our attention to the typical trees. That is, recalling the identity \eqref{eq:crucial:identity}, it suffices to show that there exists $C_0\equiv C_0(\eps,C,\alpha,k)$ such that for any $(B, s, \{p_{\fff}\}_{\fff\in \FFF\setminus \FFF_{\tr}})\in \Psi_{\typ}(C)$, 
    \begin{equation}\label{eq:typical:goal}
          \P_{\utheta^\star}\Big(\sum_{\ttt\in \FFF_{\tr}^{\typ}}v_{\ttt}\Big|\frac{1}{n}\sum_{i=1}^{nh_{\circ}}\one(X_i=\ttt)-p_{\ttt}^\star \Big|\geq \frac{C_0}{\sqrt{n}}\bgiven \AAA\Big)\leq \eps\,.
    \end{equation}
    To prove \eqref{eq:typical:goal}, we assign a cost $\eps_{\ttt}$ to each typical tree $\ttt\in \FFF_{\tr}^{\typ}$. We choose $\eps_{\ttt}\equiv \eps_{v_\ttt}=C_0^\prime (v^2 (Ck)^v)^{-1}$, where $C>0$ is the universal constant from the bound $L_v:=|\{\ttt\in \FFF_{\tr}: v_{\ttt}=v\}|\leq (Ck)^v$, and $C_0^\prime = C_0^\prime(C,\eps,\alpha,k)$ is determined later. Then, we let $C_0=C_0(C,\eps,\alpha,k)$ to be large enough so that \begin{equation*}
        \sum_{\ttt\in \FFF_{\tr}^{\atyp}}\eps_{\ttt}\leq \sum_{v\geq 1}\eps_v L_v = C_0^\prime \sum_{v\geq 1}v^{-2}\leq C_0\,.
    \end{equation*}
    Thus, Lemma \ref{lem:typical} shows that the \textsc{lhs} of \eqref{eq:typical:goal} is bounded by
    \begin{equation*}
    \wt{C}\sum_{\ttt\in \FFF_{\tr}^{\typ}} \P_{\utheta^\star}\Big(\Big|\frac{1}{n}\sum_{i=1}^{nh_{\circ}}\one(X_i=\ttt)-p_{\ttt}^\star \Big|\geq \frac{\eps_{\ttt}}{v_{\ttt}\sqrt{n}}\Big)+|\FFF_{\tr}^{\typ}|\exp(-C_k\sqrt{n})\,,
    \end{equation*}
    where $\wt{C}\equiv \wt{C}(C,\alpha,k)$ is from Lemma \ref{lem:typical}. An important observation is that $\FFF_{\tr}^{\typ}\subset \{\ttt\in \FFF_{\tr}: v_{\ttt}\leq \frac{\log n}{k \log 2}\}$ since $\{p_{\fff}^\star\}_{\fff\in \FFF} \in \ee_{\frac{1}{2}}$ holds by Lemma 3.13 in \cite{NSS}. Thus, $|\FFF_{\tr}^{\typ}|\leq (Ck)^{\frac{\log n}{k \log 2}}= n^{O(1)}$. Therefore, altogether,
    \begin{equation}\label{eq:typical:goal:inter}
      \P_{\utheta^\star}\Big(\sum_{\ttt\in \FFF_{\tr}^{\typ}}v_{\ttt}\Big|\frac{1}{n}\sum_{i=1}^{nh_{\circ}}\one(X_i=\ttt)-p_{\ttt}^\star \Big|\geq \frac{C_0}{\sqrt{n}}\bgiven \AAA\Big)\leq \wt{C}\sum_{\ttt\in \FFF_{\tr}^{\typ}} \P_{\utheta^\star}\Big(\Big|\frac{1}{n}\sum_{i=1}^{nh_{\circ}}\one(X_i=\ttt)-p_{\ttt}^\star \Big|\geq \frac{\eps_{\ttt}}{v_{\ttt}\sqrt{n}}\Big)+o_n(1)\,.
    \end{equation}
    The final step is to bound the \textsc{rhs} above by a Chernoff bound. Choose $C_0^\prime=C_0^\prime(C,\eps,\alpha,k)$ large enough so that for every $v\geq 1$,
    \begin{equation*}
        \sqrt{n}\big|h_{\circ}\P_{\utheta^\star}(X_i=\ttt)-p_{\ttt}^\star\big|\leq \wt{C}^\prime p_{\ttt^\star}\leq \wt{C}^\prime 2^{-\frac{kv}{2}}\leq \frac{C_0^\prime}{2 v^2(Ck)^v}\,,
    \end{equation*}
    where the first inequality holds for some $\wt{C}^\prime \equiv \wt{C}^\prime(C,\alpha,k)$ since $B\to h_{\circ}(B)$ is Lipschitz and $\norm{B-B^\star}_1\leq \frac{C}{\sqrt{n}}$ by definition of $\Psi_{\typ}(C)$ in \eqref{eq:def:Psi}. Thus, we have by a Chernoff bound that
    \begin{equation}\label{eq:typical:goal:inter:2}
    \begin{split}
    \sum_{\ttt\in \FFF_{\tr}^{\typ}} \P_{\utheta^\star}\Big(\Big|\frac{1}{n}\sum_{i=1}^{nh_{\circ}}\one(X_i=\ttt)-p_{\ttt}^\star \Big|\geq \frac{\eps_{\ttt}}{v_{\ttt}\sqrt{n}}\Big)
    &\leq \sum_{\ttt\in \FFF_{\tr}^{\typ}} \exp\Big(-\frac{\eps_{\ttt}^2}{C (v_{\ttt} p_{\ttt}^\star)^2}\Big)\\
    &\leq \sum_{v\geq 1}(Ck)^v\exp\Big(-\frac{(C_0^\prime)^2 2^{\frac{kv}{2}}}{C v^6 (Ck)^{2v}}\Big)\,,
    \end{split}
    \end{equation}
    where $C>0$ denotes a universal constant. If we denote the \textsc{rhs} above by $f(C_0^\prime)$, then $f$ does not depend on any other parameters and clearly \rev{satisfies} $\lim_{C_0^\prime\to\infty}f(C_0^\prime)=0$. Therefore, taking $C_0^\prime$ to be large enough so that $f(C_0^\prime)\leq (2\wt{C})^{-1}\eps$ concludes the proof by \eqref{eq:typical:goal:inter} and \eqref{eq:typical:goal:inter:2}. 
    \end{proof}
    By a corollary of Proposition \ref{prop:boundary} and Proposition \ref{prop:freetree:concentration}, we have the following.
    \begin{cor}\label{cor:concentration}
    Recall the set of coloring profile $\Xi_0$ from Proposition \ref{prop:nss}. Then, for any $\eps>0$, there exists $C_0\equiv C_0(\eps,\alpha,k)$ such that uniformly over $|s-s^\star|\leq n^{-2/3}$,
    \begin{equation*}
        \E \bZ_{\la^\star,s}\Big[\norm{\xi-\xi^\star}_{\tsq} \geq \frac{C_0}{\sqrt{n}}\,~\textnormal{and}~~\xi \in \Xi_0\Big]\leq \eps \cdot\E \bZ_{\la^\star,s}\,.
    \end{equation*}
    \end{cor}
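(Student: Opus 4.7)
The plan is to decompose $\norm{\xi-\xi^\star}_{\tsq}$ into its three natural components and bound each using the two preceding results. Since $p_{\fff}^\star=0$ for every cyclic $\fff\in\FFF\setminus\FFF_{\tr}$,
\[
\norm{\xi-\xi^\star}_{\tsq} = \norm{B-B^\star}_1 + \sum_{\ttt\in\FFF_{\tr}}|p_\ttt - p_\ttt^\star|(v_\ttt+f_\ttt) + \sum_{\fff\in\FFF\setminus\FFF_{\tr}} p_\fff (v_\fff+f_\fff).
\]
For the boundary term, Proposition \ref{prop:boundary} applied with tolerance $\eps/3$ furnishes a constant $C_1\equiv C_1(\eps,\alpha,k)$ such that on a set of $\bZ_{\la^\star,s}$-mass at least $(1-\eps/3)\E\bZ_{\la^\star,s}$ we may assume $\norm{B-B^\star}_1\leq C_1/\sqrt{n}$.

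For the non-tree term, I would show it is deterministically $o(n^{-1/2})$ on $\xi\in\Xi_0$. By Remark \ref{rmk:clauses}, $f_\fff\leq v_\fff$ for any non-multicyclic $\fff$, so $v_\fff+f_\fff\leq 2v_\fff$. The two constraints $\xi\in\ee_{1/4}$ (i.e.\ $\sum_{\fff:v_\fff=v} p_\fff \leq 2^{-kv/4}$) and $\sum_{\fff\in\FFF\setminus\FFF_{\tr}}p_\fff\leq \log n/n$ together permit a direct estimate: splitting the sum over $v$ at the crossover $v^\dagger\asymp k^{-1}\log(n/\log n)$ yields $\sum_{\fff\in\FFF\setminus\FFF_{\tr}} p_\fff v_\fff = O_k((\log n)^2/n)$, which is $o(n^{-1/2})$ for large $n$.

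For the tree term, I would decompose
\[
\E\bZ_{\la^\star,s}\big[\text{the event}\big] = \sum_{B,\,\{p_\fff\}_{\fff\in\FFF\setminus\FFF_{\tr}}}\;\sum_{\{p_\ttt\}_{\ttt\in\FFF_{\tr}}} \one(\cdots)\,\E\bZ_{\la^\star,s}\big[B,\{p_\ttt\},\{p_\fff\}\big]
\]
and verify that, restricted to coloring profiles with $\xi\in\Xi_0$ and $\norm{B-B^\star}_1\leq C_1/\sqrt{n}$, the parameter $(B,s,\{p_\fff\}_{\fff\in\FFF\setminus\FFF_{\tr}})$ lies in $\Psi_{\typ}(C_1)$. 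The four defining conditions in \eqref{eq:def:Psi} all hold: $\norm{B-B^\star}_1\leq C_1/\sqrt n$ from the first step; $|s-s^\star|\leq n^{-2/3}$ by hypothesis; $\sum_{\fff\in\FFF\setminus\FFF_{\tr}}p_\fff\leq\log n/n$ and exclusion of multi-cyclic components by $\Xi_0$; and $v_\fff<\log n$ whenever $p_\fff>0$, because $\xi\in\ee_{1/4}$ combined with the quantization $p_\fff\geq 1/n$ forces $v_\fff\leq 4\log_2 n/k$. Proposition \ref{prop:freetree:concentration} applied pointwise with tolerance $\eps/3$ then supplies $C_0'\equiv C_0'(\eps,\alpha,k)$ such that, outside a sub-sum of total $\bZ_{\la^\star,s}$-mass at most $(\eps/3)\E\bZ_{\la^\star,s}$, we have $\sum_{\ttt} |p_\ttt-p_\ttt^\star|v_\ttt \leq C_0'/\sqrt n$, and hence $\sum_{\ttt} |p_\ttt-p_\ttt^\star|(v_\ttt+f_\ttt) \leq 2C_0'/\sqrt n$ using $f_\ttt\leq v_\ttt-1$ for free trees (from $|V(\ttt)|+|F(\ttt)|-1=|E(\ttt)|$ together with $2f_\ttt\leq e_\ttt$). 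Setting $C_0:=C_1+2C_0'+1$, a union bound combines the three bounds to give $\norm{\xi-\xi^\star}_{\tsq}\leq C_0/\sqrt n$ off an event of $\bZ_{\la^\star,s}$-mass at most $(2\eps/3)\,\E\bZ_{\la^\star,s}<\eps\,\E\bZ_{\la^\star,s}$. The only real technicality is ensuring that the pointwise invocation of Proposition \ref{prop:freetree:concentration} is truly uniform in $(B,\{p_\fff\}_{\fff\in\FFF\setminus\FFF_{\tr}})$; this is exactly the content of its uniform-over-$\Psi_{\typ}(C)$ formulation, so the plan closes.
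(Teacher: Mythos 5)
Your proof is correct and follows essentially the same route as the paper: decompose $\norm{\xi-\xi^\star}_{\tsq}$ using $p_\fff^\star=0$ on cyclic components, invoke Proposition~\ref{prop:boundary} for the boundary part, bound the non-tree part deterministically by $O_k((\log n)^2/n)$ on $\Xi_0$ via exponential decay, verify that the remaining coloring-profile data lies in $\Psi_{\typ}$, and finish with Proposition~\ref{prop:freetree:concentration}. The only cosmetic differences are an $\eps/3$ split instead of the paper's $\eps/2$, and a slightly more elaborate crossover-point argument for the non-tree term where the paper just observes that $p_\fff\geq 1/n$ together with $\xi\in\ee_{1/4}$ already forces $v_\fff\leq \tfrac{4\log n}{k\log 2}$; both yield the same $O_k((\log n)^2/n)$ bound.
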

    \begin{proof}
        For $\xi \in \Xi_0$, the quantity $\norm{\xi-\xi^\star}_{\tsq}$ defined in \eqref{eq:def:tsq:norm} can be bounded by
        \begin{equation*}
            \norm{\xi-\xi^\star}_{\tsq}\leq \norm{B-B^\star}_1+2\sum_{\ttt\in \FFF_{\tr}}|p_{\ttt}-p_{\ttt}^\star|v_{\ttt}+\frac{\log^2 n}{n}\,,
        \end{equation*}
        since by Remark \ref{rmk:clauses}, $f_{\fff}\leq v_{\fff}$ holds for non multi-cyclic free component\rev{s} $\fff$, and $v(\fff)\leq \frac{4\log n}{k\log 2}$ holds if $p_{\fff}\neq 0$ from $\xi\in \ee_{\frac{1}{4}}$. Thus, by taking $C_0\equiv C_0(\eps,\alpha,k)$ large enough, Proposition \ref{prop:boundary} shows that for some $C\equiv C(\eps,\alpha,k)>0$,
        \begin{equation}\label{eq:cor:concentration}
        \begin{split}
             &\E \bZ_{\la^\star,s}\Big[\norm{\xi-\xi^\star}_{\tsq} \geq \frac{C_0}{\sqrt{n}}\,~\textnormal{and}~~\xi \in \Xi_0\Big]\\
             &\leq \frac{\eps}{2} \cdot \E\bZ_{\la^\star,s}+\E \bZ_{\la^\star,s}\bigg[\norm{B-B^\star}_1\leq \frac{C}{\sqrt{n}}\,,~\sum_{\ttt\in \FFF_{\tr}}|p_{\ttt}-p_{\ttt}^\star|v_{\ttt}\geq \frac{C_0}{2\sqrt{n}}\,,~\textnormal{and}~~(B,\{p_{\fff}\}_{\fff\in \FFF})\in \Xi_0\bigg]\,.
        \end{split}
        \end{equation}
        Here, we used the fact that $p_{\fff}^\star=0$ for cyclic free components $\fff$ (cf. Definition \ref{def:optimal:coloring}). Note that $\norm{B-B^\star}_1\leq \frac{C}{\sqrt{n}}$, $|s-s^\star|\leq n^{-2/3}$,  and $(B,\{p_{\fff}\}_{\fff\in \FFF})\in \Xi_0$ implies that $(B,s, \{p_{\fff}\}_{\fff\in \FFF\setminus \FFF_{\tr}})\in \Psi_{\typ}(C)$. Therefore, Proposition \ref{prop:freetree:concentration} shows that if take $C_0\equiv C_0(\eps,\alpha,k)$ large enough, the rightmost term in \eqref{eq:cor:concentration} is at most $\frac{\eps}{2}\cdot \E \bZ_{\la^\star,s}$.
    \end{proof}
    Finally, it is straightforward to establish Theorem \ref{thm:concentration} from Proposition \ref{prop:nss} and Corollary \ref{cor:concentration}.
    \begin{proof}[Proof of Theorem \ref{thm:concentration}]
        By Proposition \ref{prop:nss}, there exist\rev{s} $\uC\equiv \uC(\eps,\alpha,k)$ such that $\P\big((\bGG,\bux)\notin \Gamma(\uC)\big)\leq \eps/2$. For such $\uC=(C_1,C_2)$, we have that
        \begin{equation}\label{eq:inter}
        \begin{split}
            &\P\bigg((\bGG,\bux)\in \Gamma(\uC)~~\textnormal{and}~~\norm{\xi[\bGG,\bux]-\xi^\star}_{\tsq}\geq \frac{C_0}{\sqrt{n}}\bigg)=\E\bigg[\E\Big[\sum_{\substack{\ux:(\bGG,\ux)\in \Gamma(\uC)\\\norm{\xi[\bGG,\ux]-\xi^\star}_{\tsq}\geq \frac{C_0}{\sqrt{n}}}}\; \frac{\size(\ux;\bGG)}{|\sol(\bGG)|}\bgiven \bGG\Big]\bigg]\\
            &\leq \sqrt{n}e^{n\la^\star s^\star+(2-\la^\star)C_2}\sum_{s\in [s_{\circ}(C_1),s_{\circ}(C_2)]}\E \bZ_{\la^\star,s}\Big[\norm{\xi-\xi^\star}_{\tsq} \geq \frac{C_0}{\sqrt{n}}\,~\textnormal{and}~~\xi \in \Xi_0\Big]+o_n(1)\,,
        \end{split}
        \end{equation}
        where the extra $o_n(1)$ comes from the truncation of the number of free variables and forcing edges in the definition of $\bZ_{\la^\star,s}$ in \eqref{eq:def:Z:lambda} (see Lemma 2.17 in \cite{ssz22}). By Corollary \ref{cor:concentration}, the sum in the \textsc{rhs} above can be made small enough compared to $\sum_{s\in [s_{\circ}(C_1),s_{\circ}(C_2)]}\E\bZ_{\la^\star,s}$. Moreover, Theorem 3.22 in \cite{NSS} shows that uniformly over $|s-s^\star|\leq n^{-2/3}$, $\E\bZ_{\la^\star,s}\asymp_k n^{-1/2}e^{n\la^\star s^\star}$ holds (see also equation (3.54) therein). Note that $s$ lies in the lattice $n^{-1}\Z$, so the number of $s\in [s_{\circ}(C_1),s_{\circ}(C_2)]$ is at most $\lceil C_2-C_1+1 \rceil$. Therefore, taking $C_0\equiv C_0(\eps,\alpha,k)>0$ large enough compared to $|C_1|\vee|C_2|$ in \eqref{eq:inter}, we have by Corollary \ref{cor:concentration} that $\P\big((\bGG,\bux)\in \Gamma(\uC)~~\textnormal{and}~~\norm{\xi[\bGG,\bux]-\xi^\star}_{\tsq}\geq \frac{C_0}{\sqrt{n}}\big)\leq \frac{\eps}{2}$, which concludes the proof.
    \end{proof}

         \section{Coupling $t$-neighborhoods to a broadcast process on a tree}
    \label{sec:coupling}
    In this section, we prove Lemma \ref{lem:bdry:cycle} and Theorem \ref{thm:coupling}. We mostly focus on the proof of Theorem \ref{thm:coupling}, and the proof of Lemma \ref{lem:bdry:cycle}, which is based on a first moment estimate, is provided at the end of this section. Throughout, we let $(\bGG, \bux)$ be a random frozen configuration drawn with probability proportional to its size (cf. $(a)$ of Observation \ref{obs:sampling}).

    As mentioned in Section \ref{subsec:proof:coupling}, the key idea in proving Theorem \ref{thm:coupling} is to construct a coupling between the measures $\E_{\xi} \nu_t[\bGG,\bux]$ and $\nu^\star_t$, and another coupling between $\E_{\xi}\big[\nu_t[\bGG,\bux]\otimes \nu_t[\bGG,\bux]\big]$ and $\E_{\xi}\nu_t[\bGG,\bux] \otimes \E_{\xi}\nu_t[\bGG,\bux]$.
    \begin{prop}\label{prop:coupling}
    There exists an explicit $\nu^\star_t\equiv \nu^\star_t[\alpha,k]\in \PPP(\Omega_t)$ such that the following holds: for any $t\geq 1$ and $C>0$, there exists a constant $K\equiv K(C,k,t)$ such that 
        \begin{align}
            &\sup_{\xi \in \Xi_C}d_{\tv}(\,\E_{\xi}\nu_t[\bGG,\bux]\,,\, \nu_t^\star\,)\leq \frac{K}{\sqrt{n}}\,,
            \label{eq:single:prop:coupling}\\
             &\sup_{\xi \in \Xi_C}d_{\tv}\Big (\,\E_{\xi}\big[\nu_t[\bGG,\bux]\otimes \nu_t[\bGG,\bux]\big]\,,\, \E_{\xi}\nu_t[\bGG,\bux]\otimes \E_{\xi}\nu_t[\bGG,\bux]\,\Big)\leq \frac{K}{n}\,.
             \label{eq:double:prop:coupling}
        \end{align}
    \end{prop}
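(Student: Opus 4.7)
The plan is to realize both sides of \eqref{eq:single:prop:coupling} as laws of the $t$-coloring produced by explicit sequential procedures that build the neighborhood around a uniformly chosen root variable, and then to couple the two procedures edge-by-edge. The target $\nu_t^\star$ I describe as a broadcast on $\TTT_{d,k,t}$: the root is assigned a length-$d$ spin tuple drawn from $\dot{B}^\star$; at each new edge carrying spin $\rr$ or $\bb$, one attaches a clause whose remaining $k-1$ edge-spins are drawn conditionally from $\hat{B}^\star$; full-edge literals are uniform on $\{0,1\}$; and whenever the next edge receives a color of the form $(\fff,e)$, the identity of the attached free component is drawn from the normalized weights $\{p^\star_\fff\}$, at which point, by Remark~\ref{rmk:comp:col}, the colors of all edges incident to the revealed free piece are deterministic. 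Iterating for $t$ layers yields a probability measure on $\Omega_t\cup\{\cyc\}$, with the $\cyc$ atom arising when two sampled half-edges from the same free component meet; this is an i.i.d.\ (with-replacement) procedure driven by the profiles of $\xi^\star$.

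On the other side, under $\P_\xi$ the pair $(\bGG,\bux)$ is uniform among frozen configurations with coloring profile $\xi$ (Remark~\ref{rmk:P:xi}), and is therefore generated by the colored configuration model: a pool of half-edges whose counts are prescribed by $\dot B,\hat B$, together with pre-grouped free-component copies with multiplicities $np_\fff$, is matched uniformly at random. The law $\E_\xi\bnu_t$ is obtained by choosing a uniform root and running a breadth-first exploration that reveals matching partners one at a time, unveiling the entire free piece whenever a free-component half-edge is touched. This is a sampling-\emph{without}-replacement analogue of the broadcast above, now driven by the profiles of $\xi$ rather than $\xi^\star$.

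I then couple the two explorations in the same BFS order. Each step is of one of two kinds: (a) drawing the spin profile of a fresh variable or clause, or (b) drawing the type of a free component once its entry edge is revealed. The total-variation cost of coupling a single step of type (a) against an i.i.d.\ draw from $\xi^\star$ is at most $\|\dot B-\dot B^\star\|_1+\|\hat B-\hat B^\star\|_1+O_{k,t}(1/n)$, where the $O_{k,t}(1/n)$ comes from the with/without-replacement discrepancy; this step cost is legitimate because on $\Xi_C$ the non-degenerate $B^\star$-masses force each color pool to stay of size $\Omega_k(n)$ after the $O_{k,t}(1)$ preceding removals. The cost of a single type-(b) step is bounded by $\sum_\fff |p_\fff-p^\star_\fff|(v_\fff+f_\fff)+O_{k,t}(1/n)$, using $\{p_\fff^\star\}\in\ee_{1/2}$ from Lemma~3.13 of \cite{NSS} to ensure that the relevant component populations are large enough for the replacement correction to be $O_{k,t}(1/n)$. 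Summing over the $O_{k,t}(1)$ edges revealed during the BFS gives a total TV bound of $O_{k,t}(\|\xi-\xi^\star\|_{\tsq}+1/n)\lekt n^{-1/2}$ on $\Xi_C$. The probability that $N_t(\bv,\bGG)$ contains a cycle contributes to the $\cyc$ atom on both sides and is controlled by $\E_\xi[N_{\cyc}(2t;\bGG)]\leq C$ and $\E[N_{\cyc}(2t;\bGG)]\lekt 1$, so the two $\cyc$ masses match to within $O_{k,t}(1/n)$ as well. This proves \eqref{eq:single:prop:coupling}.

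For \eqref{eq:double:prop:coupling} I run the coupling for two roots $\bv_1,\bv_2$ picked independently and uniformly. On the $\nu_t^\star\otimes\nu_t^\star$ side the two broadcasts are independent by construction. On the $\E_\xi[\bnu_t\otimes\bnu_t]$ side the two BFS explorations share the pool, and the single-neighborhood coupling can be run independently on each exploration unless they collide, meaning they hit a common variable, common clause, or common copy of a free component; since each exploration touches $O_{k,t}(1)$ such objects out of populations of size $\Omega_k(n)$, a union bound gives collision probability $O_{k,t}(1/n)$, and off the collision event the per-step replacement corrections for the second exploration conditional on the first are again $O_{k,t}(1/n)$ in total, yielding \eqref{eq:double:prop:coupling}. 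The main obstacle is the careful bookkeeping when the BFS enters a free component: under $\P_\xi$, revealing one half-edge of $\fff$ pins down the positions of all other half-edges of $\fff$ in the matching, the same free piece may be touched in several BFS locations, and a cyclic $\fff$ may contribute to the $\cyc$ atom in non-obvious ways. One must verify that this conditioning integrates cleanly into the configuration-model sampler, that the $\cyc$ contribution is tight thanks to the added hypothesis $\E_\xi[N_{\cyc}(2t;\bGG)]\leq C$ in Definition~\ref{def:Xi}, and that the weight $v_\fff+f_\fff$ built into $\|\cdot\|_{\tsq}$ is exactly what makes the type-(b) step cost aggregate to $O_{k,t}(\|\xi-\xi^\star\|_{\tsq})$ without any logarithmic loss.
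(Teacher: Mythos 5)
Your plan is essentially the same as the paper's: realize $\E_\xi\nu_t[\bGG,\bux]$ as a breadth-first exploration of the colored configuration model (Observation~\ref{obs:config}, Remark~\ref{rmk:exploration}), couple it step-by-step to the broadcast with channel $(\dHH^\star,\hHH^\star)$, and charge each step a TV cost of (profile discrepancy) $+\;O_{k,t}(1/n)$ (withdrawal correction), summing over $O_{k,t}(1)$ steps; this is precisely what the paper packages into Lemma~\ref{lem:tilt} and the proof of Proposition~\ref{prop:coupling}. Two small cleanups: the tree broadcast defining $\nu_t^\star$ never produces a $\cyc$ atom (the $\cyc$ mass lives only on the configuration-model side and is $O_{k,t}(1/n)$ by $\E_\xi[N_\cyc(2t;\bGG)]\le C$), and the withdrawal correction for entering a free component does not require $\{p_\fff^\star\}\in\ee_{1/2}$ — it needs only that the $\{\rr,\bb,\fs\}$-colored pools have size $\Omega_k(n)$ (via $\min_\tau\bar B^\star(\tau)\gtrsim_k 1$ on $\Xi_C$), since by Remark~\ref{rmk:comp:col} the matching among $(\fff,e)$-colored half-edges only selects a copy of $\fff$ and does not affect the revealed $t$-coloring.
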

    Proposition \ref{prop:coupling} clearly implies Theorem \ref{thm:coupling}, and we include the proof for completeness.
    \begin{proof}[Proof of Theorem \ref{thm:coupling}]
        By Chebyshev's inequality, we have for any $w \in [-1,1]^{\Omega_t\cup\{\cyc\} }$ that
        \begin{equation*}
        \begin{split}
             \P_{\xi}\bigg(\Big|\big\langle \nu_t[\bGG, \bux] -\nu_t^\star \,,\, w\big\rangle\Big|\geq \frac{K_0}{\sqrt{n}}\bigg)
             &\leq \frac{n}{(K_0)^2}\left(\Big(\big\langle \E_{\xi} \nu_t[\bGG, \bux] -\nu_t^\star \,,\, w\big\rangle\Big)^2+\Var_{\xi} \Big(\big\langle \nu_t[\bGG,\bux]\,,\, w\big\rangle\Big)\right)\\
             &\leq \frac{n}{(K_0)^2}\left(\Big\|\E_{\xi} \bnu_t-\nu_t^\star\Big\|_1^2+\Big\|\E_{\xi}[\bnu_t\otimes \bnu_t]-\E_{\xi}\bnu_t\otimes \E_{\xi}\bnu_t\Big\|_1 \right)\,,
        \end{split}
        \end{equation*}
        where we abbreviated $\bnu_t\equiv \nu_t[\bGG,\bux]$ in the last line. Recall that $d_{\tv}(\mu,\nu)=\frac{1}{2}\norm{\mu-\nu}_1$ holds for any probability measures $\mu, \nu$. Thus, taking $K_0\equiv K_0(C,\eps,k,t)$ large enough so that $(K_0)^2\geq 8\eps^{-1}(K\vee 1)^2$ for $K\equiv K(C,k,t)$ in Proposition \ref{prop:coupling} concludes the proof.
    \end{proof}
    Thus, the rest of this section is devoted to the proof of Proposition \ref{prop:coupling} except for a brief moment where we prove Lemma \ref{lem:bdry:cycle}.
    \subsection{A broadcast process with edge configurations}\label{subsec:broadcast}
    In this subsection, we define the \textit{optimal $t$-coloring profile} $\nu^\star_t\equiv \nu^\star_t[\alpha,k]$ based on the optimal coloring profile $\xi^\star\equiv \xi^\star[\alpha,k]$ in Definition \ref{def:optimal:coloring}. The following notations will be useful throughout.

     We say $\utau \in \CCC^d$ (resp. $(\utau, \uL^\prime) \in (\CCC\times \{0,1\})^k$) is \textit{valid} if it can be realized as $\sig_{\delta v}=\utau$ (resp. $(\sig_{\delta a}, \uL_{\delta a})=(\utau,\uL^\prime)$) for a valid component coloring $(V,F,E,\uL,\sig)$ and $v\in V$ (resp. $a\in F$). We say $\utau\in \CCC^k$ is valid if there exists $\uL^\prime \in \{0,1\}^k$ such that $(\utau,\uL^\prime)$ is valid. We denote the collection of valid $\utau \in \CCC^d$ such that $\utau \not\subset \{\rr,\bb,\fs\}^d$ by $\dot{\CCC}_{\ff}$, and denote the collection of valid $\utau \in \CCC^k$ such that $\utau \not\subset \{\rr,\bb,\fs\}^k$ by $\hat{\CCC}_{\ff}$

    Note that for $\utau\in \dot{\CCC}_{\ff}$ and $\sig_{\delta v}=\utau$, then the free component $\fff(\utau)\in \FFF$ that contains $v$ is determined \textit{solely} with $\utau$. Similarly, for $\utau \in \hat{\CCC}_{\ff}$, the free component $\fff(\utau)$ induces is well-defined. Conversely, given a free component $\fff\in \FFF, v\in V(\fff)$, and $a\in F(\fff)$, the component colorings $\sig_{\delta v}\in \CCC^{d}$ and $\sig_{\delta a} \in \CCC^k$ are determined up to a permutation of the coordinates. For $\utau\in \dot{\CCC}_{\ff}$, define the \textbf{multiplicity} of $\utau$ by  
    \begin{equation}\label{eq:dot:multi}
        \dot{m}(\utau):=\big|\{v\in V(\fff(\utau)): \sig_{\delta v}\in  \textnormal{per}(\utau)\}\big|\,,
    \end{equation}
    where $\textnormal{per}(\utau)$ denotes the set of permutations $\utau$. Similarly, the multiplicity of $\utau \in \hat{\CCC}_{\ff}$ is defined by
    \begin{equation}\label{eq:hat:multi}
        \hat{m}(\utau):=\big|\{a\in F(\fff(\utau)): \sig_{\delta a}\in  \textnormal{per}(\utau)\}\big|\,.
    \end{equation}
    \begin{remark}\label{rmk:coloring:determines:spins}
    Let $\sig\in \CCC^{E}$ be a valid component coloring corresponding to $(\GG,\ux)$. Note that for $\utau\in \dot{\CCC}_{\ff}$, the quantity $\dot{m}(\utau)\cdot p_{\fff(\utau)}[\GG,\ux]$ equals the fraction of variables such that $\sig_{\delta v}\in \textnormal{per}(\utau)$. A similar remark can be made for $\utau\in \hat{\CCC}_{\ff}$. On the other hand, the empirical profile of spins adjacent to frozen variables and separating clauses is encoded by $B[\GG,\ux]$. Thus, the coloring profile $\xi=\xi[\GG,\ux]$ alone determines the spin profiles $\{\sig_{\delta v}\}_{v\in V}$ and $\{\sig_{\delta a}\}_{a\in F}$ up to a permutation of the coordinates, which serves as a useful fact throughout this section. 
    \end{remark}
    We now introduce the \textit{broadcast model} on an infinite $(d,k)$-regular tree with edge configurations.
    \begin{defn}\label{def:broadcast}
    Consider symmetric probability distributions $\dot{\HH}\in \PPP(\CCC^{d})$, $\hat{\HH}\in \PPP(\CCC^{k})$ with the same marginal $\bar{\HH}\in \PPP(\CCC)$. That is, $\dot{\HH}(\sigma_1,\ldots,\sigma_d)=\dot{\HH}(\sigma_{\pi(1)},\ldots, \sigma_{\pi(d)})$ holds for $\pi \in S_d$ and $(\sigma_1,\ldots,\sigma_d)\in \CCC^d$. An analogous condition holds for $\hHH$. Moreover, for any $\tau \in \CCC$,
    \begin{equation}\label{eq:same:marginal}
          \bar{\HH}(\tau):=\sum_{\utau \in \CCC^{d}}\dot{\HH}(\utau)\one\{\tau_1=\tau\} = \sum_{\utau \in \CCC^k}\hat{\HH}(\utau)\one\{\tau_1 = \tau \}\,.
    \end{equation}
    Given such \textbf{channel} $(\dot{\HH}, \hat{\HH})$, and an infinite $(d,k)$-regular tree $\TTT_{d,k}$ with root $\rho$, the \textbf{broadcast process} (with edge configurations) is a probability distribution of $(\bsig_t,\buL_t)=\big((\bsigma_e)_{e\in E(\TTT_{d,k,t})},(\bL_e)_{e\in E_{\sf in }(\TTT_{d,k,t})}\big)$ defined as follows. The spin neighborhood around the root $\bsig_{\delta \rho}\in \CCC^{d}$ is drawn from the distribution $\dot{\HH}$. Then, it is propagated along the variables and clauses with the following rule: if an edge $e\in E(\TTT_{d,k})$ has children edges $\delta a(e)\setminus e$, i.e. when $d(a(e),\rho)>d(v(e),\rho)$, then for $\utau=(\tau_1,\ldots,\tau_k)\in \CCC^k$ and $\tau\in \CCC$, 
        \begin{equation}\label{eq:broadcast:clause}
            \P\big(\bsig_{\delta a(e)}= \utau \,\big| \bsigma_e = \tau\big)= \frac{1}{k}\frac{\hat{\HH}(\utau)\sum_{i=1}^{k}\one(\tau_i=\tau)}{\bar{\HH}(\tau)}=:\hat{\HH}(\utau \given \tau)\,.
        \end{equation}
    If an edge $e\in E(\TTT_{d,k})$ has children edges $\delta v(e)\setminus e$, i.e. when $d(v(e),\rho)>d(a(e),\rho)$, then for $\utau=(\tau_1,\ldots,\tau_d)\in \CCC^d$ and $\tau \in \CCC$,
    \begin{equation*}
        \P\big(\bsig_{\delta v(e)}= \utau \,\big| \bsigma_e = \tau\big)= \frac{1}{d}\frac{\dot{\HH}(\utau)\sum_{i=1}^{d}\one(\tau_i=\tau)}{\bar{\HH}(\tau)}=:\dot{\HH}(\utau\given \tau)\,.
    \end{equation*}
    Finally, conditional on $(\bsigma_e)_{e\in E(\TTT_{d,k})}$, draw $\buL_{\delta a}\in \{0,1\}^k$ for each clause $a\in F(\TTT_{d,k})$ independently and uniformly at random among $\uL\in \{0,1\}^k$ such that $(\bsig_{\delta a},\uL)$ is valid.  
    \end{defn}
    In the last step above, we \rev{emphasize} that \rev{if} $\sig_{\delta a} \in \hat{\CCC}_{\ff}$, then there exists a unique $\uL_{\delta a}\in \{0,1\}^k$ such that $(\sig_{\delta a}, \uL_{\delta a})$ is valid since the non-boundary component coloring carries the literal information.
    \begin{defn}\label{def:optimal:t}
    Let $\dot{\HH}^\star\equiv \dot{\HH}^\star[\alpha,k]\in \PPP(\CCC^d)$ and $\hat{\HH}^\star\equiv \hHH^\star[\alpha,k]$ be defined as follows.
    \begin{eqnarray*}
        \dHH^\star(\utau):=
        \begin{cases}
            \dot{B}^\star(\utau) & \utau\in \{\rr,\bb,\fs\}^{d},\\
            (|\textnormal{per}(\utau)|)^{-1}\cdot \dot{m}(\utau)\cdot p^\star_{\fff(\utau)} &\utau\in \dot{\CCC}_{\ff},
        \end{cases}
        &
        \hHH^\star(\utau):=
        \begin{cases}
            \hat{B}^\star(\utau) & \utau\in \{\rr,\bb,\fs \}^{d},\\
            \frac{k}{d}\cdot  (|\textnormal{per}(\utau)|)^{-1}\cdot\hat{m}(\utau)\cdot p^\star_{\fff(\utau)} &\utau\in \hat{\CCC}_{\ff}\,.
        \end{cases}
    \end{eqnarray*}
    The compatibility $\{p_{\fff}^\star\}_{\fff \in \FFF}\sim B^\star$ (cf. Remark \ref{rmk:optimal:compatible}) guarantees that $\dHH^\star$ and $\hHH^\star$ have total mass $1$ and have the same marginals. Consider the sample $(\bsig^{\star}_t,\buL^\star_t)$ drawn from the broadcast process with channel $(\dHH^\star, \hHH^\star)$. Then, the \textbf{optimal $t$-neighborhood coloring profile} $\nu^\star_t\equiv \nu^\star_t[\alpha,k]$ is the distribution of $(\bsig^{\star}_t,\buL^\star_t)\in \Omega_t$, considered up to automorphisms as in Definition \ref{def:t:nbd:empirical}.
    \end{defn}
    
    \subsection{Coupling based on a configuration model}
    \label{subsec:coupling:config}
    We now prove Proposition \ref{prop:coupling}. We focus on the estimate $d_{\tv}(\E_{\xi} \nu_t[\bGG,\bux], \nu^\star_t)\lekt n^{-1/2}$ (cf. \eqref{eq:single:prop:coupling}) as the second estimate \eqref{eq:double:prop:coupling} is obtained in a simpler manner. Throughout, we fix $\xi\in \Xi_C$.

    Recall that the law $\P_{\xi}(\cdot ):=\P\big((\bGG,\bux)=\cdot \,\big|\,\xi[\bGG,\bux]=\xi\big)$ is a uniform distribution of $(\GG,\ux)$ that satisfy $\xi[\GG,\ux]=\xi$ (cf. Remark \ref{rmk:P:xi}). With abuse of notation, we write $\xi[\GG,\sig]=\xi$ for $\sig$ corresponding to a frozen configuration $(\GG,\ux)$ with $\xi[\GG,\ux]=\xi$, and consider $\P_{\xi}(\cdot)$ also as a distribution of component coloring $(\bGG,\bsig)$ conditioned on $\xi[\bGG,\bsig]=\xi$, which is uniform.

     Then, note that $\E_{\xi}\nu_t[\bGG,\bux]\in \PPP(\Omega_t\sqcup \{\cyc\})$ is the law of $\big(\bsig_t(\bv,\bGG),\buL_t(\bv,\bGG)\big)$ considered up to automorphisms where $(\bGG,\bsig)\equiv (V,F,\bE,\buL,\bsig)\sim \P_{\xi}$ and $\bv\sim \Unif(V)$. Here, we take the convention that if $N_t(\bv,\bG)$ contains a cycle, then $\big(\bsig_t(\bv,\bGG),\buL_t(\bv,\bGG)\big)\equiv\cyc$. Moreover, the following observation, which follows from Remark \ref{rmk:coloring:determines:spins}, shows that $(\bGG,\bsig)\in \P_{\xi}$ is a sample from a \textit{configuration model}. It serves as our main intuition in the construction of the coupling.
    \begin{obs}\label{obs:config}
    Consider a coloring profile $\xi=(B, \{p_{\fff}\}_{\fff\in \FFF})$, where $\{p_{\fff}\}_{\fff\in \FFF}\sim B$ \footnote{Here, we assume that $n\dot{B}, m\hat{H},$ and $\{np_{\fff}\}_{\fff\in \FFF}$ are integer valued.}. Then, $(\bGG,\bsig)\sim \P_{\xi}$ can be drawn from the following \textbf{configuration model}.
   \begin{enumerate}[label=(\alph*)]
       \item For $\utau\equiv (\tau_1,\ldots,\tau_d) \in \{\rr,\bb,\fs\}^{d}$, consider $n\dot{B}(\tau_1,\ldots,\tau_d)$ number of variables and assign to each of the variables the spin $\tau_i$ to its $i$'th half-edge, $1\leq i \leq d$. For $\utau \in \dot{\CCC}_{\ff}$, consider $n\cdot \dot{m}(\utau)\cdot p_{\fff(\utau)}$ number of variables and assign to each of the variables the spin $\tau_{\boldsymbol{\pi}(i)}$ to its $i$'th half-edge, $1\leq i\leq d$, where $\boldsymbol{\pi}\in S_d$ is a u.a.r. permutation. The total number of variables considered is $n$ by compatibility (cf. Definition \ref{def:compat}). Then, permute the location of the considered variables u.a.r., i.e. assign a random order to the $n$ variables.
       \item Similarly, for $\utau \in \{\rr,\bb,\fs\}^k$, consider $m\hat{B}(\utau)$ number of clauses with its neighboring spin $\utau$, and for $\utau \in \hat{\CCC}_{\ff}$, consider $n\cdot \hat{m}(\utau)\cdot p_{\fff(\utau)}$ with its neighboring spins a u.a.r. permutation of $\utau$. Then, permute the location of the considered clauses u.a.r..
       \item Subsequently, match the half-edges adjacent to variables and the half-edges adjacent to clauses among those that have the same spin in a uniformly random manner. 
       \item Finally, we draw the literals: for each clause $a\in F$, independently draw $\buL_{\delta a}\in \{0,1\}^k$ uniformly at random among those which $(\bsig_{\delta a}, \buL_{\delta a})$ is valid. 
   \end{enumerate}
    \end{obs}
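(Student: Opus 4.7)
The plan is to show that both $\P_{\xi}$ and the configuration model (a)--(d) describe the \emph{uniform} distribution on the finite set of labeled pairs $(\GG, \sig)$ with coloring profile $\xi[\GG,\sig]=\xi$. For $\P_{\xi}$, Observation \ref{obs:sampling}(a) writes the unconditioned sample $(\bGG,\bux)$ as a uniform $(d,k)$-regular bipartite graph with i.i.d.\ uniform literals, weighted by $\size(\ux,\GG)=\prod_{\fff\in \FFF(\ux,\GG)} w_\fff$. This weight depends only on the free-component multiset $\{p_{\fff}[\GG,\ux]\}_{\fff}$ and hence is constant on the event $\{\xi[\bGG,\bux]=\xi\}$; after the bijection of Definition \ref{def:compcol} identifying $\ux$ with $\sig$, conditioning yields the uniform measure on this set.

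Next, I would analyze the configuration model. By Remark \ref{rmk:coloring:determines:spins}, any pair $(\GG,\sig)$ with $\xi[\GG,\sig]=\xi$ is uniquely specified by: (i) the variable-side data $v \mapsto \sig_{\delta v}$; (ii) the clause-side data $a \mapsto (\sig_{\delta a},\uL_{\delta a})$; and (iii) a spin-compatible perfect matching between variable and clause half-edges, where matched half-edges must carry the same spin. The multiset $\{\sig_{\delta v}\}_{v\in V}$ is completely prescribed by $\xi$: the entries $\utau \in \{\rr,\bb,\fs\}^d$ occur with count $n\dot{B}(\utau)$, while for each $\utau \in \dot{\CCC}_\ff$ the multiplicity $\dot{m}(\utau)$ combined with $p_{\fff(\utau)}$ fixes the number of variables whose spin tuple lies in $\textnormal{per}(\utau)$, in exact agreement with step (a). An analogous decomposition holds for clauses via $\hat{B}$, $\hat{m}$, and step (b).

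The remaining step is to verify that for each target $(\GG,\sig)$ with profile $\xi$, the probability of producing it under (a)--(d) is a product of three factors each depending only on $\xi$: the reciprocal of the number of ways to distribute the prescribed type multiset over variable and clause labels (steps (a)--(b), a multinomial coefficient in $\xi$); the reciprocal of the number of spin-compatible matchings (step (c), a product of factorials of the spin-class sizes, which are read off from $\xi$); and the reciprocal of the number of literal extensions valid on the clause side (step (d), determined by $\hat{B}$ alone). Since every element in the target set receives the same mass, the configuration-model law coincides with the uniform measure, and hence with $\P_\xi$.

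The main subtlety, which I would dispatch separately, concerns the free-component bookkeeping. A single $\fff \in \FFF$ is instantiated $np_\fff$ times, and the internal spin pattern across $V(\fff)$ is rigid up to automorphism; step (a) implements this via a uniform permutation $\boldsymbol{\pi}\in S_d$ at each variable slot followed by a uniform permutation of variable locations. Verifying that the joint output hits each labeled embedding of $\fff$ with equal probability --- so that the factors $\dot{m}(\utau)/|\textnormal{per}(\utau)|$ correctly absorb the automorphisms of $\fff$ --- is a short combinatorial calculation analogous to the $J_{\fff}$ counting in Proposition \ref{prop:moment:formula}, and constitutes the only non-routine computation in the proof.
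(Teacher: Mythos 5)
Your plan --- show that both $\P_\xi$ and the configuration model (a)--(d) give the uniform measure on the fiber of pairs with coloring profile $\xi$, then match combinatorial factors --- is the natural one and matches what the paper has in mind (the observation is stated without proof, with Remark~\ref{rmk:P:xi} and Remark~\ref{rmk:coloring:determines:spins} as its basis). The configuration-model side of your outline, including the free-component bookkeeping via $\dot{m}(\utau)$ and the $J_\fff$-type counting, is on track.

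The step where you identify $\P_\xi$ with the uniform measure needs more care. You write that Observation~\ref{obs:sampling}(a) represents the law of $(\bGG, \bux)$ as the uniform $(d,k)$-regular graph law weighted by $\size(\ux,\GG)$, and conclude that the weight is constant on the fiber $\{\xi[\GG,\ux]=\xi\}$. In fact that observation gives $\P(\bGG = \GG, \bux = \ux) = \P(\bGG = \GG)\cdot \size(\ux,\GG)\,/\,|\sol(\GG)|$: the normalizing constant in the step ``sample $\bux$ proportional to its size'' is $|\sol(\GG)|$, and you have silently discarded it. While $\size(\ux,\GG)$ is a function of $\xi$, the partition function $|\sol(\GG)| = \sum_{\ux'}\size(\ux',\GG)$ ranges over \emph{all} frozen configurations on $\GG$, not just the one with profile $\xi$, and is not determined by $\xi$. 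Hence on the fiber $\P_\xi$ is proportional to $1/|\sol(\GG)|$ rather than constant, and ``conditioning yields the uniform measure'' does not follow as written. The paper's Remark~\ref{rmk:P:xi} contains the same elision, so you have reproduced its reasoning faithfully; but a complete proof of the observation would need an extra ingredient, either showing that $|\sol(\GG)|$ is effectively constant over the graphs appearing in the fiber, or replacing the weight $\size/|\sol|$ by a tilted weight such as $\size(\ux,\GG)^{\lambda^\star}$ whose normalizer is genuinely a function of $\xi$ alone.
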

    \begin{remark}\label{rmk:symmetry:B}
    Note that a priori, the Steps $(a)$ and $(b)$ in Observation \ref{obs:config} does not permute the spins adjacent to frozen variables and separating clauses, i.e. $\utau \in \{\rr,\bb,\fs\}^d$ and $\utau \in \{\rr,\bb,\fs\}^k$. This is because $\dot{B}(\utau)$ and $\hat{B}(\utau)$ stores the information on the orderings of $\utau$. However, we do not distinguish such orderings for $(\sig_t,\uL_t)\in \Omega_t$: for permutations $\pi_1\in S_k$ and $\pi_2\in S_d$, let $\xi_{\pi_1,\pi_2}\equiv (\dot{B}_{\pi_1},\hat{B}_{\pi_2}, \bar{B}, \{p_{\fff}\}_{\fff\in \FFF})$, where $\dot{B}_{\pi_1}(\utau)\equiv \dot{B}(\tau_{\pi_1(1)},\ldots ,\tau_{\pi_1(d)})$ and $\hat{B}_{\pi_2}$ is similarly defined. Then, we have that $\E_{\xi}\nu_t[\bGG,\bux]=\E_{\xi_{\pi_1,\pi_2}}\nu_t[\bGG,\bux]$ and $\E_{\xi}\nu_t[\bGG,\bux]\otimes \nu_t[\bGG,\bux]=\E_{\xi_{\pi_1,\pi_2}}\nu_t[\bGG,\bux]\otimes \nu_t[\bGG,\bux]$. Thus, for the purpose of proving Proposition \ref{prop:coupling}, we may assume that the spins $\utau\in \{\rr,\bb,\fs\}^d$ and $\utau \in \{\rr,\bb,\fs\}^k$ are permuted u.a.r. like the spins $\utau\in \dot{\CCC}_{\fff}$ and $\utau\in \hat{\CCC}_{\fff}$ in the Steps $(a)$ and $(b)$ in Observation \ref{obs:config}. 
    \end{remark}
     \begin{remark}\label{rmk:exploration}
    Note that Observation \ref{obs:config} and Remark \ref{rmk:symmetry:B} show that $\E_{\xi}\nu_t[\bGG,\bux]=\Law\big(\bsig_t(\bv,\bGG),\buL_t(\bv,\bGG)\big)$, where $(\bGG,\bsig)\sim \P_{\xi}$ and $\bv\sim \Unif(V)$, is an \textit{exploration process}, which can be described in a \textit{breadth-first} manner as follows. 
    \begin{enumerate}[label=(\alph*)]
        \item  Consider $n$ variables with $d$ half-edges hanging and $m$ clauses with $k$ half-edges hanging. Each half-edge is assigned a color $\sigma\in \CCC$ by the Steps $(a)$, $(b)$ in Observation \ref{obs:config} and Remark \ref{rmk:symmetry:B}. Then, pick a variable $\bv$ uniformly at random among $n$ variables. 
        \item Let $(e_1,...,e_d)=\delta \bv$ be the half-edges adjacent to $\bv$. For $e_1$ match it with a half-edge hanging on a clause u.a.r. among those which have the same color $\sigma_{e_1}$. Similarly, match $e_2$ with a half-edge u.a.r. among those which have the same color \textbf{and} have not been matched, i.e. $e_2$ cannot be matched with the half-edge that $e_1$ has been matched with. Repeat the same procedure for $e_3,...,e_d$, i.e. match them sequentially u.a.r. among those \rev{that} have the same color and have not been matched previously. 
        \item By the previous step, we obtain a $2-$neighborhood around $\bv$ (not necessarily a tree) with the boundary half-edges hanging on clauses. For each boundary half-edges, we repeat the same procedure. We repeat this process until depth $2t$ to obtain the $2t-\frac{3}{2}$ neighborhood of $\bv$ denoted by $N_t(\bv)$, and a coloring configuration on $N_t(\bv)$ denoted by $\bsig_t(\bv)\in \CCC^{E(\TTT_{d,k,t})}\sqcup\{\cyc\}$. Finally, for each clause $a\in N_t(\bv)$, independently draw $\buL_{\delta a}\in \{0,1\}^k$ uniformly at random among those which $(\bsig_{\delta a}, \buL_{\delta a})$ is valid. The final output is $(\bsig_t(\bv),\buL_t(\bv))\in \Omega_t\sqcup \{\cyc\}$.
    \end{enumerate}
    \end{remark}
    The description of the exploration process above is a \textit{sampling without replacement} version (due to the pairs of half-edges already matched) of the broadcast process with the channel $(\dHH^{\sy}, \hHH^{\sy})$ given as follows: for $\xi=(B, \{p_{\fff}\}_{\fff\in \FFF})$, the \textbf{symmetrized coloring profile} of $\xi$ is defined by $(\dHH^{\sy}, \hHH^{\sy})\equiv (\dHH^{\sy}[\xi], \hHH^{\sy}[\xi])\in \PPP(\CCC^d)\times \PPP(\CCC^k)$, where 
    \begin{equation}\label{eq:symm:coloring}
    \begin{split}
        &\dHH^{\sy}(\utau):=
        \begin{cases}
            (|\textnormal{per}(\utau)|)^{-1}\cdot\sum_{\utau^\prime \in \textnormal{per}(\utau)}\dot{B}(\utau^\prime) &~~\utau\in \{\rr,\bb,\fs\}^{d},\\
            (|\textnormal{per}(\utau)|)^{-1}\cdot \dot{m}(\utau)\cdot p_{\fff(\utau)} &~~\utau\in \dot{\CCC}_{\ff},
        \end{cases}
    \\
        &\hHH^{\sy}(\utau):=
        \begin{cases}
            (|\textnormal{per}(\utau)|)^{-1}\cdot\sum_{\utau^\prime \in \textnormal{per}(\utau)}\hat{B}(\utau^\prime) & \utau\in \{\rr,\bb,\fs \}^{d},\\
            \frac{k}{d}\cdot  (|\textnormal{per}(\utau)|)^{-1}\cdot\hat{m}(\utau)\cdot p_{\fff(\utau)} &\utau\in \hat{\CCC}_{\ff}\,.
        \end{cases}
    \end{split}
    \end{equation}
 The only difference between the exploration process $\E_{\xi}\nu_t[\bGG,\bux]$ and the broadcast model with channel $(\dHH^{\sy},\hHH^{\sy})$ is that during the process when the boundary half-edge $e$ has spin $\sigma_e\in \{\rr,\bb,\fs\}$, then the distribution of the spins of the children edges has a slight tilt compared to $\dHH^{\sy}$ or $\hHH^{\sy}$ due to the pairs of half-edges already matched. \footnote{If $\sigma_e \notin \{\rr,\bb,\fs\}$, then the spins of the children edges are deterministic (see Remark \ref{rmk:comp:col}).} The following lemma shows that this tilt is $O_{k,t}(n^{-1})$ and that the error arising from the difference of channels $(\dHH^{\sy},\hHH^{\sy})$ and $(\dHH^{\star},\hHH^{\star})$ is $O_{k,t}(n^{-1/2})$:
\begin{lemma}\label{lem:tilt}
    Let $\dHH\in \PPP(\CCC^d), \hHH\in \PPP(\CCC^k)$ be symmetric probability measures with the same marginal $\bHH\in \PPP(\CCC)$ that satisfy \eqref{eq:same:marginal}. For $\tau \in \{\rr,\bb,\fs\}$, non-negative integer $\ell_1\in \N$, and vectors $\dul_2\equiv (\dot{\ell}_2(\sig))_{\sig\in \CCC^d} \in \N^{\CCC^{d}}, \hul_2\equiv (\hat{\ell}_2(\sig))_{\sig\in \CCC^k} \in \N^{\CCC^{k}}$, define the conditional probability measures $\dot{H}_{\ell_1,\dul_2}(\cdot \given \tau)\equiv \dHH_{\ell_1,\dul_{2},n}(\cdot \given \tau)\in \PPP(\CCC^d)$ and $\hHH_{\ell_1,\hul_2}(\cdot \given \tau)\equiv \hHH_{\ell_1,\hul_2,n}(\cdot \given \tau)\in \PPP(\CCC^k)$ as follows. For $\sig=(\sigma_1,\ldots,\sigma_d)\in \CCC^d$ and $\sig^\prime=(\sigma^\prime_1,\ldots,\sigma^\prime_k)\in \CCC^k$, let
    \begin{equation*}
    \dHH_{\ell_1,\dul_2}\big(\sig \,\big|\, \tau\big)=\frac{n\dHH(\sig)\sum_{i=1}^{d}\one(\sigma_i=\tau)-\dot{\ell}_2(\sig)}{nd\bHH(\tau)-\ell_1}\,,~~~~~\hHH_{\ell_1,\hul_2}\big(\sig^\prime \,\big|\, \tau\big)=\frac{m\hHH(\sig^\prime)\sum_{i=1}^{k}\one(\sigma^\prime_i=\tau)-\hat{\ell}_2(\sig^\prime)}{nd\bHH(\tau)-\ell_1}\,.
    \end{equation*}
    For any constant $C>0$, there is a constant $K(C_0,t,k)>0$ such that for $(\dHH^{\sy}, \hHH^{\sy})\equiv (\dHH^{\sy}[\xi], \hHH^{\sy}[\xi])\in \PPP(\CCC^d)\times \PPP(\CCC^k)$, we have uniformly over $\xi\in \Xi_C$, $\tau\in \{\rr,\bb,\fs\}$, and $\ell_1,\ell_1^\prime, \big\|\dul_2\big\|_1, \big\|\dul_2^\prime \big\|_1, \big\|\hul_2\big\|_1, \big\|\hul_2^\prime \big\|_1\leq 2(kd)^{2t}$ that
    \begin{equation}\label{eq:lem:tilt}
    \begin{split}
    d_{\tv}\Big(\dHH_{\ell_1,\dul_2}^{\sy}\big(\cdot \,\big|\, \tau\big)\; ,\; \dHH_{\ell_1^\prime,\dul_2^\prime}^{\sy}\big(\cdot \,\big|\, \tau\big)\Big)\vee d_{\tv}\Big(\hHH_{\ell_1,\hul_2}^{\sy}\big(\cdot \,\big|\, \tau\big)\; ,\; \hHH_{\ell_1^\prime,\hul_2^\prime}^{\sy}\big(\cdot \,\big|\, \tau\big)\Big)&\leq \frac{K}{n}\,,\\
    d_{\tv}\big(\dHH^{\sy}\,,\,\dHH^\star\big)\vee d_{\tv}\Big(\dHH_{\ell_1,\dul_2}^{\sy}\big(\cdot \,\big|\, \tau\big)\; ,\; \dHH_{0,0}^{\star}\big(\cdot \,\big|\, \tau\big)\Big)\vee d_{\tv}\Big(\hHH_{\ell_1,\hul_2}^{\sy}\big(\cdot \,\big|\, \tau\big)\; ,\; \hHH_{0,0}^{\star}\big(\cdot \,\big|\, \tau\big)\Big)&\leq \frac{K}{\sqrt{n}}\,.
    \end{split}
    \end{equation}
\end{lemma}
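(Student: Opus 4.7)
The plan is to reduce the five bounds in \eqref{eq:lem:tilt} to two observations: a uniform lower bound on the marginal $\bHH(\tau)$ for $\tau \in \{\rr,\bb,\fs\}$, and a direct TV comparison between $\dHH^{\sy}$ and $\dHH^\star$ expressed in terms of $\|\xi-\xi^\star\|_{\tsq}$. First I would establish a positivity lemma: there exists $c_k>0$ such that for all $\tau \in \{\rr,\bb,\fs\}$ and all $\xi \in \Xi_C$,
\begin{equation*}
\bHH^\star(\tau) \geq 2c_k, \qquad \bHH^{\sy}(\tau) \geq c_k.
\end{equation*}
The first inequality is a property of the optimal boundary profile $B^\star$, which assigns $\Omega_k(1)$ mass to each of $\rr,\bb,\fs$, and the second then follows from $|\bar B(\tau)-\bar B^\star(\tau)| \leq \|\xi-\xi^\star\|_{\tsq} \leq C/\sqrt{n}$ for all $n$ sufficiently large.

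For the $O(1/n)$ bounds I would write $N(\sig;\tau) := n\dHH^{\sy}(\sig)\sum_i \one(\sigma_i=\tau)$ and $D(\tau) := nd\,\bHH^{\sy}(\tau)$, expand
\begin{equation*}
\dHH^{\sy}_{\ell_1,\dul_2}(\sig|\tau) - \dHH^{\sy}_{\ell_1',\dul_2'}(\sig|\tau) = \frac{(\dot{\ell}_2'(\sig)-\dot{\ell}_2(\sig))D(\tau) + N(\sig;\tau)(\ell_1-\ell_1') + \dot{\ell}_2(\sig)\ell_1' - \dot{\ell}_2'(\sig)\ell_1}{(D(\tau)-\ell_1)(D(\tau)-\ell_1')},
\end{equation*}
and sum absolute values over $\sig$, using $\sum_\sig N(\sig;\tau) = D(\tau)$ together with the hypotheses $\ell_1,\ell_1',\|\dul_2\|_1,\|\dul_2'\|_1 \leq 2(kd)^{2t}$ and $D(\tau) \geq c_k nd$. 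This yields a TV bound of order $(kd)^{2t}/n$, and the clause-side argument is the same verbatim.

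For the $O(1/\sqrt{n})$ bounds I would first compute
\begin{equation*}
d_{\tv}(\dHH^{\sy},\dHH^\star) \leq \tfrac{1}{2}\|\dot B-\dot B^\star\|_1 + \tfrac{1}{2}\sum_\fff |p_\fff-p^\star_\fff| \sum_{\substack{\utau\in\dot{\CCC}_\ff\\\fff(\utau)=\fff}} |\textnormal{per}(\utau)|^{-1}\dot m(\utau),
\end{equation*}
and invoke the combinatorial identity $\sum_{\utau:\fff(\utau)=\fff}|\textnormal{per}(\utau)|^{-1}\dot m(\utau) = v_\fff$, which follows by double-counting over the $S_d$-orbits of $\{\sig_{\delta v}\}_{v \in V(\fff)}$. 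This gives $d_{\tv}(\dHH^{\sy},\dHH^\star) \leq \|\xi-\xi^\star\|_{\tsq}/2 \leq C/(2\sqrt{n})$; the clause analogue replaces $v_\fff$ with $f_\fff$ and yields the corresponding bound for $\hHH$. To upgrade to the conditional bound $d_{\tv}(\dHH^{\sy}_{\ell_1,\dul_2}(\cdot|\tau),\dHH^\star_{0,0}(\cdot|\tau))$, I would triangle-inequality through $\dHH^{\sy}_{0,0}(\cdot|\tau)$: the first leg is $O_{k,t}(n^{-1})$ by the previous paragraph, and the second leg is controlled via the identity $\dHH^{\sy}_{0,0}(\sig|\tau) = \dHH^{\sy}(\sig)\sum_i \one(\sigma_i=\tau)/(d\bHH^{\sy}(\tau))$ together with the positivity of $\bHH^{\sy}(\tau)$ and $\bHH^\star(\tau)$. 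The only nontrivial step is the double-counting identity above, which is exactly what makes $\|\cdot\|_{\tsq}$ the correct distance on coloring profiles to feed into the coupling in Proposition~\ref{prop:coupling}; everything else is arithmetic with the bounds of the first paragraph.
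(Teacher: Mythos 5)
Your proposal is correct and follows essentially the same route as the paper. The three ingredients are identical: (i) the $\ell^1$ comparison $\norm{\dHH^{\sy}-\dHH^\star}_1 \leq \norm{\dot B-\dot B^\star}_1 + \sum_\fff v_\fff |p_\fff-p^\star_\fff| \leq \norm{\xi-\xi^\star}_{\tsq}$ via the double-counting identity $\sum_{\utau\in\dot\CCC_\ff} |\textnormal{per}(\utau)|^{-1}\dot m(\utau)\,\one(\fff(\utau)=\fff)=v_\fff$, together with its clause-side analogue replacing $v_\fff$ by $f_\fff$; (ii) the uniform lower bound $\min_{\tau\in\{\rr,\bb,\fs\}}\bHH^\star(\tau)\wedge\bHH^{\sy}(\tau)\gtrsim_k 1$, which the paper derives from $\bHH^\star(\tau)=\bar B^\star(\tau)\gtrsim_k 1$ (via compatibility of $\xi^\star$ and Definition~\ref{def:opt:bdry:1stmo}) plus $d_{\tv}(\bHH^{\sy},\bHH^\star)=O(n^{-1/2})$; and (iii) the arithmetic reduction of all five TV bounds to (i) and (ii), which the paper states as ``easily imply'' and you have written out — your explicit expansion of the numerator/denominator difference and the triangle inequality through $\dHH^{\sy}_{0,0}(\cdot\,|\,\tau)$ are exactly the omitted details, and they check out.
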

  \begin{proof}
      By definition of $(\dHH^{\sy}, \hHH^{\sy})\equiv (\dHH^{\sy}[\xi], \hHH^{\sy}[\xi])$ in \eqref{eq:symm:coloring} and Definition \ref{def:optimal:t} of $\dHH^\star, \hHH^\star$, we have by triangular inequality that
      \begin{equation}\label{eq:dot:sy:close}
      \begin{split}
          \big\|\dHH^{\sy}-\dHH^\star\big\|_1
          &\leq \big\|\dot{B}-\dot{B}^\star\big\|_1+\sum_{\utau\in \dot{\CCC}_{\ff}}\big(|\textnormal{per}(\utau)|\big)^{-1}\big|p_{\fff(\utau)}-p^\star_{\fff(\utau)}\big|\dot{m}(\utau)\\
          &=\big\|\dot{B}-\dot{B}^\star\big\|_1+\sum_{\fff\in \FFF}\big|p_{\fff}-p^\star_{\fff}\big|v_{\fff}\leq \norm{\xi-\xi^\star}_{\tsq}\leq \frac{C}{\sqrt{n}}\,,
      \end{split}
      \end{equation}
      where the equality holds since $\sum_{\utau\in \dot{\CCC}_{\ff}}\big(|\textnormal{per}(\utau)|\big)^{-1}\dot{m}(\utau)\one(f(\tau)=\fff)=v_{\fff}$ for $\fff\in \FFF$ by definition of $\dot{m}(\utau)$ in \eqref{eq:dot:multi}, and the last inequality is by Definition \ref{def:Xi} of $\xi\in \Xi_C$. Analogously, we have that
      \begin{equation}\label{eq:hat:sy:close}
           \big\|\hHH^{\sy}-\hHH^\star\big\|_1\leq \big\|\hat{B}-\hat{B}^\star\big\|_1+\sum_{\fff\in \FFF}\big|p_{\fff}-p^\star_{\fff}\big|f_{\fff}\leq \norm{\xi-\xi^\star}_{\tsq}\leq \frac{C}{\sqrt{n}}\,.
      \end{equation}
      Note that the equations above also imply that $d_{\tv}(\bHH^{\sy},\bHH^\star)\leq \frac{C}{\sqrt{n}}$ holds, where $\bHH^{\sy}$ is the marginal of $(\dHH^{\sy},\bHH^{\sy})\equiv (\dHH^{\sy}[\xi],\bHH^{\sy}[\xi])$ and $\bar{H}^\star$ is the marginal of $(\dHH^\star, \hHH^\star)$. Moreover, the compatibility $\{p_{\fff}^\star\}_{\fff\in \FFF}\sim B^\star$ shows that the $\bar{H}^\star$ at the boundary spins $\tau\in \{\rr,\bb,\fs\}$ is given by $\bHH^\star(\tau)=\bar{B}^\star(\tau)$. From the definition of $\bar{B}^\star$ in Definition \ref{def:opt:bdry:1stmo}, we have $\min_{\tau\in \{\rr,\bb,\fs\}} \bar{B}^\star(\tau)\gtrsim_{k} 1$. Hence, it follows that uniformly over $\xi\in \Xi_C$, 
      \begin{equation}\label{eq:bar:min}
          \min_{\tau\in \{\rr,\bb,\fs\}} \bHH^\star(\tau)\wedge \bHH^{\sy}(\utau)\gtrsim_k 1\,.
      \end{equation}
      The estimates \eqref{eq:dot:sy:close}, \eqref{eq:hat:sy:close}, and \eqref{eq:bar:min} easily imply the estimate \eqref{eq:lem:tilt}, and we omit the details.
  \end{proof}
  Having Lemma \ref{lem:tilt} in hand, we now prove Proposition \ref{prop:coupling}.
  \begin{proof}[Proof of Proposition \ref{prop:coupling}]
  Throughout, we fix $C>0$ and treat it as a constant that depends on $k,t$. We also fix $\xi\in \Xi_C$. Recall that by Definition \ref{def:Xi} of $\Xi_C$, we have $\E_{\xi}[N_{\cyc}(2t;\GG)]\leq C$. Thus, for $\bv\sim \Unif(V)$, the probability of having a cycle in $N_{t}(\bv,\bGG)$ is $O_{k,t}(n^{-1})$ by Markov's inequality, which will be useful in the coupling below.
  
  First, we consider the single copy estimate $d_{\tv}(\E_{\xi}\nu_t[\bGG,\bux], \nu^\star_t)\lekt n^{-1/2}$. We proceed by coupling the exploration process $(\bsig_t(\bv),\buL_t(\bv))\sim \E_{\xi}\nu_t[\bGG,\bux]$ in Remark \ref{rmk:exploration} and the broadcast process $\{(\bsigma^\star_e, \bL^\star_e)\}_{e\in E(\TTT_{d,k,t})}$ with channel $(\dHH^\star,\hHH^\star)$ in Definition \ref{def:optimal:t}. Throughout, we abbreviate $(\dHH^{\sy},\bHH^{\sy})\equiv (\dHH^{\sy}[\xi],\bHH^{\sy}[\xi])$.

  We begin by revealing the spins in the neighbor of $\bv$ and $\rho$, i.e. $\bsig_{\delta \bv}$ and $\bsig^\star_{\delta \rho}$. Recall that the distribution of $\bsig_{\delta \bv}$ is $\dHH^{\sy}$ and the distribution of $\bsig^\star_{\delta \rho}$ is $\dHH^\star$. By Lemma \ref{lem:tilt}, we have $d_{\tv}(\dHH^{\sy},\dHH^\star)\lekt n^{-1/2}$, thus we can couple $\bsig_{\delta \bv}$ and $\bsig^\star_{\delta \rho}$ so that $\bsig_{\delta \bv}=\bsig^\star_{\delta \rho}$ with probability at least $1-O_{k,t}(n^{-1/2})$.

  Next, we sequentially reveal the spins and the literals associated with the `children half-edges' of a boundary half-edge. That is, for each boundary half-edge $e$ adjacent to a variable, we reveal the half-edge $e^\prime$ that is matched with $e$, and reveal the spins and literals associated with children half-edges $\delta a(e^\prime)\setminus e^\prime$. If boundary half-edge $e$ is adjacent to a clause, we reveal the half-edge $e^\prime$ matched with $e$ and only reveal the spins associated with children half-edges $\delta v(e^\prime)\setminus e^\prime$. This procedure is carried out by utilizing a breadth-first search for both the neighbors of $\bv$ and $\rho$.
  
  At time $\ell\geq 1$, denote the revealed neighborhood of $\bv$ by $\NN_{\ell}(\bv)$ and the revealed neighborhood of $\rho$ by $\NN_{\ell}(\rho)$. We let $\partial \NN_{\ell}(\bv)$ and $\partial \NN_{\ell}(\rho)$ be the set of boundary half-edges of $\NN_{\ell}(\bv)$ and $\NN_{\ell}(\rho)$. Also, denote the revealed spins (resp. literals) in $\NN_{\ell}(\bv)$ by $\bsig_{\NN_{\ell}(\bv)}$ (resp. $\buL_{\NN_{\ell}(\bv)}$) and the revealed spins (resp. literals) in $\NN_{\ell}(\rho)$ by $\bsig^\star_{\NN_{\ell}(\rho)}$ (resp. $\buL^\star_{\NN_{\ell}(\rho)}$). Then, define the event $\EEE_{\ell}$ of success by
  \begin{equation*}
      \EEE_{\ell}:= \Big\{\textnormal{$\NN_{\ell}(\bv)$   is   a   tree  
 and  
 $(\bsig_{\NN_{\ell}(\bv)},\buL_{\NN_{\ell}(\bv)})=(\bsig^\star_{\NN_{\ell}(\rho)},\buL^\star_{\NN_{\ell}(\rho)})$}  \Big\}\,.
  \end{equation*}

  Now, suppose at time $\ell+1$, we take a boundary half-edge $e\in \partial \NN_{\ell}(\bv)$ adjacent to a variable $v(e)\in \NN_{\ell}(\bv)$, and reveal the connection of $e$, and the spins and literals of children half-edges of $e$. Note that the probability of creating a cycle by revealing the connection of $e$ is $O_{k,t}(n^{-1})$ since \textit{a priori}, the probability of having a cycle in $N_t(\bv,\bGG)$ is $O_{k,t}(n^{-1})$ by definition of $\Xi_C$. Moreover, if the spin at $e$, $\bsigma_e$, is free, i.e. $\bsigma_e \notin \{\rr,\bb,\fs\}$, then the spins and literals of children half-edges $\delta a(e)\setminus e$ is completely determined by $\bsigma_e$ (cf. Remark \ref{rmk:comp:col}).
  
  On the other hand, if $\bsigma_e\in \{\rr,\bb,\fs\}$, then conditioned on $\NN_{\ell}(\bv)$ and $\bsig_{\NN_{\ell}(\bv)}$, $\bsig_{\delta a(e)}$ is drawn from $\hHH^{\sy}_{\ell_1,\hul_2}(\cdot \given \bsigma_e)$, defined in Lemma \ref{lem:tilt}. Here, $\ell_1\in \N, \hul_2\equiv (\hat{\ell}_2(\sig))_{\sig\in \CCC^k} \in \N^{\CCC^k}$ is determined by $\bsig_{\NN_{\ell}(\bv)}$. More precisely, $\ell_1$ is the number of edges $e^\prime$ in $\NN_{\ell}(\bv)$ that have spins $\bsigma_{e^\prime}=\bsigma_e$, and $\hat{\ell}_2(\sig)$ is the number of clauses $a$ in $\NN_{\ell}(\bv)$ that have spin neighborhood $\bsig_{\delta a}=\sig$ (up to a permutation) times the number of $\bsig_e$ in $\sig$. In particular, note that $\ell_1, \big\|\hul_2\big\|_1\leq (kd)^{2t}$ holds.
  
  Thus, Lemma \ref{lem:tilt} shows that conditioned on $\NN_{\ell}(v), \bsig_{\NN_{\ell}(\bv)}$, and $\EEE_{\ell}$, we can couple the spins of the children half-edges $\bsig_{\delta a(e)\setminus e}$ and $\bsig^\star_{\delta a(\phi(e))\setminus \phi(e)}$, where $\phi(e)$ is the boundary half-edge of $\NN_{\ell}(\rho)$ corresponding to $e$, so that $\bsig_{\delta a(e)\setminus e}=\bsig^\star_{\delta a(\phi(e))\setminus \phi(e)}$ with probability at least $1-O_{k,t}(n^{-1/2})$. Finally, conditioned on $\bsig_{\delta a(e)\setminus e}=\bsig^\star_{\delta a(\phi(e))\setminus \phi(e)}$, the literals $\buL_{\delta a(e)}$ and $\buL^\star_{a(\phi(e))}$ is distributed the same, so we can use the same randomness to ensure $\buL_{\delta a(e)}=\buL^\star_{a(\phi(e))}$. Therefore, we have that
  \begin{equation}\label{eq:success:coupling}
      \P\big(\EEE_{\ell+1}^{\textsf{c}}\cap \EEE_{\ell}\big)\lekt n^{-1/2}\,.
  \end{equation}
  The same analysis applies when the boundary half-edge $e\in \delta\NN_{\ell}(\bv)$ is adjacent to a clause $a(e)\in \NN_{\ell}(\bv)$ to ensure that \eqref{eq:success:coupling}. Since it takes at most $\ell\leq (kd)^{2t}=O_{k,t}(1)$ times to explore the $2t-\frac{3}{2}$ neighborhood around $\bv$, summing up \eqref{eq:success:coupling} shows that $d_{\tv}(\E_{\xi}\nu_t[\bGG,\bux], \nu^\star_t)\lekt n^{-1/2}$.

  Second, we consider the pair copy estimate $d_{\tv}\left(\E_{\xi}[\bnu_t\otimes \bnu_t], \E_{\xi}\bnu_t\otimes \E_{\xi}\bnu_t\right)\lekt n^{-1}$ for $\bnu_t\equiv \nu_t[\bGG,\bux]$. Observe that $\E_{\xi}[\bnu_t\otimes \bnu_t]$ is the law of $\big(\bsig_t(\bv^1,\bGG),\buL_t(\bv^1,\bGG),\bsig_t(\bv^2,\bGG),\buL_t(\bv^2,\bGG)\big)$, where $(\bGG,\bsig)\sim \P_{\xi}$ and $\bv^1,\bv^2 \stackrel{i.i.d}{\sim} \Unif(V)$. Note that if we let $(\bGG^1,\bsig^1)=(\bGG,\bsig)$ and $(\bGG^2,\bsig^2)\sim \P_{\xi}$ be independent of $(\bGG^1,\bsig^1)$, then $ \E_{\xi}\bnu_t\otimes \E_{\xi}\bnu_t$ is the law of $\big(\bsig_t(\bv^1,\bGG^1),\buL_t(\bv^1,\bGG^1),\bsig_t(\bv^2,\bGG^2),\buL_t(\bv^2,\bGG^2)\big)$. Thus, it suffices to construct a coupling of $\big(\bsig_t(\bv^2,\bGG^1),\buL_t(\bv^2,\bGG^1)\big)$ and $\big(\bsig_t(\bv^2,\bGG^2),\buL_t(\bv^2,\bGG^2)\big)$ conditional on $N_{t}(\bv^1,\bGG^1), \bsig_t(\bv^1,\bGG^1),$ and $\buL_t(\bv^1,\bGG^1)$.

  Observe that conditional on $N_{t}(\bv^1,\bGG^1), \bsig_t(\bv^1,\bGG^1), \buL_t(\bv^1,\bGG^1)$, and the event $N_t(\bv^1, \bGG^1)\cap N_t(\bv^2, \bGG^1)=\emptyset$, which happens with probability $1-O_{k,t}(n^{-1})$, the law of $\big(\bsig_t(\bv^2,\bGG^1),\buL_t(\bv^2,\bGG^1)\big)$ can be described as an alternate exploration process. The only difference between the exploration process described in Remark \ref{rmk:exploration} is that the variables, clauses, full-edges, and boundary half-edges of $N_{t}(\bv^1,\bGG^1)$ cannot be used nor matched during the process. Thus, we can couple these 2 exploration processes by a breadth-first search as before. The probability of error comes from 2 sources as before. The first is from forming a cycle in either of the processes, which has probability $O_{k,t}(n^{-1})$. The second is the difference of $\ell_1, \dul_2,\hul_2$ regarding the distribution of the spins associated with children half-edges. That is, because the alternate process cannot use variables, clauses, and edges of $N_{t}(\bv^1,\bGG^1)$, the associated $\ell_1^\prime, \dul_2^\prime ,\hul_2^\prime$ can differ from $\ell_1,\dul_2,\hul_2$ associated with the original exploration process. Note that however, the number of variables, clauses, and edges in $N_{t}(\bv^1,\bGG^1)$ is at most $(kd)^{2t}$, so we still have that $\ell_1^\prime, \big\|\dul_2^\prime\big\|_1,\big\|\hul_2^\prime\big\|_1\leq 2(kd)^{2t}$. Hence, by Lemma \ref{lem:tilt}, the error probability is at most $O_{k,t}(n^{-1})$. We, therefore, conclude that $d_{\tv}\left(\E_{\xi}[\bnu_t\otimes \bnu_t], \E_{\xi}\bnu_t\otimes \E_{\xi}\bnu_t\right)\lekt n^{-1}$.
  \end{proof}

  We conclude this section with the proof of Lemma \ref{lem:bdry:cycle}
  \begin{proof}[Proof of Lemma \ref{lem:bdry:cycle}]
      By Markov's inequality, it suffices to show that $\E_{\xi} N^{\sf b}_{\cyc}(2t;\bGG,\bux)\leq n^{1/4}$, uniformly over $\xi=(B,\{p_{\fff}\}_{\fff\in \FFF})$ such that $\{p_{\fff}\}_{\fff\in \FFF}\in \ee_{\frac{1}{4}}$, $p_{\fff}=0$ if $\fff$ is multi-\rev{cyclic}, and $\norm{B-B^\star}_{1}\leq n^{-1/3}$. Note that from the definition of $\bar{B}^\star$ in Definition \ref{def:opt:bdry:1stmo}, we have $\min_{\tau\in \{\rr,\bb,\fs\}} \bar{B}^\star(\tau)\gtrsim_{k} 1$, thus if $\norm{B-B^\star}_1\leq n^{-1/3}$, then we have $\min _{\tau\in \{\rr,\bb,\fs\}}\bar{B}(\tau)\gtrsim_k 1$. Note that $\min _{\tau\in \{\rr,\bb,\fs\}}\bar{B}(\tau)\gtrsim_k 1$ is equivalent to the fact that there are $\Omega_k(n)$ number of edges that have spins $\tau$ for each $\tau\in \{\rr,\bb,\fs\}$. This fact plays a key role in the analysis below.

      Let $(\bGG, \bsig)$ be the component coloring corresponding to $(\bGG,\bux)\sim \P_{\xi}$. Then, recall from Observation \ref{obs:config} that $(\bGG, \bsig)$ can be drawn from a configuration model. In the description of the configuration model in Observation \ref{obs:config}, we further condition on Steps $(a)$, $(b)$, and the connections of free components therein. That is, we condition on the location of the variables (resp. clauses) with assigned spin neighborhoods $\utau \in \CCC^{d}$ (resp. $\utau \in \CCC^k$), and also on the matchings of the half-edges with associated spin $\tau \notin \{\rr,\bb,\fs\}$. Then, the randomness comes from the u.a.r. matching of the half-edges that have the same associated spins $\tau \in \{\rr,\bb,\fs\}$, and also from assigning literals. Since we do not bother with the literals of $\CC$, we only analyze the former randomness with regards to the u.a.r. matching of the $\{\rr,\bb,\fs\}$-colored spins.

      Let $\CC=(e_1,\ldots, e_{2\kappa})$ be a self-avoiding cycle \rev{that has $2t$ edges colored $\{\rr,\bb,\fs\}$}. Let $(\SS_l)_{1\leq l\leq K}$ be the \textit{boundary segments} of $\CC$, where $\SS_{l}= (e_{i_l},e_{i_l+1},\ldots, e_{j_l})$ satisfies the condition below. Intuitively, the boundary segment encodes a path that connects two free components by $\{\rr,\bb,\fs\}$-colored edges.
      \begin{enumerate}[label=(\arabic*)]
          \item Suppose that the endpoints of the segment $(e_{i_l},\ldots ,e_{j_l})$ are formed by variables $v(e_{i_l})$ and $v(e_{j_l})$. That is, $a(e_{i_{l}})=a(e_{i_{l}+1})\,,\, v(e_{i_l+1})=v(e_{i_l+2})\,,\, \ldots\,,\, a(e_{j_l-1})=a(e_{j_l})$. Then, $v(e_{i_l})$ and $v(e_{j_l})$ are free, but all the other variables $v(e_{i_l+1}),\ldots, v(e_{j_l-1})$ are frozen. Also, the clauses $a(e_{i_l}),\,\ldots,\, a(e_{j_l})$ are separating.
          \item If the endpoints of the segment $(e_{i_l},\ldots ,e_{j_l})$ are clauses, then $a(e_{i_l})$ and $a(e_{j_l})$ is non-separating while all the other clauses in the segment \rev{are} separating. Also, the variables in the segment is frozen.
          \item If the endpoints of the segment $(e_{i_l},\ldots ,e_{j_l})$ are formed by a variable and a clause, they are respectively free and non-separating. All the other variables and clauses are respectively frozen and separating.
      \end{enumerate}
      We assume that $\SS_l$ and $\SS_{l+1}$, where $\SS_{K+1}\equiv \SS_1$, are adjacent in the sense that the right endpoint in $\SS_l$, which is either $v(e_{j_l})$ or $a(e_{j_l})$, and the left endpoint in $\SS_{l+1}$, which is either $v(e_{i_{l+1}})$ or $a(e_{i_{l+1}})$, lies in the same free component. Also, we denote the length of $\SS_l$ by $L_l$ for $1\leq l\leq K$. Then $\sum_{l=1}^K L_l=2t$ holds, \rev{so} we have $K\leq 2t$.

      Observe that the cycle $\CC$ is `almost' determined by the boundary segments $(\SS_l)_{l\leq K}$. Namely, since all cyclic free components have at most one cycle, there are at most $2$ paths from the right endpoint of $\SS_l$ to the left endpoint of $\SS_{l+1}$. Therefore, if we fix the configuration of boundary segments $(\SS_l)_{l \leq K}$, there are at most $2^K \leq 2^{2t}$ corresponding self-avoiding cycles \rev{that have $2t$ $\{\rr,\bb,\fs\}$-colored edges.}
      
      Now suppose that $e_i$ is contained in a boundary segment. Then, by its definition, either $v(e_i)$ is frozen or $a(e_i)$ is separating (both statements hold when $v(e_i), a(e_i)$ are not endpoints). Without loss of generality, suppose $v(e_i)$ is frozen. Then, the revealed spin neighborhood of $v(e_i)$ must be $\utau=(\tau_{l})_{l\leq d} \in \{\rr,\bb\}^d$. Since we have $\min _{\tau\in \{\rr,\bb,\fs\}}\bar{B}(\tau)\gtrsim_k 1$, the number of edges with spin $\tau_l$ is $\Omega_k(n)$ for every $l\leq d$. Thus, the probability of having an edge $e_i=(v(e_i),a(e_i))$ under u.a.r. matching of the half-edges with the same spins is at most $O_{k}(n^{-1})$. Therefore, the probability of containing a specific configuration of a boundary segment $\SS$ is $O_k(n^{-L(\SS)})$, where $L(\SS)$ denotes the length of $\SS$.

      Moreover, note that the number of choosing variables and clauses involved in $(\SS_{\ell})_{1\leq \ell \leq K}$ is at most $O_{k,t}(n^{2t} (\log n)^{K})$, which can be argued as follows. The number of choosing variables and clauses involved in $\SS_1$ is $O_{k}(n^{L_1+1})$. Then, since the left endpoint of $\SS_2$ must lie in the same free component as the right endpoint of $\SS_1$, and the largest free component has $O_{k}(\log n)$ number of variables and clauses (cf. $\xi \in \ee_{\frac{1}{4}}$), there are at most $O_{k}(n^{L_2}\log n)$ number of choosing the variables and the clauses involved in $\SS_2$. By repeating this and noting that the right endpoint of $\SS_K$ must lie in the same free component as in the left endpoint of $\SS_1$, the total number is $O_{k,t}(n^{2t} (\log n)^{K})$.

      To conclude, we have shown that the probability of containing boundary segments $(\SS_l)_{l \leq K}$ is at most $O_k(n^{-2t})$, and \rev{the} number of choosing the $(\SS_l)_{l \leq K}$ is at most $O_{k,t}(n^{2t} (\log n)^{K})$. Also, $K\leq 2t$ and there are $2^{2K}$ number of self-avoiding cycles \rev{that have $2t$ $\{\rr,\bb,\fs\}$-colored edges.} Therefore, it follows that $\E_{\xi}[N^{\sf b}_{\cyc}[2t;\bGG,\bux]\lekt (\log n)^{2t}\ll n^{1/4}$ hold uniformly over $\xi=(B,\{p_{\fff}\}_{\fff\in \FFF})$ such that $\{p_{\fff}\}_{\fff\in \FFF}\in \ee_{\frac{1}{4}}$, $p_{\fff}=0$ if $\fff$ is multi-\rev{cyclic}, and $\norm{B-B^\star}_{1}\leq n^{-1/3}$, which concludes the proof.
  \end{proof}

    \section*{Acknowledgements}
    We thank Andrea Montanari for helpful discussions. Youngtak Sohn is supported by Simons-NSF Collaboration on Deep Learning NSF DMS-2031883 and Elchanan Mossels's Vannevar Bush Faculty Fellowship award ONR-N00014-20-1-2826. Allan Sly is supported by a Simons Investigator grant and a MacArthur Fellowship.

	\bibliography{naesatref}
\newpage 
\appendix

 \section{The belief-propagation fixed point}\label{sec:BP}
	In this appendix, we review the combinatorial models for frozen configurations described in Section 2 of \cite{ssz22}. We also review the \textit{belief propagation fixed point} established in Section 5 of \cite{ssz22}, and the related notion of \textit{optimal boundary profile} in Section 3 of \cite{NSS}.
	\subsection{Message configurations and the Bethe formula}\label{subsec:model:sszrev}
    We first introduce the \textit{message configuration}, which enables us to calculate the size of a free tree, i.e. $w_{\ttt}$, by local quantities. The message configuration is given by $\utau = (\tau_e)_{e\in E} \in \mathscr{M}^E$ ($\mathscr{M}$ is defined below). Here, $\tau_e=(\dot{\tau}_e,\hat{\tau}_e)$, where $\dot{\tau}$ (resp. $\hat{\tau}$) denotes the message from $v(e)$ to $a(e)$ (resp. $a(e)$ to $v(e)$).
	
	A message will carry information of the structure of the free tree it belongs to. To this end, we first define the notion of \textit{joining} $l$ trees at a vertex (either variable or clause) to produce a new tree.
	Let $t_1,\ldots , t_l$ be  a collection of rooted bipartite factor trees satisfying the following conditions:
	\begin{itemize}
		\item Their roots $\rho_1,\ldots,\rho_l$ are all of the same type (i.e., either all-variables or all-clauses) and are all degree one.
		
		\item 
		If an edge in $t_i$ is adjacent to a degree one vertex, which is not the root, then the edge is called a \textbf{boundary-edge}. The rest of the edges are called \textbf{internal-edges}. For the case where $t_i$ consists of a single edge and a single vertex, we regard the single edge to be a boundary edge.
		
		\item $t_1,\ldots,t_l$ are \textbf{boundary-labeled trees}, meaning that their variables, clauses, and internal edges are unlabeled (except we distinguish the root), but the boundary edges are assigned with values from $\{0,1,\fs \}$, where $\fs$ stands for `separating'. 
	\end{itemize}
	
    Then, the joined tree $t \equiv  \textsf{j}(t_1,\ldots, t_l) $ is obtained by identifying all the roots as a single vertex $o$, and adding an edge which joins $o$ to a new root $o'$ of an opposite type of $o$ (e.g., if $o$ was a variable, then $o'$ is a clause). Note that $t= \textsf{j}(t_1,\ldots,t_l)$ is also a boundary-labeled tree, whose labels at the boundary edges are induced by those of $t_1,\ldots,t_l$.
	
	For the simplest trees that consist of a single vertex and a single edge, we use $0$ (resp. $1$) to stand for the ones whose edge is labeled $0$ (resp. $1$): for the case of $\dot{\tau}$, the root is the clause, and for the case of $\hat{\tau}$, the root is the variable. Also,  if its root is a variable and its edge is labeled $\fs$, we write the tree as $\fs$. 
	
	We can also define the Boolean addition to a boundary-labeled tree $t$ as follows. For the trees $0,1$, the Boolean-additions $0\oplus \tL$, $1\oplus\tL$ are defined as above ($t\oplus \tL$), and we define $\fs \oplus \tL = \fs$ for $\tL\in\{0,1\}$. For the rest of the trees, $t \oplus 0 := t$, and $t\oplus 1$ is the boundary-labeled tree with the same graphical structure as $t$ and the labels of the boundary Boolean-added by $1$ (Here, we define $\fs \oplus 1 = \fs$ for the $\fs$-labels).
	
	\begin{defn}[Message configuration]\label{def:msg config}
		Let $\dot{\MMM}_0:= \{0,1,\star \}$ and $\hat{\MMM}_0:= \emptyset$. Suppose that $\dot{\MMM}_t, \hat{\MMM}_t$ are defined, and we inductively define $\dot{\MMM}_{t+1}, \hat{\MMM}_{t+1}$ as follows: For $\hat{\utau} \in (\hat{\MMM}_t)^{d-1}$, $\dot{\utau} \in (\dot{\MMM}_t)^{k-1}$, we write $\{\hat{\tau}_i \}:= \{\hat{\tau}_1,\ldots,\hat{\tau}_{d-1} \}$ and similarly for $\{\dot{\tau}_i \}$. We define
		\begin{eqnarray}
		\hat{T}\left(\dot{\utau} \right):= 
		\begin{cases}
		0 & \{\dot{\tau}_i \} = \{  1 \};\\
		1 & \{ \dot{\tau }_i \} = \{0  \};\\
		\fs & \{\dot{\tau}_i \} \supseteq \{ 0,1 \};\\
		\star & \star \in \{ \dot{\tau}_i \}, \{0,1 \} \nsubseteq \{\dot{\tau}_i \};\\
		\mathsf{j}\left(\dot{\utau} \right) & \textnormal{otherwise}, 
		\end{cases}
		&  
		\dot{T}(\hat{\utau}) :=
		\begin{cases}
		0 & 0 \in \{\hat{\tau}_i \} \subseteq \hat{\MMM}_t \setminus\{ 1\};\\
		1 &  1\in \{\hat{\tau}_i\} \subseteq \hat{\MMM}_t \setminus \{0 \};\\
		\tz & \{0,1 \} \subseteq \{\hat{\tau}_i \};\\
		\star & \star \in \{\hat{\tau}_i \} \subseteq \hat{\MMM}_t \setminus \{0,1\};\\
		\mathsf{j}\left(\hat{\utau} \right) & \{\hat{\tau}_i \} \subseteq \hat{\MMM}_t\setminus \{0,1,\star \}.
		\end{cases}
		\end{eqnarray}
		Further, we set $\dot{\MMM}_{t+1} := \dot{\MMM}_t \cup \dot{T}( \hat{\MMM}_t^{d-1}  ) \setminus \{\tz \}$, and $\hat{\MMM}_{t+1}:= \hat{\MMM}_t \cup \hat{T}(\dot{\MMM}_t^{k-1} )$, and define $\dot{\MMM}$ (resp. $\hat{\MMM}$) to be the union of all $\dot{\MMM}_t$ (resp. $\hat{\MMM}_t$) and $\MMM:= \dot{\MMM} \times \hat{\MMM}$. Then, a \textbf{message configuration} on $\GG=(V,F,E,\uL)$ is a configuration $\utau \in \MMM^E$ that satisfies the local equations given by
		\begin{equation}\label{eq:def:localeq:msg}
		\tau_e = (\dot{\tau}_e, \hat{\tau}_e) = \left(\dot{T}\big(\hat{\utau}_{\delta v(e)\setminus e} \big), \tL_e \oplus  \hat{T} \big((\uL \oplus \dot{\utau})_{\delta a(e)\setminus e} \big) \right),
		\end{equation}
		for all $e\in E$.
	\end{defn} 
	
	In the definition, $\star$ is the symbol introduced to cover cycles, and $\tz$ is an error message. See Figure 1 in Section 2 of \cite{ssz22} for an example of $\star$ message. 
	
	When a frozen configuration $\ux$ on $\GG$ with no free cycles is given, we can construct a message configuration $\utau$ via the following procedure:
	\begin{enumerate}
		\item For a forcing edge $e$, set $\hat{\tau}_e=x_{v(e)}$. Also, for an edge $e\in E$, if there exists $e^\prime \in \delta v(e) \setminus e$ such that $\hat{\tau}_{e^\prime} \in \{0,1\}$, then set $\dot{\tau}_e=x_{v(e)}$.
		\item For an edge $e\in E$, if there exists $e_1,e_2\in \delta a(e)\setminus e$ such that $\{\tL_{e_1}\oplus\dot{\tau}_{e_1}, \tL_{e_2}\oplus\dot{\tau}_{e_2}\}=\{0,1\}$, then set $\hat{\tau}_e = \fs$.
		
		\item After these steps, apply the local equations \eqref{eq:def:localeq:msg} recursively to define $\dot{\tau}_e$ and $\hat{\tau}_e$ wherever possible.
		
		\item For the places where it is no longer possible to define their messages until the previous step, set them to be $\star$.
	\end{enumerate}
	
	In fact, the following lemma shows the relation between the frozen and message configurations. We refer to \cite{ssz22}, Lemma 2.7 for its proof.
	
	\begin{lemma}\label{lem:model:bij:frozen and msg}
		The mapping explained above defines a bijection
		\begin{equation}\label{eq:frz msg 1to1}
		\begin{Bmatrix}
		\textnormal{Frozen configurations } \ux \in \{0,1,\ff \}^V\\
		\textnormal{without free cycles}
		\end{Bmatrix}
		\quad
		\longleftrightarrow
		\quad
		\begin{Bmatrix}
		\textnormal{Message configurations}\\
		\utau \in \MMM^E
		\end{Bmatrix}.
		\end{equation}
	\end{lemma}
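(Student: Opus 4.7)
My plan is to prove the bijection by (i) showing the forward map $\ux \mapsto \utau$ described in Steps 1--4 produces a valid message configuration, (ii) constructing an explicit inverse, and (iii) verifying the two compositions are identities.

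For the forward direction, I would first show that $\utau$ is well-defined, i.e.\ Steps 1--3 never assign conflicting values to an edge. Step 1 sets $\hat{\tau}_e = x_{v(e)}$ on forcing edges; note that if two distinct edges $e_1,e_2 \in \delta v$ are both forcing, they force $v$ to the same value $x_v$ by definition of the coarsening, so no conflict arises. Step 1 also defines $\dot{\tau}_e = x_{v(e)}$ whenever some sibling message in $\delta v(e)\setminus e$ is in $\{0,1\}$; the forcing condition for a frozen variable guarantees that such $\dot{\tau}_e$ is well-defined and equals $x_{v(e)}$. Step 2 defines $\hat{\tau}_e=\fs$ on separating clauses, consistent with the $\hat T$ rule. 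Step 3 then propagates the local equations \eqref{eq:def:localeq:msg} inward from the boundary of each free component; since $\ux$ has no free cycles, each free component is a tree, so this propagation terminates and produces a message whose graphical structure is exactly the corresponding boundary-labeled subtree of the free component. Step 4 assigns $\star$ to the remaining edges; I would then verify that $\star$ is a fixed point of \eqref{eq:def:localeq:msg} under the rules $\dot T,\hat T$ whenever the inputs are in $\hat{\MMM}_t\setminus\{0,1\}$ or $\dot{\MMM}_t\setminus\{0,1,\tz\}$ respectively, so the local equations hold everywhere. A key check here is that Step 4 never produces $\tz$: a $\tz$ would require some clause to be forced into contradiction, which is impossible in a valid frozen configuration.

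For the inverse direction, given $\utau \in \MMM^E$, define $x_v := \hat{\tau}_e$ if some $e\in\delta v$ has $\hat{\tau}_e\in\{0,1\}$ and $x_v := \ff$ otherwise. Consistency (different forcing edges at $v$ giving the same value) follows from the local equation at $v$: if $\hat{\tau}_{e_1},\hat{\tau}_{e_2}\in\{0,1\}$ with $\hat{\tau}_{e_1}\neq \hat{\tau}_{e_2}$, then applying $\dot T$ around $v$ produces $\tz\notin\dot{\MMM}$, contradicting $\utau\in \MMM^E$. Validity of the NAE-SAT constraints and of the forcing condition in Definition~\ref{def:frozenconfig} is then read off from \eqref{eq:def:localeq:msg} and the definitions of $\dot T,\hat T$: a clause $a$ is separating exactly when some $\hat{\tau}_e=\fs$ for $e\in\delta a$, and a variable $v$ is free exactly when no incoming message at $v$ lies in $\{0,1\}$. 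Finally, absence of free cycles follows from the structure of $\dot{\MMM},\hat{\MMM}$: a cycle through only free variables and non-separating clauses would force some message on the cycle to be built recursively from itself, which forbids finite depth in $\dot{\MMM}_t\cup\hat{\MMM}_t$; such edges must instead carry $\star$, and I would check that $\star$-labeled edges never create a free-cycle obstruction in the recovered $\ux$.

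The two maps are mutual inverses by construction: starting from $\ux$, Steps 1--2 recover exactly the frozen values at forcing edges and around frozen variables, so $x_v$ is recovered correctly at every frozen $v$, and free variables remain free since all their incident $\hat{\tau}$ are in $\hat{\MMM}\setminus\{0,1\}$; starting from $\utau$, the propagation in Steps 1--3 on the recovered $\ux$ reproduces $\utau$ on all non-$\star$ edges by the local equations, and Step 4 fills in the remaining edges with $\star$, matching $\utau$ there because those are precisely the edges in the original graph that cannot be reached from the frozen/separating boundary via the tree recursion.

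The main obstacle I anticipate is the careful bookkeeping for Step 4 and the $\star$ messages: one must verify that the set of edges receiving $\star$ under the forward map coincides with the set of edges whose message is $\star$ in any $\utau\in\MMM^E$, and that the local equations \eqref{eq:def:localeq:msg} are satisfied consistently there. This requires a case analysis based on how many of the sibling messages at a variable or clause lie in $\{0,1,\fs,\star\}$, invoking that $\ux$ is a valid frozen configuration (hence no $\tz$) and has no free cycles (hence the tree recursion of Step 3 is well-defined and $\star$ is confined to cycles of the original factor graph that sit in non-free regions).
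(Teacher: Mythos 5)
The paper does not prove this lemma itself---it is stated and then deferred entirely to Lemma 2.7 of \cite{ssz22}, so there is no in-paper proof to compare against. Your plan (verify the forward map is well-defined and lands in $\MMM^E$, construct an explicit inverse, check the two compositions) is the natural route, and several of the key checks you identify are correct: in particular, that a conflict between two forcing edges at the same variable, or between $\dot\tau$ determined in opposite directions, would produce the error symbol $\tz$, which is explicitly excised from $\dot\MMM$, so no such conflict can occur in a valid element of $\MMM^E$.

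Your treatment of $\star$, however, is off on two counts, and it misidentifies where the real work lies. First, the factual point: $\dot T(\hat\utau)=\star$ requires $\star\in\{\hat\tau_i\}\subseteq\hat\MMM_t\setminus\{0,1\}$, and $\hat T(\dot\utau)=\star$ requires $\star\in\{\dot\tau_i\}$ and $\{0,1\}\not\subseteq\{\dot\tau_i\}$; neither map outputs $\star$ merely because the inputs avoid $\{0,1\}$. If all inputs land in $\hat\MMM_t\setminus\{0,1,\star\}$, the output is the joined tree $\mathsf{j}(\cdot)$, not $\star$. Consequently $\star$ is purely self-sustaining: a $\star$ message at any edge forces a $\star$ among its neighbours, and hence a $\star$-labelled cycle. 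Along such a cycle every $\hat\tau$ at every variable must avoid $\{0,1\}$ (so the variables are free in the recovered $\ux$) and every clause on the cycle must fail the $\{0,1\}\subseteq\{\dot\tau_i\}$ condition (so it is non-separating); in other words, a $\star$-cycle is exactly a free cycle of $\ux$. Since the lemma's domain is frozen configurations \emph{without} free cycles, the free components are trees, Step 3's recursion from the boundary reaches every edge of every free piece, and Step 4 never fires: no $\star$ appears in the image of the forward map. Your anticipation that ``the careful bookkeeping for Step 4 and the $\star$ messages'' is the main obstacle is therefore misplaced in this setting, and your claim in the inverse direction that ``$\star$-labeled edges never create a free-cycle obstruction in the recovered $\ux$'' has it backwards: a $\star$ in $\utau$ would force a free cycle in the recovered $\ux$, which the lemma excludes. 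The genuine care required is elsewhere---showing that Step 3's tree recursion within each free piece produces, at every internal edge, exactly the boundary-labeled tree dictated by \eqref{eq:def:localeq:msg} (so the local equations hold), and that re-running Steps 1--3 on the recovered $\ux$ reproduces the original $\utau$ edge-by-edge, which your sketch leaves at the outline level.
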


	Next, we introduce a dynamic programming method based on \textit{belief propagation} to calculate the size of a free tree by local quantities from a message configuration. 
	
	\begin{defn}\label{def:msg:m}
		Let $\mathcal{P}\{0,1\} $ denote the space of probability measures on $\{0,1\}$. We define the mappings $\dot{\mm}:\dot{\MMM} \rightarrow \PP\{0,1\}$ and $\hat{\mm} : \hat{\MMM} \rightarrow \PP \{0,1\}$ as follows. For $\dot{\tau}\in\{0,1\}$ and $\hat{\tau}\in\{0,1\}$, let $\dot{\mm}[\dot{\tau}] =\delta_{\dot{\tau}}$, $\hat{\mm}[\hat{\tau}] = \delta_{\hat{\tau}}$. For $\dot{\tau}\in\dot{\MMM} \setminus \{0,1,\star\}$ and $\hat{\tau}\in\hat{\MMM} \setminus \{0,1,\star \}$, $\dot{\mm}[\dot{\tau}]$ and $\hat{\mm}[\hat{\tau}]$ are recursively defined:
		\begin{itemize}
			\item Let $\dot{\tau} = \dot{T}(\hat{\tau}_1,\ldots,\hat{\tau}_{d-1})$, with $\star \notin \{\hat{\tau}_i \}$. Define
			\begin{equation}\label{bethe:bpmsg:dot}
			\dot{z}[\dot{\tau}] := \sum_{\bx\in\{0,1\} }
			\prod_{i=1}^{d-1} \hat{\mm}[\hat{\tau}_i](\bx), \quad 
			\dot{\mm}[\dot{\tau}](\bx) :=
			\frac{1}{\dot{z}[\dot{\tau}]} \prod_{i=1}^{d-1} \hat{\mm}[\hat{\tau}_i](\bx).
			\end{equation}
			Note that $\dot{z}[\dot{\tau}]$ and $\dot{\mm}[\dot{\tau}](\bx)$ are well-defined, since $(\hat{\tau}_1,\ldots, \hat{\tau}_{d-1})$ can be recoved from $\dot{\tau}$ up to permutation.

			\item Let $\hat{\tau} = \hat{T} ( \dot{\tau}_1,\ldots,\dot{\tau}_{k-1})$, with $\star \notin \{\dot{\tau}_i \}$. Define
			\begin{equation}\label{bethe:bpmsg:hat}
			\hat{z}[\hat{\tau}] := 2-\sum_{\bx\in\{0,1\}} \prod_{i=1}^{k-1} \dot{\mm}[\dot{\tau}_i](\bx), \quad 
			\hat{\mm}[\hat{\tau}](\bx) :=
			\frac{1}{\hat{z}[\hat{\tau}]}
			\left\{1- \prod_{i=1}^{k-1} \dot{\mm}[\dot{\tau}_i](\bx) \right\} .
			\end{equation}
			Similarly as above, $\hat{z}[\hat{\tau}]$ and $	\hat{\mm}[\hat{\tau}]$ are well-defined.
		\end{itemize}
		Moreover, observe that inductively, $\dot{\mm}[\dot{\tau}], \hat{\mm}[\hat{\tau}] $ are not Dirac measures unless $\dot{\tau}, \hat{\tau}\in \{0,1\}$. 
	\end{defn} 
	It turns out that $\dot{\mm}[\star], \hat{\mm}[\star]$ can be arbitrary measures for our purpose, and hence we assume that they are uniform measures on $\{0,1\}$.
	
	The equations \eqref{bethe:bpmsg:dot} and \eqref{bethe:bpmsg:hat} are known as \textit{belief propagation} equations. We refer the detailed explanation to Section 2 of \cite{ssz22} where the same notions are introduced, or to Chapter 14 of \cite{mm09} for more fundamental background. From these quantities, we define the following local weights.
	\begin{equation}\label{eq:def:phi}
	\begin{split}
	&\bar{\varphi} (\dot{\tau}, \hat{\tau}) := \bigg\{ \sum_{\bx\in \{0,1\}} \dot{\mm}[\dot{\tau}](\bx) \hat{\mm}[\hat{\tau}](\bx) \bigg\}^{-1}; \quad \hat{\varphi}^{\textnormal{lit}} (\dot{\tau}_1,\ldots, \dot{\tau}_k):= 1-\sum_{\bx \in \{0,1\}} \prod_{i=1}^k \dot{\mm}[\dot{\tau}_i](\bx);\\
	&\dot{\varphi} (\hat{\tau}_1,\ldots,\hat{\tau}_d):=
	\sum_{\bx\in\{0,1\} }\prod_{i=1}^d \hat{\mm}[\hat{\tau}_i](\bx).
	\end{split}
	\end{equation}
	\begin{lemma}[\cite{ssz22}, Lemma 2.9 and Corollary 2.10; \cite{mm09}, Ch. 14]\label{lem:size:msg and trees}
		Let $\ux$ be a frozen configuration on $\GG=(V,F,E,\uL)$ without any free cycles, and $\utau$ be the corresponding message configuration. Then, we have that
		\begin{equation}\label{eq:free:tree:weight:lit}
		w_{\ttt}= \prod_{v\in V(\ttt)} \left\{ \dot{\varphi}(\hat{\utau}_{\delta v}) \prod_{e\in \delta v} \bar{\varphi}(\tau_e) \right\} \prod_{a\in F(\ttt)} \hat{\varphi}^{\textnormal{lit}}\big( (\dot{\utau} \oplus \uL)_{\delta a} \big).
		\end{equation}
		Furthermore, we have
		\begin{equation*}
		\textsf{size}(\ux,\GG)= \prod_{v\in V} \dot{\varphi} (\hat{\utau}_{\delta v}) \prod_{a\in F} \hat{\varphi}^{\textnormal{lit}}\big((\dot{\utau}\oplus \uL)_{\delta a} \big) \prod_{e \in E} \bar{\varphi} (\tau_e).
		\end{equation*} 
	\end{lemma}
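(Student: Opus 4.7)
The plan is to prove both identities via a standard belief propagation calculation on trees, exploiting the fact that $\ux$ has no free cycles so every free component in $\FFF(\ux,\GG)$ is a free tree (so Observation~\ref{obs:sampling} gives $\size(\ux,\GG)=\prod_{\ttt\in\FFF(\ux,\GG)}w_{\ttt}$). The key interpretation is that for $e\in E(\ttt)$, the probability measure $\dot{\mm}[\dot{\tau}_e]\in\PP\{0,1\}$ computed by the recursion \eqref{bethe:bpmsg:dot} is exactly the marginal law of $z_{v(e)}$ under the uniform measure on \textsc{nae-sat} assignments of the sub-tree of $\ttt$ lying on the variable side of $e$ (with the convention that boundary half-edges in $\hat{\partial}\ttt$ are unconstrained and boundary half-edges in $\dot{\partial}\ttt$ enforce their spin-labels). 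The symmetric statement holds for $\hat{\mm}[\hat{\tau}_e]$ on the clause side, and this tree interpretation is precisely why the definitions in \eqref{bethe:bpmsg:dot}--\eqref{bethe:bpmsg:hat} are the standard \textsc{nae-sat} BP recursions with the NAE clause kernel $1-\prod_i \dot{\mm}_i(x)$.

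First I would establish the identity for $w_{\ttt}$ by induction on $|V(\ttt)|+|F(\ttt)|$, using the "joining" operation from Definition~\ref{def:msg config}. The base case is a free tree consisting of a single free variable with $d$ boundary half-edges in $\hat{\partial}\ttt$, for which $w_{\ttt}=2$ matches $\dot{\varphi}(\hat{\utau}_{\delta v})\cdot\prod_{e\in\delta v}\bar{\varphi}(\tau_e)=2$. For the inductive step, attaching a new clause (or variable) to the existing tree multiplies the number of satisfying assignments by the appropriate local factor $\hat{\varphi}^{\textnormal{lit}}$ (or $\dot{\varphi}$), while the normalizing constants $\hat{z}[\hat{\tau}]$, $\dot{z}[\dot{\tau}]$ appearing in the BP recursion \eqref{bethe:bpmsg:dot}--\eqref{bethe:bpmsg:hat} get absorbed via the reciprocal edge weights $\bar{\varphi}(\tau_e)=(\sum_x \dot{\mm}[\dot{\tau}_e](x)\hat{\mm}[\hat{\tau}_e](x))^{-1}$, giving the product formula \eqref{eq:free:tree:weight:lit}. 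Crucially, the boundary half-edges in $\dot{\partial}\ttt$ (adjacent to frozen variables outside $\ttt$) carry Dirac messages $\hat{\mm}[\hat{\tau}_e]=\delta_{x_{v(e)}\oplus L_e}$, so their literal-enforced values are already accounted for by the incoming $\hat{\mm}$ messages at the corresponding clauses in $F(\ttt)$.

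Second, to lift this to the global formula for $\size(\ux,\GG)$, I would verify that outside free trees the three local factors are identically $1$, so that multiplying $\prod_{\ttt}w_{\ttt}$ by these trivial factors produces the global product over $V,F,E$. Concretely, for a frozen variable $v$ there is a forcing edge $e\in\delta v$ with $\hat{\tau}_e\in\{0,1\}$, hence $\hat{\mm}[\hat{\tau}_e]$ is a Dirac mass, forcing $\dot{\varphi}(\hat{\utau}_{\delta v})=\sum_x\prod_i\hat{\mm}[\hat{\tau}_{e_i}](x)=1$; for a separating clause $a$ two incoming $\dot{\tau}_{e_i}\oplus L_{e_i}$ messages are opposite Dirac masses, so $\hat{\varphi}^{\textnormal{lit}}((\dot{\utau}\oplus \uL)_{\delta a})=1-\sum_x\prod_i\dot{\mm}[\dot{\tau}_{e_i}\oplus L_{e_i}](x)=1$; and for an edge $e$ between them, at least one of $\dot{\mm}[\dot{\tau}_e],\hat{\mm}[\hat{\tau}_e]$ is a Dirac mass agreeing with the other, making $\bar{\varphi}(\tau_e)=1$. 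Combining these with the tree formula recovers \eqref{eq:free:tree:weight:lit} extended to the whole graph.

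The main obstacle I anticipate is the careful bookkeeping at the interface between free trees and their complement: each internal edge of a free tree appears once in $\prod_{v\in V(\ttt)}\prod_{e\in\delta v}\bar{\varphi}(\tau_e)$, and each boundary half-edge of a free tree needs to be accounted for exactly once in the corresponding factor at the outside endpoint. In particular, for $e\in \dot{\partial}\ttt$ (clause-side boundary), the factor $\bar{\varphi}(\tau_e)$ enters through the adjacent frozen variable $v(e)$ in the global product (and equals $1$ there, consistent with the triviality of the outside factors). Handling the $\star$ messages requires no extra work since they only arise in the presence of cycles, which are excluded by hypothesis, so $\star\notin\{\hat{\tau}_e,\dot{\tau}_e\}$ throughout and all BP recursions are well-defined.
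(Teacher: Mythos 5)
The paper does not prove this lemma itself; it is quoted verbatim from \cite{ssz22} (Lemma~2.9, Corollary~2.10). Your overall strategy --- identify $\dot{\mm}[\dot\tau_e],\hat{\mm}[\hat\tau_e]$ as the tree BP marginals, establish the $w_{\ttt}$ identity by a Bethe telescoping on the free tree, and then lift to the global identity via $\size(\ux,\GG)=\prod_{\ttt}w_{\ttt}$ --- is the right one, and your base case ($\dot\varphi\prod\bar\varphi = (1/2)^{d-1}\cdot 2^d = 2 = w_{\ttt}$ for a singleton free tree) checks out. But the key assertion in your second step, that \emph{each} of the three local factors is identically $1$ outside the free trees, is false, and so is the repeat of it in your ``main obstacle'' paragraph for $e\in\dot\partial\ttt$.

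Concretely, take a frozen variable $v$ adjacent to a separating clause $a$ that is separated by variables \emph{other} than $v$. Then $\hat\tau_e=\fs$, and $\hat{\mm}[\fs]=(1/2,1/2)$, which is not a Dirac mass. Since $\dot\tau_e=x_v$ and $\dot{\mm}[x_v]=\delta_{x_v}$, you get $\bar\varphi(\tau_e)=\hat{\mm}[\hat\tau_e](x_v)^{-1}=2\neq 1$. Similarly $\dot\varphi(\hat\utau_{\delta v})=\prod_{e\in\delta v}\hat{\mm}[\hat\tau_e](x_v)$ (the $\bx=1-x_v$ term dies by the forcing edge), which is $<1$ once any incoming message is non-Dirac. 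And for $e\in\dot\partial\ttt$, the clause $a(e)$ is non-separating, hence in particular non-forcing, so $\hat\tau_e\notin\{0,1,\fs\}$ is a tree message with $\hat{\mm}[\hat\tau_e]$ non-Dirac, giving $\bar\varphi(\tau_e)=\hat{\mm}[\hat\tau_e](x_{v(e)})^{-1}>1$. Your claim that ``at least one of $\dot{\mm},\hat{\mm}$ is a Dirac mass agreeing with the other'' does not imply $\bar\varphi=1$; for $\sum_\bx\dot{\mm}(\bx)\hat{\mm}(\bx)=1$ you need \emph{both} to be the same Dirac mass.

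What is true, and what you should use instead, is the per-frozen-variable cancellation
\[
\dot\varphi(\hat\utau_{\delta v})\prod_{e\in\delta v}\bar\varphi(\tau_e)
= \Big(\prod_{e\in\delta v}\hat{\mm}[\hat\tau_e](x_v)\Big)\Big(\prod_{e\in\delta v}\hat{\mm}[\hat\tau_e](x_v)^{-1}\Big)=1
\quad\text{for every frozen } v,
\]
together with $\hat\varphi^{\lit}\equiv 1$ at separating clauses (which you established correctly). Since every edge $e\in E$ with $v(e)$ frozen --- including all $e\in\dot\partial\ttt$ --- is counted exactly once in $\prod_{e\in\delta v}\bar\varphi(\tau_e)$ as $v$ ranges over frozen variables, and since these are precisely the $E$-terms in the global Bethe product not already accounted for by $\prod_{\ttt}w_{\ttt}$, this product-level cancellation does recover the global formula. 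The lemma and your strategy are fine; the step ``the three local factors are identically $1$'' must be replaced by this reciprocal-cancellation argument.
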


	\subsection{Colorings}\label{subsubsec:model:col}
	In this subsection, we introduce the \textit{coloring configuration}, which is a simplification of the message configuration. Recall the definition of $\MMM=\dot{\MMM}\times \hat{\MMM}, $ and let $\{ \fF \} \subset \MMM$ be defined by $\{\fF\}:= \{\tau \in \MMM: \, \dot{\tau} \notin \{ 0,1,\star\}, \hat{\tau}\notin \{ 0,1,\star\} \}$. Define $\Omega :=  \{\rr_0, \rr_1, \bb_0, \bb_1\} \cup \{\fF \}$ and let $\textsf{S}: \MMM \to \Omega$ be the projections given by
	
	\begin{equation}\label{eq:simplify:coloring}
	\textsf{S}(\tau) := 
	\begin{cases}
	\rr_0 & \hat{\tau}=0;\\
	\rr_1 & \hat{\tau}=1;\\
	\bb_0 & \hat{\tau } \neq 0, \, \dot{\tau}=0;\\
	\bb_1 & \hat{\tau} \neq 1, \, \dot{\tau}=1;\\
	\tau & \textnormal{otherwise, i.e., } \tau \in \{ \fF \},
	\end{cases}
	\end{equation}
	
    The coloring model if the projection of message configurations under the projection $\textsf{S}$. For simplicity, we abbreviate $\{\rr \}= \{\rr_0, \rr_1 \}$ and $\{\bb \} = \{\bb_0, \bb_1 \}$, and  define the Boolean addition  as $\bb_\bx \oplus \tL := \bb_{\bx \oplus \tL}$, and similarly for $\rr_\bx$. Also, for $\sigma \in \{ \rr,\bb,\fs\}$, we set $\dot{\sigma} :=  \sigma=:\hat{\sigma}$.
	\begin{defn}[Colorings]\label{def:model:col}
		For $\sig \in \Omega ^d$, let
		\begin{equation*}
		\dot{I}(\sig) : =
		\begin{cases}
		1 & \rr_0 \in \{\sigma_i\} \subseteq \{\rr_0, \bb_0 \};\\
		1& \rr_1 \in \{\sigma_i\} \subseteq \{\rr_1,\bb_1 \};\\
		1 & \{\sigma_i \} \subseteq \{ {\fF} \}, \textnormal{ and } \dot{\sigma}_i = \dot{T}\big( (\hat{\sigma}_j)_{j\neq i} \big), \ \forall i;\\
		0 & \textnormal{otherwise}.
		\end{cases}
		\end{equation*}
		Also, define $	\hat{I}^{\textnormal{lit}}: \Omega^k \to \mathbb{R}$ to be
		\begin{equation*}
		\begin{split}
		\hat{I}^{\textnormal{lit}}(\sig)&:=
		\begin{cases}
		1 & \exists i:\, \sigma_i = \rr_0 \textnormal{ and } \{\sigma_j \}_{j\neq i} = \{\bb_1 \};\\
		1 & \exists i:\, \sigma_i = \rr_1 \textnormal{ and } \{\sigma_j \}_{j\neq i} = \{\bb_0 \};\\
		1 & \{\bb \} \subseteq \{\sigma_i \} \subseteq \{\bb\} \cup \{\sigma \in \{\fF \}: \,\hat{\sigma}=\fs \};\\
		1 & \{\sigma_i \} \subseteq \{ \bb_0, {\fF} \}, \, |\{i: \sigma_i\in  \{{\fF} \}\} | \ge 2 , \textnormal{ and } \hat{\sigma}_i = \hat{T}((\dot{\sigma}_j)_{j\neq i}; 0), \ \forall i \textnormal{ s.t. } \sigma_i \neq \bb_0;\\
		1 & \{\sigma_i \} \subseteq \{ \bb_1, {\fF} \}, \, |\{i: \sigma_i\in  \{{\fF} \}\}| \ge 2 , \textnormal{ and } \hat{\sigma}_i = \hat{T}((\dot{\sigma}_j)_{j\neq i}; 0), \ \forall i \textnormal{ s.t. } \sigma_i \neq \bb_1;\\
		0 & \textnormal{otherwise}.
		\end{cases}
		\end{split}
		\end{equation*}
		On a \textsc{nae-sat} instance $\GG = (V,F,E,\uL)$, $\sig\in \Omega^E$ is a (valid) \textbf{coloring} if $\dot{I}(\sig_{\delta v})=\hat{I}^{\textnormal{lit}}((\sig\oplus\uL)_{\delta a}) =1 $ for all $v\in V, a\in F$. 
	\end{defn}
	
	Given \textsc{nae-sat} instance $\GG$, it was shown in Lemma 2.12 of \cite{ssz22} that there is a bijection
	\begin{equation}\label{eq:msg col 1to1}
	\begin{Bmatrix}
	\textnormal{message configurations}\\
	\utau \in \MMM^E
	\end{Bmatrix}
	\ \longleftrightarrow \
	\begin{Bmatrix}
	\textnormal{colorings} \\
	\sig \in \Omega^E
	\end{Bmatrix}
	\end{equation}
	The weight elements for coloring, denoted by $\dot{\Phi}, \hat{\Phi}^{\textnormal{lit}}, \bar{\Phi}$, are defined as follows. For $\sig \in \Omega^d,$ let
	\begin{equation*}
	\begin{split}
	\dot{\Phi}(\sig) := 
	\begin{cases}
	\dot{\varphi}(\hat{\sig}) & \dot{I}(\sig) =1 \textnormal{ and } \{\sigma_i \} \subseteq \{\fF \};\\
	1 & \dot{I}(\sig) =1 \textnormal{ and } \{\sigma_i \}\subseteq \{\bb, \rr \};\\
	0 & \textnormal{otherwise, i.e., } \dot{I}(\sig)=0.
	\end{cases}
	\end{split}
	\end{equation*}
	For $\sig \in \Omega^k$, let
	\begin{equation*}
	\hat{\Phi}^{\textnormal{lit}}(\sig) :=
	\begin{cases}
	\hat{\varphi}^\lit((\dot{\tau}(\sigma_i))_i)
	& \hat{I}^{\textnormal{lit}} (\sig) = 1 \textnormal{ and } \{\sigma_i \} \cap \{\rr \} = \emptyset;\\
	1 & \hat{I}^{\textnormal{lit}}(\sig) = 1 \textnormal{ and } \{\sigma_i \} \cap \{\rr \} \neq \emptyset;\\
	0 & \textnormal{otherwise, i.e., } \hat{I}^{\textnormal{lit}}(\sig)=0.
	\end{cases}
	\end{equation*}
	(If $\sigma \notin \{\rr \}, $ then $\dot{\tau}(\sigma_i)$ is well-defined.)
	
	Lastly, let
	\begin{equation*}
	\bar{\Phi}(\sigma) := 
	\begin{cases}
	\bar{\varphi} (\sigma) & \sigma \in \{\fF \};\\
	1 & \sigma \in \{\rr, \bb \}.
	\end{cases}
	\end{equation*}
	Note that if $\hat{\sigma}=\fs$, then $\bar{\varphi}(\dot{\sigma},\hat{\sigma})=2$ for any $\dot{\sigma}$.
	The rest of the details explaining the compatibility of $\varphi$ and $\Phi$ can be found in \cite{ssz22}, Section 2.4.
	Then, the formula for the cluster size we have  seen in Lemma \ref{lem:size:msg and trees} works the same for the coloring configuration.
	
	\begin{lemma}[\cite{ssz22}, Lemma 2.13]\label{lem:model:size:col}
		Let 	$\ux \in \{0,1,\ff \}^V$ be a frozen configuration on $\GG=(V,F,E,\uL)$, and let $\sig \in \Omega^E$ be the corresponding coloring. Define
		\begin{equation*}
		w_\GG^{\textnormal{lit}}(\sig):= \prod_{v\in V}\dot{\Phi}(\sig_{\delta v}) \prod_{a\in F} \hat{\Phi}^{\textnormal{lit}} ((\sig \oplus \uL)_{\delta a}) \prod_{e\in E} \bar{\Phi}(\sigma_e).
		\end{equation*}
		Then, we have $\textsf{size}(\ux;\GG) = w_\GG^{\textnormal{lit}}(\sig)$.\
	\end{lemma}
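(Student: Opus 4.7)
The plan is to derive the claim by combining Lemma~\ref{lem:size:msg and trees}, which expresses $\textsf{size}(\ux;\GG)$ in terms of the local message weights $(\dot\varphi,\hat\varphi^{\lit},\bar\varphi)$ of the message configuration $\utau$ corresponding to $\ux$, with the bijection \eqref{eq:msg col 1to1} between messages and colorings, and then verifying case-by-case that the message weights collapse to the coloring weights $(\dot\Phi,\hat\Phi^{\lit},\bar\Phi)$ at every vertex, clause, and edge.

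First, let $\utau\in\MMM^E$ be the message configuration associated with $\ux$ via \eqref{eq:frz msg 1to1}, and let $\sig = \textsf{S}(\utau) \in \Omega^E$ be the induced coloring via the projection \eqref{eq:simplify:coloring}. By Lemma~\ref{lem:size:msg and trees},
\begin{equation*}
\textsf{size}(\ux;\GG) = \prod_{v\in V}\dot{\varphi}(\hat{\utau}_{\delta v}) \prod_{a\in F}\hat{\varphi}^{\lit}((\dot{\utau}\oplus\uL)_{\delta a}) \prod_{e\in E}\bar{\varphi}(\tau_e).
\end{equation*}
Thus it suffices to show that $\dot\varphi(\hat\utau_{\delta v})=\dot\Phi(\sig_{\delta v})$ for each $v\in V$, $\hat\varphi^{\lit}((\dot\utau\oplus\uL)_{\delta a})=\hat\Phi^{\lit}((\sig\oplus\uL)_{\delta a})$ for each $a\in F$, and $\bar\varphi(\tau_e)=\bar\Phi(\sigma_e)$ for each $e\in E$.

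Next, the verification divides naturally into two regimes according to whether the local neighborhood lies in the frozen/forcing zone (spins in $\{\rr,\bb,\fs\}$) or in the free zone (spins in $\{\fF\}$). In the free regime, the coloring weights are defined precisely to agree with the message weights: $\dot\Phi(\sig_{\delta v}) = \dot\varphi(\hat\sig_{\delta v})$ when $\{\sigma_i\}\subseteq\{\fF\}$, $\hat\Phi^{\lit}(\sig) = \hat\varphi^{\lit}((\dot\tau(\sigma_i))_i)$ when $\{\sigma_i\}\cap\{\rr\}=\emptyset$, and $\bar\Phi(\sigma)=\bar\varphi(\sigma)$ for $\sigma\in\{\fF\}$, so the identification is immediate. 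In the frozen/forcing regime, both $\dot\Phi, \hat\Phi^{\lit}$, and $\bar\Phi$ are $1$ by definition; the task reduces to checking that the message-side weights also multiply to $1$. For a $\rr_x$-edge, both $\dot{\mm}[\dot\tau_e]$ and $\hat{\mm}[\hat\tau_e]$ are the point mass $\delta_x$, so $\bar\varphi(\tau_e)=1$. For a $\bb_x$-edge, $\dot{\mm}[\dot\tau_e] = \delta_x$ is a point mass while $\hat{\mm}[\hat\tau_e]$ is not; the apparent factor $1/\hat{\mm}[\hat\tau_e](x)$ in $\bar\varphi(\tau_e)$ is absorbed by the complementary factor $\hat{\mm}[\hat\tau_e](x)$ arising in $\dot\varphi(\hat\utau_{\delta v})$ at the frozen variable $v(e)$ (whose $\dot\varphi$ collapses to $\bx=x_v$ by the presence of at least one forcing edge). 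Analogous cancellations hold on the clause side via the BP relation \eqref{bethe:bpmsg:hat}.

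The main obstacle is the bookkeeping at the interface between free and frozen zones, i.e.\ the boundary half-edges of a free component $\fff$ adjacent to frozen variables (spin $\bb$) or to separating clauses (spin $\fs$). Here one must verify that the compensating $\hat{\mm}[\hat\tau]$-factors integrate correctly with the free-tree weight formula $w_\fff$ given in \eqref{eq:free:tree:weight:lit}, so that the product over free components in the message formula matches $\prod_{\fff\in\FFF(\ux,\GG)}w_\fff = \textsf{size}(\ux;\GG)$. This is a direct, if somewhat tedious, check against the case list in Definition~\ref{def:model:col} of $\dot I$ and $\hat I^{\lit}$: each nonzero pattern of $\sig_{\delta v}$ and $(\sig\oplus\uL)_{\delta a}$ corresponds to a unique admissible combination of message types, and the BP recursions \eqref{bethe:bpmsg:dot}--\eqref{bethe:bpmsg:hat} guarantee exact cancellation in each case. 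Taking the product over all variables, clauses, and edges then yields $\textsf{size}(\ux;\GG) = w_\GG^{\lit}(\sig)$, as desired.
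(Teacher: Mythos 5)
The lemma is quoted verbatim from \cite{ssz22}; the present paper does not supply its own proof, so what you propose is judged against what such a proof must actually look like. Your high-level strategy is the right one: start from Lemma~\ref{lem:size:msg and trees}, push the identity through the bijection \eqref{eq:msg col 1to1}, and verify vertex-by-vertex, clause-by-clause, edge-by-edge that the $\varphi$'s collapse to the $\Phi$'s. The free-component cases and the $\bb_x$-edge cancellation you describe are both correct.

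There are, however, two concrete problems. First, the reduction ``it suffices to show $\dot\varphi(\hat\utau_{\delta v})=\dot\Phi(\sig_{\delta v})$, $\hat\varphi^{\lit}=\hat\Phi^{\lit}$, $\bar\varphi(\tau_e)=\bar\Phi(\sigma_e)$ factor by factor'' is simply false, and you implicitly concede this a sentence later by invoking nontrivial cancellations across factors. For a $\bb_x$-edge $e$ at a frozen variable $v$, one has $\bar\varphi(\tau_e)=1/\hat{\mm}[\hat\tau_e](x_v)\neq 1=\bar\Phi(\sigma_e)$, so the factorwise identity fails; only the triple product over $v$, its incident edges, and their clauses comes out right. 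The correct reduction is that the product over each connected region (free piece plus its boundary) collapses.

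Second, and more seriously, the claim ``for a $\rr_x$-edge, both $\dot{\mm}[\dot\tau_e]$ and $\hat{\mm}[\hat\tau_e]$ are the point mass $\delta_x$, so $\bar\varphi(\tau_e)=1$'' is wrong in general. It holds only if $v(e)$ has at least \emph{two} forcing edges. If $e_0$ is the \emph{unique} forcing edge at a frozen variable $v$, then $\hat\tau_{e_0}=x_v$ is a point mass, but $\dot\tau_{e_0}=\dot T(\hat\utau_{\delta v\setminus e_0})$ is a joined tree because none of the other incoming hat-messages is in $\{0,1\}$; consequently $\dot{\mm}[\dot\tau_{e_0}]$ is not a point mass and $\bar\varphi(\tau_{e_0})=1/\dot{\mm}[\dot\tau_{e_0}](x_v)\neq 1$. (Concretely, if the other $d-1$ edges at $v$ all carry $\hat\tau=\fs$, then $\dot{\mm}[\dot\tau_{e_0}]$ is the uniform measure and $\bar\varphi(\tau_{e_0})=2$.) The surplus factor is not cancelled at the variable as you suggest; rather it cancels against $\hat\varphi^{\lit}((\dot\utau\oplus\uL)_{\delta a(e_0)})$ at the forcing clause: a direct computation shows that with exactly one forcing edge at $a(e_0)$ the clause factor equals $\dot{\mm}[\dot\tau_{e_0}](x_{v(e_0)})$, while on the coloring side $\hat\Phi^{\lit}=1$ because $\rr\in\{\sigma_i\}$. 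So the cancellation for $\rr$-edges is on the \emph{clause} side, not the variable side. Your parenthetical ``analogous cancellations hold on the clause side'' gestures at this but contradicts your explicit (and incorrect) assertion that $\bar\varphi=1$ for all $\rr$-edges. To close the gap, you should remove the false factorwise reduction, track the surplus $\bar\varphi$ factor from a $\rr$-edge into $\hat\varphi^{\lit}$ at the adjacent clause (rather than into $\dot\varphi$ at the variable), and handle separately the case of a variable with a single forcing edge.
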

	Among the valid frozen configurations, we can ignore the contribution from the configurations with too many free or red colors, as observed in the following lemma.
		\begin{lemma}[\cite{dss16} Proposition 2.2 ,\cite{ssz22} Lemma 3.3]
		For a frozen configuration $\ux \in \{0,1,\ff \}^{V}$, let $\rr(\ux)$ count the number of forcing edges and $\ff(\ux)$ count the number of free variables.
		There exists an absolute constant $c>0$ such that for $k\geq k_0$, $\alpha \in [\alpha_{\textsf{lbd}}, \alpha_{\textsf{ubd}}]$, and $\lambda \in(0,1]$, 
		\begin{equation*}
		\sum_{\ux \in \{0,1,\ff\}^V} \E \left[	\textsf{size}(\ux;\GG)^\lambda\right] \mathds{1}\left\{ \frac{\rr(\ux)}{nd} \vee \frac{\ff(\ux)}{n}> \frac{7}{2^k} \right\}  \le e^{-cn},
		\end{equation*}
		where $\textsf{size}(\ux;\GG)$ is the number of \textsc{nae-sat} solutions $\bux \in \{0,1\}^{V}$ which extends $\ux\in \{0,1,\ff\}^{V}$.
	\end{lemma}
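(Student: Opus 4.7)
I would prove this by a first-moment bound on the (modified) partition function $\bZ_\lambda$ restricted to the set of frozen configurations with density of forcing edges or free variables exceeding $7/2^k$. By the bijection of Lemma \ref{lem:model:bij:frozen and msg} (and its coloring analogue from \cite{ssz22}), it suffices to work with either messages or colorings $\sig \in \Omega^E$; the coloring representation is convenient because $\rr(\ux)$ is just the number of edges with spin in $\{\rr_0,\rr_1\}$ and $\ff(\ux)$ is the number of variables whose adjacent coloring lies in $\{\fF\}^d$. Lemma \ref{lem:model:size:col} factorizes $\textsf{size}(\ux;\GG)^\lambda$ into local weights around variables, clauses, and edges, so that after taking the expectation over the random matching and random literals the $\lambda$-tilted first moment becomes a product of local factors times a multinomial entropy, exactly as in Proposition \ref{prop:moment:formula}.

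The next step is to organize the sum by the variable empirical profile $\dot{B}$ and the clause empirical profile $\hat{B}$ (of colorings around each vertex) together with the free-component profile $\{p_\fff\}$. Using Stirling, $\E[\bZ_\lambda[\dot{B},\hat{B},\{p_\fff\}]]$ is $e^{n\Psi_\lambda(\dot B,\hat B,\{p_\fff\}) + o(n)}$ for an explicit entropy-plus-local-weight rate $\Psi_\lambda$. I would then maximize $\Psi_\lambda$ subject to the linear compatibility constraints of Definition \ref{def:compat} and the side constraint that the red-edge density $\langle \bar B, \one_{\{\rr\}}\rangle + \sum_\fff p_\fff \cdot (\text{red fraction})$ is at least $7/2^k$, or that the free-variable density $\langle \dot B, \one_{\dot C_\ff}\rangle$ is at least $7/2^k$.

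The key asymptotic input, for $k \ge k_0$ and $\alpha \in [\alpha_{\textsf{lbd}},\alpha_{\textsf{ubd}}]$, is that the unconstrained maximizer of $\Psi_\lambda$ has red and free densities of order $2^{-k}(1+o_k(1))$: a random clause is forcing with probability $\Theta(k 2^{-k})$ so forcing edges have density $\Theta(2^{-k})$, and a variable is free only if none of its $d \asymp 2^{k-1}\log 2$ clauses forces it together with the separating pattern being realized, again giving density $\Theta(2^{-k})$. Thus the threshold $7/2^k$ is well into the large-deviation region, and a Taylor expansion of $\Psi_\lambda$ around the BP fixed point $\dot\mu_\lambda$ of Proposition \ref{prop:ssz:physics:bp:fixed} (whose support is concentrated near $\{0,1\}$ with mass $\le 7/2^k$ in $(0,1)$) shows that constraining either density to exceed $7/2^k$ drops $\Psi_\lambda$ by a strictly positive constant $c = c(k)>0$, uniformly in $\lambda \in (0,1]$ and in $\alpha$ in the interval.

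The main obstacle is really bookkeeping rather than conceptual: one has to handle frozen configurations containing free cycles (to which the clean bijection with colorings does not apply) and verify that the gap in the rate function is genuinely of order $1$ and not $o_k(1)$. The cyclic-free-component contribution is handled as in Section 3 of \cite{NSS} by a separate first moment argument showing they contribute at most $e^{-\Omega(n)}$ even without the density restriction (since each extra cycle costs a factor $O(1/n)$ in the configuration model and the density constraint provides the exponential gap). The gap $c(k)>0$ follows because in the large-$k$ regime the deviation of the red/free density from its equilibrium value $\Theta(2^{-k})$ to $\ge 7/2^k$ is a deviation of constant relative size, and a direct computation of the Hessian of $\Psi_\lambda$ along this direction — together with convexity of the rate function on the feasible simplex — yields a uniform lower bound on the rate. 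Combining these gives the claimed $e^{-cn}$ bound.
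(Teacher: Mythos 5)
The paper does not prove this lemma; it imports it verbatim from \cite{dss16} Proposition~2.2 (for $\lambda=1$) and \cite{ssz22} Lemma~3.3 (for general $\lambda\in(0,1]$), so there is no in-paper argument to compare your attempt against.

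That said, your proposal has a genuine circularity problem and is also much heavier than what the cited references actually do. The crucial step ``Taylor-expand $\Psi_\lambda$ around the BP fixed point $\dot\mu_\lambda$, whose mass in $(0,1)$ is $\le 7/2^k$'' presupposes that the unconstrained maximizer of the first-moment rate functional over all coloring/free-component profiles sits at the BP fixed point and hence has free and forcing densities $\Theta(2^{-k})$ --- but establishing that the dominant contribution to $\E\bZ_\lambda$ comes from configurations with small free/forcing density is precisely what this lemma is for. In \cite{ssz22} the identification of the optimal boundary and free-tree profiles with the BP fixed point (Sections~3--5) is carried out \emph{only after} Lemma~3.3 is in hand: the truncation to density $\le 7/2^k$ is what keeps the connected free components subcritical and renders the infinite-dimensional profile optimization tractable. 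Your appeal to the free-component profile $\{p_\fff\}$ and to ``cycle control as in Section~3 of \cite{NSS}'' has the same issue, since those analyses also take this truncation as input.

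The actual proof in \cite{dss16} is a far leaner single-pass first moment that does not use the coloring model, the free-component profile, or the BP fixed point at all. It works with the coarse counts $\rr(\ux)$, $\ff(\ux)$, and the number of non-separating clauses; one computes $\sum_{\ux}\E[\size(\ux;\GG)^\lambda]$ with $\rr(\ux)/nd$ or $\ff(\ux)/n$ pinned at a value $\mu$, applies Stirling, and verifies by direct computation that the resulting exponential rate dips below $-c$ once $\mu>7/2^k$, uniformly over $\lambda\in(0,1]$ and $\alpha\in[\alpha_{\textsf{lbd}},\alpha_{\textsf{ubd}}]$. The constant $7$ emerges from this explicit estimate; there is no need to locate, characterize, or Taylor-expand around a maximizer. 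If you want to prove the lemma yourself, this coarse-count argument is the route to take.
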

	\begin{defn}[Truncated colorings]
			Let $1\leq L< \infty$,  $\ux $ be a frozen configuration on $\GG$ without free cycles and $\sig\in \Omega^E$ be the coloring corresponding to $\ux$. We say $\sig$ is a (valid) $L$-\textbf{truncated coloring} if $|V(\ttt)| \le L$ for all $\ttt \in \mathscr{F}(\ux, \GGG)$. For an equivalent definition, let $|\sigma|:=v(\dot{\sigma})+v(\hat{\sigma})-1$ for $\sigma \in \{\fF\}$, where $v(\dot{\sigma})$ (resp. $v(\hat{\sigma})$) denotes the number of variables in $\dot{\sigma}$ (resp. $\hat{\sigma}$). Define $\Omega_L := \{\rr,\bb \}\cup\{\fF \}_L$, where $\{\fF \}_L$ be the collection of $\sigma\in \{\fF \}$ such that $|\sigma| \le L$. Then,  $\sig$ is a (valid) $L$-truncated coloring if $\sig \in \Omega_L^E$.
			
			To clarify the names, we often call the original coloring $\sig\in \Omega^E$ the \textbf{untruncated coloring}.
		\end{defn}
	
	\subsection{Averaging over the literals}\label{subsubsec:model:avglit:1stmo}
	We now consider $\mathbb{E}^{\textnormal{lit}}[w_\GG^{\textnormal{lit}}(\sig) ]$ for a given coloring $\sig \in \Omega^E,$ where $\mathbb{E}^{\textnormal{lit}}$ denotes the expectation over the literals $\uL \sim \textnormal{Unif} [\{0,1\}^E]$. From Lemma \ref{lem:model:size:col}, we can write
	\begin{equation}\label{def:w}
	w_{\GG}(\sig)^\la:=\E^{\textnormal{lit}} [ w_\GG^{\textnormal{lit}}(\sig)^\lambda] = \prod_{v\in V} \dot{\Phi}(\sig_{\delta v})^\lambda \prod_{a\in F} \E^{\textnormal{lit}} \hat{\Phi}^{\textnormal{lit}}((\sig\oplus \uL)_{\delta a})^\lambda \prod_{e\in E} \bar{\Phi}(\sigma_e)^\lambda.
	\end{equation}
	Define $\hat{\Phi}(\sig_{\delta a})^\lambda := \E^{\textnormal{lit}}[ \hat{\Phi}^{\textnormal{lit}}((\sig\oplus\uL)_{\delta a})^\lambda]. $ To give a more explicit expression of this formula, we recall a property of $\hat{\Phi}^{\lit}$ from \cite{ssz22}, Lemma 2.17:
	\begin{lemma}[\cite{ssz22}, Lemma 2.17]\label{lem:decompose:Phi:hat}
		$\hat{\Phi}^{\lit}$ can be factorized as $\hat{\Phi}^{\lit}(\sig\oplus\uL) = \hat{I}^{\lit}(\sigma \oplus \uL) \hat{\Phi}^{\textnormal{m}}(\sig)$ for 
		\begin{equation}\label{eq:def:Phi:hat:max}
		\hat{\Phi}^{\textnormal{m}}(\sig) := \max\big\{\hat{\Phi}^{\lit}(\sig\oplus\uL): \uL \in \{0,1\}^k \big\}=
		\begin{cases}
		1 & \sig \in \{\rr,\bb\}^{k},\\
		\frac{\hat{z}[\hat{\sigma}_j]}{\bar{\varphi}(\sigma_j)} &\sig \in \Omega^{k}\textnormal{ with } \sigma_j \in \{\ff\}. 
		\end{cases}
		\end{equation}
	\end{lemma}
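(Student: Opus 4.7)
The plan is a case analysis on the color pattern of $\sig$, exploiting the fact that Boolean addition preserves the color type of each coordinate: $\rr_x \oplus L = \rr_{x\oplus L}$, $\bb_x \oplus L = \bb_{x\oplus L}$, and free colors stay free (only with boundary labels flipped when $L=1$). Consequently $\sig$ and $\sig \oplus \uL$ share the same red/blue/free pattern, and the factorization to be proved is trivial in the degenerate cases and reduces to a single BP identity in the substantive case.

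The first easy case is $\sig \in \{\rr,\bb\}^k$. Either $\sig \oplus \uL$ contains a red entry, making $\hat{\Phi}^{\lit}(\sig \oplus \uL) = 1$ by definition whenever $\hat{I}^{\lit}(\sig \oplus \uL) = 1$, or $\sig \oplus \uL$ is all-blue with both polarities present, in which case $\hat{\varphi}^{\lit}$ collapses to $1$ because $\dot{\mm}[0]=\delta_0$, $\dot{\mm}[1]=\delta_1$ and the mixed Dirac products vanish on each value of $\bx$. Either way the non-zero value is $1 = \hat{\Phi}^{\textnormal{m}}(\sig)$. The second easy case is when $\sig$ mixes a red and a free coordinate; inspection of the five cases in the definition of $\hat{I}^{\lit}$ shows none admits such a mixture, so $\hat{I}^{\lit}(\sig \oplus \uL)=0$ and both sides vanish identically in $\uL$.

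The substantive case is when $\sig$ has at least one free coordinate and no red entries. Fixing a free index $j$, the identity reduces to showing
\[
\hat{\varphi}^{\lit}\bigl((\dot{\tau}(\sigma_i \oplus L_i))_i\bigr) \;=\; \frac{\hat{z}[\hat{\sigma}_j]}{\bar{\varphi}(\sigma_j)}
\]
whenever $\hat{I}^{\lit}(\sig \oplus \uL)=1$. The key tool is the local BP relation at the clause, $\hat{\sigma}_j \oplus L_j = \hat{T}\bigl((\dot{\sigma}_i \oplus L_i)_{i\neq j}\bigr)$, which is exactly what Cases 3, 4, and 5 of the $\hat{I}^{\lit}$ definition encode (the Case-3 subcase $\hat{\sigma}_j = \fs$ is handled by the conventions $\hat{z}[\fs]=2$ and $\hat{\mm}[\fs] = \Unif\{0,1\}$, which also give $\bar{\varphi}(\sigma_j)=2$). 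Substituting the BP recursion \eqref{bethe:bpmsg:hat} into $\bar{\varphi}(\sigma_j)^{-1} = \sum_{\bx}\dot{\mm}[\dot{\sigma}_j](\bx)\hat{\mm}[\hat{\sigma}_j](\bx)$, after the change of variables $\by = \bx \oplus L_j$ and an application of the bit-flip symmetries $\dot{\mm}[\dot{\sigma}\oplus L](\bx) = \dot{\mm}[\dot{\sigma}](\bx\oplus L)$, $\hat{\mm}[\hat{\sigma}\oplus L](\bx) = \hat{\mm}[\hat{\sigma}](\bx\oplus L)$, and $\hat{z}[\hat{\sigma}\oplus L]=\hat{z}[\hat{\sigma}]$ (all provable by induction on tree depth from the symmetry of the Dirac initial data in Definition \ref{def:msg:m}), one arrives at $\bar{\varphi}(\sigma_j)^{-1} = \hat{z}[\hat{\sigma}_j]^{-1}\hat{\varphi}^{\lit}$, as required.

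The main obstacle is bookkeeping: cleanly extracting the local BP equation from the rather intricate definition of $\hat{I}^{\lit}$ (in particular the ``; $0$'' convention in Cases 4 and 5, which encodes the BP equation with the appropriate literal shifts), and consistently tracking the bit-flip symmetries across the free-tree BP recursion. Once these are in hand the remaining manipulation is a short algebraic calculation. An added benefit of the argument is that the value $\hat{z}[\hat{\sigma}_j]/\bar{\varphi}(\sigma_j)$ it produces is manifestly independent of the chosen free index $j$, which is needed for $\hat{\Phi}^{\textnormal{m}}(\sig)$ to be well defined.
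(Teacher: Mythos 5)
The paper states this lemma by citation to \cite{ssz22}, Lemma~2.17, without supplying its own proof, so there is no in-paper argument to compare against; your reconstruction is correct and takes the standard route. The case split (all red/blue; a forbidden red--free mixture, which makes both sides vanish; and the substantive free-without-red case), the reduction via the local BP identity $\hat{\sigma}_j\oplus L_j = \hat{T}\big((\dot{\sigma}_i\oplus L_i)_{i\neq j}\big)$ encoded in $\hat{I}^{\lit}$ to the one-line algebra $\sum_\bx \dot{\mm}_j(\bx)\hat{\mm}_j(\bx) = \hat{z}_j^{-1}\big(1-\sum_\bx\prod_i\dot{\mm}_i(\bx)\big)$, and the use of the bit-flip symmetries of $\hat{z}$ and $\bar{\varphi}$ (provable by induction on the tree structure of the messages) to remove the literal shift, together give exactly what is claimed, with the $\hat{\sigma}_j=\fs$ subcase handled by the conventions $\hat{z}[\fs]=\bar{\varphi}(\cdot,\fs)=2$.
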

	As a consequence, we can write $\hat{\Phi}(\sig)^\lambda = \hat{\Phi}^{\textnormal{m}}(\sig)^\lambda \hat{v}(\sig)$, where
	\begin{equation}\label{eq:def:vhat:basic}
	\hat{v}(\sig) := \E^{\lit} [ \hat{I}^{\lit}(\sig\oplus\uL)]. 
	\end{equation}

	\subsection{Optimal boundary profile}
	We define \textsc{bp} functional, which was introduced in Section 5 of \cite{ssz22}. For probability measures $\qdot,\qhat \in \PPP(\Omega_L)$, where $L<\infty$, let
	\begin{equation}\label{eq:pre:BP:1stmo}
	\begin{split}
	    &[\dot{\mathbf{B}}_{1,\lambda}(\qhat)](\sigma)\cong \bar{\Phi}(\sigma)^\lambda \sum_{\sig \in \Omega_L^{d}}\one\{\sigma_1=\sigma\}\dot{\Phi}(\sig)^{\lambda}\prod_{i=2}^{d}\qhat(\sigma_i)\\
	    &[\hat{\mathbf{B}}_{1,\lambda}(\qdot)](\sigma)\cong \bar{\Phi}(\sigma)^\lambda \sum_{\sig \in \Omega_L^{k}}\one\{\sigma_1=\sigma\}\hat{\Phi}(\sig)^{\lambda}\prod_{i=2}^{d}\qdot(\sigma_i),
	\end{split}
	\end{equation}
	where $\sigma \in \Omega_L$ and $\cong$ denotes equality up to normlization, so that the output is a probability measure. We denote by $\dot{\mathscr{Z}}\equiv\dot{\mathscr{Z}}_{\hat{q}} ,\hat{\mathscr{Z}}\equiv \hat{\mathscr{Z}}_{\dot{q}}$ the normalizing constants for \eqref{eq:pre:BP:1stmo}. Now, restrict the domain to the probability measures with \textit{one-sided} dependence, i.e. satisfying $\qdot(\sigma)=\dot{f}(\dot{\sigma})$ and $\qhat(\sigma)=\hat{f}(\hat{\sigma})$ for some $\dot{f} :\dot{\Omega}_L\to\R_{\geq 0}$ and $\hat{f} :\hat{\Omega}_L\to\R_{\geq 0}$. It can be checked that $\dot{\mathbf{B}}_{1,\lambda}, \hat{\mathbf{B}}_{1,\lambda}$ preserve the one-sided property, inducing
	\begin{equation*}
	    \dot{\textnormal{BP}}_{\lambda,L}:\PPP(\hat{\Omega}_{L}) \rightarrow \PPP(\dot{\Omega}_{L}),\quad\hat{\textnormal{BP}}_{\lambda,L}:\PPP(\dot{\Omega}_L) \rightarrow \PPP(\hat{\Omega }_L).
	\end{equation*}
    More precisely, for $\hat{q}\in \PPP(\hat{\Omega}_L)$ and $\dot{q} \in \PPP(\dot{\Omega}_L)$, define the probability measures $\dot{\textnormal{BP}}_{\la,L}(\hat{q})\in \PPP(\dot{\Omega}_L)$ and $\hat{\textnormal{BP}}_{\la,L}(\dot{q})\in \PPP(\hat{\Omega}_L)$ as follows. For $\dot{\sigma}\in \dot{\Omega}_L$ and $\hat{\sigma}\in \hat{\Omega}_L$, let
    \begin{equation}\label{eq:def:BP}
    \begin{split}
    &[\dot{\textnormal{BP}}_{\la,L}(\hat{q})](\dot{\sigma})=\big(\dot{\ZZZ}_{\hat{q}}\big)^{-1} \cdot \bar{\Phi}(\dot{\sigma}, \hat{\sigma}^\prime)^\lambda \sum_{\sig \in \Omega_L^{d}}\one\{\sigma_1=(\dot{\sigma},\hat{\sigma}^\prime)\}\dot{\Phi}(\sig)^{\lambda}\prod_{i=2}^{d}\hat{q}(\hat{\sigma}_i)\,,\\
    &[\hat{\textnormal{BP}}_{\la,L}(\dot{q})](\hat{\sigma})=\big(\hat{\ZZZ}_{\dot{q}}\big)^{-1} \cdot \bar{\Phi}(\dot{\sigma}^\prime, \hat{\sigma})^\lambda \sum_{\sig \in \Omega_L^{k}}\one\{\sigma_1=(\dot{\sigma}^\prime, \hat{\sigma})\}\dot{\Phi}(\sig)^{\lambda}\prod_{i=2}^{k}\dot{q}(\dot{\sigma}_i)\,,
    \end{split}
    \end{equation}
    where $\hat{\sigma}^\prime \in \hat{\Omega}_L$ and $\dot{\sigma}^\prime \in \dot{\Omega}_L$ are arbitrary with the only exception that when $\dot{\sigma}\in \{\rr,\bb\}$ (resp. $\hat{\sigma}\in \{\rr,\bb\}$), then we take $\hat{\sigma}^\prime = \dot{\sigma}$ (resp. $\dot{\sigma}^\prime = \hat{\sigma}$) so that the \textsc{rhs} above is non-zero. From the definition of $\dot{\Phi},\hat{\Phi}$, and $\bar{\Phi}$, it can be checked that the choices of $\hat{\sigma}^\prime \in \hat{\Omega}_L$ and $\dot{\sigma}^\prime \in \dot{\Omega}_L$ do not affect the values of the \textsc{rhs} above. The normalizing constants $\dot{\ZZZ}_{\hat{q}}$ and $\hat{\ZZZ}_{\dot{q}}$ are given by
    \begin{equation}\label{eq:BP:normalization}
 \begin{split}
 &\dot{\ZZZ}_{\hat{q}}\equiv\sum_{\dot{\sigma} \in \dot{\Omega}_L} \bar{\Phi}(\dot{\sigma}, \hat{\sigma}^\prime)^\lambda \sum_{\sig \in \Omega_L^{d}}\one\{\sigma_1=(\dot{\sigma},\hat{\sigma}^\prime)\}\dot{\Phi}(\sig)^{\lambda}\prod_{i=2}^{d}\hat{q}(\hat{\sigma}_i)\,,\\
 &\hat{\ZZZ}_{\dot{q}}\equiv \sum_{\hat{\sigma} \in \hat{\Omega}_L}\bar{\Phi}(\dot{\sigma}^\prime, \hat{\sigma})^\lambda \sum_{\sig \in \Omega_L^{k}}\one\{\sigma_1=(\dot{\sigma}^\prime, \hat{\sigma})\}\dot{\Phi}(\sig)^{\lambda}\prod_{i=2}^{k}\dot{q}(\dot{\sigma}_i)\,.
 \end{split}
 \end{equation}
Here, $\hat{\sigma}^\prime \in \hat{\Omega}_L$ and $\dot{\sigma}^\prime \in \dot{\Omega}_L$ are again arbitrary. We then define the \textit{Belief Propagation functional} by $\textnormal{BP}_{\lambda,L}:= \dot{\textnormal{BP}}_{\lambda,L}\circ \hat{\textnormal{BP}}_{\lambda,L}$. The untruncated BP map, which we denote by $\textnormal{BP}_{\lambda}:\PPP(\dot{\Omega}) \to \PPP(\dot{\Omega})$, is analogously defined, where we replace $\dot{\Omega}_L$(resp. $\hat{\Omega}_L$) with $\dot{\Omega}$(resp. $\hat{\Omega}$). Let $\mathbf{\Gamma}_C$ be the set of $\dot{q} \in \PPP(\dot{\Omega})$ such that 
	\begin{equation}\label{eq:def:bp:contract:set:1stmo}
	    \dot{q}(\dot{\sigma})=\dot{q}(\dot{\sigma}\oplus 1)\quad\text{for}\quad\dot{\sigma} \in \dot{\Omega },\quad\text{and}\quad \frac{\dot{q}(\rr)+2^k\dot{q}(\ff)}{C}\leq \dot{q}(\bb) \leq \frac{\dot{q}(\rr)}{1-C2^{-k}}.
	\end{equation}
	\begin{prop}[Proposition 5.5 item a,b of \cite{ssz22}]
	\label{prop:BP:contraction}
	For $\lambda \in [0,1]$, the following holds:
	\begin{enumerate}
	    \item There exists a large enough universal constant $C$ such that the map $\textnormal{BP}\equiv\textnormal{BP}_{\lambda,L}$ has a unique fixed point $\dot{q}^\star_{\lambda,L}\in \mathbf{\Gamma}_C$. Moreover, if $\dotq \in \mathbf{\Gamma}_C$, $\textnormal{BP}\dotq \in \mathbf{\Gamma}_C$ holds with
	    \begin{equation}\label{eq:BPcontraction:1stmo}
	        ||\textnormal{BP}\dotq-\dotq^\star_{\lambda,L}||_1\lesssim k^2 2^{-k}||\dotq-\dotq^\star_{\lambda,L}||_1.
	    \end{equation}
	    The same holds for the untruncated BP, i.e. $\textnormal{BP}_{\la}$, with fixed point $\dot{q}^\star_{\lambda}\in \Gamma_C$. $\dot{q}^\star_{\la,L}$ for large enough $L$ and $\dot{q}^\star_{\la}$ have full support in their domains.
	    \item In the limit $L \to \infty$, $||\dot{q}^\star_{\lambda,L}-\dot{q}^\star_{\lambda}||_1 \to 0$.
	\end{enumerate}
	\end{prop}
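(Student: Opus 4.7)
The plan is to carry out a standard Banach fixed-point argument on the complete metric space $(\PPP(\dot{\Omega}_L),\|\cdot\|_1)$, following Section~5 of \cite{ssz22}. The two main ingredients are invariance of $\mathbf{\Gamma}_C$ under $\BP_{\lambda,L}$ and the $k^2 2^{-k}$ contraction estimate. For invariance, I would directly estimate $[\BP_{\lambda,L}\dotq](\dot{\sigma})$ pointwise using formula \eqref{eq:def:BP} for $\dotq \in \mathbf{\Gamma}_C$. The symmetry $[\BP_{\lambda,L}\dotq](\dot{\sigma})=[\BP_{\lambda,L}\dotq](\dot{\sigma}\oplus 1)$ is immediate from the $0/1$-symmetry of $\dot{\Phi},\hat{\Phi},\bar{\Phi}$ and the Boolean-addition invariance of the averaged weight $\hat{v}$. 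The ratio bounds defining $\mathbf{\Gamma}_C$ reduce to estimating the probability that a BP-updated message equals $\rr$, $\bb$, or $\ff$: the $\rr$-output requires one of the $d-1$ clause-neighbors of the variable to be forcing, each forcing clause arising with weight $\Theta_k(2^{-k})$ since all $k-1$ other variables must agree on a Boolean value; the $\bb$-output has weight $\Theta_k(1)$; and the $\ff$-output is $O(2^{-k})$ since being free requires all $d-1$ adjacent clauses to be simultaneously non-forcing and separating. Choosing $C$ large enough, these crude estimates lie inside the constraints of $\mathbf{\Gamma}_C$, giving $\BP_{\lambda,L}(\mathbf{\Gamma}_C)\subseteq \mathbf{\Gamma}_C$.

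For the contraction, the cleanest route is a telescoping expansion. Given $\dotq_1,\dotq_2 \in \mathbf{\Gamma}_C$, I would write $[\BP_{\lambda,L}\dotq_1-\BP_{\lambda,L}\dotq_2](\dot{\sigma})$ as a sum of terms obtained by replacing one factor $\dotq_1(\dot\sigma_j)$ in the product $\prod_{i=2}^{k-1}\dotq_1(\dot\sigma_i)$ inside $\hat{\BP}_{\lambda,L}$ (and then once more inside $\dot{\BP}_{\lambda,L}$) by $(\dotq_1-\dotq_2)(\dot\sigma_j)$, plus a correction from the change in the normalizations $\hat{\ZZZ}_{\dotq}$ and $\dot{\ZZZ}_{\hat{q}}$. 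Each individual swap is bounded by $\|\dotq_1-\dotq_2\|_1$ times a product of the remaining $k-2$ or $d-2$ factors; the crucial observation is that in order for the output not to be a trivial cancellation in $\{\rr,\bb\}$, at least one of the surviving factors must either evaluate at a free spin or contribute to a Boolean-pinning arrangement, both producing an extra $O(2^{-k})$ factor. The $(k-1)(d-1) = \Theta(k^2)$ combinatorial choices of substitution position then yield the claimed rate. Existence, uniqueness, and continuity of $\dot{q}^\star_{\lambda,L}$ follow by standard Banach contraction; full support for $L$ sufficiently large is obtained by exhibiting, for each $\dot{\sigma}\in\dot{\Omega}_L$, a configuration $\underline{\sigma}_{\delta v}$ with $\sigma_1=\dot{\sigma}$ and $\dot{\Phi}(\underline{\sigma}_{\delta v})>0$ that is realized with positive $\dot{q}^\star_{\lambda,L}$-weight.

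For item~(2), I would use the natural zero-extension $\iota_L:\PPP(\dot{\Omega}_L)\hookrightarrow \PPP(\dot{\Omega})$ and show that $\iota_L\dot{q}^\star_{\lambda,L}$ is an approximate fixed point of $\BP_{\lambda}$: the difference $\BP_{\lambda}(\iota_L\dot{q}^\star_{\lambda,L})-\iota_L\BP_{\lambda,L}\dot{q}^\star_{\lambda,L}$ is supported on $\dot{\sigma}$ with component-size greater than $L$ and has $\ell^1$-mass $O_k(2^{-kL/4})$ by the exponential decay of the free-spin sector of the fixed point (the same decay underlying $\{p_\fff^\star\}_{\fff\in\FFF}\in \ee_{1/2}$ from Lemma~3.13 of \cite{NSS}). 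Applying the contraction from item~(1) to this near-fixed-point then yields $\|\iota_L\dot{q}^\star_{\lambda,L}-\dot{q}^\star_\lambda\|_1\to 0$. The main obstacle throughout is the contraction estimate in Step~2: one must carefully track how the first-order linearization interacts with the rational-function normalizations $\hat{\ZZZ},\dot{\ZZZ}$ so as not to lose the $2^{-k}$ suppression, since naive bounds would only give rate $O(k^2)$. This bookkeeping is carried out in detail in Section~5 of \cite{ssz22}, and the present proof essentially amounts to verifying that the combinatorial models (message configurations, colorings, truncation) used here match those of \cite{ssz22} and invoking their estimates.
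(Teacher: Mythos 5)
The proposition is imported verbatim from \cite[Proposition~5.5(a,b)]{ssz22}; the paper offers no proof of its own, so the only meaningful comparison is between your sketch and the argument in \cite{ssz22}. Your overall strategy (invariance of $\mathbf{\Gamma}_C$, telescoping linearization for the Lipschitz bound, Banach contraction, approximate-fixed-point argument for $L\to\infty$) is the right one and is essentially how Section~5 of \cite{ssz22} proceeds.

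There is, however, a concrete arithmetic error in your contraction step that would derail the argument if taken at face value. You write that the ``$(k-1)(d-1)=\Theta(k^2)$ combinatorial choices of substitution position then yield the claimed rate.'' But $d = \alpha k$ with $\alpha\in[\alpha_{\textsf{lbd}},\alpha_{\textsf{ubd}}]\subset[(2^{k-1}-2)\log 2,\,2^{k-1}\log 2]$, so $d=\Theta(k\,2^k)$ and $(k-1)(d-1)=\Theta(k^2 2^k)$, not $\Theta(k^2)$. Multiplying this raw position count by a single per-substitution factor of $O(2^{-k})$ would give $O(k^2)$, which is not a contraction at all. Recovering the rate $k^2 2^{-k}$ needs an additional $\Theta(2^{-k})$ suppression on the variable side: a swapped factor $\hat{q}_1(\hat\sigma_j)\mapsto(\hat{q}_1-\hat{q}_2)(\hat\sigma_j)$ inside $\dot{\BP}_{\lambda,L}$ only perturbs the output when the clause at position $j$ is forcing, which for $\hat{q}\in\hat{\BP}_{\lambda,L}(\mathbf{\Gamma}_C)$ has probability $O(2^{-k})$. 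This turns the $d-1$ positions into $O((d-1)2^{-k})=O(k)$ \emph{effective} positions, which combined with the $k-1$ clause-side positions and one more genuine $2^{-k}$ factor gives $k^2 2^{-k}$. That is precisely the ``bookkeeping'' you defer to \cite{ssz22}; as written, your derivation of the $k^2$ factor is incorrect without it.

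The remaining parts of your sketch (symmetry, invariance of $\mathbf{\Gamma}_C$, full support, and the $L\to\infty$ limit via the exponential tail of the free sector) are consistent with \cite{ssz22}, though you should note that for the last item the relevant decay is that of $\dot{q}^\star_{\lambda,L}$ on $\{\sigma:|\sigma|>L\}$ and its stability under $\BP_\lambda$, not (directly) the decay of free-tree frequencies in Lemma~3.13 of \cite{NSS}, even if the two are of course related.
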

	For $\dot{q} \in \PPP(\dot{\Omega})$, denote $\hat{q}\equiv \hat{\textnormal{BP}}\dot{q}$, and define $H_{\dot{q}}=(\dot{H}_{\dot{q}},\hat{H}_{\dot{q}}, \bar{H}_{\dot{q}})\in \bDelta$ by
	\begin{equation}\label{eq:H:q:1stmo}
	    \dot{H}_{\dot{q}}(\sig)=\frac{\dot{\Phi}(\sig)^{\lambda}}{\dot{\mathfrak{Z}}}\prod_{i=1}^{d}\hat{q}(\hat{\sigma}_i),\quad \hat{H}_{\dot{q}}(\sig)=\frac{\hat{\Phi}(\sig)^{\lambda}}{\hat{\mathfrak{Z}}}\prod_{i=1}^{k}\dot{q}(\dot{\sigma}_i),\quad \bar{H}_{\dot{q}}(\sigma)=\frac{\bar{\Phi}(\sigma)^{-\lambda}}{\bar{\mathfrak{Z}}}\dot{q}(\dot{\sigma})\hat{q}(\hat{\sigma}),
	\end{equation}
	where $\dot{\mathfrak{Z}}\equiv \dot{\mathfrak{Z}}_{\dot{q}},\hat{\mathfrak{Z}}\equiv\hat{\mathfrak{Z}}_{\dot{q}}$ and $\bar{\mathfrak{Z}}\equiv \bar{\mathfrak{Z}}_{\dot{q}}$ are normalizing constants.
	\begin{defn}[Definition 5.6 of \cite{ssz22}]\label{def:opt:coloring:profile}
	    The \textit{optimal coloring profile} for the untruncated model is the tuple $H^\star_{\lambda}=(\dot{H}^\star_{\lambda},\hat{H}^\star_{\lambda},\bar{H}^\star_{\lambda})$, defined respectively by $ H^\star_{\lambda,L}:= H_{\dot{q}^\star_{\lambda,L}}$ and $H^\star_{\lambda}:=H_{\dot{q}^\star_{\lambda}}$.
	\end{defn}
	\begin{defn}[optimal boundary profile, free tree profile and weight]\label{def:opt:bdry:1stmo}
	The \textit{optimal boundary profile}, the \textit{optimal free tree profile}, and the \textit{optimal weight} are defined by the following.
	\begin{itemize}
	    \item  The optimal boundary profile for the truncated model is the tuple $B^\star_{\lambda} \equiv(\dot{B}^\star_{\lambda},\hat{B}^\star_{\lambda},\bar{B}^\star_{\lambda})$, defined by restricting the optimal coloring profile to $(\dot{\partial}^\bullet)^d, (\hat{\partial}^\bullet)^k, \hat{\partial}^\bullet$:
	 \begin{equation}\label{eq:def:optimal:bdry}
	 \begin{split}
	     &\dot{B}^\star_{\lambda}(\sig):=  \dot{H}^\star_{\lambda}(\sig)\quad\textnormal{for}\quad \sig \in (\dot{\partial}^\bullet)^d\\
	     &\hat{B}^\star_{\lambda}(\sig) := \sum_{\utau \in \Omega^k, \utau_{\fs}=\sig} \hat{H}^\star_{\lambda}(\utau)\quad\textnormal{for}\quad \sig \in (\hat{\partial}^\bullet)^k
	     \\
	     &\bar{B}^\star_{\lambda}(\sigma):= \sum_{\tau \in\Omega, \tau_{\fs}= \sigma} \bar{H}^\star_{\lambda}(\tau)\quad\textnormal{for}\quad \sigma \in \hat{\partial}^\bullet,
	 \end{split}
	 \end{equation}
	 where $\tau_{\fs}$ is defined by the simplified coloring of $\tau\in \Omega$, where $\tau_{\fs}:=\tau$, if $\hat{\tau}\neq \fs$, and $\tau_{\fs}:=\fs$, if $\hat{\tau}=\fs$. $\utau_{\fs}$ is the coordinate-wise simplified coloring of $\utau$.
	 
	 \item The (normalized) optimal free tree profile $(p_{\ttt, \lambda}^\star)_{\ttt\in \FFF_{\tr}}$ for the truncated model is defined as follows. Recall the normalizing constants $\dot{\mathscr{Z}}^\star \equiv \dot{\mathscr{Z}}_{\hat{q}^\star_{\lambda}}, \hat{\mathscr{Z}}\equiv \hat{\mathscr{Z}}_{\dot{q}^\star_{\lambda}}$ for the BP map in \eqref{eq:pre:BP:1stmo}, where $\hat{q}^\star_{\lambda}\equiv \hat{\textnormal{BP}}\dot{q}^\star_{\lambda}$, and $\bar{\mathfrak{Z}}^\star \equiv\bar{\mathfrak{Z}}_{\dot{q}^\star_{\lambda}}$ in \eqref{eq:H:q:1stmo}. Writing $\dot{q}^\star=\dot{q}^\star_{\lambda}$ and $\hat{q}^\star=\hat{q}^\star_{\lambda}$, define
	 \begin{equation}\label{eq:optimal:tree:1stmo}
	     p_{\ttt, \lambda}^\star := \frac{J_{\ttt} w_{\ttt}^\lambda}{\bar{\mathfrak{Z}}^\star (\dot{\ZZZ}^\star)^{v(\ttt)}(\hat{\ZZZ}^\star)^{f(\ttt)}}\dot{q}^\star(\bb_0)^{\eta_{\ttt}(\bb_0)+\eta_{\ttt}(\bb_1)}(2^{-\lambda}\hat{q}^\star(\fs))^{\eta_{\ttt}(\fs)},
	 \end{equation}
	 for $\ttt\in \FFF$ with $v(\ttt)\leq L$.
    \end{itemize}
	\end{defn}
\section{Equivalent descriptions of the weak local limit}
\label{sec:appendix:B}
In this section, we prove the following proposition. Recall the definition of $\mathcal{P}_{\star}^{t}$ in Section \ref{s:lwl} and the definition of $\nu^\star_{t}$ in Definition \ref{def:optimal:t}. Also, recall the definition of $p_{\uz_t}(\sig_t,\uL_t)$ in Remark \ref{rmk:p}.
\begin{prop}\label{prop:equiv}
For any $\uz_t\in \{0,1\}^{V(\TTT_{d,k,t})}$ and $\uL_t\in \{0,1\}^{E_{\sf in}(\TTT_{d,k,t})}$, we have that
\begin{equation*}
    \PP_{\star}^{t}(\uz_t,\uL_t)=\sum_{\sig_{t}\in \CCC_t}\nu_{t}^\star(\sig_t,\uL_t)p_{\uz_t}(\sig_t,\uL_t)\,.
\end{equation*}
\end{prop}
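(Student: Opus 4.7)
The plan is to match both sides via the bijection between BP messages on the tree $\TTT_{d,k,t}$ (equivalently, message configurations from Definition~\ref{def:msg config}) and colorings (Definition~\ref{def:model:col}), combined with the BP fixed-point property of $\dot{\mu}_{\lambda^\star}$ from Proposition~\ref{prop:ssz:physics:bp:fixed}. Concretely, I would reparametrize the integral in \eqref{eq:def:P:star} by: (a) summing over all possible colorings $\sig_t \in \CCC^{E(\TTT_{d,k,t})}$ compatible with $\uL_t$; (b) integrating over the boundary BP messages $(\hat{\mm}_e)_{e\in \partial \TTT_{d,k,t}}$ conditional on the coloring type of each boundary edge (point mass vs.\ non-degenerate). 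On a tree, specifying $(\hat{\mm}_e)_{e \in \partial \TTT_{d,k,t}}$ uniquely determines $\underline{\mm}_t$ by iterated BP, and together with $\uL_t$ it determines the frozen configuration $\ux(\underline{\mm}_t)$ and hence its coloring.

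First, I would show that the pushforward of $2^{-|E_{\sf in}(\TTT_{d,k,t})|}\nu_{\lambda^\star}(\de \underline{\mm}_t; \uL_t)\,\de \uL_t$ onto pairs $(\sig_t, \uL_t)$ equals $\nu_t^\star(\sig_t, \uL_t)$. Using Lemmas~\ref{lem:size:msg and trees} and~\ref{lem:model:size:col} (and equation \eqref{eq:def:phi}) to rewrite $\dot{\varphi}^{\lambda}\bar{\varphi}^{\lambda}\hat{\varphi}^{\lit,\lambda}$ as $\dot{\Phi}^{\lambda}\bar{\Phi}^{\lambda}\hat{\Phi}^{\lit,\lambda}$, and using that $\hat{\mu}_{\lambda^\star} = \hat{\RR}_{\lambda^\star}\dot{\mu}_{\lambda^\star}$ is the BP-conjugate measure, the per-edge normalizations $\dot{\ZZZ}^\star, \hat{\ZZZ}^\star, \bar{\mathfrak{Z}}^\star$ from \eqref{eq:BP:normalization}--\eqref{eq:H:q:1stmo} cancel the internal weights. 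What survives is exactly the local broadcast weight of the coloring type at the root, together with the transition kernels $\dot{\HH}^\star(\cdot\given\cdot)$ and $\hat{\HH}^\star(\cdot\given\cdot)$ along every internal edge, which is the broadcast description of $\nu_t^\star$ from Definitions~\ref{def:broadcast} and~\ref{def:optimal:t}. The uniform factor $2^{-|E_{\sf in}|}$ absorbs the literal-averaging inherent in passing from $\hat{\Phi}^{\lit}$ to $\hat{\Phi}$ (cf.\ Lemma~\ref{lem:decompose:Phi:hat} and \eqref{eq:def:vhat:basic}).

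Second, I would identify the conditional probability of $\uz_t$ given the coloring $\sig_t$ and literals $\uL_t$ with $p_{\uz_t}(\sig_t, \uL_t)$. Once $\sig_t$ and $\uL_t$ are fixed, the frozen variables of $\ux(\underline{\mm}_t)$ and their values are determined, and the remaining randomness is over the boundary BP messages $\hat{\mm}_e$ on those $e \in \partial \TTT_{d,k,t}$ that are attached to a free component (i.e.\ colored in $\{\fF\}$). By the Markov property of BP on a tree, the ratio
\[
\frac{I(\uz_t \sim_{\uL_t} \ux(\underline{\mm}_t))\prod_{e\in \partial} \hat{\mm}_e(\uz_{v(e)})}{\sum_{\uz'_t} I(\uz'_t \sim_{\uL_t} \ux(\underline{\mm}_t))\prod_{e\in \partial} \hat{\mm}_e(\uz'_{v(e)})}
\]
is precisely the probability of sampling $\uz_t$ uniformly among all solutions in the attached free components after conditioning on the boundary messages, which is the definition of $p_{\uz_t}(\sig_t, \uL_t)$ given in Remark~\ref{rmk:p}.

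The main bookkeeping obstacle will be matching normalizing constants and combinatorial factors between the per-edge BP description and the per-component description of colorings used to define $p_{\fff}^\star$ and hence $\nu_t^\star$. In particular, one must verify that the embedding-number-based weights $J_\fff w_\fff^{\lambda^\star}$ appearing in $p_{\ttt}^\star$ (Definition~\ref{def:optimal:coloring}) assemble correctly out of the per-edge BP weights; this follows from the arguments in Appendix~B of \cite{ssz22}, where the identical computation is carried out for the global partition function, and here only needs to be restricted to the finite subtree $\TTT_{d,k,t}$.
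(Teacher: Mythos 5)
Your plan is essentially the paper's own proof: disintegrate $\PP_\star^t$ through the coloring of $\TTT_{d,k,t}$, show that the marginal pushforward of $\nu_{\lambda^\star}$ (with uniform literals) is the broadcast law $\nu_t^\star$ (the paper achieves this via Lemma~\ref{lem:equiv} and the coupling of Lemma~\ref{lem:coloring:to:message}), and recognize the conditional law of $\uz_t$ given the coloring as $p_{\uz_t}$ via BP on the tree. The one nontrivial ingredient you should name explicitly, which your normalization-cancellation argument silently relies on, is the atomic identification of the measure-valued fixed point $\dot{\mu}_{\lambda^\star}$ with the discrete coloring-model fixed point $\dot{q}^\star_{\lambda^\star}$ (Lemma~\ref{lem:relation:bp}); this is precisely what turns the boundary integral against $\hat{\mu}_{\lambda^\star}$ in \eqref{eq:def:P:star} into a sum over the discrete alphabet $\CCC$ and lets the $\dot{\ZZZ}^\star,\hat{\ZZZ}^\star,\bar{\mathfrak{Z}}^\star$ cancellations go through.
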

To prove Proposition \ref{prop:equiv}, we first reduce the component coloring model to the coloring model. To this end, we define the analog of $\nu^\star_t$ for the coloring model. Throughout, we use the supercript $^{\circ}$ to distinguish a coloring $\sigma^{\circ}\in \Omega$ from a component coloring $\sigma \in \CCC$. Moreover, we denote the optimal coloring profile for the untruncated model with $\la=\la^\star$ by $H^\star\equiv H^\star_{\la^\star}$. 
\begin{defn}\label{def:optiaml:t:ssz}
Consider the broadcast process with channel $H^\star\equiv (\dot{H}^\star, \hat{H}^\star, \bar{H}^\star)$ analogous to Definition \ref{def:broadcast}. That is, given a $\TTT_{d,k}$ with root $\rho$, the spins around the root $\bsig^{\circ}_{\delta \rho} \in \Omega^d$ is drawn from $\dot{H}^\star$. Then, it is propagated along the variables and clauses as follows. If an edge $e\in E(\TTT_{d,k})$ has children edges $\delta a(e)\setminus e$, then for $\utau^{\circ}=(\tau_1^{\circ},\ldots, \tau_k^{\circ})\in \Omega^k$ and $\tau^{\circ}\in \Omega$,  
        \begin{equation*}
            \P\big(\bsig^{\circ}_{\delta a(e)}= \utau^{\circ} \,\big| \bsigma_e^{\circ} = \tau^{\circ}\big)= \frac{1}{k}\frac{\hat{H}^\star(\utau^{\circ})\sum_{i=1}^{k}\one(\tau_i^\circ =\tau^{\circ})}{\bar{H}^\star(\tau^{\circ})}\,.
        \end{equation*}
    If an edge $e\in E(\TTT_{d,k})$ has children edges $\delta v(e)\setminus e$, then for $\utau^{\circ}=(\tau^{\circ}_1,\ldots, \tau^{\circ}_d)\in \Omega^d$ and $\tau^{\circ}\in \Omega$,
    \begin{equation*}
        \P\big(\bsig^{\circ}_{\delta v(e)}= \utau^{\circ} \,\big| \bsigma_e^{\circ} = \tau^{\circ}\big)= \frac{1}{d}\frac{\dot{H}^\star(\utau^{\circ})\sum_{i=1}^{d}\one(\tau_i^\circ =\tau^{\circ})}{\bar{H}^\star(\tau^{\circ})}\,.
    \end{equation*}
    Then, conditional on $(\bsigma_e^{\circ})_{e\in E(\TTT_{d,k})}$, draw $\buL_{\delta a}\in \{0,1\}^k$ for each clause $a\in F(\TTT_{d,k})$ independently and uniformly at random among $\uL$ which satisfy $\hat{I}^{\lit}(\sig^{\circ}_{\delta a}\oplus \uL)=1$. Define $\nu^{\star,\circ}_t\equiv \nu^{\star,\circ}_t[\alpha,k]$ by the law of $(\bsig_t^{\circ},\buL_t)\equiv \big((\bsigma_e^{\circ})_{e\in E(\TTT_{d,k,t})},(\bL_e)_{e\in E_{\sf in}(\TTT_{d,k,t})}\big)$ considered up to automorphisms analogous to Definition \ref{def:t:nbd:empirical}.
\end{defn}

Observe that for a $t$-coloring $(\sig_t,\uL_t)\in \Omega_t$ such that $\sig_t=(\sigma_e)_{e\in E(\TTT_{d,k,t})}$ does not contain a cyclic free component, i.e. if $\sigma_e=(\fff,e^\prime)$, then $\fff\in \FFF_{\tr}$, we can reconstruct the corresponding coloring configuration $\sig^{\circ}_t\equiv( \sigma_e^{\circ})_{e\in E(\TTT_{d,k,t})}$ by the following procedure.
\begin{itemize}
    \item For $e\in E(\TTT_{d,k,t})$, if $\sigma_e\in \{\rr,\bb\}$, then set $\sigma^{\circ}_e=\sigma_e$.
    \item Suppose that $\sigma_e=(\ttt, e^\prime)$ for some $\ttt\in \FFF_{\tr}$ and $e^\prime \in E(\ttt)$. Then, since the free tree $\ttt$ contains the information of its boundary spins and literal information, we can apply the map $\dot{T}(\cdot)$ and $\hat{T}(\cdot)$ (cf. Definition \ref{def:msg config}) from the boundary of $\ttt$ to recover the coloring $\sigma_{e^\prime}^{\circ}(\ttt)=\big(\dot{\sigma}_{e^\prime}^{\circ}(\ttt), \hat{\sigma}_{e^\prime}^{\circ}(\ttt)\big)\in \Omega$ at the edge $e^\prime$. Then, define the map $\psi:\{(\ttt,e^\prime ):\ttt\in \FFF_{\tr},e^\prime\in E(\ttt)\}\to \Omega$ by $\psi(\ttt,e^\prime)=\sigma_{e^\prime}^{\circ}(\ttt)$, and set $\sigma^{\circ}_e=\psi(\sigma_e)$.
    \item For all $e\in E(\TTT_{d,k,t})$ such that $\sigma_e=\fs$, set $\hat{\sigma}^{\circ}_e=\fs$. Note that after this step, $\hat{\sigma}_e^{\circ}$ have been determined for $e\in E(\TTT_{d,k,t})$.
    \item Finally, for $e\in E(\TTT_{d,k,t})$ such that $\sigma_e=\fs$, set $\dot{\sigma}^{\circ}_e:=\dot{T}(\hat{\sig}^{\circ}_{\delta v(e)\setminus e})$ (cf. Definition \ref{def:msg config}), which is well-defined since if $\sig_t$ is valid per Definition \ref{def:t:nbd:empirical}, then $\sigma_e=\fs$ implies that there exists no edge $e^\prime \in \delta v(e)$ such that $\sigma_e^\prime \in \{\rr,\bb\}$.
\end{itemize} 
We denote $\Psi(\sig_t,\uL_t):=(\sig^{\circ}_t,\uL_t)$ by the resulting coloring configuration. Then, the lemma below follows easily from the relationship between $H^\star$ and $p_{\ttt}^\star$ established in Lemma B.1 in \cite{NSS}.
\begin{lemma}\label{lem:equiv}
If $(\bsig_t,\buL_t)\sim \nu_t^\star$, then $\Psi(\bsig_t,\buL_t)\sim \nu_t^{\star, \circ}$.
\end{lemma}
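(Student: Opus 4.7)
The plan is to show that $\Psi$ intertwines the $\CCC$-valued broadcast defining $\nu_t^\star$ with the $\Omega$-valued broadcast defining $\nu_t^{\star,\circ}$. Both are Markov broadcast processes on the same skeleton $\TTT_{d,k,t}$, so to identify their pushforwards under $\Psi$ I verify three compatibility conditions: (i) the root-neighborhood law pushes forward, $\Psi^{(d)}_*\dot{\HH}^\star = \dot{H}^\star$; (ii) the variable-to-children and clause-to-children transition kernels commute with $\Psi$; (iii) the conditional distribution of $\buL_{\delta a}$ given the local coloring matches on both sides. Condition (iii) is immediate since in both processes $\buL_{\delta a}$ is drawn uniformly over the $\uL$ satisfying $\hat{I}^\lit$, and this validity set depends only on $\Psi(\sig_{\delta a})$ via the bijection \eqref{eq:msg col 1to1} and Definition \ref{def:compcol}.

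For condition (i) I split by the form of $\utau^\circ \in \Omega^d$. On $\utau^\circ \in \{\rr,\bb\}^d$ the preimage under $\Psi^{(d)}$ is $\{\rr,\bb\}^d \subset \CCC^d$, and $\dot{\HH}^\star|_{\{\rr,\bb\}^d} = \dot{B}^\star = \dot{H}^\star|_{\{\rr,\bb\}^d}$ by Definition \ref{def:optimal:t} together with \eqref{eq:def:optimal:bdry}. For $\utau^\circ$ containing at least one $\{\fF\}$-spin, the preimage consists of $\utau \in \dot{\CCC}_\ff$ with $\psi(\sigma_i) = \tau^\circ_i$ coordinatewise, and I must verify
\[
\sum_{\utau:\,\Psi^{(d)}(\utau)=\utau^\circ} \frac{\dot{m}(\utau)\, p^\star_{\fff(\utau)}}{|\textnormal{per}(\utau)|} \;=\; \dot{H}^\star(\utau^\circ).
\]
This is the content of Lemma B.1 of \cite{NSS}: the left side enumerates free trees $\ttt$ together with an embedded variable whose surrounding component coloring projects to $\utau^\circ$, weighted by $p^\star_\ttt$, and the definition \eqref{eq:optimal:tree:1stmo} of $p^\star_\ttt$ via the BP fixed point is calibrated precisely so that this enumeration reproduces $\dot{H}^\star(\utau^\circ)$ from \eqref{eq:H:q:1stmo}. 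The symmetric statement for clauses handles the $\hat{\HH}^\star \to \hat{H}^\star$ pushforward.

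For condition (ii) I use the transition kernel formula of Definition \ref{def:broadcast} together with the same-marginal identity \eqref{eq:same:marginal}. When the conditioning spin $\tau$ lies in $\{\rr,\bb,\fs\}$, the pushforward of $\hat{\HH}^\star(\cdot\mid\tau)$ and $\dot{\HH}^\star(\cdot\mid\tau)$ agrees with the corresponding $\Omega$-side kernels by the same free-tree enumeration as in (i), applied now to the joint law at an adjacent clause or variable. When $\tau$ lies in the free-tree part $\dot{\CCC}_\ff \cup \hat{\CCC}_\ff$, knowing $\sigma_e = (\fff, e')$ deterministically reveals the adjacent free-component structure; on the $\Omega$ side, knowing $\sigma^\circ_e = \psi(\fff, e')$ determines the adjacent $\{\fF\}$-spins via the local BP equations \eqref{eq:def:localeq:msg}. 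By the bijection \eqref{eq:msg col 1to1}, these two deterministic extensions carry the same information and agree under $\Psi$.

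The main obstacle is the bookkeeping in condition (i) in the free-tree case: the multiplicities $\dot{m}(\utau)$, the permutation factor $|\textnormal{per}(\utau)|$, and the embedding number $J_\ttt$ must be matched carefully against the factors appearing in $p^\star_\ttt$ via \eqref{eq:optimal:tree:1stmo} and in $H^\star$ via \eqref{eq:H:q:1stmo}. This is precisely what Lemma B.1 of \cite{NSS} verifies, so once that lemma is invoked, conditions (ii) and (iii) reduce to direct Markov-chain compatibility checks and the lemma follows.
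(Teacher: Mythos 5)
Your decomposition (root-marginal, Markov kernel, and literal-law pushforward) is the right scaffold, and you correctly isolate Lemma B.1 of \cite{NSS} --- calibrating the optimal free-tree profile $p^\star_{\ttt}$ against $H^\star$ via the BP fixed point --- as the key combinatorial input. The paper states the lemma with no further argument, saying it ``follows easily'' from precisely that reference, so your proposal fills in the same route the paper implicitly takes.
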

Another important ingredient of the proof of Proposition \ref{prop:equiv} is the relationship with the fixed point $\dot{\mu}_{\la}$ (cf. Proposition \ref{prop:ssz:physics:bp:fixed}) and $\dot{q}^\star_{\la}$ (cf. Proposition \ref{prop:BP:contraction}) established in \cite{ssz22}. Recall that we defined $\{\fF\}\equiv \{\tau \in \MMM: \, \dot{\tau} \notin \{ 0,1,\star\}, \hat{\tau}\notin \{ 0,1,\star\} \}$ and $\Omega \equiv \{\rr_0, \rr_1, \bb_0, \bb_1\} \cup \{\fF \}$.
\begin{lemma}[Lemma B.2 and B.3 in \cite{ssz22}]\label{lem:relation:bp}
Recall the BP fixed point $\dot{q}^\star_{\la}$ and $\hat{q}^\star_{\la}\equiv \textnormal{BP}\dot{q}^\star_{\la}$ in Proposition \ref{prop:BP:contraction}. For $\la\in [0,1]$, we have that
\begin{equation*}
\begin{split}
&\sum_{\dot{\tau}\notin \dot{\MMM}\setminus \{0,1,\star \}}\big(\dot{\mm}[\dot{\tau}](1)\big)^{\la}\dot{q}^\star_{\la}(\dot{\tau})=\sum_{\dot{\tau}\notin \dot{\MMM}\setminus \{0,1,\star \}}\big(\dot{\mm}[\dot{\tau}](0)\big)^{\la}\dot{q}^\star_{\la}(\dot{\tau})=\dot{q}^\star_{\la}(\rr_1)-\dot{q}^\star_{\la}(\bb_1)\,,\\
&\sum_{\hat{\tau}\notin \hat{\MMM}\setminus \{0,1,\star\}}\big(\hat{\mm}[\hat{\tau}](1)\big)^{\la}\hat{q}^\star_{\la}(\hat{\tau})=\sum_{\hat{\tau}\notin \hat{\MMM}\setminus \{0,1,\star\}}\big(\hat{\mm}[\hat{\tau}](0)\big)^{\la}\hat{q}^\star_{\la}(\hat{\tau})=\hat{q}^\star_{\la}(\bb_1)\,.
\end{split}
\end{equation*}
Moreover, the fixed point $\dot{\mu}_{\la}$ and $\hat{\mu}_{\la}\equiv \hat{\RR}_{\la}\dot{\mu}_{\la}$ in Proposition \ref{prop:ssz:physics:bp:fixed} is a discrete measure on $[0,1]$ whose support is at most countable, which is given as follows. For $y\in [0,1]$, we have
\begin{equation*}
\begin{split}
\dot{\mu}_{\la}(y)
&=\sum_{\dot{\tau}\in \dot{\MMM}\setminus\{0,1,\star\}}\frac{\dot{q}^\star_{\la}(\dot{\tau})}{1-\dot{q}^\star_{\la}(\rr)}\one\{\dot{\mm}[\dot{\tau}](1)=y\}+\sum_{\bx\in \{0,1\}}\frac{\dot{q}^\star_{\la}(\bb_{\bx})}{1-\dot{q}^\star_{\la}(\rr)}\one\{y=\bx\}\,,\\
\hat{\mu}_{\la}(y)
&=\sum_{\hat{\tau}\in \hat{\MMM}\setminus\{0,1,\star\}}\frac{\hat{q}^\star_{\la}(\hat{\tau})}{1-\hat{q}^\star_{\la}(\bb)}\one\{\hat{\mm}[\hat{\tau}](1)=y\}+\sum_{\bx\in \{0,1\}}\frac{\hat{q}^\star_{\la}(\rr_{\bx})}{1-\hat{q}^\star_{\la}(\bb)}\one\{y=\bx\}\,.
\end{split}
\end{equation*}
\end{lemma}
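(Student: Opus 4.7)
The plan is to prove Part 2 (the explicit formulas for $\dot\mu_\la$ and $\hat\mu_\la$) first, and then derive Part 1 (the moment identities) from Part 2. Define a candidate measure $\tilde\mu_\la\in\PPP([0,1])$ by the RHS formula claimed for $\dot\mu_\la$. The symmetry $\tilde\mu_\la(y)=\tilde\mu_\la(1-y)$ follows from $\dot q^\star_\la(\dot\tau)=\dot q^\star_\la(\dot\tau\oplus 1)$ (Proposition~\ref{prop:BP:contraction} via \eqref{eq:def:bp:contract:set:1stmo}) together with $\dot\mm[\dot\tau\oplus 1](y)=\dot\mm[\dot\tau](1-y)$, which is immediate from the recursion \eqref{bethe:bpmsg:dot}. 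By uniqueness of the physicist BP fixed point in Proposition~\ref{prop:ssz:physics:bp:fixed}, it then suffices to show $\tilde\mu_\la=\RR_\la\tilde\mu_\la=\dot\RR_\la(\hat\RR_\la\tilde\mu_\la)$; I would do this by explicitly computing $\hat\RR_\la\tilde\mu_\la$, matching it to the analogous candidate formula $\tilde{\hat\mu}_\la$ for $\hat\mu_\la$, and then computing $\dot\RR_\la\tilde{\hat\mu}_\la$ and matching it back to $\tilde\mu_\la$.

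For the first step, expanding $\hat\RR_\la\tilde\mu_\la(\{y\})$ against the definition of $\tilde\mu_\la$ produces a $(k-1)$-fold sum over tuples $(\dot\tau_1,\ldots,\dot\tau_{k-1})\in\dot\MMM^{k-1}$, each coordinate being either an $\fF$-type message (contributing $x_i=\dot\mm[\dot\tau_i](1)\in(0,1)$ weighted by $\dot q^\star_\la(\dot\tau_i)/(1-\dot q^\star_\la(\rr))$) or a boundary $\dot\tau_i=\bx$ (arising from the atom at $\bx$ in $\tilde\mu_\la$, corresponding to $\bb_\bx$ under the projection \eqref{eq:simplify:coloring}). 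Two algebraic identifications drive the matching: (i) by \eqref{bethe:bpmsg:hat}, $1-\prod_i\dot\mm[\dot\tau_i](x)=\hat z[\hat\tau]\hat\mm[\hat\tau](x)$ when $\hat\tau=\hat T(\dot\utau)$ is $\fF$-type, converting the kernel $(2-\prod x_i-\prod(1-x_i))^\la$ of $\hat\RR_\la$ into $(2\hat z[\hat\tau])^\la$; (ii) by Lemma~\ref{lem:decompose:Phi:hat}, $\hat\Phi^\la=(\hat\Phi^{\mathrm{m}})^\la\hat v$ with $\hat\Phi^{\mathrm{m}}=\hat z[\hat\sigma]/\bar\varphi(\sigma)$, which absorbs both the literal averaging and the $\bar\Phi$ factor in \eqref{eq:def:BP}. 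Using \eqref{eq:simplify:coloring} to identify $\dot\tau_i\in\{0,1,\star,\fF\}$ with the corresponding $\dot\sigma_i\in\dot\Omega$, the expanded sum reassembles into the BP recursion \eqref{eq:def:BP} for $\hat q^\star_\la$; the normalizing constants agree after collecting the $(1-\dot q^\star_\la(\rr))^{k-1}$ prefactor into $\hat\ZZZ_{\dot q^\star_\la}$ from \eqref{eq:BP:normalization}, yielding $\hat\RR_\la\tilde\mu_\la=\tilde{\hat\mu}_\la$. The same computation with \eqref{bethe:bpmsg:dot} in place of \eqref{bethe:bpmsg:hat} then verifies $\dot\RR_\la\tilde{\hat\mu}_\la=\tilde\mu_\la$.

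For Part 1, by Part 2 one has $\sum_{\dot\tau\in\dot\MMM\setminus\{0,1,\star\}}(\dot\mm[\dot\tau](1))^\la\dot q^\star_\la(\dot\tau)=(1-\dot q^\star_\la(\rr))\int_{(0,1)}y^\la\,\dot\mu_\la(\de y)$. Using $\dot\mu_\la=\dot\RR_\la\hat\mu_\la$ together with Part 2 applied to $\hat\mu_\la$, the integral expands into a weighted sum involving $(\hat q^\star_\la(\hat\sigma_i))_{i=2}^d$ against the BP kernel $\prod_i y_i$; this is exactly the expression produced by the BP equation \eqref{eq:def:BP} for $\dot q^\star_\la$ specialized to $\dot\sigma=\rr_1$ minus its specialization to $\dot\sigma=\bb_1$, with the $\rr_1$ contribution arising from the distinguished coordinate $\sigma_1$ being a forcing edge (contributing $\bar\Phi^\la=1$) and the $\bb_1$ contribution from its non-forcing counterpart (carrying the extra factor $\bar\varphi^\la$). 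This yields the stated value $\dot q^\star_\la(\rr_1)-\dot q^\star_\la(\bb_1)$. The symmetric identity with $\dot\mm[\dot\tau](0)$ in place of $\dot\mm[\dot\tau](1)$ follows from $\dot\mu_\la(\de y)=\dot\mu_\la(\de(1-y))$ in Proposition~\ref{prop:ssz:physics:bp:fixed}. The identities involving $\hat\mm$ are verified analogously using $\hat\mu_\la=\hat\RR_\la\dot\mu_\la$ and the BP recursion for $\hat q^\star_\la$.

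\textbf{Main obstacle.} The technical core is the bookkeeping in the first step of Part 2: one must carefully track how each $(\dot\tau_1,\ldots,\dot\tau_{k-1})\in\dot\MMM^{k-1}$ maps under \eqref{eq:simplify:coloring} to an ordered coloring tuple $(\dot\sigma_2,\ldots,\dot\sigma_k)\in\dot\Omega^{k-1}$, separately handling the four cases $\dot\tau_i\in\{0,1,\star,\fF\}$ (with $\dot\mm[\star]\equiv\mathrm{Unif}\{0,1\}$ as stipulated after Definition~\ref{def:msg:m}), and verify that the total contributions to the kernel $1-\prod_i\dot\mm[\dot\tau_i](x)$ from each case align with the corresponding coloring-side contributions to the BP equation \eqref{eq:def:BP} for $\hat q^\star_\la$ — with the matching being coordinate-by-coordinate rather than by unordered neighborhoods.
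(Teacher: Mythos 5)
Note first that the paper does not prove this lemma internally (it is imported verbatim from Lemmas B.2--B.3 of \cite{ssz22}), so your proposal has to stand on its own. Its architecture -- build a candidate $\tilde\mu_\la$ from $\dot q^\star_\la$, verify the distributional recursion by expanding against the discrete BP equations, then extract the moment identities -- is the right one, and the kernel bookkeeping you describe via \eqref{bethe:bpmsg:dot}--\eqref{bethe:bpmsg:hat}, Lemma \ref{lem:decompose:Phi:hat} and \eqref{eq:BP:normalization} is the correct machinery. The genuine gap is the identification step: you invoke ``uniqueness of the physicist BP fixed point in Proposition \ref{prop:ssz:physics:bp:fixed}'', but that proposition asserts no uniqueness -- it only states that the particular iteration started from $\tfrac12\delta_0+\tfrac12\delta_1$ converges, is symmetric, and puts mass at most $7\cdot 2^{-k}$ on $(0,1)$. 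Uniqueness of fixed points of $\RR_\la$ on $\PPP([0,1])$ is in fact false: $\delta_{1/2}$ is a symmetric fixed point of $\RR_\la$ for every $\la$ (with all $x_i=\tfrac12$ the projected values are exactly $\tfrac12$ in both maps), and it is distinct from $\dot\mu_\la$. Hence verifying $\RR_\la\tilde\mu_\la=\tilde\mu_\la$ does not identify $\tilde\mu_\la$ with $\dot\mu_\la$. To close this you must either (i) prove a contraction/uniqueness statement for $\RR_\la$ on a restricted class (symmetric measures with small mass on $(0,1)$, say), check that $\tilde\mu_\la$ lies in it using $\dot q^\star_\la\in\mathbf{\Gamma}_C$, and check that $\dot\mu_\la$ and all iterates $\dot\mu_{\la,t}$ eventually lie in it -- i.e.\ reconstruct the contraction analysis behind Proposition \ref{prop:ssz:physics:bp:fixed}, which is not available as stated in this paper -- or (ii) bypass uniqueness by matching the recursions level by level: show by induction on $t$ that $\dot\mu_{\la,t}$ is the push-forward, under $\dot\tau\mapsto\dot\mm[\dot\tau](1)$, of a correspondingly initialized discrete BP iterate, and pass to the limit using Proposition \ref{prop:BP:contraction}.

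A secondary issue: your derivation of Part 1 from Part 2 is circular as sketched. Writing $S:=\sum_{\hat\tau\in\hat\MMM\setminus\{0,1,\star\}}(\hat\mm[\hat\tau](1))^\la\hat q^\star_\la(\hat\tau)$ and using $\bar\varphi(\tau_1)\dot\varphi(\hat\utau)=\dot z[\dot\tau_1]$ in the variable equation of \eqref{eq:def:BP}, one gets $\sum_{\dot\tau\in\dot\MMM\setminus\{0,1,\star\}}(\dot\mm[\dot\tau](1))^\la\dot q^\star_\la(\dot\tau)=(\dot{\ZZZ}^\star)^{-1}S^{d-1}$, while the same equation gives $\dot q^\star_\la(\rr_1)-\dot q^\star_\la(\bb_1)=(\dot{\ZZZ}^\star)^{-1}\hat q^\star_\la(\bb_1)^{d-1}$; so the variable-side identity is \emph{equivalent} to the clause-side identity $S=\hat q^\star_\la(\bb_1)$, and asserting that the hat-identities are ``verified analogously'' chases your own tail -- the expansion does not literally reproduce ``$\rr_1$ minus $\bb_1$'' as you claim, it reproduces it only modulo the other identity. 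This is repairable once Part 2 is in hand: e.g.\ compare the mass of $(0,1)$ under $\dot\mu_\la$ computed from Part 2 and from $\dot\RR_\la\hat\mu_\la$ to pin the normalizing constant, then compare the atom at $1$ the same two ways to obtain $(\hat q^\star_\la(\rr_1)+S)^{d-1}-S^{d-1}=(\hat q^\star_\la(\rr_1)+\hat q^\star_\la(\bb_1))^{d-1}-\hat q^\star_\la(\bb_1)^{d-1}$, and conclude $S=\hat q^\star_\la(\bb_1)$ by strict monotonicity of $x\mapsto(c+x)^{d-1}-x^{d-1}$ for $c=\hat q^\star_\la(\rr_1)>0$; alternatively prove both lines of Part 1 jointly by induction over the depth-$t$ sets $\dot\MMM_t,\hat\MMM_t$. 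Either way this step needs an argument, not an ``analogously''.
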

With Lemma \ref{lem:relation:bp} in hand, we establish the relationship between $\nu^{\star,\circ}_t$ and $\nu_{\la^\star}$, defined in Section \ref{s:lwl}. Denote by $\nu_t^{\star,\circ}(\,\cdot\,;\,\uL_t)$ the law of $\bsig_t^{\circ}$ conditional on $\buL_t =\uL_t$, where $(\bsig_t^{\circ},\buL_t)\sim \nu_t^{\star,\circ}$.
\begin{lemma}\label{lem:coloring:to:message}
    For any $\uL_t \in \{0,1\}^{E_{\sf in}(\TTT_{d,k,t})}$, there exists a coupling between $\bsig_t^{\circ}\equiv (\bsigma_e^{\circ})_{e\in E(\TTT_{d,k,t})} \sim \nu_t^{\star,\circ}(\,\cdot\,;\,\uL_t)$ and $\underline{\mm}_t\equiv (\mm_e)_{e\in E(\TTT_{d,k,t})} \sim \nu_{\la^\star}(\,\cdot\,;\,\uL_t)$ such that it satisfies the following conditions almost surely.
    \begin{enumerate}
        \item The frozen configurations corresponding to $\bsig_t^{\circ}$ and $\underline{\mm}_t$ are the same. More precisely, for any $e\in E(\TTT_{d,k,t})$ and $\bx\in \{0,1\}$, $\hat{\mm}_e=\delta_{\bx}$ if and only if $\bsigma_e^{\circ}=\rr_{\bx}$.
        \item For any $e\in \partial\TTT_{d,k,t}$, whenever $\bsigma_e^{\circ}\notin \{\rr, \bb\}$, $\dot{\mm}_e=\dot{\mm}[\dot{\bsigma}_e^{\circ}]$ and $\hat{\mm}_e=\hat{\mm}[\hat{\bsigma}_e^{\circ}]$ hold, where $\dot{\mm}[\cdot]$ and $\hat{\mm}[\cdot]$ are defined in \eqref{bethe:bpmsg:dot} and \eqref{bethe:bpmsg:hat} respectively. Otherwise, for $\bx\in \{0,1\}$ and $e\in \partial\TTT_{d,k,t}$, $\dot{\mm}_e=\delta_{\bx}$ holds if $\bsigma_e^{\circ}=\bb_{\bx}$, and $\hat{\mm}_e=\delta_{\bx}$ holds if $\bsigma_e^{\circ}=\rr_{\bx}$.
    \end{enumerate}
\end{lemma}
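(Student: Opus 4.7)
The plan is to realize the coupling through the explicit bijection \eqref{eq:msg col 1to1} between message configurations $\underline{\tau}\in\MMM^{E(\TTT_{d,k,t})}$ and colorings $\sig^\circ\in\Omega^{E(\TTT_{d,k,t})}$ on the finite factor tree, combined with the pushforward identities of Lemma \ref{lem:relation:bp}. Concretely, I would first sample $\bsig^\circ_t\sim\nu_t^{\star,\circ}(\cdot;\uL_t)$ and then translate edge-by-edge to a message configuration: on the interior the messages are forced by the local BP equations \eqref{eq:def:localeq:msg}; on a boundary half-edge the color $\hat{\bsigma}^\circ_e$ prescribes $\hat{\mm}_e=\hat{\mm}[\hat{\bsigma}^\circ_e]$ (with $\hat{\mm}_e=\delta_\bx$ when $\hat{\bsigma}^\circ_e=\rr_\bx$). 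The core of the proof is then to check that the induced law of $\underline{\mm}_t$ equals $\nu_{\la^\star}(\cdot;\uL_t)$.

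The first key input is a cavity telescoping of the broadcast measure from Definition \ref{def:broadcast} with the BP-fixed-point channel $(\dHH^\star,\hHH^\star,\bar{H}^\star)$. Multiplying the transitions \eqref{eq:broadcast:clause} along all edges of $\TTT_{d,k,t}$ and using the explicit form \eqref{eq:H:q:1stmo} of $H^\star$, the edge ratios $\bar{H}^\star(\sigma)$ telescope to give
\begin{equation*}
\nu_t^{\star,\circ}(\sig^\circ_t;\uL_t)\propto \prod_v \dot{\Phi}(\sig^\circ_{\delta v})^{\la^\star}\prod_a \hat{\Phi}^{\lit}((\sig^\circ\oplus\uL)_{\delta a})^{\la^\star}\prod_{e\in E_{\sf in}(\TTT_{d,k,t})}\bar{\Phi}(\sigma^\circ_e)^{\la^\star}\prod_{e\in\partial\TTT_{d,k,t}}\hat{q}^\star_{\la^\star}(\hat{\sigma}^\circ_e).
\end{equation*}
Translating via \eqref{eq:msg col 1to1} and using Lemmas \ref{lem:size:msg and trees}, \ref{lem:model:size:col}, the bulk coloring factors become exactly the tree cluster-size weight defining $\nu_{\la^\star}(\cdot;\uL_t)$. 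The boundary factor $\hat{q}^\star_{\la^\star}(\hat{\sigma}^\circ_e)$ is then converted into $\hat{\mu}_{\la^\star}(\de\hat{\mm}_e)$ by Lemma \ref{lem:relation:bp}, which states precisely that $\hat{\mu}_{\la^\star}$ is the pushforward of $\hat{q}^\star_{\la^\star}$ restricted to $\hat{\MMM}\setminus\{\bb\}$ (and normalized by $1-\hat{q}^\star_{\la^\star}(\bb)$) under $\hat{\tau}\mapsto\hat{\mm}[\hat{\tau}]$; this yields condition (2) of the lemma on the boundary messages with non-$\bb$ color, together with its variable-side analogue $\dot{\mm}_e=\dot{\mm}[\dot{\bsigma}^\circ_e]$.

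The main obstacle is the treatment of boundary $\bb$-colors, which carry positive mass under $\hat{q}^\star_{\la^\star}$ but have no corresponding atom in $\hat{\mu}_{\la^\star}$. The resolution is that when $\hat{\bsigma}^\circ_e=\bb_\bx$ at a leaf $v(e)$, validity of the coloring (Definition \ref{def:model:col}) forces some sibling $e'\in\delta v(e)\setminus\{e\}$ to satisfy $\hat{\bsigma}^\circ_{e'}=\rr_\bx$, so through the bijection $\dot{\tau}_e=\bx$ is already pinned at $v(e)$ by the other boundary half-edges -- exactly the condition $\dot{\mm}_e=\delta_\bx$ demanded by part (2) -- while $\hat{\mm}_e$ itself can be independently resampled from the residual $\hat{\mu}_{\la^\star}$-conditional law without altering any other quantity in the coupling. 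Matching the global normalizers $\mathcal{Z}_{\la^\star}$ and $\dot{\mathscr{Z}}^\star,\hat{\mathscr{Z}}^\star,\bar{\mathfrak{Z}}^\star$, and absorbing the $(1-\hat{q}^\star_{\la^\star}(\bb))^{|\partial\TTT_{d,k,t}|}$ factor introduced by the Lemma \ref{lem:relation:bp} pushforward, is then a direct bookkeeping exercise based on the BP fixed-point relations from Proposition \ref{prop:BP:contraction}.
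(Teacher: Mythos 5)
The proposal takes a genuinely different route from the paper's: the paper projects $\underline{\mm}_t$ and $\bsig^{\circ}_t$ to a common ``simplified message'' state space and then compares the \emph{broadcast channels} of the two processes directly (equations \eqref{eq:simplify:K:1}--\eqref{eq:simplify:K:4} versus \eqref{eq:K:circ}), case-by-case for frozen and free vertices, whereas you telescope the full broadcast density into a global Gibbs form and compare it to the density of $\nu_{\la^\star}$. Both rely on Lemma~\ref{lem:relation:bp} in the same essential way, and your cavity telescoping identity for $\nu_t^{\star,\circ}$ (bulk $\Phi$-factors times $\hat{q}^\star$ on the boundary) is correct, including the cancellation of $\hat{v}$ against the number of admissible literals.

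There is, however, a real gap in the step where you claim the bulk coloring factors ``become exactly the tree cluster-size weight defining $\nu_{\la^\star}$.'' Lemmas~\ref{lem:size:msg and trees} and \ref{lem:model:size:col} say that $\prod_v\dot{\varphi}\prod_a\hat{\varphi}^{\lit}\prod_e\bar{\varphi}$ and $\prod_v\dot{\Phi}\prod_a\hat{\Phi}^{\lit}\prod_e\bar{\Phi}$ coincide \emph{as global products on a closed graph}, not factor by factor. In fact $\dot{\Phi}\neq\dot{\varphi}$ and $\bar{\Phi}\neq\bar{\varphi}$ at every frozen variable and every $\{\rr,\bb\}$-colored edge (e.g.\ $\dot{\Phi}=1$ while $\dot{\varphi}(\hat{\mm}_{\delta v})=\prod_i\hat{\mm}_{e_i}(\bx)<1$; $\bar{\Phi}(\bb_\bx)=1$ while $\bar{\varphi}=1/\hat{\mm}_e(\bx)$). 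These discrepancies cancel pairwise \emph{across} a frozen variable and each of its adjacent edges, so the global products agree only when the graph is closed. On the open tree $\TTT_{d,k,t}$ the boundary half-edges sever half of each cancelling pair, and the leftover factors $\prod_{e\in\partial\TTT_{d,k,t}}\hat{\mm}_e(\bx_{v(e)})^{\la^\star}$ (over frozen leaf variables) must be tracked explicitly: it is precisely these factors, combined with the prior $\prod_{e\in\partial}\hat\mu_{\la^\star}(\de\hat{\mm}_e)$, which via Lemma~\ref{lem:relation:bp} reproduce $\prod_{e\in\partial}\hat{q}^\star_{\la^\star}(\hat{\sigma}^\circ_e)$, and which make the $\rr_\bx$-vs-$\bb_\bx$ decomposition of the boundary consistent on both sides. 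So the statement ``bulk factors $=$ cluster-size weight'' is wrong as written, and the ensuing ``direct bookkeeping exercise'' in fact contains the heart of the argument (this is, in effect, what the paper's case-by-case channel comparison is doing). Your ``main obstacle'' paragraph identifies the right issue but only accounts for where $\dot{\mm}_e$ is pinned, not for the $\hat{\mm}_e(\bx)^{\la^\star}$ weights that migrate from the interior $\dot{\varphi}/\bar{\varphi}$ mismatch to the boundary; the resampling step cannot be ``without altering any other quantity'' since $\dot{\varphi}(\hat{\mm}_{\delta v(e)})$ at a frozen leaf $v(e)$ explicitly depends on $\hat{\mm}_e$.
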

\begin{proof}
    For a message $\mm \equiv (\dot{\mm},\hat{\mm})\in \PPP(\{0,1\})^{2}$, define it's simplified message by $\mm^{\tns}\equiv \mm^{\tns}[\mm]$ by
    \begin{equation*}
    \mm^{\tns} =
    \begin{cases}
        (*,\delta_{\bx})~~~~&\textnormal{if $\hat{\mm}=\delta_{\bx}$ for some $\bx\in \{0,1\}$,}\\
        (\delta_{\bx},*) ~~~~&\textnormal{if $\hat{\mm}\notin \{\delta_0, \delta_1\}$ and $\dot{\mm}=\delta_{\bx}$ for some $\bx\in \{0,1\}$,}\\
        \mm ~~~~&\textnormal{otherwise.}
    \end{cases}
    \end{equation*}
    Thus, $\mm^{\tns}$ is obtained by erasing one side of $\mm$ when $\hat{\mm} \in \{\delta_0, \delta_1\}$ or $\dot{\mm} \in \{\delta_0, \delta_1\}$. Note that such procedure is similar to the projection in \eqref{eq:simplify:coloring} that leads to the coloring. To this end, for a coloring $\sigma^{\circ} \in \Omega$, we define its simplified message $\mm^{\circ}\equiv \mm^{\circ}[\sigma^{\circ}]$ by
     \begin{equation*}
    \mm^{\circ} =
    \begin{cases}
        (*,\delta_{\bx})~~~~&\textnormal{if $\sigma^{\circ}=\rr_{\bx}$ for some $\bx\in \{0,1\}$,}\\
        (\delta_{\bx},*) ~~~~&\textnormal{if $\sigma^{\circ}= \bb_{\bx}$ for some $\bx\in \{0,1\}$,}\\
        (\dot{\mm}[\dot{\sigma}],\hat{\mm}[\hat{\sigma}])~~~~&\textnormal{if $\sigma^{\circ} \in \{\fF\}$.}
    \end{cases}
    \end{equation*}
    For $\underline{\mm}_t \sim \nu_{\la^\star}(\,\cdot\,;\,\uL_t)$, denote its simplified message by $\underline{\mm}^{\tns}_t\equiv \mm^{\tns}[\underline{\mm}_t]$, where $\mm^{\tns}[\cdot]$ is applied coordinatewise. Similarly, for $\bsig^{\circ}_t\sim \nu^{\star, \circ}_t(\,\cdot\,;\,\uL_t)$, denote its simplirifed message by $\underline{\mm}^{\circ}_t\equiv \mm^{\circ}[\bsig^{\circ}_t]$, where $\mm^{\circ}[\cdot]$ is applied coordinatewise. Then, we claim that
    \begin{equation}\label{eq:goal:coloring:to:message}
    \underline{\mm}^{\tns}_t \stackrel{d}{=}\underline{\mm}^{\circ}_t
    \end{equation}
    Note that \eqref{eq:goal:coloring:to:message} implies our goal by definition of $\mm^{\tns}$ and $\mm^{\circ}$, so we establish \eqref{eq:goal:coloring:to:message} for the rest of the proof. Throughout, we assume that $\uL_t\equiv 0$ for simplicity. By the symmetry of $\naesat$ model, it is straightforward to generalize the argument to any $\uL_t\in \{0,1\}^{E_{\sf in}(\TTT_{d,k,t})}$.

    Observe that $\nu_{\la^\star}$ defined in Section \ref{s:lwl} is a Gibbs measure on the tree $\TTT_{d,k,t}$, so it can be described as a broadcast model analogous to $\nu_t^{\star,\circ}$ in Definition \ref{def:optiaml:t:ssz}. The channel $(\dot{K},\hat{K})$ of the broadcast model is given as follows. $\dot{K}$ and $\hat{K}$ are probability measures on $\left(\PPP(\{0,1\})^2\right)^{d}$ and $\left(\PPP(\{0,1\})^2\right)^{k}$ given by
    \begin{equation*}
    \begin{split}
    &\dot{K}(\mm_1,\ldots, \mm_d)= (\dot{Z})^{-1}\dot{\varphi}(\hat{\mm}_1,\ldots,\hat{\mm}_d)^{\la^\star}\prod_{i=1}^{d}\hat{\mu}_{\la^\star}\left(\hat{\mm}_i(1)\right)\,,\\
    &\hat{K}(\mm_1,\ldots, \mm_k)= (\hat{Z})^{-1}\hat{\varphi}^{\lit}(\dot{\mm}_1 ,\ldots,\dot{\mm}_d)^{\la^\star}\prod_{i=1}^{k}\dot{\mu}_{\la^\star}\left(\dot{\mm}_i(1)\right)\,, 
    \end{split}
    \end{equation*}
    where $\dot{Z}$ and $\hat{Z}$ are normalizing constants. Since $\underline{\mm}^{\tns}_t$ is obtained by a projection of $\underline{\mm}_t \sim \nu_{\la^\star}(\,\cdot\,;\,\uL_t)$, $\underline{\mm}^{\tns}_t$ is a broadcast model with channel $(\dot{K}^{\tns},\hat{K}^{\tns})$. Here, $\dot{K}^{\tns}$ is the probability measure given by the projection of $\dot{K}$ according to $\mm^{\tns}[\cdot]$: for $\mm_1^{\tns},\ldots,\mm_d^{\tns}\in \PPP(\{0,1\})^2\sqcup_{\bx\in \{0,1\}} \{(*,\delta_{\bx}),(\delta_{\bx},*)\}$,
    \begin{equation*}
    \dot{K}^{\tns}(\mm^{\tns}_1,\ldots, \mm^{\tns}_d)=\sum_{\mm_1,\ldots, \mm_d}\dot{K}(\mm_1,\ldots,\mm_d)\one\left\{\mm^{\tns}[\mm_i]=\mm^{\tns}_i~~\textnormal{for all}~~1\leq i \leq d \right\}\,.
    \end{equation*}
    $\hat{K}^{\tns}$ is defined similarly. Observe that we can simplify $\dot{K}^{\tns}$ using Lemma \ref{lem:relation:bp} as follows. Consider the case where the simplified messages $(\mm^{\tns}_1,\ldots, \mm^{\tns}_d)$ are adjacent to a frozen variable, i.e. $(\mm^{\tns}_1,\ldots, \mm^{\tns}_d)\in \sqcup_{\bx\in \{0,1\}}\left( \{(*,\delta_{\bx}),(\delta_{\bx},*)\}\right)^{d}$. Without loss of generality, suppose $(\mm^{\tns}_1,\ldots, \mm^{\tns}_d)\in \left( \{(*,\delta_{1}),(\delta_{1},*)\}\right)^{d}$ and let $I:=\{1\leq i \leq d:\mm^{\tns}_i=(*,\delta_{1})\}$ be the indices that correspond to the non-forcing messages. Then, note that $(\mm_1,\ldots, \mm_d)$ that satisfy $\mm^{\tns}[\mm_i]=\mm^{\tns}_i$ for $1\leq i\leq d$ is determined by $(\hat{\mm}_i)_{i\in I}\subset \PPP(\{0,1\})\setminus \{\delta_0,\delta_1\}$. Also, $\dot{\varphi}(\hat{\mm}_1,\ldots,\hat{\mm}_d)=\prod_{i\in I}\hat{\mm}_i(1)$ holds by definition of $\dot{\varphi}$. Thus, we have
    \begin{equation}\label{eq:simplify:K:1}
    \begin{split}
    \dot{K}^{\tns}(\mm_1^{\tns},\ldots,\mm_d^{\tns})
    &= (\dot{Z})^{-1}\hat{\mu}_{\la^\star}(1)^{d-|I|}\cdot \prod_{i\in I}\Bigg(\sum_{\hat{\mm}_i\in \PPP(\{0,1\})\setminus \{\delta_0,\delta_1\}}\big(\hat{\mm}_i(1)\big)^{\la^\star}\hat{\mu}_{\la^\star}\big(\hat{\mm}_i(1)\big)\Bigg)\\
    &=\Big(\dot{Z}\big(1-\hat{q}^{\star}_{\la^\star}(\bb)\big)^d\Big)^{-1}\hat{q}^\star_{\la^\star}(\rr_1)^{d-|I|}\hat{q}^\star_{\la^\star}(\bb_1)^{|I|}\,,
    \end{split}
    \end{equation}
    where the last equality is due to Lemma \ref{lem:relation:bp}. Next, consider the case where the simplified messages are adjacent to a free variable, i.e. $\mm_1^{\tns},\ldots, \mm_d^{\tns}\in \big(\PPP(\{0,1\})\setminus \{\delta_0,\delta_1\}\big)^2$. Then, by Lemma \ref{lem:relation:bp}, we have
    \begin{equation}\label{eq:simplify:K:2}
    \begin{split}
    \dot{K}^{\tns}(\mm_1^{\tns},\ldots,\mm_d^{\tns})=\dot{K}(\mm_1^{\tns},\ldots,\mm_d^{\tns})=\Big(\dot{Z}\big(1-\hat{q}^{\star}_{\la^\star}(\bb)\big)^d\Big)^{-1}\sum_{\hat{\tau}_1,\ldots, \hat{\tau}_{d}\in \hat{\MMM}\setminus\{0,1,\star\}}\dot{\varphi}\big(\hat{\mm}[\hat{\tau}_1],\ldots, \hat{\mm}[\hat{\tau}_d]\big)^{\la^\star}\prod_{i=1}^{d}\hat{q}^\star_{\la^\star}(\hat{\tau}_i)\,.
    \end{split}
    \end{equation}
    Similarly, we can simplify $\hat{K}^{\tns}$ as follows. Consider the case where the simplified messages $(\mm^{\tns}_1,\ldots, \mm^{\tns}_k)$ are adjacent to a forcing clause, i.e. $(\mm^{\tns}_1,\ldots, \mm^{\tns}_k)\in \sqcup_{\bx\in \{0,1\}}\textnormal{Per}\big((*,\delta_{\bx}), (\delta_{1-\bx},*)^{k-1}\big)$. Without loss of generality, suppose $(\mm^{\tns}_1,\ldots, \mm^{\tns}_k)=\big((*,\delta_{1}), (\delta_{0},*)^{k-1}\big)$. Then, note that $(\mm_1,\ldots, \mm_k)$ that satisfy $\mm^{\tns}[\mm_i]=\mm^{\tns}_i$ for $1\leq i\leq k$ is determined by $\dot{\mm}_1\in \PPP(\{0,1\})$. Also, $\hat{\varphi}^{\lit}(\dot{\mm}_1,\ldots,\dot{\mm}_k)=\dot{\mm}_1(0)$ holds by definition of $\hat{\varphi}^{\lit}$. Thus, we have
    \begin{equation}\label{eq:simplify:K:3}
    \begin{split}
    \hat{K}^{\tns}(\mm^{\tns}_1,\ldots, \hat{\mm}^{\tns}_k)
    &=(\hat{Z})^{-1}\dot{\mu}_{\la^\star}(1)^{k-1}\cdot \Bigg(\sum_{\dot{\mm}_1\in \PPP(\{0,1\})}\big(\dot{\mm}_1(0)\big)^{\la^\star}\dot{\mu}_{\la^\star}\big(\dot{\mm}_1(1)\big)\Bigg)\\
    &=\Big(\hat{Z}\big(1-\dot{q}^{\star}_{\la^\star}(\rr)\big)^k\Big)^{-1}\dot{q}^\star_{\la^\star}(\bb_1)^{k-1}\dot{q}^\star_{\la^\star}(\rr_0)\,,
    \end{split}
    \end{equation}
    where the last equality is due to Lemma \ref{lem:relation:bp}. Next, consider the case where the simplified messages are adjacent to a non-forcing clause, i.e. $\mm^{\tns}_1,\ldots, \mm^{\tns}_k\in \big(\PPP(\{0,1\})\big)^2\sqcup_{\bx\in \{0,1\}} \big\{(\delta_{\bx},*)\big\}$. Then, note that for all $(\mm_1,\ldots, \mm_k)$ that satisfy $\mm^{\tns}[\mm_i]=\mm^{\tns}_i$ for $1\leq i\leq k$, $(\dot{\mm}_1,\ldots, \dot{\mm}_k)$ is determined by $(\mm^{\tns}_1,\ldots, \mm^{\tns}_k)$. With abuse of notation, set $\dot{q}^\star_{\la^\star}(\bx)\equiv \dot{q}^\star_{\la^\star}(\bb_{\bx})$ for $\bx\in \{0,1\}$. Then, by Lemma \ref{lem:relation:bp}, we have
    \begin{equation}\label{eq:simplify:K:4}
    \begin{split}
          \hat{K}^{\tns}(\mm^{\tns}_1,\ldots, \mm^{\tns}_k)
          &=\sum_{\mm_1,\ldots, \mm_k}\hat{K}(\mm_1,\ldots,\mm_k)\one\left\{\mm^{\tns}[\mm_i]=\mm^{\tns}_i~~\textnormal{for all}~~1\leq i \leq k \right\}\\
          &=\Big(\hat{Z}\big(1-\dot{q}^{\star}_{\la^\star}(\rr)\big)^k\Big)^{-1}\sum_{\dot{\tau}_1,\ldots, \dot{\tau}_{k}\in \dot{\MMM}\setminus\{\star\}}\hat{\varphi}^{\lit}\big(\dot{\mm}[\dot{\tau}_1],\ldots, \dot{\mm}[\dot{\tau}_k]\big)^{\la^\star}\prod_{i=1}^{d}\dot{q}^\star_{\la^\star}(\dot{\tau}_i)\,.
    \end{split}
    \end{equation}
    Now, since $\mm^{\circ}_t$ is obtained by a projection of $\bsig_t^{\circ}\sim \nu_t^{\star,\circ}(\,\cdot\,;\,\uL_t)$, $\mm^{\circ}_t$ is a broadcast model with channel $(\dot{K}^{\circ},\hat{K}^{\circ})$, where $(\dot{K}^{\circ},\hat{K}^{\circ})$ is given by
    \begin{equation}\label{eq:K:circ}
    \begin{split}
        &\dot{K}^{\circ}(\mm^{\circ}_1,\ldots, \mm^{\circ}_d)= \sum_{\sigma_1^{\circ},\ldots, \sigma^{\circ}_d}\dot{H}^\star(\sigma_1^{\circ},\ldots,\sigma_d^{\circ})\one\left\{\mm^{\circ}[\sigma_i^{\circ}]=\mm^{\circ}_i~~\textnormal{for all}~~1\leq i \leq d \right\}\,,\\
        &\hat{K}^{\circ}(\mm^{\circ}_1,\ldots, \mm^{\circ}_k)= \sum_{\sigma_1^{\circ},\ldots, \sigma^{\circ}_k}\hat{H}^{\star, \lit}(\sigma_1^{\circ},\ldots,\sigma_k^{\circ})\one\left\{\mm^{\circ}[\sigma_i^{\circ}]=\mm^{\circ}_i~~\textnormal{for all}~~1\leq i \leq k \right\}\,,
    \end{split}
    \end{equation}
    where $\hat{H}^{\star,\lit}$ is defined as
    \begin{equation*}
        \hat{H}^{\star,\lit}(\sig^{\circ})=\frac{\hat{\Phi}^{\lit}(\sig^{\circ})^{\la^\star}}{\hat{\mathfrak{Z}}^{\lit}}\prod_{i=1}^{k}\dot{q}^\star_{\la^\star}(\sigma^{\circ}_i)\,.
    \end{equation*}
    In the above equation, $\hat{\mathfrak{Z}}^{\lit}$ is a normalizing constant. 

    Observe that the expressions of $\dot{K}^{\tns}$ and $\hat{K}^{\tns}$ obtained in \eqref{eq:simplify:K:1}-\eqref{eq:simplify:K:4} show that $(\dot{K}^{\tns},\hat{K}^{\tns})$ equals the channel $(\hat{K}^{\circ},\hat{K}^{\circ})$ in \eqref{eq:K:circ}. Therefore, we conclude that the law of $\underline{\mm}^{\tns}_t$ and the law of $\underline{\mm}^{\circ}_t$ is the same, which concludes the proof of \eqref{eq:goal:coloring:to:message}.
\end{proof}
\begin{proof}[Proof of Proposition \ref{prop:equiv}]
To begin with, a standard dynamic programming (or belief propagation) calculation (e.g. see \cite[Chapter 14]{am06} or the proof of \cite[Lemma 2.9]{ssz22}) shows that we can express $p_{\uz_t}(\sig_t,\uL_t)$ as follows. Let $(\sig^{\circ}_t,\uL_t)=\Psi(\sig_t,\uL_t)$ be the coloring configuration corresponding to $(\sig_t,\uL_t)$. Note that $\sig^{\circ}_t\equiv (\sigma^{\circ}_e)_{e\in E(\TTT_{d,k,t})}$ determines the frozen configuration $\ux_t\equiv (x_v)_{v\in V(\TTT_{d,k,t})}\equiv \ux[\sig^{\circ}_t]\in \{0,1,\ff\}^{V(\TTT_{d,k,t})}$. Write $\uz_t\sim_{\uL_t}\sig^{\circ}_t$ when $\uz_t=(z_v)_{v\in V(\TTT_{d,k,t})}$ is a valid $\naesat$ solution w.r.t. the literals $\uL_t$ and $z_v=x_v$ whenever $x_v\in \{0,1\}$. Then,  
\begin{equation*}
    p_{\uz_t}(\sig_t,\uL_t)=\frac{\one\{\uz_t\sim_{\uL_t}\sig_t^{\circ}\}\prod_{e\in \partial \TTT_{d,k,t}:\sigma^{\circ}_e\not\in \{\rr,\bb\}}\hat{\mm}[\hat{\sigma}_e^{\circ}](z_{v(e)})}{\sum_{\uz^\prime_t\in \{0,1\}^{V(\TTT_{d,k,t})}}\one\{\uz_t^\prime \sim_{\uL_t}\sig_t^{\circ}\}\prod_{e\in \partial \TTT_{d,k,t}:\sigma^{\circ}_e\not\in \{\rr,\bb\}}\hat{\mm}[\hat{\sigma}_e^{\circ}](z_{v(e)}^\prime)}\,.
\end{equation*}
Note that in the expression above, $\sigma_e^{\circ}\notin \{\rr,\bb\}$ guarantees that $\hat{\mm}[\hat{\sigma}_e^{\circ}]\in \PPP(\{0,1\})$ is well defined. Thus, combining with Lemma \ref{lem:equiv}, $\sum_{\sig_{t}\in \CCC_t}\nu_{t}^\star(\sig_t,\uL_t)p_{\uz_t}(\sig_t,\uL_t)$ equals
\begin{equation*}
2^{-|E_{\sf in}(\TTT_{d,k,t})|}\cdot \E_{\bsig^{\circ}_t\sim \nu_t^{\star,\circ}(\,\cdot\,;\,\uL_t)}\left[\frac{\one\{\uz_t\sim_{\uL_t}\bsig_t^{\circ}\}\prod_{e\in \partial \TTT_{d,k,t}:\bsigma^{\circ}_e\not\in \{\rr,\bb\}}\hat{\mm}[\hat{\bsigma}_e^{\circ}](z_{v(e)})}{\sum_{\uz^\prime_t\in \{0,1\}^{V(\TTT_{d,k,t})}}\one\{\uz_t^\prime \sim_{\uL_t}\bsig_t^{\circ}\}\prod_{e\in \partial \TTT_{d,k,t}:\bsigma^{\circ}_e\not\in \{\rr,\bb\}}\hat{\mm}[\hat{\bsigma}_e^{\circ}](z_{v(e)}^\prime)}\right]\,.
\end{equation*}
Furthermore, by Lemma \ref{lem:coloring:to:message}, we can couple $\bsig_t^{\circ}\equiv (\bsigma_e^{\circ})_{e\in E(\TTT_{d,k,t})} \sim \nu_t^{\star,\circ}(\,\cdot\,;\,\uL_t)$ and $\underline{\mm}_t\equiv (\mm_e)_{e\in E(\TTT_{d,k,t})} \sim \nu_{\la^\star}(\,\cdot\,;\,\uL_t)$ so that almost surely, their frozen configurations are the same and the boundary messages $(\dot{\mm}[\dot{\bsigma}_e^{\circ}], \hat{\mm}[\hat{\bsigma}_e^{\circ}])$ and $\mm_e$ are the same for $e\in \partial \TTT_{d,k,t}$ whenever $\bsigma_e^{\circ}\notin \{\rr,\bb\}$. Here, note that $\dot{\mm}[\dot{\bsigma}_e^{\circ}], \hat{\mm}[\hat{\bsigma}_e^{\circ}]\notin \{\delta_0, \delta_1\}$ holds if $\bsigma_e^{\circ}\notin \{\rr,\bb\}$, so for such $e\in \partial\TTT_{d,k,t}$, $\dot{\mm}_e,\hat{\mm}_e \not\in \{\delta_0,\delta_1\}$. Also, if $\bsigma_e^{\circ}\in \{\rr,\bb\}$, $\dot{\mm}_e\in \{\delta_0, \delta_1\}$ or $\hat{\mm}_e\in \{\delta_0, \delta_1\}$ by our construction of the coupling. Thus, the expression in the display above equals
\begin{equation*}
2^{-|E_{\sf in}(\TTT_{d,k,t})|}\cdot \E_{\underline{\mm}_t\sim  \nu_{\la^\star}(\,\cdot\,;\,\uL_t)}\left[\frac{\one\{\uz_t\sim_{\uL_t}\ux(\underline{\mm}_t)\}\prod_{e\in \partial \TTT_{d,k,t}:\dot{\mm}_e,\hat{\mm}_e \notin \{\delta_0,\delta_1\}}\hat{\mm}_e(z_{v(e)})}{\sum_{\uz^\prime_t\in \{0,1\}^{V(\TTT_{d,k,t})}}\one\{\uz_t^\prime \sim_{\uL_t}\ux(\underline{\mm}_t)\}\prod_{e\in \partial \TTT_{d,k,t}:\dot{\mm}_e,\hat{\mm}_e \notin \{\delta_0,\delta_1\}}\hat{\mm}_e(z_{v(e)}^\prime)}\right]\,.
\end{equation*}
Finally, observe that given $\underline{\mm}_t$ and $e\in \partial \TTT_{d,k,t}$, the condition $\dot{\mm}_e\in \{\delta_0,\delta_1\}$ or $\hat{\mm}_e\in \{\delta_0,\delta_1\}$ implies that $v(e)$ is frozen in the frozen configuration $\ux(\underline{\mm}_t)$. Thus, given $\underline{\mm}_t$ and $e\in \partial \TTT_{d,k,t}$ such that $\dot{\mm}_e\in \{\delta_0,\delta_1\}$ or $\hat{\mm}_e\in \{\delta_0,\delta_1\}$, $\hat{\mm}_e(z^\prime_v(e))$ is the same for all $\uz^\prime_t\in \{0,1\}^{V(\TTT_{d,k,t})}$ such that $\uz_t^\prime \sim_{\uL_t}\ux(\mm_t)$. Therefore, we have that
\begin{equation*}
\sum_{\sig_{t}\in \CCC_t}\nu_{t}^\star(\sig_t,\uL_t)p_{\uz_t}(\sig_t,\uL_t)=2^{-|E_{\sf in}(\TTT_{d,k,t})|} \E_{\underline{\mm}_t\sim  \nu_{\la^\star}(\,\cdot\,;\,\uL_t)}\left[\frac{\one\{\uz_t\sim_{\uL_t}\ux(\underline{\mm}_t)\}\prod_{e\in \partial \TTT_{d,k,t}}\hat{\mm}_e(z_{v(e)})}{\sum_{\uz^\prime_t\in \{0,1\}^{V(\TTT_{d,k,t})}}\one\{\uz_t^\prime \sim_{\uL_t}\ux(\underline{\mm}_t)\}\prod_{e\in \partial \TTT_{d,k,t}}\hat{\mm}_e(z_{v(e)}^\prime)}\right]\,,
\end{equation*}
which concludes the proof.
\end{proof}
\end{document}